\newenvironment{rouge}{\color{red}}{}
\newcommand{\btr}{\begin{rouge}}
\newcommand{\etr}{\end{rouge}}
\newenvironment{bleu}{\color{blue}}{}
\newcommand{\btb}{\begin{bleu}}
\newcommand{\etb}{\end{bleu}}
\newcommand{\mo}{\mathopen}
\newcommand{\mc}{\mathclose}
\newtheorem{theorem}{Theorem}[section]
\newtheorem{proposition}[theorem]{Proposition}
\newtheorem{lemma}[theorem]{Lemma}
\newtheorem{corollary}[theorem]{Corollary}
\theoremstyle{definition}
\newtheorem{definition}[theorem]{Definition}
\newtheorem{notation}[theorem]{Notation}
\newtheorem{example}[theorem]{Example}
\newtheorem{remark}[theorem]{Remark}
\newtheorem{assumption}[theorem]{Assumption}
\newcommand{\N}{{\mathbb{N}}}
\newcommand{\R}{{\mathbb{R}}}
\newcommand{\Z}{{\mathbb{Z}}}
\newcommand{\shf}{{\mathcal{F}}}
\newcommand{\shi}{{\mathcal{I}}}
\newcommand{\shk}{{\mathcal{K}}}
\newcommand{\shl}{{\mathcal{L}}}
\newcommand{\shm}{{\mathcal{M}}}
\newcommand{\shn}{{\mathcal{N}}}
\newcommand{\sht}{{\mathcal{T}}}
\newcommand{\Cinf}{C^\infty}
\newcommand{\cor}{{\bf k}}
\newcommand{\pt}{\{{\rm pt}\}}
\newcommand{\rpos}{\R_{\geq 0}}
\newcommand{\rspos}{\R_{>0}}
\renewcommand{\to}[1][]{\xrightarrow[]{#1}}
\newcommand{\from}[1][]{\xleftarrow[]{#1}}
\newcommand{\isoto}[1][]{\xrightarrow[#1]%
{{\raisebox{-.6ex}[0ex][-.6ex]{$\mspace{1mu}\sim\mspace{2mu}$}}}}
\newcommand{\isofrom}[1][]{\xleftarrow[#1]%
{{\raisebox{-.6ex}[0ex][-.6ex]{$\mspace{1mu}\sim\mspace{2mu}$}}}}
\newcommand{\xto}[2][]{\xrightarrow[#1]{#2}}
\newcommand{\xfrom}[2][]{\xleftarrow[#1]{#2}}
\newcommand{\RR}{\mathrm{R}}
\newcommand{\muhom}{\mu hom}
\newcommand{\Hom}{\mathrm{Hom}}
\newcommand{\RHom}{\RR\mathrm{Hom}}
\renewcommand{\hom}{{\mathcal{H}om}}
\newcommand{\rhom}{{\RR\hom}}
\newcommand{\homeps}{{\mathcal{H}om^\varepsilon}}
\newcommand{\rhomeps}{{\RR\hom^\varepsilon}}
\newcommand{\DD}{\mathrm{D}}
\newcommand{\tens}{\otimes}
\newcommand{\ltens}{\mathbin{\overset{\scriptscriptstyle\mathrm{L}}\tens}}
\newcommand{\epstens}{\otimes^\varepsilon}
\newcommand{\etens}{\mathbin{\boxtimes}}
\newcommand{\letens}{\overset{\mathrm{L}}{\etens}}
\newcommand{\Iso}{{\operatorname{Iso}}}
\newcommand{\rsect}{\mathrm{R}\Gamma}
\newcommand{\oim}[1]{{#1}_*}
\newcommand{\eim}[1]{{#1}_!}
\newcommand{\roim}[1]{\RR{#1}_*}
\newcommand{\reim}[1]{\RR{#1}_!}
\newcommand{\opb}[1]{#1^{-1}}
\newcommand{\epb}[1]{#1^{!}}
\newcommand{\eqdot}{\mathbin{:=}}
\newcommand{\cl}{\colon}
\newcommand{\pointdiag}{\makebox[0mm]{\;\;.}}
\newcommand{\scbul}{{\,\raise.4ex\hbox{$\scriptscriptstyle\bullet$}\,}}
\newcommand{\ol}{\overline}
\newcommand{\ul}{\underline}
\newcommand{\id}{\mathrm{id}}
\newcommand{\supp}{\operatorname{supp}}
\newcommand{\SSi}{\mathrm{SS}}
\newcommand{\Int}{\operatorname{Int}}
\newcommand{\Der}{\mathsf{D}}
\newcommand{\Derb}{\Der^{\mathrm{b}}}
\newcommand{\Derlb}{\Der^{\mathrm{lb}}}
\newcommand{\Mod}{\operatorname{Mod}}
\newcommand{\codim}{\operatorname{codim}}
\newcommand{\dT}{{\dot{T}}}
\newcommand{\Cor}{{\mathbb{K}}}
\numberwithin{equation}{section}
\newcommand{\gammaof}{\gamma}
\newcommand{\gammaf}{{\ol\gamma}}
\newcommand{\Derbtp}{\Derb_{\tau>0}}
\newcommand{\Derbtpn}{\Derb_{\tau\geq 0}}
\newcommand{\Derbra}{\Der^{\mathrm{b}, r_!\mathrm{a}}_{\tau\geq 0}}
\newcommand{\tU}{\widetilde U}
\newcommand{\loc}{\mathsf{Loc}}
\newcommand{\Dloc}{\mathsf{DL}}
\newcommand{\dltens}{\otimes}
\newcommand{\loceps}{\mathsf{Loc}^\varepsilon}
\newcommand{\Dloceps}{\mathsf{DL}^\varepsilon}
\newcommand{\kss}{\mathfrak{S}}
\newcommand{\kssfunc}{\mathfrak{s}}
\newcommand{\mks}{\mathfrak{m}}
\newcommand{\mucirc}{\mathbin{\overset{\scriptscriptstyle\mu}{\circ}}}
\newcommand{\tIso}{\operatorname{\widehat{Iso}}}
\newcommand{\tshi}{\widetilde{\mathcal{I}}}
\newcommand{\tshm}{\widetilde{\mathcal{M}}}
\newcommand{\nshm}{\mathcal{L}}
\newcommand{\lag}{\mathcal{L}}
\newcommand{\demi}{\frac{1}{2}}
\newcommand{\pdemi}{{\textstyle \frac{1}{2}}}
\newcommand{\qq}{\mathrm{q}}
\newcommand{\qqq}{\mathbf{q}}
\newcommand{\hplus}{\mathbin{\widehat +}}
\newcommand{\mgL}{\mathcal{L}}
\newcommand{\mgLp}{\mathbb{L}}
\newcommand{\catc}{\mathcal{C}}
\newcommand{\perf}{\mathsf{perf}}
\newcommand{\Orb}{{\mathsf{D}_{/[1]}^{\mathrm{b}}}}
\newcommand{\OrbL}[1]{{\mathsf{D}_{/[1],#1}^{\mathrm{b}}}}
\newcommand{\orb}{\mathrm{orb}}
\newcommand{\SSo}{\mathrm{SS}^\orb}
\newcommand{\supporb}{\mathrm{supp}^\orb}
\newcommand{\Oloc}{\mathsf{OL}}
\newcommand{\psh}{\mathsf{Psh}}
\newcommand{\rec}{\alpha}
\newcommand{\clms}{{ext}}
\newcommand{\bpmat}{\left( \begin{smallmatrix}}
\newcommand{\epmat}{\end{smallmatrix} \right)}
\begin{document}

\begin{abstract}
  Let $M$ be a manifold and $\Lambda$ a compact exact connected Lagrangian
  submanifold of $T^*M$. We can associate with $\Lambda$ a conic Lagrangian
  submanifold $\Lambda'$ of $T^*(M\times\R)$.  We prove that there exists a
  canonical sheaf $F$ on $M\times\R$ whose microsupport is $\Lambda'$ outside
  the zero section. We deduce the already known results that the Maslov class
  of $\Lambda$ is $0$ and that the projection from $\Lambda$ to $M$ induces
  isomorphisms between the homotopy groups.
\end{abstract}

\date{January 24, 2015}
\title[Quantization of Lagrangian submanifolds]
{Quantization of conic Lagrangian  submanifolds of cotangent bundles}
\author{St{\'e}phane Guillermou}

\maketitle

%\thanks{Mathematics Subject Classification: 53D55, 46L65, 32C38}

\setcounter{tocdepth}{1}
\tableofcontents

\section{Introduction}

Let $N$ be a $C^\infty$ manifold and $\Lambda$ a closed conic Lagrangian
submanifold of $\dT^*N$, the cotangent bundle of $N$ with the zero section
removed.  Motivated by the paper~\cite{T08} where D.~Tamarkin used the
microlocal theory of sheaves of M.~Kashiwara and P.~Schapira to obtain results
in symplectic geometry (see~\cite{T08} and the survey~\cite{GS11}), we consider
the problem of constructing a sheaf on $N$ whose microsupport coincides with
$\Lambda$ outside the zero section.  We call such a sheaf a ``quantization'' of
$\Lambda$.  We assume $N = M\times \R$ and $\Lambda$ is the ``conification'' of
a compact exact Lagrangian submanifold of $T^*M$.  As explained to me by
C. Viterbo it is possible to define a quantization of $\Lambda$ by means of
Floer homology (see~\cite{V11}) and, conversely, to recover some aspects of
Floer homology from a quantization (for example, analogs of the spectral
invariants are introduced in~\cite{Vic12} using a quantization).

The main purpose of this paper is to construct a quantization using only the
microlocal theory of sheaves. We also recover from the existence of the
quantization results of Kragh~\cite{Kr13}, saying that the Maslov class of
$\Lambda$ is zero, and Fukaya-Seidel-Smith~\cite{FSS08} and Abouzaid~\cite{A12},
saying that the projection $\Lambda \to M$ is a homotopy equivalence.  We also
obtain the vanishing of the relative Stiefel-Whitney class of $\Lambda$.  We
point out that the link between the microlocal theory of sheaves and the
symplectic geometry is studied in another way by D.~Nadler and E.~Zaslow
in~\cite{NZ09,N09}, where they give in particular another proof of the result
of~\cite{FSS08}.

\subsection*{Microsupport}
Let us first recall one of the main ingredients of the microlocal theory of
sheaves (see~\cite{KS82,KS85,KS90}), namely, the microsupport of sheaves. Let
$\cor$ be a commutative unital ring of finite global dimension. We denote by
$\Derb(\cor_N)$ the bounded derived category of sheaves of $\cor$-modules on
$N$.  In loc.\ cit. the authors attach to an object $F$ of $\Derb(\cor_N)$ its
singular support, or microsupport, $\SSi(F)$, a closed subset of $T^*N$.  We
recall its definition in Section~\ref{section:mts}. The microsupport is conic
for the action of $(\R^+,\times)$ on $T^*N$ and is coisotropic.  It gives some
information on how the cohomology groups $H^i(U;F)$ vary when the open subset
$U\subset N$ moves. The microsupport was introduced as a tool for the study of
sheaves.  In~\cite{T08} Tamarkin uses it in the other direction: if a given
conic Lagrangian submanifold $\Lambda$ of $\dT^*N$ admits a quantization $F\in
\Derb(\cor_N)$, then we may use the cohomology of $F$ or its extension groups to
obtain results on $\Lambda$.  Tamarkin constructs quantizations of Lagrangian
submanifolds of $T^*SU(n)$ associated with subsets of the complex projective
space, in particular the real projective space and the Clifford torus. He
deduces non-displaceability results for these subsets.  In~\cite{GKS10},
building on Tamarkin's ideas, the authors consider a Hamiltonian isotopy, say
$\Phi\cl \dT^*N\times \mo]-1,1[ \to \dT^*N$, homogeneous for the $\R^+$-action
on $T^*N$. We can see its graph as a conic Lagrangian submanifold of $\dT^*(N
\times N\times \mo]-1,1[)$. The authors prove that this graph admits a
quantization and they deduce a new proof of a non-displaceability conjecture of
Arnold and results on non-negative isotopies.

In this paper we construct a quantization for $\Lambda$ when $N = M \times \R$
and $\Lambda$ is the ``conification'' of a compact exact Lagrangian submanifold
of $T^*M$.  It is well known that quantizations locally exist, that is, for a
given $p \in \Lambda$ there exist $F \in \Derb(\cor_N)$ and a neighborhood
$\Omega$ of $p$ in $T^*N$ such that $\SSi(F) \cap \Omega = \Lambda \cap
\Omega$.  If the projection $\Lambda/\rspos \to N$ is finite we even have: for
any $x\in N$ there exist a neighborhood $U$ of $x$ and $F^x \in \Derb(\cor_U)$
such that $\dot\SSi(F^x) = \Lambda \cap T^*U$, where we set $\dot\SSi(F) =
\SSi(F) \cap \dT^*N$.  We will prove that when $N = M \times \R$ we can choose
the $F^x$'s so that we can glue them into a global object.

\smallskip

A first step is to glue these locally defined objects ``microlocally''.  To
give a meaning to this we recall the definition of some categories introduced
in~\cite{KS85,KS90}.  Let $S$ be a subset of $\dT^*N$.  Following~\cite{KS90}
we denote by $\Derb_S(\cor_N)$ the full triangulated subcategory of
$\Derb(\cor_N)$ formed by the $F$ such that $\dot\SSi(F) \subset S$ (this
differs slightly from~\cite{KS90} since we forget the zero section).  We denote
by $\Derb_{(S)}(\cor_N)$ the full triangulated subcategory of $\Derb(\cor_N)$
formed by the $F$ such that $\SSi(F)\cap \Omega \subset S$, for some
neighborhood $\Omega$ of $S$.  We let $\Derb(\cor_N;S)$ be the quotient
$\Derb(\cor_N) / \Derb_{\dT^*N\setminus S}(\cor_N)$.  In particular the $F \in
\Derb(\cor_N)$ satisfying $\dot\SSi(F) \cap S = \emptyset$ vanish in the
quotient $\Derb(\cor_N;S)$. A morphism $u \cl F \to F'$ in $\Derb(\cor_N)$
becomes an isomorphism in $\Derb(\cor_N;S)$ if the cone of $u$, that is, the
object $F''$ which fits in a distinguished triangle $F \to F' \to F'' \to[+1]$,
satisfies $\SSi(F'') \cap S = \emptyset$.

Let $\Lambda$ be a locally closed conic submanifold of $\dT^*N$.  We define the
Kashiwara-Schapira stack of $\Lambda$, denoted $\kss(\cor_\Lambda)$, as the
stack associated with the prestack $\kss^0_\Lambda$ given as follows.  For
$\Lambda_0$ open in $\Lambda$ the objects of $\kss^0_\Lambda(\Lambda_0)$ are
those of $\Derb_{(\Lambda_0)}(\cor_N)$.  The morphisms between two objects $F,G$
are
$$
\Hom_{\kss^0_\Lambda(\Lambda_0)}(F,G) \eqdot
\Hom_{\Derb(\cor_N;\Lambda_0)}(F,G) .
$$
Taking the associated stack means that the objects of $\kss(\cor_\Lambda)$ are
represented by local objects $F_i \in \kss^0_\Lambda(\Lambda_i)$, for a covering
$\{\Lambda_i\}_{i\in I}$ of $\Lambda$, and isomorphisms $u_{ji} \cl
F_i|_{\Lambda_{ij}} \isoto F_j|_{\Lambda_{ij}}$ in
$\kss^0_\Lambda(\Lambda_{ij})$ on the intersections $\Lambda_{ij} = \Lambda_i
\cap \Lambda_j$, such that $u_{kj} \circ u_{ji} = u_{ki}$.  (We remark that the
triangulated structure is lost when we take the associated stack.)  In fact the
morphisms in $\kss(\cor_\Lambda)$ also have a definition which does not involve
the microsupport and the quotient categories $\Derb(\cor_N;S)$.
In~\cite{KS85,KS90} the authors give a version of Sato's microlocalization as a
functor
$$
(\Derb(\cor_N))^{opp} \times \Derb(\cor_N) \to \Derb(\cor_{T^*N}),
\qquad
(F,G) \mapsto \mu hom(F,G)
$$
with the properties $\roim{\pi_N} \mu hom (F,G) \simeq \rhom (F,G)$ and, if $F$
is constructible, $\reim{\pi_N} \mu hom (F,G) \simeq \DD'(F) \ltens G$, where
$\pi_N \cl T^*N \to N$ is the projection and $\DD'(F)$ denotes the dual.  The
support of $\mu hom (F,G)$ is bounded by $\SSi(F)\cap \SSi(G)$. By~\cite{KS90}
the $\hom$ sheaf in $\kss(\cor_\Lambda)$ is given by $\mu hom$: for $F,G \in
\Derb_{(\Lambda)}(\cor_N)$ we have $\hom_{\kss(\cor_\Lambda)}(F,G) = H^0\mu
hom(F,G)$.

\smallskip

We prove in Theorem~\ref{thm:KSstack=faisc_tordus} and
Corollary~\ref{cor:objet_global_KSstack} that $\kss(\cor_\Lambda)$ is equivalent
to a twisted stack of twisted local systems on $\Lambda$.  Let us explain what
it means.  For $p\in \Lambda$ we have the Lagrangian subspaces of $T_pT^*N$
given by $\lambda_\Lambda(p) = T_p\Lambda$ and $\lambda_0(p) =
T_p\opb{\pi}\pi(p)$, where $\pi\cl T^*N \to N$ is the projection.  Let
$\shi_\Lambda$ be the fiber bundle over $\Lambda$ with fiber the space of
orientation preserving isomorphisms $\Iso^+(\lambda_0(p) \otimes
\Lambda^n\lambda_0(p), \lambda_\Lambda(p)\otimes \Lambda^n\lambda_\Lambda(p))$.
Then $\shi_{\Lambda,p} \simeq GL^+_n$ and we have a canonical morphism
$\varepsilon \cl \pi_1(\shi_{\Lambda,p}) \to \Z/2\Z$.  We let
$\loceps(\cor_\Lambda)$ be the stack whose objects over $\Lambda_0 \subset
\Lambda$ are the local systems on $\shi_{\Lambda_0}$ with monodromy
$\varepsilon$ in the fibers. If $\Lambda_0$ is contractible this is the same as
the local systems on $\Lambda_0$, say $\loc(\cor_{\Lambda_0})$.  The obstruction
to a global isomorphism $\loceps(\cor_\Lambda) \simeq \loc(\cor_{\Lambda})$ is a
relative Stiefel-Whitney class, say $rw_2(\Lambda) \in H^2(\Lambda; \Z/2\Z)$.
We define also $\Dloceps(\cor_\Lambda)$ as the stack associated with the
prestack formed by the $F \in \Derb(\cor_{\shi_{\Lambda_0}})$ with locally
constant cohomology sheaves with monodromy $\varepsilon$.  Then
$\Dloceps(\cor_\Lambda) \simeq \bigoplus_{i\in \Z} \loceps(\cor_\Lambda)[i]$.
We prove
$$
\kss(\cor_\Lambda) \simeq (\bigoplus_{i\in \Z} \loceps(\cor_\Lambda)[i])_m ,
$$
where $m = m(\Lambda) \in H^1(\Lambda;\Z)$ is the Maslov class of $\Lambda$ and
the twist $(\cdot)_m$ means that, representing $m$ by a \v Cech cocycle
$\{c_{ij}\}$, the gluing isomorphisms between local objects are $u_{ji} \cl
L_i|_{\Lambda_{ij}} \isoto L_j|_{\Lambda_{ij}}[c_{ij}]$ instead of $u_{ji} \cl
L_i|_{\Lambda_{ij}} \isoto L_j|_{\Lambda_{ij}}$. In particular if
$rw_2(\Lambda)$ and $m(\Lambda)$ vanish, then $\kss(\cor_\Lambda) \simeq
\bigoplus_{i\in \Z} \loc(\cor_\Lambda)[i]$ has globally defined objects.
Conversely, if $\kss(\cor_\Lambda)$ has a global object, then $m(\Lambda)
=0$. If it has a global ``simple'' object (see Section~\ref{sec:simpsheaves}),
then $rw_2(\Lambda) = 0$.

\subsection*{Convolution}
Now we assume to be given $\shf \in \kss(\cor_\Lambda)$ and we want to construct
a quantization of $\shf$, that is, $F \in \Derb_\Lambda(\cor_N)$ whose image in
$\kss(\cor_\Lambda)$ is $\shf$.  We first find $F \in \Derb_{(\Lambda)}(\cor_N)$
which represents $\shf$ (that is, $\dot\SSi(F)$ coincides with $\Lambda$ only in
a neighborhood of $\Lambda$).  As we already explained, $\shf$ is given by $F_i
\in \Derb_{(\Lambda_i)}(\cor_N)$, $i\in I$, for a covering $\{\Lambda_i\}_{i\in
  I}$ of $\Lambda$, and isomorphisms $u_{ji} \cl F_i|_{\Lambda_{ij}} \isoto
F_j|_{\Lambda_{ij}}$ in $\kss^0_\Lambda(\Lambda_{ij})$ on the intersections
$\Lambda_{ij} = \Lambda_i \cap \Lambda_j$, such that $u_{kj} \circ u_{ji} =
u_{ki}$.  We also recalled that $\hom$ in $\kss(\cor_\Lambda)$ is $\mu hom$.
Hence we can assume $u_{ji} \in H^0(\Lambda_{ij}; \mu hom (F_i,F_j))$.  We want
to glue the $F_i$ into a global object. The first task is of course to represent
the $u_{ji}$ by morphisms in $\Derb(\cor_N)$.  We explain this now.

We assume that our manifold is a product $N = M\times \R$.  We let $(t;\tau)$
be the coordinates on $T^*\R$ and, for an open subset $U\subset M\times \R$, we
set $T^*_{\tau >0}U = (T^*M \times \{(t;\tau);\;\tau>0\}) \cap T^*U$.  We
assume that $\Lambda \subset T^*_{\tau >0}(M\times \R)$.  We denote by
$\Derbtp(\cor_U)$ the full subcategory of $\Derb(\cor_U)$ formed by the $F$
such that $\dot\SSi(F) \subset T^*_{\tau >0}U$.  We will define a functor
$\Psi_U\cl \Derb(\cor_U) \to \Derb(\cor_{U\times \mo]0,+\infty[})$ such that,
setting $\Psi_U^\varepsilon(F) = \Psi_U(F)|_{U \times \mo]0,\varepsilon[}$, we
have
\begin{equation}\label{eq:intro_hompsimuhom}
\varinjlim_{\varepsilon>0} \Hom_{\Derb(\cor_{U\times \mo]0,\varepsilon[})}
(\Psi_U^\varepsilon(F), \Psi_U^\varepsilon(G) )
\simeq H^0(\dT^*U; \mu hom(F,G)) ,  
\end{equation}
for any $F,G \in \Derbtp(\cor_U)$.  Moreover we can describe
$\SSi(\Psi_U(F))$. It gives the following expression for the restrictions
$\Psi_U(F)|_{U \times \{\varepsilon\}}$ for $\varepsilon>0$.  Let
$T'_\varepsilon$ be the translation $T'_\varepsilon(x,t;\xi,\tau) =
(x,t+\varepsilon;\xi,\tau)$.  Then $\dot\SSi((\Psi_U(F))|_{U \times
  \{\varepsilon\}}) = \dot\SSi(F) \cup T'_\varepsilon(\dot\SSi(F))$.

Using $\Psi_U$ we can modify the local representatives of our $\shf \in
\kss(\cor_\Lambda)$.  We assume that each $\Lambda_i$ is of the form $\Lambda
\cap T^*U_i$ for some $U_i \subset M \times\R$.  We set $F_i^\varepsilon =
\Psi_{U_i}(F_i)|_{U_i \times \{\varepsilon\}}$.  Then $F_i^\varepsilon \in
\Derb_{(\Lambda_i)}(\cor_M)$ has the same image as $F_i$ in
$\kss(\cor_{\Lambda_i})$.  But now the formula~\eqref{eq:intro_hompsimuhom}
turns $u_{ji}$ into an isomorphism $v_{ji} \cl F_i^\varepsilon \isoto
F_j^\varepsilon$ on $U_{ij}$ for $\varepsilon>0$ small enough.  The same formula
also gives $\Hom(F_i^\varepsilon,F_i^\varepsilon [d]) = 0$ for all $i\in I$ and
all $d<0$.  It is then possible to glue the $F_i^\varepsilon$ into a global
object $F^\varepsilon \in \Derb(\cor_{M\times\R})$ representing $\shf$.  We have
$\dot\SSi(F^\varepsilon) = \Lambda \cup T'_\varepsilon(\Lambda)$.

\smallskip

Now we give some details on the functor $\Psi_U$. It is introduced to solve the
problem that two non isomorphic sheaves in $\Derb(\cor_{M\times\R})$ may
represent the same object in $\kss(\cor_\Lambda)$. For example we consider
$\Lambda = \{(x,0;0,\tau)$; $\tau >0\}$, $F = \cor_{M\times [0,+\infty[}$ and
$G = \cor_{M\times ]-\infty,0[}[1]$. Then $\dot\SSi(F) = \dot\SSi(G) =
\Lambda$, $F$ and $G$ are isomorphic in $\kss(\cor_\Lambda)$, but not in
$\Derb(\cor_{M\times\R})$.  We recall the convolution functor, which is a
variant of the composition defined in~\cite{KS90} and was already used by
Tamarkin to build canonical representatives of objects of
$\Derb(\cor_{M\times\R}; \{\tau>0\})$.  For $F\in \Derb(\cor_{M\times\R})$ and
$F'\in \Derb(\cor_\R)$ we set $F'\star F \eqdot \reim{s}(F\etens F')$, where
$s\cl M\times \R^2 \to M\times\R$ is the sum $(x,t,t') \mapsto (x,t+t')$.  For
our example we have $\cor_{[0,\varepsilon[} \star F \simeq
\cor_{[0,\varepsilon[} \star G \simeq \cor_{M\times [0,\varepsilon[}$.  This is
a general fact: for $F,G \in \Derbtp(\cor_{M\times\R})$, if $F$ and $G$ are
isomorphic in $\Derb(\cor_{M\times\R}; \dT^*(M\times\R) )$, then
$\cor_{[0,\varepsilon[}\star F \simeq \cor_{[0,\varepsilon[}\star G$.  If $F\in
\Derb(\cor_U)$ for $U\subset M\times\R$, then $\cor_{[0,\varepsilon[} \star F$
is only defined over $U\cap T_\varepsilon(U)$.  Hence we have to consider all
$\varepsilon>0$ at once.  We define $\Psi_U\cl \Derb(\cor_U) \to
\Derb(\cor_{U\times \mo]0,+\infty[})$ by $\Psi_U(F) = \cor_\gamma \star F$,
where $\gamma = \{(t,u)\in \R\times \mo]0,+\infty[$; $0\leq t<u\}$.

\subsection*{Quantization}
In the last two parts of the paper we build a quantization of a closed conic
Lagrangian $\Lambda \subset T^*_{\tau >0}(M\times\R) $ and deduce topological
consequences for $\Lambda$.  We assume that $\Lambda/\R_{>0}$ is compact and
that the map $T^*_{\tau >0}(M\times\R) \to T^*M$, $(x,t;\xi,\tau) \mapsto
(x;\xi/\tau)$ gives an embedding of $\Lambda/\R_{>0}$ in $T^*M$. Then $\Lambda$
can be recovered up to translation from its image in $T^*M$ which is a compact
exact Lagrangian submanifold.

In this situation, for a given $\shf \in \kss(\cor_\Lambda)$ we find $F \in
\Derb_\Lambda(\cor_M)$ whose image in $\kss(\cor_\Lambda)$ is $\shf$.  We have
already seen that there exists $F^\varepsilon \in \Derb_{(\Lambda)}(\cor_M)$
whose image in $\kss(\cor_\Lambda)$ is $\shf$ and such that
$\dot\SSi(F^\varepsilon) = \Lambda \cup T'_\varepsilon(\Lambda)$.  Since
$\Lambda$ arises from a compact exact Lagrangian submanifold of $T^*M$, we can
find a Hamiltonian isotopy $\phi\cl \dT^*(M\times\R) \times \R \to
\dT^*(M\times\R)$ such that, for all $s\geq 0$ and $\varepsilon>0$, we have
$\phi_s(\Lambda) = \Lambda$ and $\phi_s(T'_\varepsilon(\Lambda)) =
T'_{\varepsilon + s}(\Lambda)$.  By the main result of~\cite{GKS10} we can
quantize $\phi$ and compose the resulting kernel with $F^\varepsilon$.  We
obtain $F^{\varepsilon+s} \in \Derb(\cor_{M\times\R})$ such that
$\SSi(F^{\varepsilon+s}) = \Lambda \cup T'_{\varepsilon+s}(\Lambda)$.  For $s$
big enough there exists $a\in \R$ such that $\Lambda \subset T^*(M\times
\mo]-\infty,a[)$ and $T'_{\varepsilon+s}(\Lambda) \subset T^*(M\times
\mo]a,+\infty[)$.  Then $F \eqdot F^{\varepsilon+s}|_{M\times \mo]-\infty,a[}$
is a quantization of $\shf$, up to a choice of a suitable diffeomorphism
$\mo]-\infty,a\mc[ \simeq \R$.

\smallskip

Now, using standard properties of the microsupport, we can prove the following
isomorphism. Let $\shf,\shf' \in \kss(\cor_\Lambda)$ be given and let $F,F'\in
\Derb(\cor_{M\times\R})$ be quantizations of $\shf,\shf'$ obtained by the above
procedure. By construction $F|_{M\times \{t\}} \simeq 0$ for $t \ll 0$. We can
also see that, for $t \gg 0$, $F|_{M\times \{t\}}$ is independent of $t$ and is
locally constant on $M$.  We set $L = F|_{M\times \{t\}}$, $L' = F'|_{M\times
  \{t\}}$, for $t \gg 0$.  Then we have
\begin{equation}\label{eq:intro_homMhomLambda}
\RHom(L,L') \isofrom  \RHom(F,F')
 \isoto \rsect(\Lambda;\hom_{\kss(\cor_\Lambda)}(\shf,\shf') ).
\end{equation}
The $\hom$ sheaf in $\kss(\cor_\Lambda)$ is in fact a local system on
$\Lambda$. Hence we obtain an isomorphism between the cohomology of some local
systems on $M$ and corresponding local systems on $\Lambda$. If we take for $F$
a ``simple sheaf'' and $F'=F$ we have $\RHom(L,L) = \rsect(M;\cor_M)$ and
$\hom_{\kss(\cor_\Lambda)}(\shf,\shf) = \cor_\Lambda$, proving that $M$ and
$\Lambda$ have the same cohomology.

We prove that there exists $F_0\in \Derb_\Lambda(\cor_{M\times\R})$ such that
$F_0|_{M\times \{t\}} \simeq 0$ for $t \ll 0$ and $F_0|_{M\times \{t\}} \simeq
\cor_M$ for $t \gg 0$.  Let $\Derb_{loc}(\cor_M)$ be the full subcategory of
objects $L$ with locally constant cohomology sheaves, or equivalently, such that
$\dot\SSi(L) = \emptyset$.  Then, $L \mapsto F_0 \ltens \opb{p}L$ gives an
equivalence between $\Derb_{loc}(\cor_M)$ and $\Derb_\Lambda(\cor_{M\times\R})$.
On the other hand $F \mapsto \mu hom(F_0,F)$ induces an equivalence between
$\Derb_\Lambda(\cor_{M\times\R})$ and $\Derb_{loc}(\cor_\Lambda)$.  We deduce an
equivalence between local systems on $M$ and local systems on $\Lambda$, proving
that $M$ and $\Lambda$ have the same first homotopy groups.  This also proves
that the chosen $F_0$ is unique. Hence $\Lambda$ has a canonical quantization
$F_0$ such that $F_0|_{M\times \{t\}} \simeq 0$ for $t \ll 0$ and $F_0|_{M\times
  \{t\}} \simeq \cor_M$ for $t \gg 0$.  The final result is stated in
Theorem~\ref{thm:quant_canon} and Corollary~\ref{cor:equiv_homot}.

\subsection*{Triangulated orbit category}

We have explained how we build a quantization of a given global object of
$\kss(\cor_\Lambda)$.  The existence of such an object is equivalent to the
vanishing of the Maslov class and relative Stiefel-Whitney class of $\Lambda$.
If we take $\cor = \Z/2\Z$ the image of the Stiefel-Whitney class in
$H^2(\Lambda;\cor^\times)$ is zero, since the multiplicative group $\cor^\times$
is trivial.  The Maslov class appears in our framework as a shift functor in the
cohomological degrees of the objects of $\Derb(\cor_N)$.

It is possible to quotient $\Derb(\cor_N)$ by the shift functor, that is, define
a category $\Orb(\cor_N)$ with a functor $i_N \cl \Derb(\cor_N) \to
\Orb(\cor_N)$, such that $i_N(F) \simeq i_N(F[1])$ for all $F \in
\Derb(\cor_N)$.  This is a special case of a construction by Keller called the
triangulated hull of the orbit category.

We define $\Orb(\cor_N)$ for categories of sheaves and check that the usual
sheaf operations (direct, inverse images, tensor product and internal $\hom$)
make sense for $\Orb(\cor_N)$. We also check that the microsupport can be
defined for objects of $\Orb(\cor_N)$.  Almost all we have said up to now works
in the orbit categories (except the gluing property, which required a vanishing
$\Hom(F_i,F_i[d])$ for $d<0$, which has no meaning in $\Orb(\cor_N)$).  In
particular we can define a Kashiwara-Schapira stack $\kss^\orb(\cor_\Lambda)$
using $\Orb(\cor_N)$ instead of $\Derb(\cor_N)$.  Since the shift functor in
$\Derb(\cor_N)$ gives the identity functor in $\Orb(\cor_N)$, the Maslov class
gives no obstruction to the existence of a global object in
$\kss^\orb(\cor_\Lambda)$.  We take $\cor = \Z/2\Z$ and the Stiefel-Whitney
class gives no obstruction either. We see indeed that there exists a unique
simple sheaf in $\kss^\orb(\cor_\Lambda)$.

The category $\Orb(\cor_{M\times \R})$ carries of course less information than
$\Derb(\cor_{M\times \R})$ but, for example, the monodromy of locally constant
objects is not lost.  In particular the analog of~\eqref{eq:intro_homMhomLambda}
is enough to prove that the morphism $\pi_1(\Lambda) \to \pi_1(M)$ is injective.
This implies that taking a suitable cover $M' \to M$ we have vanishing of the
Maslov class and we can apply to $M'$ the results found for $M$ (in the usual
categories $\Derb(\cor_\bullet)$). We then obtain a quantization of the
pull-back of $\Lambda$ to $M'$. A quick study of this quantization shows that in
fact the Maslov class is zero.

To show the vanishing of the Stiefel-Whitney class, say $w$, we proceed in a
similar way. We work with $w$-twisted sheaves and obtain a quantization by
$w$-twisted sheaves.  Again a quick study of this quantization shows that the
twist $w$ has to be zero.

\bigskip\noindent
{\bf Acknowledgment.}  The starting point of this paper is a discussion with
Claude Viterbo.  He explained me how to construct a quantization in the sense of
this paper using Floer cohomology and asked whether it was possible to obtain it
with the methods of algebraic analysis.  Masaki Kashiwara gave me the idea to
glue locally defined simple sheaves under the assumption that the Maslov class
vanishes.  Claire Amiot explained me that it was possible to quotient by the
shift functor, hence avoiding this vanishing assumption, in the framework of the
triangulated orbit categories introduced by Keller.  I also thank Pierre
Schapira and Nicolas Vichery for many stimulating discussions.

\part{Sheaves and triangulated orbit categories}

\section{Microlocal theory of sheaves}
\label{section:mts}
In this section, we recall some definitions and results from
\cite{KS90}, following its notations with the exception of  slight
modifications. We consider a real manifold $M$ of class $C^\infty$.

\subsubsection*{Some geometrical notions  (\cite[\S 4.2,~\S 6.2]{KS90})}
For a locally closed subset $A$ of $M$, we denote by $\Int(A)$
its interior and by $\overline{A}$ its closure.
We denote by $\Delta_M$ or simply $\Delta$ the diagonal of $M\times M$. 

We denote by $\pi_M \cl T^*M\to M$ the cotangent bundle of $M$.  If $N\subset
M$ is a submanifold, we denote by $T^*_NM$ its conormal bundle, which
is naturally a fiber bundle over $N$ and a submanifold of $T^*M$. We identify
$M$ with $T^*_MM$, the zero-section of $T^*M$.
We set $\dT^*M = T^*M\setminus T^*_MM$ and we denote by
$\dot\pi_M\cl\dT^*M\to M$ the projection. For any subset $A$ of $T^*M$ we
define its antipodal $A^a = \{(x;\xi) \in T^*M$; $(x;-\xi) \in A\}$.

If $U \subset M$ is an open subset with smooth boundary we define its interior
and exterior conormal bundles $N_U^*, N_U^{*e} \subset T^*M$ by
\begin{equation}\label{eq:def_ext_con_bun}
N_U^* = \{(x;\lambda \, d\phi(x)); \; x \in \partial U, \, \lambda \geq 0\} ,
\qquad
N_U^{*e} = (N_U^*)^a,
\end{equation}
where $\phi\cl M\to \R$ is any $C^1$ function such that $U =
\opb{\phi}(]0,+\infty[)$ and $d\phi$ does not vanish on $\partial U$.

If $E\to M$ is a vector bundle and $A,B$ are subsets of $E$, we denote by $A+B$
the fiberwise sum, that is,
$$
A+B = \{ (x;e); \;  e = e_1+e_2
 \text{ for some $(x;e_1) \in A$, $(x;e_2) \in B$} \}.
$$

Let $f\cl M\to N$ be  a morphism of  real manifolds. It induces morphisms
on the cotangent bundles:
\begin{equation}\label{eq:def_derivee_morph}
T^*M \from[f_d] M\times_N T^*N \to[f_\pi] T^*N.
\end{equation}
Let $N\subset M$ be a submanifold and $A\subset M$ any subset. We denote by
$C_N(A) \subset T_NM$ the cone of $A$ along $N$.  If $M$ is a vector space,
$x_0\in N$ and $q\cl M \to T_{N,x_0}M$ denotes the natural quotient map, then
\begin{equation}\label{eq:form_cone1}
C_N(A) \cap T_{N,x_0}M 
= \bigcap_{U} \, \ol{\bigcup_{x\in A\cap (U\setminus \{x_0\})} q([x_0,x))},
\end{equation}
where $U$ runs over the neighborhoods of $x_0$ and $[x_0,x)$ denotes the half
line starting at $x_0$ and containing $x$.

If $A,B$ are two subsets of $M$, we set $C(A,B) = C_{\Delta_M}(A\times B)$.
Identifying $T_{\Delta_M}(M\times M)$ with $TM$ through the first projection,
we consider $C(A,B)$ as a subset of $TM$. If $M$ is a vector space and
$x_0\in M$, we have
\begin{equation}\label{eq:form_cone2}
C(A,B) \cap T_{x_0}M 
= \bigcap_{U} \, \ol{\bigcup_{x\in A \cap U,\; y\in B \cap U,\; x\not=y} q([y,x))},
\end{equation}
where $U$ runs over the neighborhoods of $x_0$.

The cotangent bundle $T^*M$ carries an exact symplectic structure.  We denote
the Liouville $1$-form by $\alpha_M$. It is given in local coordinates
$(x;\xi)$ by $\alpha_M = \sum_i \xi_i dx_i$.  We denote by
$H\cl T^*T^*M \isoto TT^*M$ the Hamiltonian isomorphism.  We have
$H(dx_i) = -\partial/\partial \xi_i$ and
$H(d\xi_i) = \partial/\partial x_i$. Following~\cite{KS90} we usually identify
$T^*T^*M$ and $TT^*M$ by $-H$.

\subsubsection*{Microsupport}
We consider a commutative unital ring $\cor$ of finite global dimension (we will
use $\cor=\Z$ or $\cor = \Z/2\Z$).  We denote by $\Mod(\cor)$ the category of
$\cor$-modules and by $\Mod(\cor_M)$ the category of sheaves of $\cor$-modules
on $M$.  We denote by $\Der(\cor_M)$ (resp.\ $\Derb(\cor_M)$) the derived
category (resp.\ bounded derived category) of $\Mod(\cor_M)$.

We recall the following notations, for the inclusion $j \cl Z \to M$ of a
locally closed subset of $M$ and for $F\in \Derb(\cor_M)$,
$$
F_Z  = \eim{j}\opb{j}F , \qquad \rsect_Z(F) = \roim{j} \epb{j}F.
$$
When $F= \cor_M$ is the constant sheaf we set for short $\cor_Z = (\cor_M)_Z =
\eim{j}(\cor_Z)$. We have
$$
F_Z  \simeq F \tens \cor_Z, \qquad \rsect_Z(F) \simeq \rhom(\cor_Z,F) .
$$

We recall the definition of the microsupport (or singular support) $\SSi(F)$ of
a sheaf $F$, introduced by M.~Kashiwara and P.~Schapira in~\cite{KS82}
and~\cite{KS85}.

\begin{definition}{\rm (see~\cite[Def.~5.1.2]{KS90})}
Let $F\in \Derb(\cor_M)$. We define $\SSi(F) \subset T^*M$ as the closure of the
set of points $(x_0;\xi_0) \in T^*M$ such that there exists a real
$C^1$-function $\phi$ on $M$ satisfying $d\phi(x_0) = \xi_0$ and
$(\rsect_{\{x;\, \phi(x)\geq \phi(x_0)\}} (F))_{x_0} \not\simeq 0$.

We set $\dot\SSi(F) = \SSi(F) \cap \dT^*M$.
\end{definition}
In other words, $p\notin\SSi(F)$ if the sheaf $F$ has no cohomology 
supported by ``half-spaces'' whose conormals are contained in a 
neighborhood of $p$. The following properties are easy consequences of
the definition: \\
(a) the microsupport is closed and $\R^+$-conic, that is,
invariant by the action of  $(\R^+,\times)$ on $T^*M$, \\
(b) $\SSi(F)\cap T^*_MM =\pi_M(\SSi(F))=\supp(F)$, \\
(c) the microsupport satisfies the triangular inequality:
if $F_1\to F_2\to F_3\to[+1]$ is a
distinguished triangle in  $\Derb(\cor_M)$, then 
$\SSi(F_i)\subset\SSi(F_j)\cup\SSi(F_k)$ for all $i,j,k\in\{1,2,3\}$
with $j\not=k$.

\begin{example}\label{ex:microsupport}
(i) If $F$ is a non-zero local system on a connected manifold $M$,
then $\SSi(F)=T^*_MM$, the zero-section. Conversely,
if $\SSi(F)\subset T^*_MM$, then the cohomology sheaves $H^i(F)$
are local systems, for all $i\in \Z$.

\smallskip\noindent
(ii) If $N$ is a smooth closed submanifold of $M$ and $F=\cor_N$, then 
$\SSi(F)=T^*_NM$.

\smallskip\noindent
(iii) Let $U \subset M$ be an open subset with smooth boundary. Then 
\begin{align*}
\SSi(\cor_U) & = (U\times_MT^*_MM) \cup N_U^{*e},\\
\SSi(\cor_{\ol U}) & = (U\times_MT^*_MM) \cup N_U^* .
\end{align*}
(iv) Let $\lambda$ be a closed convex cone with vertex at $0$ in $E=\R^n$.  Then
$\SSi(\cor_\lambda) \cap T^*_0\R^n = \lambda^\circ$, where $\lambda^\circ$ is
the polar cone of $\lambda$ and is defined by $\lambda^\circ = \{\xi\in E^*$; $\langle
v,\xi \rangle \geq 0$ for all $v\in E\}$.
\end{example}

\begin{notation}\label{not:micro_categories}
For a subset $S$ of $\dT^*M$ we denote by $\Derb_S(\cor_M)$ the full
triangulated subcategory of $\Derb(\cor_M)$ of the $F$ such that $\dot\SSi(F)
\subset S$.  We denote by $\Derb(\cor_M;S)$ the quotient of $\Derb(\cor_M)$ by
$\Derb_{\dT^*M\setminus S}(\cor_M)$.  If $p\in \dT^*M$, we write
$\Derb(\cor_M;p)$ for $\Derb(\cor_M;\{p\})$.  We denote by $\Derb_{(S)}(\cor_M)$
the full triangulated subcategory of $\Derb(\cor_M)$ of the $F$ for which there
exists a neighborhood $\Omega$ of $S$ in $T^*M$ such that $\SSi(F) \cap
\Omega\subset S$.
\end{notation}

Our notations differ slightly from those of~\cite{KS90} where $\Derb_Z(\cor_M)$,
for a given $Z \subset T^*M$, consists of the $F$ such that $\SSi(F) \subset Z$.
Hence our $\Derb_S(\cor_M)$ is the same as $\Derb_{S \cup T^*_MM}(\cor_M)$
in~\cite{KS90}.

\subsubsection*{Functorial operations}
Let $M$ and $N$ be two real manifolds. We denote by $q_i$ ($i=1,2$) the $i$-th
projection defined on $M\times N$ and by $p_i$ ($i=1,2$) the $i$-th projection
defined on $T^*(M\times N)\simeq T^*M\times T^*N$.

\begin{definition}
Let $f\cl M\to N$ be a morphism of manifolds and let $\Lambda\subset T^*N$
be a closed $\R^+$-conic subset. We say that $f$ is non-characteristic for
$\Lambda$ if $\opb{f_\pi}(\Lambda)\cap T^*_MN\subset M\times_NT^*_NN$.
\end{definition}
A morphism $f\cl M\to N$ is non-characteristic for a closed $\R^+$-conic subset
$\Lambda$ of $T^*N$ if and only if $\opb{f_d}(T^*_MM) \cap \opb{f_\pi}(\Lambda)
\subset M\times_NT^*_NN$.  It is equivalent to ask that $f_d\cl M\times_NT^*N\to
T^*M$ is proper on $\opb{f_\pi}(\Lambda)$ and in this case
$f_d\opb{f_\pi}(\Lambda)$ is closed and $\R^+$-conic in $T^*M$.

We denote by $\omega_M$ the dualizing complex on $M$.  Recall that $\omega_M$
is isomorphic to the orientation sheaf shifted by the dimension. We also use
the notation $\omega_{M/N}$ for the relative dualizing complex
$\omega_M\tens\opb{f}\omega_N^{\tens-1}$.  We have the duality functors
\begin{equation}\label{eq:dualfct}
\DD_M(\scbul)=\rhom(\scbul,\omega_M), \qquad
\DD'_M(\scbul)=\rhom(\scbul,\cor_M).
\end{equation}
For two manifolds $M,N$ and $F\in\Derb(\cor_M)$, $G\in\Derb(\cor_N)$ we define
$F \letens G \in \Derb(\cor_{M\times N})$ by
$$
F \letens G = \opb{q_1}F \ltens \opb{q_2}G .
$$

\begin{theorem}
\label{th:opboim}{\rm(See \cite[\S 5.4]{KS90}.)}
Let $f\cl M\to N$ be a morphism of manifolds, $F\in\Derb(\cor_M)$ and
$G\in\Derb(\cor_N)$.  Let $q_1\cl M\times N \to M$ and $q_2\cl M\times N \to N$
be the projections.
\begin{itemize}
\item [(i)] We have
\begin{gather*}
\SSi(F \letens G) \subset \SSi(F)\times\SSi(G),  \\
\SSi(\rhom(\opb{q_1}F,\opb{q_2}G))  \subset \SSi(F)^a\times\SSi(G).
\end{gather*}
\item [(ii)] We assume that $f$ is proper on $\supp(F)$. Then
  $\SSi(\reim{f}F)\subset f_\pi\opb{f_d}\SSi(F)$, with equality when $f$ is a
  closed embedding.
\item [(iii)] We assume that $f$ is non-characteristic with respect to
  $\SSi(G)$. Then the natural morphism $\opb{f}G \tens
  \omega_{M/N}\to\epb{f}(G)$ is an isomorphism. Moreover $\SSi(\opb{f}G) \cup
  \SSi(\epb{f}G) \subset f_d\opb{f_\pi}\SSi(G)$.  If $f$ is smooth, this
  inclusion is an equality.
\item [(iv)] We assume that $M=N\times I$, where $I$ is a contractible manifold,
  and that $f$ is the projection. Then $\SSi(F)\subset T^*_II\times T^*N$ if and
  only if $\opb{f} \roim{f} (F) \isoto F$.
\end{itemize}
\end{theorem}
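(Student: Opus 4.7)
The plan is to tackle the four parts separately, using in each case the definition of $\SSi$ via vanishing of local sections $\rsect_{\phi\geq 0}$ tested against functions with prescribed differential. I would proceed in the order (ii), (iii), (i), (iv), since (ii) is the most transparent and serves as a template for the others.

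For (ii), pick $q=(y_0;\eta_0)\notin f_\pi\opb{f_d}\SSi(F)$ and a test function $\psi$ near $y_0$ with $d\psi(y_0)$ in a chosen neighborhood of $\eta_0$. Set $\phi=\psi\circ f$. The natural identity $\rsect_{\psi\geq 0}\reim{f}F \simeq \reim{f}\rsect_{\phi\geq 0}F$, combined with proper base change along the compact fiber, gives
$$
(\rsect_{\psi\geq 0}\reim{f}F)_{y_0} \simeq \rsect(\opb{f}(y_0);(\rsect_{\phi\geq 0}F)|_{\opb{f}(y_0)}).
$$
For each $x\in\opb{f}(y_0)\cap\supp(F)$ the differential $d\phi(x)={}^t(df_x)(d\psi(y_0))$ lies outside $\SSi(F)$ by hypothesis on $q$ and by shrinking the neighborhood; so the right-hand side vanishes stalkwise on the fiber, proving the claim.

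For (iii), factor $f$ locally as the composition of the graph embedding $\Gamma_f\cl M\hookrightarrow M\times N$ with the projection $q_2\cl M\times N\to N$. The submersion case is easy: any test function near $q_2(x,y)$ pulls back to $N$, reducing directly to the microsupport hypothesis on $G$. For a closed embedding $i\cl M\hookrightarrow N$, the non-characteristic condition is precisely what allows one to lift a covector $d\phi(x_0)\in T^*_{x_0}M$ to some $\xi\in T^*_{i(x_0)}N\setminus \SSi(G)$ with ${}^t(di_{x_0})\xi=d\phi(x_0)$, and then by the implicit function theorem to extend $\phi$ to a function $\tif$ on a neighborhood of $i(x_0)$ in $N$ whose differential remains outside $\SSi(G)$ throughout. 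The isomorphism $\opb{f}G\tens\omega_{M/N}\isoto\epb{f}G$ then follows from the explicit description of $\epb{f}$ under the non-characteristic hypothesis, where $\epb{f}$ reduces to $\opb{f}$ twisted by the relative dualizing sheaf.

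Part (i) reduces to (iii) via $F\letens G\simeq \opb{q_1}F\ltens \opb{q_2}G$: the projections are submersions, hence automatically non-characteristic, giving $\SSi(\opb{q_1}F)\subset \SSi(F)\times T^*_NN$ and symmetrically for $q_2$. A direct cone-estimate for the microsupport of a derived tensor product with transverse microsupports then yields $\SSi(F\letens G)\subset\SSi(F)\times\SSi(G)$. The $\rhom$ statement follows from the isomorphism $\rhom(\opb{q_1}F,\opb{q_2}G)\simeq \opb{q_2}G\ltens \opb{q_1}\DD'_MF$ together with $\SSi(\DD'_MF)=\SSi(F)^a$. For (iv), the direct direction uses that for a submersion, local sections of $F\simeq\opb{f}F'$ have microsupport pulled back from $N$, which lies in $M\times_NT^*N$; the converse uses propagation of sections along vector fields tangent to the fibers of $f$, since the absence of microsupport in fiber-transverse directions is exactly the obstruction to local constancy of $H^j(F)$ on fibers. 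The main obstacle is (iii) in the closed-embedding case: the construction of $\tif$ with its differential uniformly outside $\SSi(G)$ on a neighborhood, not merely at a single point, is a microlocal propagation lemma that requires the full strength of the non-characteristic hypothesis as a transversality condition on the $\SSi$-cone, and it is the technical heart of the \cite{KS90} calculus on which all four parts depend.
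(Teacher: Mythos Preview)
The paper does not prove this theorem at all: it is stated as a quotation from \cite[\S 5.4]{KS90} and used as a black box throughout. So there is no ``paper's own proof'' to compare against; your sketch is being measured against the original Kashiwara--Schapira argument.

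That said, your outline for (ii), (iii), (iv) is in the spirit of the standard proofs. There is, however, a genuine gap in your treatment of the $\rhom$ estimate in (i). You write
\[
\rhom(\opb{q_1}F,\opb{q_2}G)\simeq \opb{q_2}G\ltens \opb{q_1}\DD'_MF
\]
and then invoke $\SSi(\DD'_MF)=\SSi(F)^a$. Neither of these holds for arbitrary $F\in\Derb(\cor_M)$. The first isomorphism requires $F$ to be cohomologically constructible (this is exactly the content of Corollary~\ref{cor:opboim}(ii) in the paper, stated \emph{with} that hypothesis), and the identity $\SSi(\DD'_MF)=\SSi(F)^a$ likewise fails without finiteness assumptions. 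The bound $\SSi(\rhom(\opb{q_1}F,\opb{q_2}G))\subset\SSi(F)^a\times\SSi(G)$ in \cite{KS90} is proved directly, not by passing through duality: one tests against functions of the form $\phi_1(x)+\phi_2(y)$ and uses that $\rsect_{\phi_1+\phi_2\geq 0}$ of an external $\rhom$ can be analyzed via the adjunction between $\reim{}$ and $\epb{}$, without ever dualizing $F$. Your route would only recover the special case already isolated in Corollary~\ref{cor:opboim}.
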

\noindent
For the definition of \emph{cohomologically constructible} we refer
to~\cite[\S 3.4]{KS90}.

\begin{corollary}\label{cor:opboim}
Let $F,G\in\Derb(\cor_M)$. 
\begin{itemize}
\item [(i)] We assume that $\SSi(F)\cap\SSi(G)^a\subset T^*_MM$. Then
  $\SSi(F\ltens G)\subset \SSi(F)+\SSi(G)$.
\item [(ii)] We assume that $\SSi(F)\cap\SSi(G)\subset T^*_MM$. Then \\
  $\SSi(\rhom(F,G))\subset \SSi(F)^a+\SSi(G)$.  Moreover, assuming that
  $F$ is cohomologically constructible, the natural morphism $\DD'F \ltens
  G \to \rhom(F,G)$ is an isomorphism.
\end{itemize}
\end{corollary}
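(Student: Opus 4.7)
The plan is to deduce both statements from Theorem~\ref{th:opboim} by pulling back the relevant external operation along the diagonal embedding $\delta\cl M \hookrightarrow M \times M$. Note that $\delta_d((x,x);\xi_1,\xi_2) = (x;\xi_1+\xi_2)$ and $T^*_M(M \times M) = \{((x,x);\xi,-\xi)\}$, so non-characteristicity of $\delta$ with respect to a closed conic subset $A \times B \subset T^*M \times T^*M$ amounts to the requirement that $(x;\xi) \in A$ and $(x;-\xi) \in B$ together force $\xi = 0$.

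For (i), I would use the canonical isomorphism $F \ltens G \simeq \opb{\delta}(F \letens G)$ together with the bound $\SSi(F \letens G) \subset \SSi(F) \times \SSi(G)$ from Theorem~\ref{th:opboim}(i). The hypothesis $\SSi(F) \cap \SSi(G)^a \subset T^*_M M$ is precisely the non-characteristicity of $\delta$ for $\SSi(F) \times \SSi(G)$, so Theorem~\ref{th:opboim}(iii) applies and gives $\SSi(F \ltens G) \subset \delta_d \opb{\delta_\pi}(\SSi(F) \times \SSi(G)) = \SSi(F) + \SSi(G)$. For the microsupport bound in (ii), I would apply Theorem~\ref{th:opboim}(i) to obtain $\SSi(\rhom(\opb{q_1}F, \opb{q_2}G)) \subset \SSi(F)^a \times \SSi(G)$. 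After the antipodal swap on the first factor, the non-characteristicity condition for $\delta$ now reads $\SSi(F) \cap \SSi(G) \subset T^*_M M$, exactly the new hypothesis. Combining this with the isomorphism $\opb{\delta} H \otimes \omega_{M/(M \times M)} \simeq \epb{\delta} H$ from Theorem~\ref{th:opboim}(iii), the identity $\omega_{M/(M \times M)} \simeq \omega_M^{-1}$ and the standard computation $\epb{\delta}\opb{q_2}G \simeq G \otimes \omega_M^{-1}$, one identifies $\opb{\delta}\rhom(\opb{q_1}F, \opb{q_2}G) \simeq \rhom(F, G)$; the bound $\SSi(\rhom(F,G)) \subset \SSi(F)^a + \SSi(G)$ then follows as in (i).

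For the last assertion, the natural morphism $\DD'F \ltens G \to \rhom(F,G)$ is always defined (tensor the evaluation $\DD'F \ltens F \to \cor_M$ with $G$). To show it is an isomorphism under cohomological constructibility of $F$ and the transversality hypothesis, I would first establish the external version $\DD'F \letens G \isoto \rhom(\opb{q_1}F, \opb{q_2}G)$ on $M \times M$, which is a purely local statement: cohomological constructibility allows a dévissage of $F$, reducing via the biduality $\DD'\DD'F \simeq F$ to the explicit case of constant sheaves on locally closed subanalytic subsets, where the comparison is immediate. The desired internal isomorphism then follows by applying $\opb{\delta}$ and invoking the identifications used for the microsupport bound in (ii). The main obstacle I expect is precisely this local comparison: the microsupport estimates are a transparent diagonal pullback argument, whereas producing the genuine isomorphism requires genuinely exploiting the finiteness supplied by cohomological constructibility rather than merely the microsupport geometry.
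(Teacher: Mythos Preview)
Your diagonal-pullback derivation of the microsupport bounds in (i) and (ii) is correct and is exactly the intended argument; the paper states this corollary without proof as an immediate consequence of Theorem~\ref{th:opboim}, and the standard way to unpack it is precisely what you wrote. The identification $\opb{\delta}\rhom(\opb{q_1}F,\opb{q_2}G)\simeq\rhom(F,G)$ via $\epb{\delta}\rhom(A,B)\simeq\rhom(\opb{\delta}A,\epb{\delta}B)$ and the twist by $\omega_{M/(M\times M)}$ is fine.

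There is, however, a genuine slip in your treatment of the last isomorphism. Cohomological constructibility in the sense of \cite[\S 3.4]{KS90} is a purely sheaf-theoretic finiteness condition (perfection of stalks and costalks, together with commutation of certain limits) and does \emph{not} presuppose any subanalytic or stratified structure on $F$. Hence a ``dévissage to constant sheaves on locally closed subanalytic subsets'' is simply not available in this generality, and the biduality you invoke is for $\DD_M$ rather than $\DD'_M$ in any case. The correct route is the one in \cite[Prop.~3.4.4]{KS90}: one checks directly that the external morphism $\DD'F\letens G\to\rhom(\opb{q_1}F,\opb{q_2}G)$ is an isomorphism using the defining properties of cohomological constructibility (essentially, perfection of $F_x$ makes $(\DD'F)_x\ltens(\cdot)\to\RHom(F_x,\cdot)$ an isomorphism, and the limit conditions let you identify the relevant stalks). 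Once the external isomorphism is established, your diagonal pullback finishes the job exactly as you describe.
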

The next result follows immediately from Theorem~\ref{th:opboim}~(ii) and
Example~\ref{ex:microsupport}~(i). It is a particular case of the microlocal
Morse lemma (see~\cite[Cor.~5.4.19]{KS90}), the classical theory corresponding
to the constant sheaf $F=\cor_M$.
\begin{corollary}\label{cor:Morse}
Let $F\in\Derb(\cor_M)$, let $\phi\cl M\to\R$ be a function of class $C^1$ and
assume that $\phi$ is proper on $\supp(F)$.  Let $a<b$ in $\R$ and assume that
$d\phi(x)\notin\SSi(F)$ for $a\leq \phi(x)<b$. Then the natural morphisms
$\rsect(\opb{\phi}(\mo]-\infty,b[);F) \to \rsect(\opb{\phi}(\mo]-\infty,a[);F)$
and
$\rsect_{\opb{\phi}([b,+\infty[)}(M;F) \to \rsect_{\opb{\phi}([a,+\infty[)}(M;F)$
are isomorphisms.
\end{corollary}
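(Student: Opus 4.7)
The plan is to reduce to the one-dimensional situation by direct image, then combine Theorem~\ref{th:opboim}~(ii) with the one-dimensional microlocal Morse lemma, whose ultimate ingredient is Example~\ref{ex:microsupport}~(i). First I would set $G \eqdot \reim{\phi}F \in \Derb(\cor_{\R})$. Since $\phi$ is proper on $\supp F$, we have $\reim{\phi}F \simeq \roim{\phi}F$, and the standard adjunction isomorphisms yield
\[
\rsect(\opb{\phi}(V);F) \simeq \rsect(V;G) \quad \text{and} \quad \rsect_{\opb{\phi}(Z)}(M;F) \simeq \rsect_Z(\R;G)
\]
for any open $V \subset \R$ and any closed $Z \subset \R$. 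Hence it suffices to prove the analogous isomorphisms for $G$ on $\R$.

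By Theorem~\ref{th:opboim}~(ii), $\SSi(G) \subset \phi_\pi \opb{\phi_d}\SSi(F)$: a point $(t_0;\tau_0) \in T^*\R$ belongs to $\SSi(G)$ only if there exists $x_0 \in M$ with $\phi(x_0)=t_0$ and $(x_0;\tau_0\,d\phi(x_0)) \in \SSi(F)$. Combining this with the $\R^+$-conicality of $\SSi(F)$, a point with $\tau_0 > 0$ and $t_0 \in [a,b)$ would force $(x_0;d\phi(x_0)) \in \SSi(F)$, contradicting the hypothesis. Hence
\[
\SSi(G) \cap \bigl\{(t;\tau) \in T^*\R : a \leq t < b,\ \tau > 0\bigr\} = \emptyset.
\]

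The remaining one-dimensional claim -- that any $G \in \Derb(\cor_\R)$ with this microsupport property satisfies $\rsect(]-\infty,b[;G) \isoto \rsect(]-\infty,a[;G)$ -- is the microlocal Morse lemma in dimension one (\cite[Cor.~5.4.19]{KS90}). Morally: on any open set where $\SSi(G)$ lies in the zero section, Example~\ref{ex:microsupport}~(i) gives locally constant cohomology, and the absence of positive conormals over $[a,b)$ is exactly the condition needed to propagate sections and force $c \mapsto \rsect(]-\infty,c[;G)$ to be constant on $[a,b]$ via a non-characteristic deformation argument. The statement for sections with closed support then follows formally by the five lemma applied to the distinguished triangles
\[
\rsect_{[c,+\infty[}(\R;G) \to \rsect(\R;G) \to \rsect(]-\infty,c[;G) \xto{+1}
\]
for $c=a,b$: the middle term is independent of $c$, and the right-hand iso (proved above) forces the iso on the left. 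The main obstacle is thus concentrated in the 1D propagation step, which is somewhat stronger than what Example~\ref{ex:microsupport}~(i) alone delivers; bridging that gap is precisely the content of the general microlocal Morse lemma.
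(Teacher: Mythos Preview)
Your proposal is correct and follows exactly the approach the paper indicates: push forward to $\R$ via $\reim{\phi}$ (using properness), bound $\SSi(\reim{\phi}F)$ by Theorem~\ref{th:opboim}~(ii), and then invoke the one-dimensional statement together with Example~\ref{ex:microsupport}~(i). The paper does not spell out the 1D propagation step either --- it simply cites \cite[Cor.~5.4.19]{KS90} --- so your honest remark that this step is ``somewhat stronger than what Example~\ref{ex:microsupport}~(i) alone delivers'' is accurate but not a defect relative to the paper; you have supplied more detail than the paper does, including the five-lemma passage from open to closed supports.
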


Here we only explained the proper and non-cha\-rac\-teristic cases but we will
also need more general results.  We recall some notations
of~\cite[Def.~6.2.3]{KS90}. Let $i\cl M \to N$ be an embedding.  We let $\tau
\cl T^*_MN \to M$ be the projection and $\tau_d \cl T^*_MN \times_M T^*M \to
T^*(T^*_MN)$ the map defined in~\eqref{eq:def_derivee_morph}.  Since $T^*_MN$ is
a Lagrangian submanifold of $T^*N$, the Hamiltonian isomorphism induces $h \cl
T^*(T^*_MN) \isoto T_{T^*_MN}T^*N$.  We let $j \cl T^*M = M \times_M T^*M \to
T^*_MN \times_M T^*M$ be the inclusion of the zero section (of a vector bundle
over $T^*M$) and we define the inclusion $i' \cl T^*M \to T_{T^*_MN}T^*N$ as the
composition $i' = h \circ \tau_d \circ j$.  For closed conic subsets $A,B
\subset T^*N$ we set
\begin{align*}
i^\sharp(A) &= (i')^{-1}(C_{T^*_MN}(A)) ,  \\
A \hplus B &= \delta_N^\sharp(A\times B) .
\end{align*}
In local coordinates $A \hplus B$ is the set of $(x;\xi)$ such that there exist
two sequences $(x_n;\xi_n)$ in $A$ and $(y_n;\eta_n)$ in $B$ such that $x_n, y_n
\to x$, $\xi_n+\eta_n \to \xi$ and $|x_n-y_n| |\xi_n| \to 0$ when $n\to \infty$.

\begin{theorem}{\rm (See~\cite[Cor.~6.4.4, 6.4.5]{KS90}.)}
\label{thm:SSrhom}
Let $i\cl M \to N$ be an embedding and $F,G \in\Derb(\cor_N)$. Then
\begin{gather*}
\SSi(\opb{i}F) \subset i^\sharp(\SSi(F)), \\
\SSi(\rhom(F,G))\subset \SSi(F)^a \hplus \SSi(G) .
\end{gather*} 
\end{theorem}

\subsubsection*{Composition and convolution}
We will use two similar operations on sheaves, the composition $\circ$ and the
convolution $\star$, defined as follows. Let $M_i$, $i=1,2,3$, be manifolds and
let $q_{ij} \cl M_1\times M_2 \times M_3 \to M_i\times M_j$ be the projections,
for $1\leq i<j \leq 3$.  We define a functor
\begin{equation}\label{eq:def_comp_gene}
  \begin{split}
\Derb(\cor_{M_1\times M_2}) \times \Derb(\cor_{M_2\times M_3})
&\to \Derb(\cor_{M_1\times M_3})  \\
(K,L) &\mapsto K \circ L \eqdot
\reim{q_{13}}(\opb{q_{12}} K \ltens \opb{q_{23}} L) .
  \end{split}
\end{equation}
Let $M$ be a manifold and let $V$ be a vector space. We let $s\cl M \times V^2
\to M \times V$ be the sum, $(x,v_1,v_2) \mapsto (x,v_1+v_2)$.  In this
situation we define two functors
\begin{equation}\label{eq:def_conv_gene}
  \begin{split}
\Derb(\cor_{V}) \times \Derb(\cor_{M \times V})   &\to \Derb(\cor_{M\times V}), \\
(K,F) &\mapsto  K \star F \eqdot  \reim{s}(F \letens K) , \\
(K,F) &\mapsto  K \star' F \eqdot  \roim{s}(F \letens K) .
  \end{split}
\end{equation}

\subsubsection*{Cut-off}
We recall two ``cut-off'' results of~\cite{KS90}.  Let $M$ be a manifold,
$E=\R^d$ a vector space and $\gamma \subset E$ a closed convex cone.  Since
$0\in \gamma$ we have a morphism $\cor_\gamma \to \cor_{\{0\}}$, which induces
\begin{equation}\label{eq:morph_convgamma_id}
\cor_\gamma \star' F \to \cor_{\{0\}} \star' F \simeq F ,
\end{equation}
for any $F \in \Derb(\cor_{M\times E})$.

\begin{proposition}[``Microlocal cut-off lemma'' Prop.~5.2.3 of~\cite{KS90}]
\label{prop:cutoff}
Let $F \in \Derb(\cor_{M\times E})$.
Let $F'$ be the cone of the morphism~\eqref{eq:morph_convgamma_id}.
Then we have $\SSi(F') \cap (T^*M \times (E \times \Int(\gamma^{\circ}))) =
\emptyset$ and~\eqref{eq:morph_convgamma_id} is an isomorphism if and only if
$\SSi(F) \subset T^*M \times (E \times \gamma^{\circ})$.
\end{proposition}

In the situation of the proposition let us set $\Omega = T^*M \times (E \times
\Int(\gamma^{\circ}))$.  By Theorem~\ref{th:opboim} and
Example~\ref{ex:microsupport}~(iv), we have $\SSi(\cor_\gamma \star' F) \subset
\ol{\Omega}$.  Since the first part of the proposition implies
$\SSi(\cor_\gamma \star' F) \cap \Omega = \SSi(F) \cap \Omega$ we have
$\SSi(\cor_\gamma \star' F) = (\SSi(F) \cap \Omega ) \cup W$, where $W$ is a
subset contained in $\partial \Omega$.  The next proposition gives a local
version of this result with some control on $W$.

\begin{proposition}[Prop.~6.1.4 of \cite{KS90}]
\label{prop:cutoff-precis}
Let $N$ be a manifold and $x_0 \in N$. Let $C_0 \subset T^*_{x_0}N$ be an open
convex cone such that $\ol{C_0}$ is proper and let $\Omega \subset T^*N$ be a
conic neighborhood of $\ol{C_0} \setminus \{x_0\}$.  Let $S \subset \Omega$ be a
closed conic subset and let $W_0 \subset T^*_{x_0}N$ be a conic neighborhood of
$(\ol{S}\cap T^*_{x_0}N)\setminus \{x_0\}$.
Then there exist a neighborhood $U$ of $x_0$, $L,L' \in \Derb(\cor_{U\times
  N})$ and a distinguished triangle $L \to \cor_{\Delta_N} \to L' \to[+1]$ in
$\Derb(\cor_{U\times N})$ such that
\begin{itemize}
\item [(i)] for any $F\in \Derb(\cor_N)$ we have
  $\SSi(L' \circ F) \cap C_0 = \emptyset$,
\item [(ii)] for any $F\in \Derb(\cor_N)$ such that $\SSi(F) \cap \Omega \subset
  S$ we have
  \begin{align}
\label{eq:cutoff-precis0}
 \dot\SSi(L \circ F) \cap T^*_{x_0}N & \subset W_0 , \\
\label{eq:cutoff-precis1}
  \dot\SSi(L' \circ F) \cap T^*_{x_0}N &\subset 
((\SSi(F)\cap T^*_{x_0}N) \cup W_0 ) \setminus C_0 ,
  \end{align}
\item [(iii)] for any $F\in \Derb(\cor_N)$ such that $\SSi(F) \cap \Omega =
  \emptyset$ we have $L \circ F \simeq 0$.
\end{itemize}
\end{proposition}

This proposition is Proposition~6.1.4 of~\cite{KS90} although the statement
given in~\cite{KS90} only gives~(i) and~\eqref{eq:cutoff-precis0} for some $F'$
instead of $L\circ F$. However we can see in the proof that $F'$ is actually
given by composition with some $L$ as in the above proposition.
Then~\eqref{eq:cutoff-precis1} follows from~(i)
and~\eqref{eq:cutoff-precis0}. Finally~(iii) can be checked directly.

Let $E=\R^n$ be a vector space endowed with a norm $\|\cdot\|$.  We consider
coordinates $(x,t;\xi,\tau)$ on $T^*(E\times\R)$. For a given $a>0$ we let
$\gamma_a \subset E\times\R$ be the closed cone $\gamma_a = \{(x,t);\; t\geq a
\|x\| \}$.  Its polar cone is $\gamma_a^\circ =\{(\xi,\tau);\; \tau \geq
\|\xi\|/a\}$.  Let $t_0 \in \R$ be given. We set $U = E \times
\mo]t_0,+\infty[$. We choose $y \in U$ and we set $C = \Int(y-\gamma_a)$.

\begin{lemma}\label{lem:easycutoff}
Let $F \in \Derb(\cor_E)$ be such that $\SSi(F) \cap ((C\cap \ol{U}) \times
(\gamma_a^\circ\setminus \{0\})) = \emptyset$.  Then
$(\cor_{\gamma_a} \star (F_U)) |_C \simeq 0$.
\end{lemma}
\begin{proof}
(i) We have $\SSi(\cor_U) \subset E \times \gamma_a^\circ$. Hence we can apply
Corollary~\ref{cor:opboim} to bound $\SSi(F \tens \cor_U)$ and we obtain
$\SSi(F_U) \cap ((C\cap \ol{U}) \times (\gamma_a^\circ\setminus \{0\})) =
\emptyset$.  Since $F_U|_{E\setminus U} \simeq 0$ we have $\SSi(F_U) \cap (C
\times (\gamma_a^\circ\setminus \{0\})) = \emptyset$.

\medskip\noindent
(ii) Let us set $G = \cor_{\gamma_a} \star (F_U)$.  The map $s$
of~\eqref{eq:def_conv_gene} is proper on $\ol{U} \times \gamma$ and we can use
Theorem~\ref{th:opboim} to bound $\SSi(G)$.  Since $\opb{s}(C) \cap E \times
\gamma \subset C \times \gamma$ it is enough to know $\SSi(F_U|_C)$ to bound
$\SSi(G|_C)$. By~(i) and Example~\ref{ex:microsupport}~(iv) we obtain
$\SSi(G|_C) \subset T^*_CC$, that is, $G$ is locally constant on $C$. We see
easily that $\supp(G) \subset \ol{U}$ does not contain $C$. It follows that
$G|_C \simeq 0$.
\end{proof}

\subsubsection*{Microlocalization}
Let $N$ be a submanifold of $M$.  Sato's microlocalization
is a functor $\mu_N\cl \Derb(\cor_M) \to \Derb(\cor_{T^*_NM})$.
We refer to~\cite{KS90} for the definition and the main properties.
When $N$ is closed in $M$ we write $\mu_N$ for $\reim{j}\mu_N$, where $j$ is
the embedding of $T^*_NM$ in $T^*M$.
We let $i_M\cl M \to T^*M$ be the inclusion of the zero section.
Then, for any $F\in \Derb(\cor_M)$ we have
\begin{align}
\label{eq:proj_microltion1}
&\roim{\pi_M} \mu_N(F) \simeq \opb{i_M} \mu_N(F) \simeq  \rsect_N(F),  \\
\label{eq:proj_microltion2}
&\reim{\pi_M} \mu_N(F) \simeq \epb{i_M} \mu_N(F) \simeq  F \tens \omega_{N|M} ,
\end{align}
and we deduce Sato's distinguished triangle (triangle~(4.3.1) in~\cite{KS90}):
\begin{equation}\label{eq:SatoDT}
F \tens \omega_{N|M} \to \rsect_N(F)
\to \roim{\dot\pi_M{}} (\mu_N(F)|_{\dT^*M}) \to[+1].
\end{equation}

We recall the definition of the bifunctor $\mu hom$, which is a variant of the
microlocalization introduced in~\cite{KS90}.  Let 
$\Delta_M$ the diagonal of $M\times M$. Let $q_1,q_2\cl M\times M \to M$ be the
projections.  We identify $T^*_{\Delta_M}(M\times M)$ with $T^*M$ through the
first projection.  For $F,G\in \Derb(\cor_M)$ we have 
\begin{equation}\label{eq:def_muhom}
\mu hom (F,G) = \mu_{\Delta_M}(\rhom(\opb{q_2}F,\epb{q_1}G))
\quad \in \quad \Derb(\cor_{T^*M}). 
\end{equation}
For a submanifold $N$ of $M$ we have $\mu_N(G) \simeq \mu hom(\cor_N,G)$, for
any $G\in \Derb(\cor_M)$. The formulas~\eqref{eq:proj_microltion1}
and~\eqref{eq:proj_microltion2} give
\begin{align}
\label{eq:proj_muhom_oim}
\roim{\pi_M} \mu hom (F,G) &\simeq \rhom (F,G)  , \\
\label{eq:proj_muhom}
\reim{\pi_M} \mu hom (F,G) &\simeq 
\opb{\delta_M} \rhom(\opb{q_2}F,\opb{q_1}G) ,
\end{align}
where $\delta_M\cl M \to M\times M$ is the diagonal embedding.
If $F$ is cohomologically constructible, then
$\opb{\delta_M} \rhom(\opb{q_2}F,\opb{q_1}G) \simeq \DD'(F) \ltens G$
and Sato's distinguished triangle gives
\begin{equation}\label{eq:SatoDTmuhom}
\DD'(F) \ltens G  \to \rhom (F,G) 
\to \roim{\dot\pi_M{}} (\mu hom (F,G) |_{\dT^*M}) \to[+1].
\end{equation}

\begin{proposition}{\rm(Cor.~6.4.3 of~\cite{KS90}.)}
\label{prop:SSmuhom}
Let $F,G\in \Derb(\cor_M)$. Then
\begin{gather}
\label{eq:suppmuhom}
\supp \mu hom (F,G) \subset \SSi(F)\cap \SSi(G), \\
\label{eq:SSmuhom}
\SSi(\mu hom(F,G) ) \subset -H^{-1}(C(\SSi(G) , \SSi(F) )) .
\end{gather}
\end{proposition}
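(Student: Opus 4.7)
The plan is to derive both inclusions from the definition $\muhom(F,G) = \mu_{\Delta_M}(\rhom(\opb{q_2}F, \epb{q_1}G))$ by combining two ingredients: the microsupport bound for the internal $\rhom$ given by Theorem~\ref{thm:SSrhom}, and the general support and microsupport estimates for Sato's microlocalization $\mu_N$ applied with $N=\Delta_M \subset M\times M$. Since $q_1,q_2$ are submersions and $\epb{q_1}G \simeq \opb{q_1}G \tens \omega_{M\times M/M}$, Theorem~\ref{th:opboim}(iii) gives $\SSi(\opb{q_2}F) \subset T^*_MM\times\SSi(F)$ and $\SSi(\epb{q_1}G) \subset \SSi(G)\times T^*_MM$, and Theorem~\ref{thm:SSrhom} then yields
$$
\SSi\bigl(\rhom(\opb{q_2}F, \epb{q_1}G)\bigr) \subset \bigl(\SSi(G)\times T^*_MM\bigr) \hplus \bigl(T^*_MM \times \SSi(F)^a\bigr).
$$

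For the support inclusion~\eqref{eq:suppmuhom} I use the general fact that $\supp\mu_N(H) \subset T^*_NM \cap \SSi(H)$, which follows directly from the description of the stalks of $\mu_N(H)$ as sections supported in half-spaces together with the very definition of the microsupport, and intersect the estimate above with $T^*_{\Delta_M}(M\times M) = \{(x,x;\xi,-\xi)\}$. A point of the ordinary sum $(\SSi(G)\times T^*_MM) + (T^*_MM \times \SSi(F)^a)$ has the form $(x,y;\xi,-\eta)$ with $(x;\xi)\in \SSi(G)$ and $(y;\eta)\in \SSi(F)$; imposing $x=y$ and $\eta=\xi$ forces $(x;\xi) \in \SSi(F)\cap\SSi(G)$. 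The $\hinfplus$ part of the bound is a limit of such points and therefore gives the same conclusion by closedness. Under the identification $T^*_{\Delta_M}(M\times M) \simeq T^*M$ by the first projection, this is exactly $\SSi(F)\cap\SSi(G)$.

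For the microsupport inclusion~\eqref{eq:SSmuhom} I invoke the general normal-cone estimate for the microsupport of Sato's microlocalization: for $H \in \Derb(\cor_{M\times M})$ and any submanifold $N$,
$$
\SSi(\mu_N(H)) \subset -H^{-1}\bigl(C_{T^*_N(M\times M)}(\SSi(H))\bigr)
$$
under the identification $T^*T^*(M\times M) \simeq TT^*(M\times M)$. Plugging in $H = \rhom(\opb{q_2}F, \epb{q_1}G)$ with $N = \Delta_M$ and the bound on $\SSi(H)$ from the first paragraph, a direct deformation computation shows that the normal cone along $T^*_{\Delta_M}(M\times M)$ of a sum of the form $A\times T^*_MM + T^*_MM \times B^a$ is carried to $C(A, B)$ under the first-projection identification $T^*_{\Delta_M}(M\times M) \simeq T^*M$. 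Taking $A = \SSi(G)$ and $B = \SSi(F)$ then yields the claimed inclusion.

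The main obstacle is purely notational bookkeeping rather than a conceptual difficulty: one has to track carefully the antipodal sign on the first argument of $\rhom$ appearing in Theorem~\ref{thm:SSrhom}, the sign coming from identifying the conormal to the diagonal with $T^*M$ via the first projection, and the sign in the Hamiltonian isomorphism $-H$ between $T^*T^*M$ and $TT^*M$. The task is to verify that these three signs combine so that the resulting cone is precisely $-H^{-1}(C(\SSi(G),\SSi(F)))$---with $\SSi(G)$ in the first slot, $\SSi(F)$ in the second, and no leftover antipodals---rather than some related but different expression. The underlying microlocal estimates themselves are already available from the results quoted above.
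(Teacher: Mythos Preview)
The paper does not give its own proof of this proposition: it is stated as a direct citation of Cor.~6.4.3 in~\cite{KS90} and is used as a black box throughout. So there is no ``paper's proof'' to compare against.

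That said, your sketch is a faithful outline of how the result is obtained in~\cite{KS90}. A minor simplification: for the support bound~\eqref{eq:suppmuhom} you do not need the full $\hplus$ estimate of Theorem~\ref{thm:SSrhom}; the product bound of Theorem~\ref{th:opboim}(i), namely $\SSi(\rhom(\opb{q_2}F,\epb{q_1}G)) \subset \SSi(G)\times\SSi(F)^a$, together with $\supp\mu_{\Delta_M}(H) \subset T^*_{\Delta_M}(M\times M)\cap\SSi(H)$ already gives the intersection $\SSi(F)\cap\SSi(G)$ after identifying $T^*_{\Delta_M}(M\times M)\simeq T^*M$. For~\eqref{eq:SSmuhom} your reduction to the normal-cone estimate for $\mu_N$ is exactly the route taken in~\cite{KS90}, and your closing remark is accurate: the only genuine work left is the sign and slot bookkeeping you describe, which is carried out in detail there.
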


\subsubsection*{Homotopy colimit}

There is no projective or inductive limit in the derived category.  However it
is possible to define a notion of homotopy colimit, which is well defined up to
{\em non unique} isomorphism.  We will often consider variations on the
following case.
Let $M$ be a manifold and $\{U_n\}_{n\in \N} \subset M$ a sequence of open
subsets of $M$ such that $U_n \subset U_{n+1}$ for all $n\in \N$ and
$M=\bigcup_{n\in \N} U_n$. Then we can write $\cor_M$ as the homotopy colimit of
the $\cor_{U_n}$'s, which means that we have a distinguished triangle in
$\Derb(\cor_M)$
\begin{equation}\label{eq:homot-lim}
  \bigoplus_{n\in \N} \cor_{U_n} \to[i] \bigoplus_{n\in \N} \cor_{U_n}
 \to \cor_M \to[+1],
\end{equation}
where the $n^{th}$-component of $i$ is $(\id - i_n)$ and $i_n\cl \cor_{U_n} \to
\cor_{U_{n+1}}$ is the natural morphism.  This triangle can be used for example
as follows. Let $f\cl M \to N$ be a morphism of manifolds.  The functor
$\reim{f}$ commutes with direct sums. Taking the tensor product
of~\eqref{eq:homot-lim} with any $F\in \Derb(\cor_M)$ and applying $\reim{f}$
gives the distinguished triangle
\begin{equation}\label{eq:homot-lim-bis}
\bigoplus_{n\in \N} \reim{f}(F_{U_n}) \to \bigoplus_{n\in \N} \reim{f}(F_{U_n}) 
\to \reim{f}(F)  \to[+1],
\end{equation}
which shows how to recover $\reim{f}(F)$ from the $\reim{f}(F_{U_n})$'s up to
isomorphism.

\section{Triangulated orbit categories}
\label{sec:def_orb_cat}

We will use a very special case of the triangulated hull of an orbit category
as described by Keller in~\cite{Ke05}.  More precisely
Definition~\ref{def:categorie-orb} below is inspired by \S7 of~\cite{Ke05} that
we apply to the simple case where we quotient $\Derb(\cor_M)$ by the
autoequivalence $F \mapsto F[1]$ (in~\cite{Ke05} much more general equivalences
are considered). However we apply this construction for sheaves instead of
modules over an algebra.  We use the name triangulated orbit category for the
category $\Orb(\cor_M)$ introduced in Definition~\ref{def:categorie-orb} by
analogy with the categories introduced by Keller.  Actually we only show that
we have a functor $\iota_M \cl \Derb(\cor_M) \to \Orb(\cor_M)$ such that
$\iota_M(F) \simeq \iota_M(F)[1]$ and which satisfies
Corollary~\ref{cor:morph-QRF-QRG} below (the proof of this result is also
inspired by~\cite{Ke05}).

In this section we set $\cor = \Z/2\Z$ and $\Cor = \cor[X]/\langle X^2 \rangle$
(we have to be careful in applying results of~\cite{KS90} because $\Cor$ is not
of finite global dimension). We let $\varepsilon$ be the image of $X$ in
$\Cor$. Hence $\Cor = \cor[\varepsilon]$ with $\varepsilon^2=0$.  Let $M$ be a
manifold.  The natural ring morphisms $\cor \to \Cor$ and $\Cor\to \cor$ induce
two pairs of adjoint functors $(e_M,r_M)$ and $(E_M,R_M)$, where $e_M,E_M$ are
scalar extensions and $r_M,R_M$ restrictions of scalars:
\begin{alignat*}{3}
&\Derb(\cor_M) \overset{e_M}{\underset{r_M}{\rightleftarrows}} \Derb(\Cor_M),
&\qquad e_M(F) &= \Cor_M \tens_{\cor_M} F,
&\quad r_M(G) &= G, \\
&\Derb(\Cor_M) \xfrom[R_M]{} \Derb(\cor_M),
&& & R_M(F) &= F , \\
&\Der^-(\Cor_M) \overset{E_M}{\underset{R_M}{\rightleftarrows}} \Der^-(\cor_M),
& E_M(G) &= \cor_M \ltens_{\Cor_M} G,
& R_M(F) &= F . 
\end{alignat*}

Choosing an isomorphism $\rhom_{\cor_M}(\Cor_M,\cor_M) \simeq \Cor_M$ of
$\Cor_M$-modules we have a canonical isomorphism, for all $F\in \Derb(\cor_M)$,
\begin{equation}\label{eq:eMtorhom}
\Cor_M \tens_{\cor_M} F \simeq \rhom_{\cor_M}(\Cor_M,F).
\end{equation}
It follows that we also have an adjunction $(r_M,e_M)$, that is,
\begin{align}
\notag
\Hom_{\Derb(\Cor_M)}(G,e_M(F))
& \simeq \Hom_{\Derb(\Cor_M)}(G, \rhom_{\cor_M}(\Cor_M,F))  \\
\label{eq:adjrMeM}
& \simeq \Hom_{\Derb(\cor_M)}( G\tens_{\Cor_M} \Cor_M ,F)  \\
\notag
& \simeq \Hom_{\Derb(\cor_M)}(r_M(G),F)  .
\end{align}

\begin{definition}\label{def:categorie-orb}
We let $\perf(\Cor_M)$ be the full triangulated subcategory of $\Derb(\Cor_M)$
generated by the image of $e_M$, that is, by the objects of the form
$\Cor_M \tens_{\cor_M} F$ with $F\in \Derb(\cor_M)$.

We denote by $\Orb(\cor_M)$ the quotient $\Derb(\Cor_M)/\perf(\Cor_M)$.
We let $Q_M\cl \Derb(\Cor_M) \to \Orb(\cor_M)$ be the quotient functor
and we set $\iota_M = Q_M \circ R_M \cl \Derb(\cor_M) \to \Orb(\cor_M)$.
\end{definition}

Let $\perf'(\Cor_M)$ be the subcategory of $\Derb(\Cor_M)$ formed by the $P$
such that $Q_M(P) \simeq 0$. Then $\perf(\Cor_M) \subset \perf'(\Cor_M)$ and
$\Orb(\cor_M) \isoto \Derb(\Cor_M)/\perf'(\Cor_M)$. We do not know whether
$\perf'(\Cor_M) = \perf(\Cor_M)$. A general result (see~\cite{KS06} Ex.~10.11)
says that $P\in \perf'(\Cor_M)$ if and only if $P\oplus P[1] \in \perf(\Cor_M)$.

\begin{notation}
If the context is clear, we will not write the functor $Q_M$ or $R_M$, that is,
for $F\in \Derb(\cor_M)$ we often write $F$ instead of $R_M(F)$, and, for $G\in
\Derb(\Cor_M)$, we often write $G$ instead of $Q_M(G)$.  In particular for a
locally closed subset $Z\subset M$, we consider $\cor_Z = R_M(\cor_Z) \in
\Derb(\Cor_M)$ and $\cor_Z = Q_M(\cor_Z) \in \Orb(\cor_M)$.
\end{notation}

The exact sequence of $\Cor$-modules  $0\to \cor \to \Cor \to \cor \to 0$
induces a morphism
\begin{equation}\label{eq:def_shift_morph0}
s_M \cl \cor_M \to \cor_M[1] \qquad \qquad \text{in $\Derb(\Cor_M)$}
\end{equation}
and a distinguished triangle, for any $F\in \Derb(\cor_M)$,
\begin{equation}\label{eq:td_shift_morph0}
R_M(F) \to e_M(F)  \to  R_M(F) \to[s_M \tens \id_F] R_M(F)[1] .
\end{equation}
We thus obtain an isomorphism $s_M \tens \id_F \cl R_M(F) \isoto R_M(F)[1]$ in
$\Orb(\cor_M)$, for any $F\in \Derb(\cor_M)$.  This would work for any field
$\cor$. In characteristic $2$ we can generalize this isomorphism to any $F\in
\Derb(\Cor_M)$ (see Remark~\ref{rem:def_shift_morph2}).

\subsection*{Internal tensor product and homomorphism}

For two $\Cor$-modules $E_1,E_2$ we define $E_1 \epstens_\cor E_2 \in
\Mod(\Cor)$ as follows. The underlying $\cor$-vector space is $E_1 \otimes_\cor
E_2$ and $\varepsilon$ acts by
$$
\varepsilon \cdot(x\otimes y)
 = (\varepsilon x) \otimes y + x \otimes (\varepsilon y) .
$$
Since the characteristic is $2$, we can check that $\varepsilon^2$ acts by $0$
and this defines an object of $\Mod(\Cor)$ that we denote $E_1 \epstens_\cor
E_2$. We obtain in this way a bifunctor $\epstens_\cor \cl \Mod(\Cor) \times
\Mod(\Cor) \to \Mod(\Cor)$.  For $F_1,F_2 \in \Mod(\Cor_M)$, we define $F_1
\epstens_{\cor_M} F_2 \in \Mod(\Cor_M)$ as the sheaf associated with the
presheaf $U \mapsto F_1(U) \epstens_\cor F_2(U)$.  We remark that $r_M(F_1
\epstens_{\cor_M} F_2) \simeq r_M(F_1) \tens_{\cor_M} r_M(F_2)$, where $r_M$ is
seen here as functor $\Mod(\Cor_M) \to \Mod(\cor_M)$, and it follows easily that
$\epstens_{\cor_M}$ is an exact functor. We denote its derived functor in the
same way:
\begin{equation}\label{eq:def-epstens}
\epstens_{\cor_M} \cl \Derb(\Cor_M) \times \Derb(\Cor_M) \to \Derb(\Cor_M) .  
\end{equation}
For any $F,G \in \Derb(\Cor_M)$ we have canonical isomorphisms
\begin{gather}
\label{eq:unit-epstens}
\cor_M \epstens_{\cor_M} F \simeq F \epstens_{\cor_M} \cor_M \simeq F
\quad \text{ in $\Derb(\Cor_M)$,}  \\
\label{eq:tens-epstens}
  r_M(F  \epstens_{\cor_M} G) \simeq r_M(F) \tens_{\cor_M} r_M(G) 
\quad \text{ in $\Derb(\cor_M)$.}
\end{gather}
Using~\eqref{eq:unit-epstens} and the exact sequence $0\to \cor \to \Cor \to
\cor \to 0$, we obtain as
in~(\ref{eq:def_shift_morph0}-\ref{eq:td_shift_morph0}) a morphism $s_M(F) \cl F
\to F[1]$, for any $F\in \Derb(\Cor_M)$, and a distinguished triangle
\begin{equation}\label{eq:td_shift_morph}
F \to  \Cor_M \epstens_{\cor_M} F  \to  F \to[s_M(F)] F[1] .
\end{equation}
Using the adjunction $(e_M,r_M)$ and~\eqref{eq:tens-epstens} we have the
isomorphism, for any $F\in \Derb(\cor_M)$ and $G \in \Derb(\Cor_M)$,
\begin{equation}\label{eq:eM-epstens}
  \begin{split}
\Hom&_{\Derb(\Cor_M)} ( e_M(F \tens_{\cor_M} r_M(G)) , e_M(F)\epstens_{\cor_M} G)  \\
& \simeq
\Hom_{\Derb(\cor_M)}( F \tens_{\cor_M} r_M(G) , (r_Me_M(F))\epstens_{\cor_M} r_M(G) ) .
  \end{split}
\end{equation}
By adjunction we have a morphism $a_F \cl F \to r_Me_M(F)$.  The inverse image
of $a_F \otimes \id_{r_M(G)}$ by~\eqref{eq:eM-epstens} gives a canonical
morphism, for any $F\in \Derb(\cor_M)$ and $G \in \Derb(\Cor_M)$,
\begin{equation}\label{eq:eM-epstens2}
e_M(F \tens_{\cor_M} r_M(G)) \to  e_M(F)\epstens_{\cor_M} G .
\end{equation}

\begin{lemma}\label{lem:epstens-perf}
Let $F\in \Derb(\cor_M)$ and $G \in \Derb(\Cor_M)$.  Then the
morphism~\eqref{eq:eM-epstens2} is an isomorphism.  In the same way $G
\epstens_{\cor_M} e_M(F) \simeq e_M(r_M(G) \tens_{\cor_M} F)$. In particular,
for $F,G \in \Derb(\Cor_M)$ such that $F$ or $G$ belongs to $\perf(\Cor_M)$, we
have $F \epstens_{\cor_M} G \in \perf(\Cor_M)$ and $\epstens_{\cor_M}$ induces
a functor
$$
\epstens_{\cor_M} \cl \Orb(\cor_M) \times \Orb(\cor_M) \to \Orb(\cor_M) .
$$
\end{lemma}
\begin{proof}
(i) Let us denote by $u_{F,G}$ the morphism~\eqref{eq:eM-epstens2}. Using the
distinguished triangle $\tau_{\leq i}F \to F \to \tau_{>i}F \to[+1]$ and the
similar one for $G$ we can argue by induction on the length of $F$ and $G$ to
prove that $u_{F,G}$ is an isomorphism.  Then we are reduced to the case where
$F$ and $G$ are concentrated in degree $0$.
Writing $\Cor = \cor \oplus \varepsilon \cor$ we have $e_M(F \tens_{\cor_M}
r_M(G)) = (F\otimes G) \oplus \varepsilon (F\otimes G)$ and $e_M(F)
\epstens_{\cor_M} G = (F \oplus \varepsilon F) \otimes G$.  For sections $f$ and
$g$ of $F$ and $G$ we have
\begin{align*}
u_{F,G} ( f\otimes g) &= f\otimes g, \\
u_{F,G} ( \varepsilon(f\otimes g)) &=  \varepsilon \cdot (f\otimes g)
= (\varepsilon f)\otimes g + f \otimes (\varepsilon g )
\end{align*}
and we can check directly that $u_{F,G}$ is an isomorphism with inverse
$$
u_{F,G}^{-1} ((f_0+\varepsilon f_1)\otimes g) 
= (f_0\otimes g + f_1 \otimes (\varepsilon g)) + \varepsilon(f_1\otimes g) .
$$
(ii) If $F \in \perf(\Cor_M)$, there exist two sequences $F_0, F_1,\ldots,F_n=F
\in \Derb(\Cor_M)$ and $F'_0, F'_1,\ldots,F'_n \in \Derb(\cor_M)$ with $F_0 =
e_M(F'_0)$, and distinguished triangles involving $F_i$, $e_M(F'_{i+1})$ and
$F_{i+1}$, for $i=0,\ldots,n-1$.  Then~(i) and an induction on $n$ give $F
\epstens_{\cor_M} G \in \perf(\Cor_M)$.
\end{proof}
\begin{remark}\label{rem:def_shift_morph2}
An easy case of Lemma~\ref{lem:epstens-perf} is $F=\cor_M$
in~\eqref{eq:eM-epstens2}. We obtain $e_Mr_M(G) \isoto \Cor_M\epstens_{\cor_M}
G$, for any $G\in \Derb(\Cor_M)$.  Hence the distinguished
triangle~\eqref{eq:td_shift_morph} becomes
\begin{equation}\label{eq:td_shift_morph-bis}
F \to  e_Mr_M(F)  \to  F \to[s_M(F)] F[1]
\quad\text{ for any $F\in\Derb(\Cor_M)$}.
\end{equation}
Applying $Q_M$ to this triangle gives an isomorphism $s_M(F) \cl F \isoto F[1]$
in $\Orb(\cor_M)$.
\end{remark}

We can define an adjoint $\homeps$ to $\epstens$ by a similar construction.
For $F_1,F_2 \in \Mod(\Cor_M)$, we define $\homeps(F_1,F_2) \in \Mod(\Cor_M)$ as
the sheaf of $\cor$-vector spaces $\hom(F_1,F_2)$ with the action of
$\varepsilon$ given by
$$
(\varepsilon \cdot \varphi)(x)
 = \varepsilon \varphi(x) + \varphi(\varepsilon x), 
$$
where $\varphi$ is a section of $\hom(F_1,F_2)$ over an open set $U$ and $x$ a
section of $F_1$ over a subset $V$ of $U$.  Then we see that $\homeps$ is right
adjoint to $\epstens$, hence left exact.  We check also that its derived functor
$\rhomeps$ is right adjoint to $\epstens$ in $\Derb(\Cor_M)$ and that, for any
$F,G \in \Derb(\Cor_M)$,
\begin{equation}\label{eq:hom-homeps}
r_M(\rhomeps(F,G) ) \simeq \rhom(r_M(F), r_M(G)).
\end{equation}
We have the similar result as Lemma~\ref{lem:epstens-perf}.
\begin{lemma}\label{lem:homeps-perf}
Let $F,G \in \Derb(\Cor_M)$. We assume that $F$ or $G$ belongs to
$\perf(\Cor_M)$.  Then $\rhomeps(F,G) \in \perf(\Cor_M)$. The induced functor
$$
\rhomeps \cl \Orb(\cor_M)^{op} \times \Orb(\cor_M) \to \Orb(\cor_M) .
$$
is right adjoint to $\epstens_{\cor_M}$.
\end{lemma}

\subsection*{Morphisms in the triangulated orbit category}
We prove the formula~\eqref{eq:mor-orbA} which describes the morphisms in
$\Orb(\cor_M)$.

\begin{lemma}\label{lem:mor-perf-borne}
Let $F,P \in \Derb(\Cor_M)$. We assume that $P \in \perf(\Cor_M)$.
Then $\RHom(P,F)$ and $\RHom(F,P)$ belong to $\Derb(\Cor)$.
\end{lemma}
\begin{proof}
Since $\perf(\Cor_M)$ is generated by $e_M(\Derb(\cor_M))$, the same argument as
in~(ii) of the proof of Lemma~\ref{lem:epstens-perf} implies that we can assume
$P=e_M(Q)$, for some $Q\in \Derb(\cor_M)$. Then $\RHom_{\Cor}(P,F) \simeq
\RHom_\cor(Q,r_M(F))$ is bounded. Using~\eqref{eq:adjrMeM} the same proof gives
that $\RHom(F,P)$ is bounded.
\end{proof}

We define the following objects $L^{p,q}$ of $\Derb(\Cor)$, for any
two integers $p\leq q$, by
\begin{equation}\label{eq:def-Lpq}
  L^{p,q} \; = \; 0 \to \Cor \to[\varepsilon] \Cor \to[\varepsilon] \dots
\to[\varepsilon] \Cor \to 0,
\end{equation}
where the first $\Cor$ is in degree $p$ and the last one in degree $q$.  Then
$L^{p,q} \in \perf(\Cor)$ and we can see that there is a distinguished triangle
\begin{equation}\label{eq:dt-Lpq}
\cor[-p] \to L^{p,q} \to \cor[-q] \to[s^{p,q}] \cor[-p+1],
\end{equation}
with $s^{p,q} = s_{\pt}^{q-p+1}[-q]$, where $s_{\pt}$
is~\eqref{eq:def_shift_morph0}.  For $F \in \Derb(\Cor_M)$ we define
$s^{p,q}_M(F) \eqdot s^{p,q}\tens \id_F \cl F[-q] \to F[-p+1]$.  We deduce the
triangle, for any $F \in \Derb(\Cor_M)$ and any $n\geq 1$,
$$
L^{1,n}_M \epstens_{\cor_M} F \to F[-n] \to[{s_M^{1,n}(F)}] F \to[+1] .
$$
\begin{lemma}\label{lem:morph-orb}
We consider a distinguished triangle $P \to F' \to F \to[+1]$ in $\Derb(\Cor_M)$
and we assume that $P\in \perf(\Cor_M)$. Then there exist $n\in \N$ and a
morphism of triangles
$$
\xymatrix@C=1cm@R=5mm{
L^{1,n}_M \epstens_{\cor_M} F \ar[r]\ar[d] & F[-n] \ar[rr]^{s_M^{1,n}(F)} \ar[d]
&&  F \ar[r]^{+1} \ar@{=}[d] &  \\
P \ar[r] & F'  \ar[rr]  && F \ar[r]^{+1}  &  \pointdiag }
$$
\end{lemma}
\begin{proof}
We set for short $s_n = {s_M^{1,n}(F)}$. We consider the diagram
$$
\xymatrix@C=1.5cm@R=7mm{
& F[-n] \ar[r]^-{s_n}  \ar[dr]^{s_n}  \ar@{.>}[d]_a &  F \ar@{=}[d] \\
P \ar[r] & F'  \ar[r]  & F \ar[r]^w  &  P[1] \pointdiag }
$$
We have $w \circ s_n \in \Hom(F[-n],P[1])$.  Since $P\in \perf(\Cor_M)$ this
group vanishes for $n$ big enough, by Lemma~\ref{lem:mor-perf-borne}. In this
case we have $w \circ s_n =0$ and there exists a morphism $a$ as in the diagram
making the square commute.  Then we can extend the square to a commutative
diagram as in the lemma.
\end{proof}

For $F \in \Derb(\Cor_M)$ we have $s^{n,n}_M(F) \cl F[-n] \to F[-n+1]$.  Then
$\{F[-n], s^{n,n}_M(F)\}_{n\in \N}$ gives a projective system.  For $G\in
\Derb(\Cor_M)$ we define
\begin{equation}\label{eq:mor-Der-Orb}
\varinjlim_{n\in \N} \Hom_{\Derb(\Cor_M)}(F[-n],G) \to \Hom_{\Orb(\cor_M)}(F,G),
\end{equation}
by sending $\varphi_n \cl F[-n]\to G$ to $\varphi_n \circ (s_M^{1,n}(F))^{-1}$.
This is well defined since $s_M^{1,n}(F)$ becomes invertible in $\Orb(\cor_M)$
and $s_M^{1,n}(F) = s_M^{1,n-1}(F) \circ s_M^{n,n}(F)$.

\begin{proposition}\label{prop:morph-orb}
Let $F,G \in \Derb(\Cor_M)$. Then the inductive limit in the left hand side
of~\eqref{eq:mor-Der-Orb} stabilizes and the morphism~\eqref{eq:mor-Der-Orb} is
an isomorphism. More precisely, if $H^i(F) = H^i(G) =0$ for all $i$ outside an
interval $[a,b]$, then
\begin{equation}\label{eq:mor-orbA}
\Hom_{\Derb(\Cor_M)}(F[-n],G) \isoto \Hom_{\Orb(\cor_M)}(F,G),
\end{equation}
for all $n > 2(b-a) + \dim M +1$.
\end{proposition}
\begin{proof}
(i) We prove that the limit stabilizes. We chose $a\leq b$ such that $H^i(F) =
H^i(G) =0$ for all $i$ outside $[a,b]$.  By~\eqref{eq:td_shift_morph-bis} we
have the distinguished triangle
\begin{equation}\label{eq:morph-orb1}
\begin{split}
\Hom_{\Derb(\Cor_M)}( e_M r_M(F)[-n],G)
&\to \Hom_{\Derb(\Cor_M)}(F[-n],G) \\
\to[s'_n] {}& \Hom_{\Derb(\Cor_M)}(F[-n-1],G) \to[+1] ,
\end{split}
\end{equation}
for all $n\in \Z$. By adjunction we have
$$
\Hom_{\Derb(\Cor_M)}( e_M r_M(F)[-n],G) \simeq
\Hom_{\Derb(\cor_M)}(r_M(F)[-n],r_M(G) )
$$
and this is zero for $n> 2(b-a) + \dim M +1$ (recall that the flabby dimension
of a manifold $M$ is $\dim M +1$ and that injective over a field is the same as
flabby). It follows that the morphism $s'_n$ in~\eqref{eq:morph-orb1} is an
isomorphism for $n> 2(b-a) + \dim M +1$.

\medskip\noindent
(ii) We prove that~\eqref{eq:mor-Der-Orb} is an isomorphism.  We recall that
$$
\Hom_{\Orb(\cor_M)}(F,G) \simeq \varinjlim_{i\cl F'\to F}
\Hom_{\Derb(\Cor_M)}(F',G),
$$
where the limit runs over the morphisms $i\cl F'\to F$ whose cone belongs to
$\perf(\Cor_M)$ (and a morphism $u\cl F' \to G$ is send to $u \circ i^{-1}$ in
$\Orb(\cor_M)$).  By Lemma~\ref{lem:morph-orb} we can restrict to the family of
morphisms $s_M^{1,n}(F) \cl F[-n] \to F$ for $n\in \N$. This gives the result.
\end{proof}

\begin{corollary}\label{cor:morph-QRF-QRG}
Let $F,G \in \Derb(\cor_M)$. We recall that $\iota_M = Q_M \circ R_M$.
We have
$$
\Hom_{\Orb(\cor_M)}(\iota_M(F), \iota_M(G))
\simeq \bigoplus_{n\in\Z} \Hom_{\Derb(\cor_M)}(F[-n],G) .
$$
\end{corollary}
\begin{proof}
By Proposition~\ref{prop:morph-orb} the left hand side of the formula is
isomorphic to
$$
\Hom_{\Derb(\Cor_M)}(R_M(F)[-n_0], R_M(G))
\simeq \Hom_{\Der^-(\cor_M)}(E_M R_M(F)[-n_0], G),
$$
for any big enough $n_0\in\N$.  Using the resolution of $\cor$ as a
$\Cor$-module given by $\cdots\to \Cor \to[\varepsilon] \Cor \to[\varepsilon]
\Cor \to \cor \to 0$, we see that $E_MR_M(F) \simeq \bigoplus_{i\in \N} F[i]$.
The result follows easily.
\end{proof}

\subsection*{Direct sums}

We recall that we denote by $Q_M$ the quotient functor $\Derb(\Cor_M) \to
\Orb(\cor_M)$.
\begin{lemma}\label{lem:direct-sum}
Let $I$ be a small set and $\{F_i\}_{i\in I}$ a family in $\Derb(\Cor_M)$.  We
assume that there exists two integers $a\leq b$ such that $H^k(F_i)=0$ for all
$k$ outside $[a,b]$ and all $i\in I$.
Then $\bigoplus_{i\in I} Q_M(F_i)$ exists in $\Orb(\cor_M)$ and
$\bigoplus_{i\in I} Q_M(F_i) \simeq Q_M(\bigoplus_{i\in I} F_i)$.
\end{lemma}
\begin{proof}
By the hypothesis on the degrees the sum $\bigoplus_{i\in I} F_i$ exists in
$\Derb(\Cor_M)$ and we have $H^k(\bigoplus_{i\in I} F_i)=0$ for $k$ outside
$[a,b]$. Let $G \in \Derb(\Cor_M)$ and let $a'\leq a$, $b'\geq b$ be such that
$H^k(G)=0$ for $k$ outside $[a',b']$. We set $n=2(b'-a') + \dim M +2$.  If $F =
F_i$ for some $i\in I$, or $F = \bigoplus_{i\in I} F_i$, we have
$$
\Hom_{\Derb(\Cor_M)}(F[-n],G) \isoto \Hom_{\Orb(\cor_M)}(Q_M(F),Q_M(G))
$$
by Proposition~\ref{prop:morph-orb}.  Now the lemma follows from the universal
property of the sum.
\end{proof}

\subsection*{Direct and inverse images}

Let $f\cl M\to N$ be a morphism of manifolds.  We have functors $\roim{f}$,
$\reim{f}$, $\opb{f}$ and $\epb{f}$, between $\Derb(\Cor_M)$ and
$\Derb(\Cor_N)$. Since $\Cor$ is not of finite global dimension we give some
details.  

First, the functors $\roim{f}, \reim{f} \cl \Der(\Cor_M) \to \Der(\Cor_N)$
commute with the functors $r_N \cl \Derb(\Cor_N) \to \Derb(\cor_N)$ and $r_M$.
We remark that $r_N$ is conservative, which means that a morphism $u$ in
$\Der(\Cor_N)$ is an isomorphism if $r_N(u)$ is an isomorphism. Hence $\roim{f}$
and $\reim{f}$ induce functors $\Derb(\Cor_M) \to \Derb(\Cor_N)$ between the
bounded categories.

The case of $\opb{f}$ is clear since it is an exact functor.  To prove the
existence of $\epb{f}$, right adjoint to $\reim{f}$, it is enough, by
factorizing through the graph embedding, to consider the cases where $f$ is an
embedding or $f$ is a submersion. The usual formulas work in our case.  If $f$
is an embedding, then $\epb{f}(\cdot) = \opb{f} \homeps(\cor_M,\cdot)$ is
adjoint to $\reim{f}$. If $f$ is a submersion, then $\epb{f}(\cdot) =
\opb{f}(\cdot) \epstens \omega_{M|N}$.  We also remark that $\opb{f}$ and
$\epb{f}$ commute with $r_N$ and $r_M$.

\begin{lemma}\label{lem:im-dir-inv_Orb}
Let $f\cl M\to N$ be a morphism of manifolds.  Then the functors $\roim{f}$,
$\reim{f}$, $\opb{f}$ and $\epb{f}$, between $\Derb(\Cor_M)$ and
$\Derb(\Cor_N)$, preserves the categories $\perf(\Cor_M)$ and $\perf(\Cor_N)$.
They induce pairs of adjoint functors (that we denote in the same way) between
$\Orb(\cor_M)$ and $\Orb(\cor_N)$.
\end{lemma}
\begin{proof}
We only consider the case of $\roim{f}$, the other cases being similar.
For $F\in \Derb(\cor_M)$ we have a natural morphism $u\cl \Cor_N
\tens_{\cor_N}\roim{f} F \to \roim{f}(\Cor_M \tens_{\cor_M} F)$. Since
$r_N(\Cor_N) \simeq \cor_N^2$ we see easily that $r_N(u)$ is an
isomorphism. Since $r_N$ is conservative, $u$ is an isomorphism and we obtain
that $\roim{f}(\perf(\Cor_M)) \subset \perf(\Cor_N)$, as required.
\end{proof}

For $F \in \Orb(\cor_M)$ and $j\cl U \to M$ the inclusion of a locally closed
subset, we use the standard notations $F|_U = \opb{j}F$, $F_U =
\reim{j}\opb{j}F$, $\rsect_U(F) = \roim{j}\opb{j}F$, $\rsect(U;F) =
\roim{a_U}(F|_U) \in \Orb(\cor)$, where $a_U$ is $U \to \pt$, and $\rsect_c(U;F)
= \reim{a_U}(F|_U) \in \Orb(\cor)$. We have the same formulas as in
$\Derb(\cor_M)$:
$$
F_U  \simeq F \epstens \cor_U, \qquad \rsect_U(F) \simeq \homeps(\cor_U,F) .
$$

Let $N$ be a submanifold of $M$.  We recall that Sato's microlocalization is a
functor $\mu_N\cl \Derb(\cor_M) \to \Derb(\cor_{T^*_NM})$.  It is defined by
composing direct and inverse images functors and Lemma~\ref{lem:im-dir-inv_Orb}
implies that it induces functors, denoted in the same way:
\begin{align}
\mu_N&\cl \Derb(\Cor_M) \to \Derb(\Cor_{T^*_NM}) , \\
\mu_N&\cl \Orb(\cor_M) \to \Orb(\cor_{T^*_NM}) .  
\end{align}

\begin{definition}
Let $q_1,q_2\cl M\times M \to M$ be the projections.  We identify
$T^*_{\Delta_M}(M\times M)$ with $T^*M$ through the first projection.  For
$F,G\in \Orb(\cor_M)$ we define as in~\eqref{eq:def_muhom}
\begin{equation*}
\mu hom^\varepsilon (F,G) = \mu_{\Delta_M}(\rhomeps(\opb{q_2}F,\epb{q_1}G))
\quad \in \quad \Orb(\cor_{T^*M}). 
\end{equation*}
\end{definition}

The following result follows from the analog one in $\Derb(\Cor_M)$.
\begin{lemma}\label{lem:dir-sum_im-inv-dir}
Let $\{F_i\}_{i\in I}$ a small family in $\Derb(\Cor_M)$ satisfying the
hypothesis of Lemma~\ref{lem:direct-sum}.  Let $f\cl M' \to M$ and $g\cl M\to
M''$ be morphisms of manifolds and let $G\in \Orb(\cor_M)$.  Then we have
canonical isomorphisms
\newcommand{\somme}{{\textstyle\bigoplus_{i\in I}}\;}
\begin{align*}
\opb{f} (\somme Q_M(F_i)) &\simeq \somme \opb{f}Q_M(F_i), \\
\reim{g}(\somme Q_M(F_i)) &\simeq \somme \reim{g}\, Q_M(F_i), \\
(\somme Q_M(F_i)) \epstens_{\cor_M} G 
&\simeq \somme (Q_M(F_i)) \epstens_{\cor_M} G).
\end{align*}
\end{lemma}

\begin{lemma}\label{lem:modif-repr}
Let $U$ be an open subset of $M$. Let $F\in \Derb(\Cor_M)$ and $F' \in
\Derb(\Cor_U)$. We assume that there exists an isomorphism $F|_U\simeq F'$ in
$\Orb(\cor_U)$.  Then there exists $F_1 \in \Derb(\Cor_M)$ such that $F_1|_U
\simeq F'$ in $\Derb(\Cor_U)$ and $F_1\simeq F$ in $\Orb(\cor_M)$.
\end{lemma}
\begin{proof}
We let $j\cl U \to M$ be the inclusion and we set $Z=M\setminus U$.
Let $u \cl F|_U \to F'$ be an isomorphism in $\Orb(\cor_U)$.  By
Proposition~\ref{prop:morph-orb} there exist $n\in \Z$ and a morphism $F[-n] \to
F'$ in $\Derb(\Cor_M)$ which represents $u$.  Defining $P$ by the distinguished
triangle $F|_U[-n] \to F' \to P \to[+1]$ we have $Q_U(P) \simeq 0$.  We apply
$\eim{j}$ to this triangle and get~\eqref{eq:modif-repr2} below; we also
consider the excision triangle~\eqref{eq:modif-repr1} and the
triangle~\eqref{eq:modif-repr3} built on the composition $F_Z[-n-1] \to F_U[-n]
\to \eim{j} F'$:
\begin{gather}
\label{eq:modif-repr1}
F_Z[-n-1] \to[a] F_U[-n] \to F[-n] \to[+1] , \\
\label{eq:modif-repr2}
F_U[-n] \to[b]  \eim{j} F' \to \eim{j} P \to[+1] , \\
\label{eq:modif-repr3}
F_Z[-n-1] \to[b\circ a] \eim{j} F' \to F_1  \to[+1] .
\end{gather}
Then the octahedron axiom gives the triangle $F[-n] \to F_1 \to \eim{j} P
\to[+1]$. We have $Q_M(\eim{j} P) \simeq \eim{j} Q_M(P) \simeq 0$, hence
$F_1\simeq F[-n] \simeq F$ in $\Orb(\cor_M)$.  Applying $\opb{j}$ to the
triangle~\eqref{eq:modif-repr3} gives $F' \simeq F_1|_U$, as required.
\end{proof}

\begin{definition}\label{def:supporb}
For $F \in \Orb(\cor_M)$ we define $\supporb(F) \subset M$ as the complement of
the union of the open subsets $U\subset M$ such that $F|_U \simeq 0$.
\end{definition}
For an open subset $U\subset M$ we have $F|_U \simeq 0$ in $\Orb(\cor_U)$ if and
only if $F_U \simeq 0$ in $\Orb(\cor_M)$.  By the Mayer-Vietoris triangle we
deduce that, for a finite covering $U=\bigcup_{i=1}^n U_i$, we have $F|_U \simeq
0$ if and only if $F|_{U_i} \simeq 0$ for all $i$.  For an increasing countable
union $U=\bigcup_{i=1}^\infty U_i$ we have the same result using the
distinguished triangle
$$
\bigoplus_{i=1}^\infty F_{U_i}
 \to \bigoplus_{i=1}^\infty F_{U_i} \to F_U \to[+1] ,
$$
obtained by applying $F \epstens_{\cor_M}\cdot$ to~\eqref{eq:homot-lim} (with
$U$ instead of $M$).  We obtain finally, for any $F \in \Orb(\cor_M)$,
\begin{equation}\label{eq:supporb-OK}
F|_{M\setminus\supporb(F)} \simeq 0 \quad \text{and} \quad
F \isoto F_{\supporb(F)} .
\end{equation}

\section{Microsupport in the triangulated orbit categories}
\label{sec:micsup_orb_cat}

We define the microsupport of objects of $\Orb(\cor_M)$ and check that it
satisfies the same properties as the usual microsupport.  In~\cite{KS90} the
microsupport is defined when the coefficient ring is of finite global
dimension. The ring $\Cor$ is of infinite dimension and we use the forgetful
functor $r_M \cl \Derb(\Cor_M) \to \Derb(\cor_M)$ to define
\begin{equation}\label{eq:def-SSi-Cor}
  \SSi(F) \eqdot \SSi(r_M(F)) \quad \text{for $F\in \Derb(\Cor_M)$.}
\end{equation}
Since $r_M$ commutes with $\roim{f}, \reim{f}, \opb{f}, \epb{f}$ we see that
Theorem~\ref{th:opboim}~(ii) and~(iii) hold with this definition.
By~\eqref{eq:tens-epstens} and~\eqref{eq:hom-homeps} Theorem~\ref{th:opboim}~(i)
and Corollary~\ref{cor:opboim} hold if we replace $\tens$ and $\rhom$ by
$\epstens$ and $\rhomeps$. Since $r_M$ is conservative (that is, a morphism $u$
is an isomorphism if $r_M(u)$ is an isomorphism) we see that
Theorem~\ref{th:opboim}~(iv) also holds.

\subsection{Definition and first properties}
We define the microsupport $\SSo(F)$ of an object of $F\in\Orb(\cor_M)$ from the
microsupports of its representatives in $\Derb(\Cor_M)$.  We prove in
Proposition~\ref{prop:repr-bonmicsup} that, for a given $x_0\in M$ and $F\in
\Orb(\cor_M)$, we can find a representative $F'\in \Derb(\cor_M)$ with
$T^*_{x_0}M \cap \SSi(F')$ contained in a arbitrary neighborhood of $T^*_{x_0}M
\cap \SSo(F)$ .

\begin{definition}\label{def:SSorb}
Let $F \in \Orb(\cor_M)$. We define $\SSo(F) \subset T^*M$ by $\SSo(F) =
\bigcap_{F'} \SSi(F')$ where $F'$ runs over the objects of $\Derb(\Cor_M)$ such
that $F'\simeq F$ in $\Orb(\cor_M)$. We set $\dot\SSo(F) = \SSo(F) \cap \dT^*M$.
\end{definition}

We remark that $\SSo(F)$ is a closed conic subset of $T^*M$.  We deduce from
Lemma~\ref{lem:modif-repr} that $\SSo(F)$ is a local notion, that is, for
$U\subset M$ open, we have
\begin{equation}\label{eq:SSorb_local}
\SSo(F|_U) = \SSo(F) \cap T^*U.
\end{equation}
In other words $p=(x;\xi) \not\in \SSo(F)$ if and only if there exist a
neighborhood $U$ of $x$ and $F'\in \Derb(\Cor_U)$ such that $F|_U \simeq F'$ in
$\Orb(\cor_U)$ and $p\not\in \SSi(F')$.  For a given $F \in \Derb(\Cor_M)$ we
see in the next lemma that it is possible to require that this isomorphism $F|_U
\simeq F'$ in $\Orb(\cor_U)$ is the image of a morphism $F|_U \to F'$ in
$\Derb(\Cor_U)$.

\begin{lemma}\label{lem:troncationSSorb}
Let $x_0 \in M$ and let $C_0 \subset T^*_{x_0}M$ be a conic open subset such
that $\ol{C_0}$ is convex and proper.  Let $F, F' \in \Derb(\Cor_M)$.  We
assume that $\SSi(F') \cap \ol{C_0} = \emptyset$ and that there exists an
isomorphism $F\simeq F'$ in $\Orb(\cor_M)$.  Let $W_0 \subset T^*_{x_0}M$ be a
conic neighborhood of $(\SSi(F) \cap \ol{C_0}) \setminus \{x_0\}$.
Then there exist a neighborhood $U$ of $x_0$, $F'' \in \Derb(\Cor_U)$ and a
morphism $u \cl F|_U \to F''$ such that $u$ induces an isomorphism in
$\Orb(\cor_U)$ and
$$
\dot\SSi(F'') \cap T^*_{x_0}M \subset ((\SSi(F)\cap T^*_{x_0}M) \cup W_0 )
\setminus C_0 .
$$
\end{lemma}
\begin{proof}
Since $F\simeq F'$ in $\Orb(\cor_M)$ there exist two distinguished triangles
in $\Derb(\Cor_M)$
\begin{equation}\label{eq:lem-troncationSSorb1}
G \to F \to P \to[+1], \quad
G \to F' \to P_1 \to[+1], 
\end{equation}
with $P,P_1 \in \perf(\Cor_M)$.

We can find a conic open subset $\Omega_0 \subset T^*_{x_0}M$ such that
$\ol{C_0}\setminus \{x_0\} \subset \Omega_0$, $(\SSi(F) \cap \ol{\Omega_0})
\setminus \{x_0\} \subset W_0$ and $\SSi(F') \cap \Omega_0 = \emptyset$.

  We choose a conic open subset $\Omega \subset
T^*M$ such that $\Omega \cap T^*_{x_0}M = \Omega_0$ and $\SSi(F') \cap \Omega
= \emptyset$.  We set $S = \SSi(F) \cap \Omega$.  We let $U \subset M$ be a
neighborhood of $x_0$ and $L,L' \in \Derb(\cor_{U\times M})$ be given by
Proposition~\ref{prop:cutoff-precis}.  Composing with $L$ gives the triangles
\begin{equation}\label{eq:lem-troncationSSorb2}
L\circ G \to L\circ F \to L\circ P \to[+1], \quad
L\circ G \to L\circ F' \to L\circ P_1 \to[+1].
\end{equation}
By Proposition~\ref{prop:cutoff-precis}~(iii) we have $L\circ F' \simeq
0$. Hence $L\circ G \simeq L\circ P_1[-1]$. This proves that $L\circ G \in
\perf(\Cor_U)$ and the first triangle in~\eqref{eq:lem-troncationSSorb2} shows
that $L\circ F \in \perf(\Cor_U)$.

We set $F'' = L'\circ F$. Composing the triangle $L \to \cor_\Delta \to L'
\to[+1]$ with $F$ gives $L\circ F \to F|_U \to F'' \to[+1]$. We deduce that
$F|_U\isoto F''$ in $\Orb(\cor_U)$. The bound for $\SSi(F'')$
is~\eqref{eq:cutoff-precis1}.
\end{proof}

\begin{lemma}\label{lem:induction-troncation}
We set $E = \R^n$ and $\dot E = E \setminus \{0\}$.
Let $A \subset \dot E$ be a closed cone.  Let $A_1,\ldots,A_n$ be convex open
cones of $\dot E$ such that $\ol{A_i}$ is proper for each $i$ and $A \subset
\bigcup_{i=1}^n A_i$.  Then there exist open cones $C_i$, $W_i \subset A_i$,
$i=1,\ldots,n$, such that $\ol{C_i}$ is convex and proper for each $i$, and,
defining inductively $S_0 = \dot E$ and $S_i = (S_{i-1} \cup W_i) \setminus
C_i$, we have
\begin{itemize}
\item [(i)] $((S_{i-1} \cap \ol{C_i}) \setminus \{0\} )\subset W_i$, for
  all $i=1,\ldots,n$,
\item [(ii)] $A \cap S_n = \emptyset$.
\end{itemize}
\end{lemma}
\begin{proof}
(a) We choose conic subsets $D_i \subset C_i \subset A_i$ for $i=1,\ldots,n$
such that $A \subset \bigcup_{i=1}^n D_i$ and, for each $i$, $D_i$ is closed,
$C_i$ is open and convex and $\ol{C_i} \subset A_i\cup \{0\}$.
Then we define $S_0^+ = \dot E$, $S_i^+ = \dot E \setminus
\bigcup_{j=1}^i D_j$ and $W_i = S_{i-1}^+ \cap A_i$, for $i=1,\ldots,n$.

\medskip\noindent
(b) We define as in the lemma $S_0 = \dot E$ and $S_i = (S_{i-1} \cup
W_i) \setminus C_i$.  Let us prove by induction that $S_i \subset S_i^+$. This
is clear for $i=0$.  Assuming it holds for $i-1$ we obtain $S_i \subset
S_{i-1}^+ \setminus C_i$ since $W_i \subset S_{i-1}^+$.  Since $D_i \subset C_i$
we deduce $S_i \subset S_{i-1}^+ \setminus D_i = S_i^+$.

\medskip\noindent
(c) We deduce from~(b) that $((S_{i-1} \cap \ol{C_i}) \setminus \{0\}
)\subset S_{i-1}^+ \cap A_i = W_i$ which is the property~(i).  Since $A \subset
\bigcup_{i=1}^n D_i$ we also obtain $A \cap S_n \subset A \cap S^+_n =
\emptyset$, which is the property~(ii).
\end{proof}

\begin{proposition}\label{prop:repr-bonmicsup}
Let $M$ be a manifold, $x_0\in M$ and $A \subset \dT^*_{x_0}M$ a closed conic
subset.  Let $F\in \Orb(\cor_M)$ be such that $\SSo(F) \cap A = \emptyset$.
Then there exist a neighborhood $U$ of $x_0$, $F' \in \Derb(\Cor_U)$ and a
morphism $u \cl F|_U \to F'$ such that $u$ induces an isomorphism in
$\Orb(\cor_U)$ and $\SSi(F') \cap A = \emptyset$.
\end{proposition}
\begin{proof}
(i) Since $A \cap \SSo(F) = \emptyset$, for any $\xi \in A$ there exist a
neighborhood $V$ of $x_0$ in $M$ and $F_\xi \in \Derb(\Cor_V)$ such that $F_\xi
\simeq F|_V$ in $\Orb(\cor_V)$ and $(x_0;\xi) \not\in \SSi(F_\xi)$.  Let
$C_\xi \subset T^*_{x_0}M$ be a convex open cone such that $\ol{C_\xi}$ is
proper and $\SSi(F_\xi) \cap \ol{C_\xi} \subset \{x_0\}$.

We can cover $A$ by a finite number of such cones $C_{\xi_i}$, $i=1,\dots,n$.
We set $A_i = C_{\xi_i}$ and $F_i = F_{\xi_i}$.  We choose conic open subsets
$C_i$, $W_i$ of $A_i$ satisfying the properties of
Lemma~\ref{lem:induction-troncation} with $E = T^*_{x_0}M$.  We also define $S_0
= \dT^*_{x_0}M$ and $S_i = (S_{i-1} \cup W_i) \setminus C_i$, as in
Lemma~\ref{lem:induction-troncation}.

\medskip\noindent
(ii) Let us prove by induction on $i$ that there exist a neighborhood $U_i$ of
$x_0$, $F'_i \in \Derb(\Cor_{U_i})$ and a morphism $u_i \cl F|_{U_i} \to F'_i$
such that $u_i$ induces an isomorphism in $\Orb(\cor_{U_i})$ and $\dot\SSi(F'_i)
\cap T^*_{x_0}M \subset S_i$.

For $i=0$ it is enough to set $F'_0 = F$.  We assume that we have defined
$F'_{i-1}$ and $u_{i-1}$.  In particular we have isomorphisms $F'_{i-1} \simeq
F|_{U_{i-1}} \simeq F_i|_{U_{i-1}}$ in $\Orb(\cor_{U_{i-1}})$.  By the
property~(i) of Lemma~\ref{lem:induction-troncation} $W_i$ is a neighborhood of
$(\SSi(F'_{i-1}) \cap \ol{C_i}) \setminus \{x_0\}$.  Hence we can apply
Lemma~\ref{lem:troncationSSorb} with $M=U_{i-1}$, $F = F'_{i-1}$, $F' = F_i$,
$C_0 = C_i$ and $W_0 = W_i$.  We obtain a neighborhood $U$ of $x_0$, $F'' \in
\Derb(\Cor_U)$ and a morphism $u \cl F|_U \to F''$ such that $u$ induces an
isomorphism in $\Orb(\cor_U)$ and
\begin{equation}\label{eq:repr-bonmicsup1}
\dot\SSi(F'') \cap T^*_{x_0}M \subset ((\SSi(F)\cap T^*_{x_0}M) \cup W_0 )
\setminus C_0 .
\end{equation}
We set $U_i = U$, $F'_i = F''$, $u_i = u \circ u_{i-1}$.
Then~\eqref{eq:repr-bonmicsup1} translates into
$$
\dot\SSi(F'_i) \cap T^*_{x_0}M
\subset (S_{i-1} \cup W_i) \setminus C_i = S_i,
$$ 
as required. 

\medskip\noindent
(iii) We set $F' = F_n$, $u = u_n$.  The lemma follows from the
property~(ii) of Lemma~\ref{lem:induction-troncation}.
\end{proof}

\begin{proposition}\label{prop:trian-ineq-SSo}
Let $F' \to F \to F'' \to[+1]$ be a distinguished triangle
in $\Orb(\cor_M)$. Then $\SSo(F) \subset \SSo(F') \cup \SSo(F'')$.
\end{proposition}
\begin{proof}
(i) Let $p=(x_0;\xi_0) \in T^*M$ be given. We assume that $p\not\in \SSo(F')
\cup \SSo(F'')$ and we prove that $p\not\in \SSo(F)$.  Let us set for short $G
= F''$ and $H= F'[1]$.  Let $u \cl G \to H$ be the morphism of the triangle.
It is enough to find a neighborhood $U$ of $x_0$ and a morphism $u_0 \cl G_0
\to H_0$ in $\Derb(\Cor_U)$ whose image in $\Orb(\cor_U)$ is $u|_U$ and such
that $p\not\in \SSi(H_0) \cup \SSi(G_0)$. Indeed the cone of $u_0$ represents
$F$ and the result then follows from the triangular inequality for the usual
microsupport.

\medskip\noindent
(ii) By definition we can find $G_0 \in \Derb(\Cor_M)$ such that $p\not\in
\SSi(G_0)$ and $G_0 \simeq G$ in $\Orb(\cor_M)$. We can also find $H_1 \in
\Derb(\Cor_M)$ and a morphism $u_1 \cl G_0 \to H_1$ such $H_1 \simeq H$ in
$\Orb(\cor_M)$ and the image of $u_1$ is $u$.

By Proposition~\ref{prop:repr-bonmicsup} applied with $A=\rspos \xi_0$ there
exist a neighborhood $U$ of $x_0$ and a morphism $s \cl H_1|_U \to H_0$ such
that $s$ induces an isomorphism in $\Orb(\cor_U)$ and $p\not\in \SSi(H_0)$.  We
define $u_0 \cl G_0 \to H_0$ by $u_0 = s \circ u_1$. Then $u_0$ represents $u$
and we conclude by~(i).
\end{proof}

\begin{remark}\label{rem:supporb-SSo}
For $F \in \Orb(\cor_M)$ we have $\supporb(F) = T^*_MM \cap \SSo(F)$.  Indeed
we have $T^*_MM \cap \SSo(F) \subset \supporb(F)$
by~\eqref{eq:SSorb_local}. Conversely let us assume that $(x;0) \not\in
\SSo(F)$.  Then $F$ has a representative $F' \in \Derb(\Cor_M)$ such that
$(x;0) \not\in \SSi(F')$.  Then $F'$ vanishes in a neighborhood $U$ of $x$.
Hence $F$ also vanishes on $U$ and we have $x\not\in \supporb(F)$.
\end{remark}

\subsection{Functorial behavior}
We prove that $\SSo(\cdot)$ satisfies the same properties as $\SSi(\cdot)$
with respect to the usual sheaf operations.

\begin{proposition}\label{prop:im-inv-SSorb}
Let $f\cl M \to N$ be a morphism of manifolds.  Let $G\in \Orb(\cor_N)$. We
assume that $f$ is non-characteristic for $\SSo(G)$. Then $\SSo(\opb{f}G) \cup
\SSo(\epb{f}G) \subset f_d\opb{f_\pi}\SSo(G)$.
\end{proposition}
\begin{proof}
(i) The cases of $\opb{f}$ and $\epb{f}$ are similar and we only consider
$\opb{f}$.  We can write $f = p\circ i$ where $i\cl M \to M\times N$ is the
graph embedding and $p\cl M\times N \to N$ is the projection. Since the result
is compatible with the composition it is enough to consider the case of an
embedding and a submersion separately.

\medskip\noindent
(ii) We assume that $f$ is a submersion. Let $x \in M$ and set $y=f(x)$. Then
$$
\SSo(\opb{f}G)\cap T^*_xM
= \bigcap_{F'} \SSi(F')\cap T^*_xM 
\subset \bigcap_{G'} \SSi(\opb{f}G')\cap T^*_xM ,
$$
where $F'$ runs over the objects of $\Derb(\Cor_M)$ such that $F' \simeq
\opb{f}G$ in $\Orb(\cor_M)$ and $G'$ over the objects of $\Derb(\Cor_N)$ such
that $G' \simeq G$ in $\Orb(\cor_N)$.  Now the result follows from
Theorem~\ref{th:opboim}.

\medskip\noindent
(iii) We assume that $f$ is an embedding.  Let $(x_0;\xi_0) \in T^*M$ be such
that $(x_0;\xi_0) \not\in f_d(\SSo(G)\cap T^*_{x_0}N)$.  Let $l$ be the half
line $\rpos \cdot(x_0;\xi_0)$. Since $f$ is non-characteristic for $\SSo(G)$,
we have $\opb{f_d}(l) \cap \SSo(G) \subset \{x_0\}$.  By
Proposition~\ref{prop:repr-bonmicsup} there exist a neighborhood $V$ of $x_0$
and $G'\in \Derb(\Cor_V)$ such that $G' \simeq G$ in $\Orb(\cor_V)$ and
$\opb{f_d}(l) \cap \SSi(G') \subset \{x_0\}$. Then $\opb{f}G' \simeq \opb{f}G$
in $\Orb(\cor_{M\cap V})$ and $(x_0;\xi_0) \not\in f_d(\SSi(G')\cap
T^*_{x_0}N)$.  This proves $(x_0;\xi_0) \not\in \SSo(\opb{f}G)$, hence the
inclusion of the proposition.
\end{proof}

Let $E=\R^n$ be a vector space endowed with a norm $\|\cdot\|$.  We consider
coordinates $(x,t;\xi,\tau)$ on $T^*(E\times\R)$. For a given $a>0$ we let
$\gamma_a \subset E\times\R$ be the closed cone
\begin{equation}\label{eq:def-gammaa}
\gamma_a = \{(x,t);\; t\geq a \|x\| \}.
\end{equation}
Its polar cone is $\gamma_a^\circ =\{(\xi,\tau);\; \tau \geq \|\xi\|/a\}$.

\begin{lemma}\label{lem:im-dir-SSorb}
Let $F \in \Orb(\cor_{E\times\R})$. We assume that there exist a compact subset
$K$ of $E$ and $a>0$ such that $\SSo(F) \cap ((K\times \{0\}) \times
(\gamma_a^\circ \setminus \{0\})) = \emptyset$.  Then there exists $F' \in
\Derb(\Cor_{E\times\R})$ which represents $F$ such that $\SSi(F') \cap
((K\times \{0\}) \times (\gamma_a^\circ \setminus \{0\})) = \emptyset$.
\end{lemma}
\begin{proof}
(i) We set $M = E\times \R$.
Since microsupports are closed, the hypothesis holds with $a$ replaced by some
$b>a$.  By Proposition~\ref{prop:repr-bonmicsup}, for each $x\in K$ there exist
a neighborhood $U^x$ of $x$ and $F^x \in \Derb(\Cor_M)$ representing $F$ such
that $\SSi(F^x) \cap (U_x \times (\gamma_b^\circ \setminus \{0\})) =
\emptyset$.  We can find a finite number of points $x_i \in K$, $i \in I$, such
that $K \times \{0\} \subset \bigcup_{i\in I} U_{x_i}$.

Let $\varepsilon >0$ be given.  For $x\in E$ we let $C_x \subset E\times \R$ be
the truncated cone $C_x = ((x,\varepsilon) - \gamma_b) \cap (E \times
[-\varepsilon,\varepsilon])$.  We choose $\varepsilon > 0$ small enough such
that, for each $x\in K$, there exists $i\in I$ such that $C_x \subset
U_{x_i}$. We set $V = E \times \mo]-\varepsilon,+\infty[$, $W_x = \Int(C_x)$
and $W = \bigcup_{x\in K} W_x$.

\medskip\noindent
(ii) We choose $F_0 \in \Derb(\Cor_M)$ representing $F$.  We set $F' =
\cor_{\gamma_b\setminus \{0\}} \star (F_0)_V[1]$. Then we have the
distinguished triangle
\begin{equation}\label{eq:td_proj_cone_orb}
\cor_{\gamma_b} \star (F_0)_V \to (F_0)_V \to F' \to[+1].
\end{equation}
By Proposition~\ref{prop:cutoff} we have $\SSi(F') \cap (M \times
\Int(\gamma_b^{\circ})) = \emptyset$.  Hence $\SSi(F') \cap ((K\times \{0\})
\times (\gamma_a^\circ \setminus \{0\})) = \emptyset$.  In~(iii) we prove that
$F'|_W \simeq F|_W$ in $\Orb(\cor_W)$.  By Lemma~\ref{lem:modif-repr} we can
then extend $F'|_W$ to $F' \in \Derb(\Cor_{M})$ satisfying the conclusion of
the lemma.

\medskip\noindent
(iii) We define $\Phi \cl \Derb(\Cor_{M}) \to \Derb(\Cor_{M})$ by $\Phi(G) =
\cor_{\gamma_b} \star (G)_V$.  Then $\Phi$ induces a functor on
$\Orb(\cor_{M})$ such that $\Phi(Q_M(G)) \simeq Q_M(\Phi(G))$.

Since $W \subset V$, the assertion $F'|_W \simeq F|_W$ in $\Orb(\cor_W)$ that
we want to prove is equivalent to $Q_W(\Phi(F_0)|_W) \simeq 0$, by the
triangle~\eqref{eq:td_proj_cone_orb}.  By the definition of $W$ it is enough to
prove that $Q_{W_x}(\Phi(F_0)|_{W_x}) \simeq 0$ for any given $x\in K$.  Since
$F^{x_i}$ is a representative of $F$, for any $i\in I$, we have
$Q_{W_x}(\Phi(F_0)|_{W_x}) \simeq Q_{W_x}(\Phi(F^{x_i})|_{W_x})$. There exists
$i\in I$ such that $C_x \subset U_{x_i}$. Then $\SSi(F^{x_i}) \cap (C_x \times
(\gamma_b^\circ \setminus \{0\})) = \emptyset$ and Lemma~\ref{lem:easycutoff}
gives $\Phi(F^{x_i})|_{W_x} \simeq 0$. This implies $Q_{W_x}(\Phi(F_0)|_{W_x})
\simeq 0$, as required.
\end{proof}

\begin{proposition}\label{prop:im-dir-SSorb}
Let $f\cl M \to N$ be a morphism of manifolds.  Let $F\in \Orb(\cor_M)$.  We
assume that $f$ is proper on $\supporb(F)$. Then $\SSo(\reim{f}F)\subset
f_\pi\opb{f_d}\SSo(F)$.
\end{proposition}
\begin{proof}
(i) As in the proof of Proposition~\ref{prop:im-inv-SSorb} we can reduce the problem
to the cases where $f$ is an embedding or a projection. The case of an
embedding is similar to the part~(ii) of the proof of
Proposition~\ref{prop:im-inv-SSorb}.  We assume now that $M=M'\times N$ and $f$
is the projection.  Since the problem is local on $N$ we can assume that $N
\simeq \R^d$ for some $d$.  By first embedding $M'$ into some $\R^e$ and using
the embedding case we are reduced to the case of a projection $f \cl \R^n \to
\R^d$.

\medskip\noindent
(ii) We assume $f$ is the projection $M = M' \times N \to N$, with $M'=\R^e$ and
$N=\R^d$.

Let $q=(y;\eta) \in T^*N$ be such that $q\not\in f_\pi\opb{f_d}\SSo(F)$.  Let us
prove that $q\not\in \SSo(\reim{f}F)$. For this it is enough to give $F' \in
\Derb(\Cor_M)$ representing $F$ such that $f$ is proper on $\supp(F')$ and
$\SSi(F') \cap T^*_{M'}M' \times \{q\} = \emptyset$.

If $\eta = 0$, then $\supporb(F) \cap (M'\times\{y\}) = \emptyset$ by
Remark~\ref{rem:supporb-SSo}. We deduce by~\eqref{eq:supporb-OK} that $y
\not\in \supporb(\reim{f}F)$.  Hence $q\not\in \SSo(\reim{f}F)$.

We assume now $\eta\not= 0$.  We choose coordinates so that $y=(0,0)$ and $\eta
= (0,1)$.  We set $Z = \supporb(F) \cap \opb{f}(y)$.  Then $Z$ is compact and
we can find a cone $\gamma_a$ like~\eqref{eq:def-gammaa} such that $\dot\SSo(F)
\cap (Z \times \gamma_a^\circ) = \emptyset$.  We can also find $x \in \R^{e+d}$
such that $C = x-\gamma_a$ contains $Z$.  Then, in a neighborhood of $y$, we
have $\reim{f}F \simeq \reim{f}(F_C)$.

We set $K = C \cap (\R^{e+d-1}\times \{0\}) \cap \opb{f}(y)$.  By
Lemma~\ref{lem:im-dir-SSorb} there exists $F' \in \Derb(\Cor_{\R^{e+d}})$ which
represents $F$ such that $\dot\SSi(F') \cap ((K\times \{0\}) \times
\gamma^\circ) = \emptyset$.  Then $\SSi(F'_C)$ satisfies the same relation and
we deduce that $(y;\eta) \not\in \SSi(\reim{f}(F'_C))$.  Since $\reim{f}(F'_C)$
represents $\reim{f}F$ around $y$ this gives the result.
\end{proof}

\begin{proposition}\label{prop:SSo-tens-hom}
Let $F,G\in\Orb(\cor_M)$. 
\begin{itemize}
\item [(i)] We assume that $\SSo(F)\cap\SSo(G)^a\subset T^*_MM$. Then \\
  $\SSo(F\epstens_{\cor_M} G)\subset \SSo(F)+\SSo(G)$.
\item [(ii)] We assume that $\SSo(F)\cap\SSo(G)\subset T^*_MM$. Then \\
  $\SSo(\rhomeps(F,G))\subset \SSo(F)^a+\SSo(G)$. 
\end{itemize}
\end{proposition}
\begin{proof}
Let us prove~(i). Let $x_0 \in M$ and let $A , B \subset T^*_{x_0}M$ be conic
neighborhoods of $\SSo(F)\cap T^*_{x_0}M$, $\SSo(G)\cap T^*_{x_0}M$ such that
$A\cap B^a \subset \{x_0\}$.  By Proposition~\ref{prop:repr-bonmicsup} we can
find representatives $F', G' \in \Derb(\Cor_M)$ of $F,G$ such that $\SSi(F')\cap
T^*_{x_0}M \subset A$, $\SSi(G')\cap T^*_{x_0}M \subset B$.  Since microsupports
are closed we have $\SSi(F')\cap\SSi(G')^a\subset T^*_UU$ for some neighborhood
$U$ of $x_0$.  Then Corollary~\ref{cor:opboim} gives $\SSi(F'\epstens_{\cor_M}
G') \cap T^*_{x_0}M \subset A+B$.  Since $A$ and $B$ are arbitrarily close to
our microsupports we deduce~(i). The proof of~(ii) is the same.
\end{proof}

\subsection{Microsupport in the zero section}
In Proposition~\ref{prop:SSorb-sectnulle} we give a special case of
Theorem~\ref{th:opboim}~(iv) for $\SSo$.

\begin{lemma}\label{lem:recouvrt-cubes}
Let $C=[a,b]^d$ be a compact cube in $\R^d$ and let $\{U_i\}_{i\in I}$ be a
family of open subsets of $\R^d$ such that $C\subset \bigcup_{i\in I}
U_i$. Then there exists a finite family of open subsets $\{V_n\}$,
$n=1,\ldots,N$, such that
\begin{itemize}
\item [(i)] for each $n=1,\ldots,N$ there exists $i\in I$ such that $V_n
  \subset U_i$,
\item [(ii)] $C\subset \bigcup_{n=1}^N V_n$,
\item [(iii)] $V_n$ is contractible, for each $n=1,\ldots,N$,
\item [(iv)] $(\bigcup_{k=1}^{n} V_k) \cap V_{n+1}$ is contractible, for each
  $n=1,\ldots,N-1$.
\end{itemize}
\end{lemma}
\begin{proof}
For $x\in \R^d$ and $\varepsilon>0$ we set $C_x^\varepsilon =
x+\mo]-\varepsilon,\varepsilon[^d$.  We can choose $\varepsilon>0$ such that,
for any $x\in C$, there exists $i\in I$ satisfying $C_x^\varepsilon \subset
U_i$.  We let $x_n$, $n=1,\ldots,N$, be the points of the lattice $C\cap
(\varepsilon \Z)^d$ ordered by the lexicographic order of their coordinates.
Then the family $V_n = C_{x_n}^\varepsilon$, $n=1,\ldots,N$, satisfies the
required properties.
\end{proof}

\begin{proposition}\label{prop:SSorb-sectnulle}
Let $E=\R^d$ and $F\in \Orb(\cor_E)$.  We assume that $\SSo(F) \subset
T^*_EE$. Then there exists $A\in \Orb(\cor)$ such that $F\simeq A_E$.
\end{proposition}
\begin{proof}
(i) By Proposition~\ref{prop:repr-bonmicsup}, for any $x\in E$ there exists a
representative of $F$, say $F^x$, in $\Derb(\Cor_E)$ such that $\SSi(F^x) \cap
T^*_xE \subset \{x\}$.  Since microsupports are closed there exists an open
neighborhood of $x$, say $U_x$, such that $\SSi(F^x) \cap \dT^*U_x =
\emptyset$, that is, $F^x|_{U_x}$ is constant. In other words, there exists
$A^x\in \Derb(\Cor)$ such that $F^x|_{U_x} \simeq A^x_{U_x}$.  In particular we
have $F|_{U_x} \simeq A^x_{U_x}$ in $\Orb(\cor_{U_x})$.

\medskip\noindent
(ii-a) We set $x_0=0$ and $A = A^{x_0}$.  We also define $I_n = \mo]-n,n[^d$
for $n\in \N\setminus\{0\}$. Let us prove that $F|_{I_n} \simeq A_{I_n}$ in
$\Orb(\cor_{I_n})$ for any $n\geq 1$.

The family $\{U_x\}_{x\in \ol{I_n}}$ covers $\ol{I_n}$. Hence by
Lemma~\ref{lem:recouvrt-cubes} we can find a finite subcovering $\{V_k\}$,
$k=1,\ldots,K$, by contractible open subsets such that for each
$k=1,\ldots,K-1$ the set $(\bigcup_{j=1}^{k} V_j) \cap V_{k+1}$ is
contractible.  We set $W_k = \bigcup_{j=1}^{k} V_j$. In~(ii-b) we will prove by
induction on $k$ that $F|_{W_k} \simeq A' _{W_k}$ for some $A'\in
\Orb(\cor)$.  For $k=K$ we will have $F|_{I_n} \simeq A' _{I_n}$. Then
$F_{x_0} \simeq A' \simeq A$ and $F|_{I_n} \simeq A_{I_n}$, as claimed.

\medskip\noindent
(ii-b) Now we prove $F|_{W_k} \simeq A'_{W_k}$ for $k=1,\ldots,K$.  For $k=1$
we have $W_1=V_1 \subset U_{x_1}$ for some $x_1\in \ol{I_n}$.  Hence $F|_{W_1}
\simeq A^{x_1} _{W_1}$.  We set $A'=A^{x_1}$.  Let us assume $F|_{W_k} \simeq
A' _{W_k}$ for some $k$.  We set $Y_k = W_k \cap V_{k+1}$.  We choose $y$ such
that $V_{k+1} \subset U_y$ and we set $A''= A^y$.

Then $F|_{V_{k+1}} \simeq A''_{V_{k+1}}$.  Since $F_{Y_k} \simeq A'_{Y_k}\simeq
A''_{Y_k}$ we have $A' \simeq A''$.  Hence we have a Mayer-Vietoris triangle
$A'_{Y_k} \to[u] A'_{W_k}\oplus A'_{V_{k+1}} \to F|_{W_{k+1}} \to[+1]$ in
$\Orb(\cor_{W_{k+1}})$ where $u$ is of the form $(1,v)$ for some isomorphism $v
\in \Hom(A'_{Y_k},A'_{Y_k})$.
Since $Y_k$ and $V_{k+1}$ are contractible, Corollary~\ref{cor:morph-QRF-QRG}
gives
$$
\Hom(A'_{Y_k},A'_{Y_k}) \simeq \Hom(A',A')
 \simeq  \Hom(A'_{V_{k+1}},A'_{V_{k+1}}) .
$$
Hence $v$ can be extended to $V_{k+1}$ and we can define the commutative square
below:
$$
\xymatrix@C=1.5cm{
A'_{Y_k} \ar[r]^-{(1,v)} \ar@{=}[d]
& A'_{W_k}\oplus A'_{V_{k+1}} \ar[r] \ar[d]^{\left(\begin{smallmatrix}
    1 & 0 \\ 0 & v^{-1} \end{smallmatrix} \right) }_\wr
& F|_{W_{k+1}} \ar[r]^{+1}  \ar@{.>}[d] &  \\
A'_{Y_k} \ar[r]^-{(1,1)}
& A'_{W_k}\oplus A'_{V_{k+1}} \ar[r]
& A'_{W_{k+1}} \ar[r]^{+1} & \pointdiag
}
$$
We extend this square to an isomorphism of triangles and we obtain $F|_{W_{k+1}}
\simeq A'_{W_{k+1}}$, as required.

\medskip\noindent
(iii) As claimed in~(ii-a) we have $F_{I_n} \simeq A_{I_n}$ for all $n \in
\N\setminus\{0\}$.  We can assume that these isomorphisms are compatible with
the morphisms $i_n\cl (\cdot)_{I_n} \to (\cdot)_{I_{n+1}}$. Hence we obtain a
commutative square on the first two terms of the following triangles deduced
from~\eqref{eq:homot-lim}
$$
\xymatrix{
\oplus_n F_{I_n} \ar[r]^u\ar[d]^\wr & \oplus_n F_{I_n} \ar[r]\ar[d]^\wr
& F \ar[r]^{+1} \ar@{.>}[d] & \\
\oplus_n A_{I_n} \ar[r]^u & \oplus_n A_{I_n} \ar[r] & A_E \ar[r]^{+1} &,
}
$$
where the $n^{th}$-component of $u$ is $\id - i_n$. We extend this
square to an isomorphism of triangles and we see that $F \simeq A_E$.
\end{proof}

\part{The Kashiwara-Schapira stack}

In this part we define the Kashiwara-Schapira stack $\kss(\cor_\Lambda)$
quickly described in the introduction and we prove that it is equivalent to a
twisted stack of twisted local systems on $\Lambda$.  We prove it in the three
main steps described below.  For $p\in \Lambda$ we have the Lagrangian
subspaces of $T_pT^*M$ given by $\lambda_\Lambda(p) = T_p\Lambda$ and
$\lambda_0(p) = T_p\opb{\pi}\pi(p)$, where $\pi\cl T^*M \to M$ is the
projection.  Let $\sigma\cl \lag_M \to T^*M$ be the Lagrangian Grassmannian of
$T^*M$ and let $U_\Lambda$ be the open subset of $\lag_M|_\Lambda$ formed by
the $l$ which are transversal to $\lambda_\Lambda(p)$ and $\lambda_0(p)$, where
$p=\sigma(l)$.

Our first step is the definition of a functor
$$
m_\Lambda \cl \Derb_{(\Lambda)}(\cor_M) \to \Derb(\cor_{U_\Lambda}).
$$
We call $m_\Lambda(F)$  the sheaf of microlocal germs of $F$.
To explain the definition of $m_\Lambda(F)$ we first recall that
$\SSi(F)$ is the closure of the set of $p=(x;\xi)$ satisfying:
there exists $\varphi\cl M\to \R$ with $d\varphi_x =\xi$
such that $(\rsect_{\varphi\geq 0}F)_x \not= 0$.
We assume that $F\in \Derb_{(\Lambda)}(\cor_M)$ and $p\in \Lambda$.
 We set
$l= T_p\Lambda_\varphi$, where $\Lambda_\varphi = \{(x;d\varphi_x)\}$. It is
proved in~\cite{KS90} that, if $l \in U_\Lambda$, then
$(\rsect_{\varphi\geq 0}F)_x$ only depends on $l$.
We set $m_{\Lambda,l}(F) = (\rsect_{\varphi\geq 0}F)_x$. It is also proved
in~\cite{KS90} that all $m_{\Lambda,l}(F)$, for $l\in U_{\Lambda}$, are
isomorphic, up to a shift given by the Maslov index.
If $m_{\Lambda,l}(F)$ is concentrated in one degree, then $F$ is said pure
along $\Lambda$. If $m_{\Lambda,l}(F) \simeq \cor[d]$ for some $d\in\Z$, then
$F$ is said simple along $\Lambda$.
We prove that it is possible to define
$m_{\Lambda}(F) \in \Derb(\cor_{U_\Lambda})$ with stalks $m_{\Lambda,l}(F)$ and
with locally constant cohomology sheaves.
We let $\Dloc(\cor_{U_\Lambda})$ be the stack associated with the subprestack of
$\Derb(\cor_{U_\Lambda})$ formed by the complexes with 
locally constant cohomology sheaves. Then we obtain a functor
$\mks_\Lambda\cl \kss(\cor_\Lambda) \to \Dloc(\cor_{U_\Lambda})$.

The second step is to understand the link between the $\mks_\Lambda(F)|_U$ for
the different connected components $U$ of $U_\Lambda$.  We let $U_\Lambda^n$ be
the fiber product of $U_\Lambda$ over $\Lambda$, $n$ times. If $I\subset [1,n]$
is a set of indices, we let $q_I\cl U_\Lambda^n \to U_\Lambda^{|I|}$ be the
projection to the corresponding factors.
We introduce the Maslov sheaf of $\Lambda$,
$\shm_\Lambda \in \Dloc(\cor_{U_\Lambda^2})$, obtained as
$\shm_\Lambda = \mks_U(\shk_\Lambda)$, where $U$ is a neighborhood of the diagonal
of $\Lambda\times \Lambda$ and $\shk_\Lambda \in \kss(\cor_U)$ is a canonical
object defined on $U$.  For $(l,l') \in U_\Lambda^2$ we can see that
$(\shm_\Lambda)_{(l,l')} \simeq \cor[d]$, where $d \in \Z$ is given by some
Maslov index associated with $(l,l')$. Moreover we have isomorphisms
$$
\opb{\qq_{12}} \shm_\Lambda \dltens \opb{\qq_{23}} \shm_\Lambda \isoto
\opb{\qq_{13}} \shm_\Lambda
\text{ and }
\shm_\Lambda \dltens \opb{q_2} \mks_{\Lambda}(F) \isoto
\opb{q_1} \mks_{\Lambda}(F),
$$
for any $F \in \kss(\cor_{\Lambda})$, which satisfy natural commutative
diagrams.
In particular, for any $p\in \Lambda$ and any connected components
$U,V \subset U_\Lambda \cap\opb{\sigma}(p)$ the restriction
$\mks_\Lambda(F)|_U$ determines $\mks_\Lambda(F)|_V$.
We introduce the stack $\kss_{mg}(\cor_\Lambda)$ of pairs $(L,u)$,
where $L\in \Dloc(\cor_{U_\Lambda})$ and $u$ is an isomorphism
$\shm_\Lambda \dltens \opb{q_2} L \isoto \opb{q_1} L$ satisfying the
same diagram as $\mks_{\Lambda}(F)$.
Then we prove that $\mks_\Lambda$ induces an equivalence
$\mks'_\Lambda \cl \kss(\cor_\Lambda) \isoto \kss_{mg}(\cor_\Lambda)$.

The third step is a to give an equivalence between $\kss_{mg}(\cor_\Lambda)$
and the twisted stack $(\bigoplus_{i\in \Z}
\loceps(\cor_\Lambda)[i])_{m(\Lambda)}$ already defined in the introduction.
For this we embed $U_\Lambda$ in a fiber bundle $\shi_\Lambda$ and prove that,
up to shifts in the degrees, $\mks_\Lambda(F)$ extends as a local system on
$\shi_\Lambda$.

\section{Definition of the Kashiwara-Schapira stack}
\label{sec:KSstack}

We follow the notations of~\cite[\S 7.5]{KS90}.  Let $M$ be a manifold and
$\Lambda$ a locally closed conic Lagrangian submanifold of $\dT^*M$.  We
introduce the Kashiwara-Schapira stack of $\Lambda$.  The notion of stack used
here is that of ``sheaf of categories''.  We refer for example to~\cite[\S
19]{KS06}.  A prestack $\catc$ on a topological space $X$ consists of the data
of a category $\catc(U)$, for each open subset $U$ of $X$, restriction functors
$r_{V,U} \cl \catc(U) \to \catc(V)$, for $V\subset U$, and isomorphisms of
functors $r_{W,V} \circ r_{V,U} \simeq r_{W,U}$, for $W\subset V\subset U$,
satisfying compatibility conditions.

A stack is a prestack satisfying some gluing conditions.  In particular, if
$A,B \in \catc(U)$, then the presheaf $V \mapsto \Hom_{\catc(V)}(A|_V,B|_V)$ is
a sheaf on $U$.  Moreover, if $U = \bigcup_{i\in I} U_i$ and $A_i \in
\catc(U_i)$ are given objects with compatible isomorphisms between their
restrictions on the intersections $U_i \cap U_j$, then these objects glue into
an object of $\catc(U)$.

For any given prestack we can construct its associated stack, similar to the
associated sheaf of a presheaf.

\medskip 

We use the categories associated with a subset of $T^*M$ introduced in
Notation~\ref{not:micro_categories}.
\begin{definition}
\label{def:KSstack}
Let $\Lambda \subset T^*M$ be a locally closed conic subset.
We define a prestack $\kss^0_\Lambda$ on $\Lambda$ as follows.
Over an open subset $\Lambda_0$ of $\Lambda$ the objects of
$\kss^0_\Lambda(\Lambda_0)$ are those of $\Derb_{(\Lambda_0)}(\cor_M)$.
For $F,G \in \kss^0_\Lambda(\Lambda_0)$ we set
$$
\Hom_{\kss^0_\Lambda(\Lambda_0)}(F,G) \eqdot
\Hom_{\Derb(\cor_M;\Lambda_0)}(F,G) .
$$
We define the Kashiwara-Schapira stack of $\Lambda$ as the stack associated with
$\kss^0_\Lambda$. We denote it by $\kss(\cor_\Lambda)$ and, for
$\Lambda_0 \subset \Lambda$, we write $\kss(\cor_{\Lambda_0})$ instead of
$\kss(\cor_\Lambda)(\Lambda_0)$.

We denote by
$\kssfunc_\Lambda \cl \Derb_{(\Lambda)}(\cor_M) \to \kss(\cor_\Lambda)$ the
obvious functor. However, for $F \in \Derb_{(\Lambda)}(\cor_M)$, we often write
$F$ instead of $\kssfunc_\Lambda(F)$ if there is no risk of ambiguity.
\end{definition}

Several results in the next sections give links between $\kss(\cor_\Lambda)$
and stacks of the following type.

\begin{definition}
\label{def:Dloc}
Let $X$ be a topological space. We let $\Dloc^0(\cor_X)$ be the subprestack of
$U \mapsto \Derb(\cor_U)$, $U$ open in $X$, formed by the $F\in \Derb(\cor_U)$
with locally constant cohomologically sheaves.  We let $\Dloc(\cor_X)$ be the
stack associated with $\Dloc^0(\cor_X)$.
We denote by $\loc(\cor_X)$ the substack of $\Mod(\cor_X)$ formed by the
locally constant sheaves.
\end{definition}
We remark that $\Dloc(\cor_X)$ is only a stack of additive categories (the
triangulated structure is of course lost in the ``stackification'').  However
the cohomological functors $H^i \cl \Derb(\cor_U) \to \Mod(\cor_U)$ induce
functors of stacks $H^i \cl \Dloc(\cor_X) \to \loc(\cor_X)$ and the natural
embedding $\Mod(\cor_U) \hookrightarrow \Derb(\cor_U)$ induces $i \cl
\loc(\cor_X) \to \Dloc(\cor_X)$.  We have $H^0 \circ i \simeq
\id_{\loc(\cor_X)}$. Hence $i$ is faithful and $\loc(\cor_X)$ is a subcategory
of $\Dloc(\cor_X)$.

When $\Lambda$ is a locally closed conic Lagrangian submanifold of $\dT^*M$,
our main result on $\kss(\cor_\Lambda)$ is
Theorem~\ref{thm:KSstack=faisc_tordus} which says that it is equivalent to a
stack of twisted local systems. 

\medskip

For $\Omega \subset T^*M$, we recall the link between the sections of $\mu hom$
and the morphisms in $\Derb(\cor_M;\Omega)$. A morphism $u\cl F\to G$ in
$\Derb(\cor_M;\Omega)$ is represented by a triple $(F',s,u')$ with $F'\in
\Derb(\cor_M)$ and
\begin{equation*}
F\from[s] F' \to[u'] G
\end{equation*}
such that the $L$ defined (up to isomorphism) by the distinguished triangle
$F'\to[s] F \to L \to[+1]$ satisfies $\Omega \cap \SSi(L) = \emptyset$.
By~\eqref{eq:suppmuhom} we see that $s$ induces an isomorphism
$\mu hom(F,G)|_\Omega \isoto \mu hom(F',G) |_\Omega$. On the other
hand~\eqref{eq:proj_muhom_oim} gives a morphism
$\Hom(F',G) \to H^0(\Omega;\mu hom(F',G))$. Hence $u'$ induces an element in
$H^0(\Omega;\mu hom(F,G))$. Finally we obtain a well-defined morphism
\begin{equation}\label{eq:HomDkMOmega_vers_muhom}
\Hom_{\Derb(\cor_M;\Omega)}(F,G) \to H^0(\Omega;\mu hom(F,G)|_\Omega) .
\end{equation}
\begin{theorem}[Thm.~6.1.2 of~\cite{KS90}]
\label{thm:HomDkMp=muhomp}
If $\Omega = \{p\}$ for some $p\in T^*M$,
then~\eqref{eq:HomDkMOmega_vers_muhom} is an isomorphism.
\end{theorem}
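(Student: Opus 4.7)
The plan is to identify both sides with the same filtered colimit indexed by local data near $p$.

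For the left-hand side I use the standard calculus of fractions for the localization $\Derb(\cor_M;p) = \Derb(\cor_M)/\Derb_{T^*M\setminus\{p\}}(\cor_M)$ to write
$$
\Hom_{\Derb(\cor_M;p)}(F,G) \ \simeq\ \varinjlim_{(F',s)} \Hom_{\Derb(\cor_M)}(F',G),
$$
the colimit running over roofs $F \from[s] F'$ whose cone $L$ satisfies $p\notin \SSi(L)$. Since $\SSi(L)$ is closed and $\R^+$-conic, this is equivalent to asking that $\SSi(L)\cap\Omega=\emptyset$ for some open conic neighborhood $\Omega$ of $p$. For the right-hand side, $H^0(\{p\};\mu hom(F,G)|_p) = (\mu hom(F,G))_p$ is the stalk of $\mu_{\Delta_M}(\rhom(\opb{q_2}F,\epb{q_1}G))$ at $p$ viewed in $T^*_{\Delta_M}(M\times M)\simeq T^*M$. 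The stalk of Sato's microlocalization $\mu_N(H)_{(x_0;\xi_0)}$ admits a concrete description as a filtered colimit
$$
\mu_N(H)_{(x_0;\xi_0)} \ \simeq\ \varinjlim_{(U,Z)} H^0\rsect_Z(U;H),
$$
where $U$ ranges over open neighborhoods of $x_0$ and $Z \subset U$ over closed half-spaces of the form $\{\phi\geq 0\}$ with $\phi(x_0)=0$ and $d\phi(x_0)$ pointing toward $\xi_0$. Unfolding the definition of $\mu hom$ and using the adjunction $(\opb{q_2},\roim{q_2})$ rewrites $(\mu hom(F,G))_p$ as a filtered colimit of $\Hom(F\ltens \cor_{Z'},G)$-groups for suitable half-spaces $Z'$ adapted to $p$.

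The bridge between the two colimits is supplied by the microlocal cut-off lemma. Given such a half-space $Z'=\{\phi\geq 0\}$, Example~\ref{ex:microsupport}~(iii) computes $\SSi(\cor_{Z'})$ explicitly, and Corollary~\ref{cor:opboim}~(i) then shows that the natural morphism $F\ltens \cor_{Z'} \to F$ has mapping cone with microsupport disjoint from some open conic neighborhood of $p$. This turns each representative on the right into a legitimate roof $F\from F\ltens \cor_{Z'}\to G$ on the left, and conversely, any roof $(F',s)$ on the left produces a section of $\mu hom$ near $p$ via the map~\eqref{eq:HomDkMOmega_vers_muhom}. Compatibility of these assignments with the two indexing systems yields candidate mutually inverse maps.

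The main obstacle — and the technical heart of the argument — is the cofinality step: one must show that every roof $(F',s)$ killing $p$ in the microsupport of the cone admits a refinement by a half-space cut-off of the form $F\ltens \cor_{Z'}\to F$ with $Z'$ adapted to $p$. This is established by the microlocal cut-off lemma of Kashiwara--Schapira, which smoothly truncates $F$ by a function whose differential at the base point equals $\xi_0$. Once this cofinality is in hand, the composition in both directions collapses to the identity on each level of the colimit, and passing to the limit gives the isomorphism claimed in the theorem.
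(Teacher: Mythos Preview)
The paper does not prove this theorem; it is quoted as Theorem~6.1.2 of~\cite{KS90} and no argument is given. There is therefore nothing in the paper to compare your attempt against.

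As for the substance of your sketch: the overall strategy---rewriting both sides as filtered colimits and matching the index categories via a cut-off---is indeed the shape of the argument in \cite{KS90}, but several steps are not justified. The stalk formula you write for $\mu_N(H)$ is too loose: the colimit in \cite{KS90} runs over closed convex cones $Z$ with $Z\subset U$ and polar cone contained in a prescribed conic neighborhood of $\xi_0$, not merely half-spaces with $d\phi(x_0)$ ``pointing toward'' $\xi_0$, and the passage from $\mu_{\Delta_M}(\rhom(\opb{q_2}F,\epb{q_1}G))$ to expressions of the form $\Hom(F\ltens\cor_{Z'},G)$ requires a genuine computation (in \cite{KS90} this is done through the normal deformation and involves several intermediate identifications, not a single adjunction). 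More seriously, the cofinality step you flag as the ``technical heart'' is asserted but not carried out: the refined cut-off lemma (Proposition~\ref{prop:cutoff} here, Proposition~6.1.4 in \cite{KS90}) produces an $F'$ with controlled microsupport over the fiber $T^*_xM$, but showing that every roof in the localized category is dominated by one coming from a tensor with $\cor_{Z'}$ is exactly what needs proof, and the cut-off lemma alone does not obviously give it in the form you need. Your sketch is a plausible outline, but as written it does not constitute a proof.
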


For $F,G,H \in \Derb(\cor_M)$ we have a composition morphism
(see~\cite[Cor.~4.4.10]{KS90})
\begin{equation}\label{eq:comp_muhom}
  \mu hom(F,G) \ltens \mu hom(G,H) \to \mu hom(F,H) .
\end{equation}
It is compatible with the composition morphism for $\rhom$ through the
isomorphism $\rhom(F,G) \simeq \roim{\pi_M}\mu hom(F,G)$.
Hence for a given open subset $\Omega$ of $T^*M$ the morphism
induced by~\eqref{eq:comp_muhom} on the sections over $\Omega$ is compatible
with the composition in $\Derb(\cor_M;\Omega)$
through~\eqref{eq:HomDkMOmega_vers_muhom}.
\begin{notation}
\label{not:mucomposition}
For $F,G \in \Derb(\cor_M)$ and sections $a$ of $\mu hom(F,G)$ and $b$ of
$\mu hom(G,H)$, we denote by $b\mucirc a$ the image of $a\tens b$
by~\eqref{eq:comp_muhom}.
\end{notation}

It follows from Theorem~\eqref{thm:HomDkMp=muhomp} that the sheaf
associated with $\Omega \mapsto \Hom_{\Derb(\cor_M;\Omega)}(F,G)$ is
$H^0\mu hom(F,G)$, for given $F,G \in \Derb(\cor_M)$. We obtain
an alternative definition of $\kss(\cor_\Lambda)$:
\begin{corollary}\label{cor:defbiskss}
Let $\Lambda \subset T^*M$ be as in Definition~\ref{def:KSstack}.
We define a prestack $\kss^1_\Lambda$ on $\Lambda$ as follows.
Over an open subset $\Lambda_0$ of $\Lambda$ the objects of
$\kss^1_\Lambda(\Lambda_0)$ are those of $\Derb_{(\Lambda_0)}(\cor_M)$.
For $F,G \in \kss^1_\Lambda(\Lambda_0)$ we set
$\Hom_{\kss^1_\Lambda(\Lambda_0)}(F,G)
\eqdot H^0(\Lambda_0;\mu hom(F,G)|_{\Lambda_0})$.
The composition is induced by~\eqref{eq:comp_muhom}.
Then, the natural functor of prestacks $\kss^0_\Lambda \to \kss^1_\Lambda$
induces an isomorphism on the associated stacks.
\end{corollary}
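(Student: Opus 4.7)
My plan exploits the general principle that the stack associated with a prestack is determined by its class of objects together with the sheafifications of its Hom presheaves. Since $\kss^0_\Lambda$ and $\kss^1_\Lambda$ have by definition the same objects, it will suffice to show that the natural morphism of Hom presheaves $\Lambda_0 \mapsto \Hom_{\Derb(\cor_M;\Lambda_0)}(F,G)$ to $\Lambda_0 \mapsto H^0(\Lambda_0; \mu hom(F,G)|_{\Lambda_0})$, given by~\eqref{eq:HomDkMOmega_vers_muhom}, induces an isomorphism after sheafification on $\Lambda$.

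First I would verify that~\eqref{eq:HomDkMOmega_vers_muhom} is functorial under restriction to smaller open subsets of $\Lambda$ (clear from the construction) and compatible with composition. Compatibility with composition is the compatibility of~\eqref{eq:comp_muhom} with the composition in $\Derb(\cor_M;\Omega)$ via the isomorphism $\rhom(F,G) \simeq \roim{\pi_M}\mu hom(F,G)$, as noted in the excerpt just after~\eqref{eq:comp_muhom}. This upgrades the collection of maps~\eqref{eq:HomDkMOmega_vers_muhom} into an honest morphism of prestacks $\kss^0_\Lambda \to \kss^1_\Lambda$, namely the natural functor appearing in the statement.

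Second, the target presheaf is already a sheaf on $\Lambda$: the assignment $\Lambda_0 \mapsto H^0(\Lambda_0; \mu hom(F,G)|_{\Lambda_0})$ is the restriction to $\Lambda$ of the sheaf $H^0 \mu hom(F,G)$. Hence it suffices to check that the morphism of presheaves is an isomorphism on every stalk at $p\in \Lambda$. By the very definition of the localized category as a Verdier quotient, the stalk at $p$ of the source is the filtered colimit over open neighborhoods $\Lambda_0$ of $p$ in $\Lambda$ of $\Hom_{\Derb(\cor_M;\Lambda_0)}(F,G)$, which coincides with $\Hom_{\Derb(\cor_M;p)}(F,G)$; the stalk of the target is $(H^0 \mu hom(F,G))_p$. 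That the induced map between these stalks is an isomorphism is precisely Theorem~\ref{thm:HomDkMp=muhomp}.

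Once the Hom presheaves have been identified up to sheafification, the conclusion is formal: the passage from a prestack to its associated stack factors through (i) sheafifying Hom presheaves to obtain a separated prestack, and then (ii) adding descent for objects. The prestacks $\kss^0_\Lambda$ and $\kss^1_\Lambda$ both produce, after step (i), the same separated prestack, so their associated stacks are canonically equivalent. The real content therefore lies in the $\mu hom$ stalk computation of Theorem~\ref{thm:HomDkMp=muhomp}; once that input is granted, everything else is routine bookkeeping about prestacks and sheafification, which I expect to be straightforward rather than the main obstacle.
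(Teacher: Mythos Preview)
Your proposal is correct and follows the same approach as the paper: the paper simply notes, in the sentence immediately preceding the corollary, that Theorem~\ref{thm:HomDkMp=muhomp} identifies the sheafification of $\Lambda_0 \mapsto \Hom_{\Derb(\cor_M;\Lambda_0)}(F,G)$ with $H^0\mu hom(F,G)$, and the corollary is then stated without further proof. Your write-up spells out the same argument with more care (functoriality, compatibility with composition, and the stalk computation), but the key input and the overall strategy are identical.
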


\begin{remark}\label{rem:description_KSstack}
By Corollary~\ref{cor:defbiskss} an object of $\kss(\cor_\Lambda)$ is determined
by the data of an open covering $\{\Lambda_i\}_{i\in I}$ of $\Lambda$, objects
$F_i \in \Derb_{(\Lambda_i)}(\cor_M)$, for any $i\in I$, and sections $u_{ji}\in
H^0(\Lambda_{ij};\mu hom(F_i,F_j)|_{\Lambda_{ij}})$, for any $i,j\in I$, such that
\begin{itemize}
\item [(i)] $u_{ii}$ is induced by $\id_{F_i}$, for any $i\in I$,
\item [(ii)] $u_{kj} \mucirc u_{ji} = u_{ki}$, for any $i,j,k \in I$.
\end{itemize}
\end{remark}

\section{Simple sheaves}\label{sec:simpsheaves}

In this section we assume that $\Lambda$ is a locally closed conic Lagrangian
submanifold of $\dT^*M$.  We recall the definition of simple and pure sheaves
along $\Lambda$ and give some of their properties.

\subsection{Definition and first properties}
We first recall some notations from~\cite{KS90}.  For a function $\varphi\cl
M\to \R$ of class $C^\infty$ we define
\begin{equation}
\label{eq:def_Lambdaphi}
  \Lambda_\varphi = \{(x; d\varphi(x)); \; x\in M\}  .
\end{equation}
We notice that $\Lambda_\varphi$ is a closed Lagrangian submanifold of $T^*M$.
For a given point $p=(x;\xi) \in \Lambda \cap \Lambda_\varphi$ we have the
following Lagrangian subspaces of $T_p(T^*M)$
\begin{equation}\label{eq:def_lambdas_p}
\lambda_0(p) = T_p(T_x^*M), \qquad 
\lambda_\Lambda(p) = T_p\Lambda, \qquad
\lambda_\varphi(p) = T_p\Lambda_\varphi.
\end{equation}
We recall the definition of the inertia index (see for example \S A.3
in~\cite{KS90}).  Let $(E,\sigma)$ be a symplectic vector space and let
$\lambda_1, \lambda_2, \lambda_3$ be three Lagrangian subspaces of $E$. We
define a quadratic form $q$ on $\lambda_1\oplus \lambda_2\oplus \lambda_3$
by $q(x_1,x_2,x_3) = \sigma(x_1,x_2) + \sigma(x_2,x_3) + \sigma(x_3,x_1)$.
Then $\tau_E(\lambda_1, \lambda_2, \lambda_3)$ is defined as the signature
of $q$, that is, $p_+-p_-$, where $p_\pm$ is the number of $\pm 1$ in
a diagonal form of $q$.
We set
\begin{equation}\label{eq:def_tau}
\tau_{\varphi} = \tau_{p,\varphi} 
= \tau_{T_pT^*M}(\lambda_0(p), \lambda_\Lambda(p), \lambda_\varphi(p)).
\end{equation}

\begin{proposition}[Prop.~7.5.3 of~\cite{KS90}]
\label{prop:inv_microgerm}
Let $\varphi_0, \varphi_1\cl M\to \R$ be functions of class $C^\infty$, let
$p=(x;\xi) \in \Lambda$ and let $F\in \Derb_{(\Lambda)}(\cor_M)$.  We assume
that $\Lambda$ and $\Lambda_{\varphi_i}$ intersect transversally at $p$, for
$i=0,1$. 
Then $(\rsect_{\{ \varphi_1 \geq 0\}}(F))_x$ is isomorphic to
$(\rsect_{\{ \varphi_0 \geq 0\}}(F))_x
[\demi  (\tau_{\varphi_0} - \tau_{\varphi_1})]$.
\end{proposition}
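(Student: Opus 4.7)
The plan is to join $\varphi_0$ to $\varphi_1$ by a generic smooth family $\{\varphi_s\}_{s\in[0,1]}$ of $C^\infty$ functions with $d\varphi_s(x)=\xi$ for all $s$, and to compare the variations of $(\rsect_{\{\varphi_s\geq 0\}}F)_x$ and of $\tau_{\varphi_s}$ along the family. Since every $\Lambda_{\varphi_s}$ is the graph of a $1$-form, $\lambda_{\varphi_s}(p)\cap\lambda_0(p)=0$ automatically, so the only locus of interest in the Lagrangian Grassmannian of $T_pT^*M$ is the stratum
\begin{equation*}
\Sigma_\Lambda=\{\ell:\ell\cap\lambda_\Lambda(p)\neq 0\}.
\end{equation*}
By hypothesis the endpoints $\lambda_{\varphi_0}(p),\lambda_{\varphi_1}(p)$ both lie in the complement of $\Sigma_\Lambda$.

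Two invariances hold away from $\Sigma_\Lambda$. On the one hand, the microlocal Morse lemma (Corollary~\ref{cor:Morse}), applied to $F$ and to $\varphi_s+c$ for small $c$ and combined with the microsupport assumption $F\in\Derb_{(\Lambda)}(\cor_M)$, shows that $(\rsect_{\{\varphi_s\geq 0\}}F)_x$ is locally constant in $s$ as long as $\lambda_{\varphi_s}(p)\notin\Sigma_\Lambda$. On the other hand, the inertia index $\tau_{\varphi_s}$ is locally constant on the same open set, by the standard property of the inertia index on the locus of pairwise transversal triples of Lagrangians. After a generic perturbation of the family we may therefore assume that $s\mapsto\lambda_{\varphi_s}(p)$ meets $\Sigma_\Lambda$ transversally in finitely many times $s_1<\dots<s_k$, and between them both the stalk and $\tau_{\varphi_s}$ are constant.

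At each crossing time $s_i$ the stalk jumps by some $d_i\in\Z$ and $\tau_{\varphi_s}$ jumps by some $\Delta_i\in\Z$. Both numbers depend only on the linear-symplectic data of the crossing, so they can be computed in a normal form: a direct signature computation from the definition of the inertia index gives $\Delta_i=\pm 2$, while for the stalk one uses $F\in\Derb_{(\Lambda)}(\cor_M)$ together with Sato's distinguished triangle~\eqref{eq:SatoDT} on a submanifold transverse to $\Lambda_{\varphi_{s_i}}$ at $p$ to reduce the calculation to the case of a constant sheaf on a half-space, yielding $d_i=\pm 1$. A single linear-symplectic model computation then verifies the key identity $d_i=-\demi\Delta_i$; summing over $i=1,\dots,k$ yields
\begin{equation*}
(\rsect_{\{\varphi_1\geq 0\}}F)_x\simeq (\rsect_{\{\varphi_0\geq 0\}}F)_x\bigl[\demi(\tau_{\varphi_0}-\tau_{\varphi_1})\bigr].
\end{equation*}

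The hard part will be the sign bookkeeping at each crossing: one has to check, in the linear model, that the side of $\Sigma_\Lambda$ on which the stalk gains a cohomological degree is exactly the side on which the quadratic form $q(x_1,x_2,x_3)=\sigma(x_1,x_2)+\sigma(x_2,x_3)+\sigma(x_3,x_1)$ that defines the inertia index loses a positive eigenvalue. This is where the factor $\demi$ and the minus sign in $\tau_{\varphi_0}-\tau_{\varphi_1}$ are pinned down; everything else is a formal consequence of the transversality argument above.
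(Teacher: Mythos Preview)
The paper does not give its own proof of this statement: it is quoted verbatim as Proposition~7.5.3 of~\cite{KS90} and used as input, so there is no in-paper argument to compare against. What the paper does develop later (Theorem~\ref{thm:SSmicrogerm} and Lemma~\ref{lem:mLambda_indpt_phi}) is a refinement in the same spirit as your approach: it builds the microlocal germs as a sheaf on the open set $U_\Lambda$ of Lagrangians transversal to both $\lambda_0(p)$ and $\lambda_\Lambda(p)$, and shows this sheaf has locally constant cohomology by a linear homotopy between test functions. That, however, is a statement entirely on the complement of your wall $\Sigma_\Lambda$ and does not by itself yield the Maslov shift; for the shift the paper again refers to~\cite{KS90} (see Proposition~\ref{prop:shift_Maslovsheaf} and the surrounding remarks).

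As for your sketch itself, the outline is the standard one and is essentially how \cite{KS90} proceeds. Two points deserve care if you want to turn it into a proof. First, the invocation of Corollary~\ref{cor:Morse} to get local constancy of $(\rsect_{\{\varphi_s\geq 0\}}F)_x$ in $s$ is not quite direct: you need a parametrized version, which is exactly what the paper sets up in Theorem~\ref{thm:SSmicrogerm} via the clean-intersection bound~\eqref{eq:SSmuhom} on $\SSi(\mu hom)$. Second, the crossing computation $d_i=-\tfrac{1}{2}\Delta_i$ is the genuine content; your reduction to a constant sheaf on a half-space is correct in principle (this is Example~\ref{ex:simple_sheaf}), but you still have to match the co-orientation of $\Sigma_\Lambda$ coming from the inertia-index form with the direction in which the support of $\rsect_{\{\varphi_s\geq 0\}}$ degenerates. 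That bookkeeping is precisely Lemma~7.5.2 and the surrounding computations in~\cite{KS90}, and it is not reproduced in the present paper.
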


\begin{definition}[Def.~7.5.4 of~\cite{KS90}]
\label{def:simple_pure}
In the situation of Proposition~\ref{prop:inv_microgerm} we say that $F$ is
pure at $p$ if $(\rsect_{\{ \varphi_0 \geq 0\}}(F))_x$ is concentrated in a
single degree, that is, $(\rsect_{\{ \varphi_0 \geq 0\}}(F))_x \simeq L[d]$,
for some $L\in \Mod(\cor)$ and $d\in \Z$. If moreover $L\simeq \cor$, we say
that $F$ is simple at $p$.

If $F$ is pure (resp. simple) at all points of $\Lambda$ we say that it is pure
(resp. simple) along $\Lambda$.
\end{definition}
We know from~\cite{KS90} that, if $\Lambda$ is connected and
$F\in \Derb_{(\Lambda)}(\cor_M)$ is pure at some $p\in \Lambda$, then $F$ is in
fact pure along $\Lambda$. Moreover the $L\in \Mod(\cor)$ in the above
definition is the same at every point.

\begin{example}
\label{ex:simple_sheaf}
The generic situation is easy. We consider the hypersurface
$X= \R^{n-1}\times \{0\}$ in $M=\R^n$. We let
$\Lambda = \{(\ul x, 0; 0, \xi_n); \xi_n>0\}$ be the ``positive'' half part of
$T^*_XM$. We set $Z= \R^{n-1}\times \rpos$.
Let $F\in \Derb_{\Lambda \cup T^*_MM}(\cor_M)$. Then, there exists
$L\in \Derb(\cor)$ such that the image of $F$ in the quotient category
$\Derb(\cor_M; \dT^*M)$ is isomorphic to $L_Z$.
\end{example}

For any $p\in \Lambda$ we can find an integral transform that sends a
neighborhood of $p$ in $\Lambda$ to the conormal bundle of a smooth
hypersurface.  Then, Theorem~7.2.1 of~\cite{KS90} reduces the general case to
Example~\ref{ex:simple_sheaf} and we can deduce:

\begin{lemma}\label{lem:simple_local}
Let $p=(x;\xi)$ be a given point of $\Lambda$. Then there exist a neighborhood
$\Lambda_0$ of $p$ in $\Lambda$ such that
\begin{itemize}
\item [(i)] there exists $F\in \Derb_{(\Lambda_0)}(\cor_M)$ which is
simple along $\Lambda_0$,
\item [(ii)] for any $G\in \Derb_{(\Lambda_0)}(\cor_M)$ there exist a
  neighborhood $\Omega$ of $\Lambda_0$ in $T^*M$ and an isomorphism
  $F\ltens L_M \isoto G$ in $\Derb(\cor_M;\Omega)$, where $L\in \Derb(\cor)$ is
  given by $L = \mu hom(F,G)_p$.
\end{itemize}
\end{lemma}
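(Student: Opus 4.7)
The plan is to reduce the statement to the model situation of Example~\ref{ex:simple_sheaf} via a quantized contact transformation, then transport both the simple object and the decomposition of an arbitrary $G$ back to $M$.

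First, I would invoke Theorem~7.2.1 of~\cite{KS90}: for the given $p=(x;\xi) \in \Lambda$ there exist a conic neighborhood $U$ of $p$ in $\dT^*M$, a point $p_0 = (0;0,\dots,0,1) \in \dT^*\R^n$, a contact transformation $\chi\cl U \isoto V$ onto a conic neighborhood $V$ of $p_0$ that sends $\Lambda \cap U$ to the positive half-conormal $\Lambda' = \{(\ul x,0;0,\xi_n);\ \xi_n>0\}$ of the hypersurface $X = \R^{n-1} \times \{0\}$, and a kernel quantizing $\chi$ inducing an equivalence of triangulated categories $\Phi\cl \Derb(\cor_{\R^n};V) \isoto \Derb(\cor_M;U)$. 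This equivalence is compatible with $\mu hom$ up to a shift determined by $\chi$, and sends constant sheaves to constant sheaves in the obvious $\cor$-linear way.

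Second, in the model case take $F' \eqdot \cor_Z$ with $Z = \R^{n-1} \times \rpos$. By Example~\ref{ex:microsupport}(iii) we have $\dot\SSi(F') \subset \Lambda'$, and $(\rsect_{\{t \geq 0\}} \cor_Z)_0 \simeq \cor$ shows that $F'$ is simple at $p_0$. Setting $F \eqdot \Phi(F')$, one obtains $F \in \Derb_{(\Lambda_0)}(\cor_M)$ on a sufficiently small conic neighborhood $\Lambda_0$ of $p$ in $\Lambda$ such that $F$ is simple along $\Lambda_0$ (possibly after a Maslov shift), proving (i).

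Third, for (ii), given $G \in \Derb_{(\Lambda_0)}(\cor_M)$, set $G' \eqdot \opb\Phi(G) \in \Derb_{(\Lambda'_0)}(\cor_{\R^n})$, where $\Lambda'_0 = \chi(\Lambda_0)$. Example~\ref{ex:simple_sheaf} applied in a neighborhood of $p_0$ yields $L \in \Derb(\cor)$ and an isomorphism $L_Z \isoto G'$ in $\Derb(\cor_{\R^n};\dT^*\R^n)$. Since $L_Z \simeq \cor_Z \ltens (L_{\R^n})$ and $\Phi$ is $\cor$-linear, applying $\Phi$ produces an isomorphism $F \ltens L_M \isoto G$ in $\Derb(\cor_M;\Omega)$ for some neighborhood $\Omega$ of $\Lambda_0$.

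Finally, the identification $L \simeq \mu hom(F,G)_p$ follows by combining Theorem~\ref{thm:HomDkMp=muhomp} with simplicity of $F$: the natural morphism $\mu hom(F,F) \ltens \opb{\pi_M} L \to \mu hom(F, F \ltens L_M)$ is an isomorphism at points of $\dT^*M$ (the sheaves involved are supported on $\Lambda$ where one can reduce to the model case), and $\mu hom(F,F)_p \simeq \cor$ because $F$ is simple at $p$. The main obstacle is the careful bookkeeping of the Maslov shift introduced by the quantized contact transformation, which must be absorbed so that $F$ is genuinely simple (not merely pure) along $\Lambda_0$ and so that the isomorphism in the quotient category has the advertised form; this is controlled by Proposition~\ref{prop:inv_microgerm} and the compatibility of $\Phi$ with $\mu hom$.
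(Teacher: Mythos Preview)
Your proposal is correct and follows essentially the same route as the paper: the paper's own argument for this lemma is just the sentence preceding it, namely that an integral transform (the quantized contact transformation of Theorem~7.2.1 in~\cite{KS90}) straightens $\Lambda$ near $p$ to the half-conormal of a hypersurface, reducing everything to Example~\ref{ex:simple_sheaf}. You have simply written out the details of that reduction, including the identification $L \simeq \mu hom(F,G)_p$ via~\eqref{eq:stalk_muhom} and simplicity of $F$.
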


\begin{definition}\label{def:KSstacksimple}
Let $\Lambda \subset \dT^*M$ be a locally closed conic Lagrangian submanifold.
We let $\kss^p(\cor_\Lambda)$ (resp. $\kss^s(\cor_\Lambda)$) be the substack of
$\kss(\cor_\Lambda)$ formed by the pure (resp. simple) sheaves along $\Lambda$.
\end{definition}

Let $F,G\in \Derb_{(\Lambda)}(\cor_M)$. By~\eqref{eq:SSmuhom} we know that
$\mu hom(F,G)$ has locally constant cohomology sheaves on $\Lambda$
(see Lemma~\ref{lem:lagr-clean-inter} below).  Moreover,
for a given $p = (x;\xi) \in \Lambda$ we have
\begin{equation}\label{eq:stalk_muhom}
\mu hom(F,G)_p \simeq
\RHom( (\rsect_{\{ \varphi_0 \geq 0\}}(F))_x ,
(\rsect_{\{ \varphi_0 \geq 0\}}(G))_x ),
\end{equation}
where $\varphi_0$ is such that $\Lambda$ and $\Lambda_{\varphi_0}$ intersect
transversally at $p$ (see Proposition~\ref{prop:inv_microgerm}).  Hence, if $F$
and $G$ are simple along $\Lambda$ (or $F$ and $G$ are pure along $\Lambda$ and
$\cor$ is a field), then $\mu hom(F,G)$ is concentrated in one degree.

The functor $\kss^{0,opp}_\Lambda \times \kss^0_\Lambda \to
\Derb(\cor_\Lambda)$, $(F,G) \mapsto \mu hom(F,G)$ induces a functors of stacks
$$
\ol{\mu hom} \cl \kss^{opp}_\Lambda \times \kss_\Lambda \to \Dloc(\cor_\Lambda) .
$$
Lemma~\ref{lem:simple_local} implies the following result.

\begin{proposition}\label{prop:KSstack=Dloc}
Let $\Lambda \subset \dT^*M$ be a locally closed conic Lagrangian submanifold.
We assume  that there exists a simple sheaf $F\in \kss^s(\cor_\Lambda)$.
Then the functor $\ol{\mu hom}$ induces an equivalence of stacks
\begin{equation*}
\ol{\mu hom}(F,\cdot) \cl \kss(\cor_\Lambda) \isoto   \Dloc(\cor_\Lambda) ,
\qquad G \mapsto \ol{\mu hom}(F,G).
\end{equation*}
\end{proposition}

By Lemma~\ref{lem:simple_local} we know that simple sheaves exist locally around
a given point $p\in \Lambda$. When $\Lambda$ is in a good position we can see
that simple sheaves exist locally on the base, as follows.

\begin{lemma}\label{lem:simple_local_base}
Let $M$ be a manifold and let $\Lambda$ be a locally closed conic
Lagrangian submanifold of $\dT^*M$ such that the projection
$\Lambda/\R_{>0} \to M$ is finite. Let $p=(x;\xi) \in \Lambda$.
Then there exist a neighborhood $U$ of $x$ and $F\in \Derb(\cor_U)$ such that
$\dot\SSi(F) \subset \Lambda\cap T^*U$ and $F$ is simple along $\Lambda\cap
T^*U$.
\end{lemma}
\begin{proof}
(i) By hypothesis $\Lambda\cap T^*_xM$ consists of finitely many half-lines, say
$\rspos\cdot p_i$, with $p_i = (x;\xi_i)$, $i=1,\ldots,n$. Up to a restriction
to a neighborhood of $x$ we can assume that the $p_i$ belong to distinct
connected components of $\Lambda$, say $\Lambda_i$, $i=1,\ldots,n$. If $F_i$ is
simple along $\Lambda_i$, then the direct sum $\oplus_i F_i$ is simple along
$\Lambda$.  Hence we can assume that $\Lambda\cap T^*_xM = \rspos\cdot p$ for
some $p=(x;\xi)$.

\medskip\noindent
(ii) By Lemma~\ref{lem:simple_local} there exists a neighborhood $\Omega$ of $p$
in $T^*M$ and $F_0\in \Derb(\cor_M)$ such that $\SSi(F_0) \cap \Omega \subset
\Lambda$ and $F_0$ is simple along $\Lambda$ at $p$.  Up to shrinking $\Omega$ we
can assume that $T^*_xM \cap \Omega\cap \Lambda = \rspos\xi$.

We choose an open convex cone $V\subset T^*_xM$ such that $\xi\in V$, $\ol V$ is
proper and $\ol V \subset T^*_xM \cap \Omega$.  Hence $\ol V\cap \dot\SSi(F_0) =
\rspos\xi$.  In particular $W\eqdot V$ is a neighborhood of $\ol V\cap
\dot\SSi(F_0)$.  By Proposition~\ref{prop:cutoff-precis}, there exist $F\in
\Derb(\cor_M)$ and a distinguished triangle $F\to[u] F_0 \to G \to[+1]$ such
that $\SSi(G) \cap V =\emptyset$ and $T^*_xM \cap \dot\SSi(F) \subset W = V$.
Hence $T^*_xM \cap \dot\SSi(F) = \dot\SSi(F) \cap V = \dot\SSi(F_0) \cap V =
\rspos\xi$.

\medskip\noindent
(iii) Since microsupports are closed conic subsets there exists a conic
neighborhood $V_1$ of $V$ in $T^*M$ such that $\SSi(G) \cap V_1 =\emptyset$.
We can assume $V_1 \subset \Omega$.  Hence $\SSi(F) \cap V_1 = \SSi(F_0) \cap
V_1 \subset \Lambda \cap V_1$.

Since $\Lambda \setminus V_1$ is a closed subset of $\Lambda$ which does not
contain $p$, we can find a neighborhood $U_1$ of $x$ such that $\Lambda \cap
T^*U_1 \subset V_1$.  Then $(\SSi(F) \cap \dT^*U_1) \setminus \Lambda = (\SSi(F)
\cap \dT^*U_1) \setminus V_1$ is a closed subset of $\SSi(F) \cap \dT^*U_1$
which does not contain $p$.  Since $\dT^*_xM \cap \SSi(F) = \rspos p$, it
follows that $Z = \dot\pi_M((\SSi(F) \cap \dT^*U_1) \setminus \Lambda)$ is a
closed subset of $U_1$ which does not contain $x$.  We set $U = U_1 \setminus
Z$.  Then $F$ and $U$ satisfy the conclusion of the lemma.
\end{proof}

For a simple sheaf $F$ along $\Lambda$ we have seen that $\mu hom(F,F) \simeq
\cor_\Lambda$ in a neighborhood of $\Lambda$.  In the construction of a
quantization we will ``glue'' simple sheaves locally given on open subsets. For
this we need to understand $\mu hom(\rsect_UF,F')$ for simple sheaves $F,F'$
along $\Lambda$ and an open subset $U\subset M$.  This is done in
Proposition~\ref{prop:muhomFUF} below. We begin with some notations.

Let $U$ be an open subset of $M$ with smooth boundary.  We recall the notations
$N_U^*, N_U^{*e} \subset T^*M$ of~\eqref{eq:def_ext_con_bun} for the interior
and exterior conormal bundles of $\partial U$.  By Example~\ref{ex:microsupport}
we have $\SSi(\cor_U) = \ol{U} \times_M T^*_MM \cup N_U^{*e}$ and
$\SSi(\cor_{\ol{U}}) = \ol{U} \times_M T^*_MM \cup N_U^*$.

Let $\sigma\cl E\to M$ be a vector bundle.  For two given points $p=(x;e),
q=(x;f) \in E$ in the same fiber we define $V_p(q) \in T_pE$ by
\begin{equation}\label{eq:def-tgt-somme}
V_p(q) = (\partial / \partial t) (x; e+tf)|_{t=0} .
\end{equation}

\begin{lemma}\label{lem:tgt-somme}
Let $\sigma\cl E\to M$ be a vector bundle.
Let $\alpha \cl M \to E$ be a nowhere vanishing section of $\sigma$ and let
$\Lambda\subset E$ be a submanifold of $E$.  We assume that, for all $p=(x;e)
\in \Lambda$, we have $V_p(\alpha(x)) \not\in T_p\Lambda$.  Then the fiberwise
sum $\Lambda+ \R \cdot \alpha$ is a submanifold of $E$ in a neighborhood of
$\Lambda$ and we have, for all $p=(x;e) \in \Lambda$,
\begin{align*}
T_p(\Lambda+ \R \cdot \alpha) &= T_p\Lambda + \R\cdot V_p(\alpha(x)) ,  \\
C(\Lambda, \Lambda + \R_{\leq 0} \cdot\alpha)_p 
 &= T_p\Lambda + \R_{\geq 0} \cdot V_p(\alpha(x)) .
\end{align*}
\end{lemma}
\begin{proof}
Since the result is local we can assume $M = \R^n$, $E=\R^m$ and $\sigma$ is the
projection.  Then $\alpha$ determines a section $\theta$ of $TM$ over $\Lambda$
by $\theta(x;e) = \alpha(x)$ and $\Lambda+ \R \cdot \alpha = \{p+t\theta(p)$;
$p\in \Lambda$, $t\in\R\}$.  Then the result is a standard fact of differential
geometry.  (The change of sign from $\R_{\leq 0}$ from $\R_{\geq 0}$ is the last
formula is due to the convention taken in the definition of $C(A,B)$
in~\eqref{eq:form_cone2}.)
\end{proof}

The following result can be checked in local coordinates.  For a function $f$ on
$T^*M$ we let $X_f = H(df)$ be its Hamiltonian vector field.  We have $H(dx_i) =
-\partial/\partial \xi_i$ and $H(d\xi_i) = \partial/\partial x_i$.
\begin{lemma}\label{lem:con-bord-TU}
Let $\varphi \cl M \to \R$ be a function of class $C^1$ and let $X_{\varphi
  \circ \pi_M}$ be the Hamiltonian vector field of $\varphi \circ \pi_M \cl T^*M
\to \R$.

(i) Using the notation~\eqref{eq:def-tgt-somme} we have, for any $p=(x;\xi) \in
T^*M$, $V_p(d\varphi(x)) = - (X_{\varphi \circ \pi_M})_p$.

(ii) We define $U = \{x\in M;\, \varphi(x) >0\}$ and we assume that $\partial U$
is smooth.  Then, for $p \in \partial T^*U$, we have $H_p(N_{\dT^*U}^{*e}) =
\R_{\leq 0} \cdot (X_{\varphi \circ \pi_M})_p$, where $H_p \cl T^*_p T^*M \isoto
T_pT^*M$ is the Hamiltonian isomorphism.
\end{lemma}

\begin{proposition}\label{prop:muhomFUF}
Let $M$ be a manifold and let $\Lambda$ be a locally closed conic Lagrangian
submanifold of $\dT^*M$.  Let $F,F' \in \Derb(\cor_M)$ be such that $\SSi(F) =
\SSi(F') = \Lambda$.  Let $U \subset M$ be an open subset such that $\partial U$
is smooth. We assume
\begin{itemize}
\item [(i)]  $N_U^* \cap \Lambda = \emptyset$, respectively
$N_U^{*e} \cap \Lambda = \emptyset$,
\item [(ii)] $\dT^*_\Lambda \dT^*M \cap \dT^*_{\partial \dT^*U} \dT^*M =
  \emptyset$.
\end{itemize}
Then $\mu hom(F_U,F')|_{\dT^*M} \simeq (\mu hom(F,F'))_{\ol{T^*U}}|_{\dT^*M}$
and, respectively, \\
$\mu hom(\rsect_U F,F')|_{\dT^*M} \simeq (\mu hom(F,F'))_{T^*U}|_{\dT^*M}$.
\end{proposition}
\begin{proof}
(i) We first consider the case where $N_U^* \cap \Lambda = \emptyset$.
Since the problem is local we can assume that there exists a function
$\varphi \cl M \to \R$ such that $d\varphi$ does not vanish and $U = \{x\in
M;\, \varphi(x) > 0\}$.  The natural morphism $F_U \to F$ induces a morphism $u
\cl \mu hom(F,F') \to \mu hom(F_U,F')$.  We will apply
Lemma~\ref{lem:sousfaisceau} below, with $X = \dT^*M$, $Y =\Lambda$, $V =
\dT^*U$ and $\mu hom(F,F')$ instead of $F$, to see that $u|_X$ induces the
isomorphism of the proposition.  We have $\supp( \mu hom(F_U,F') ) \subset
\ol{V}$ and $u|_V$ is an isomorphism. Let us check that $\dot\SSi(
\muhom(F_U,F') ) \cap N_{\dT^*U}^{*e} = \emptyset$.  By
Corollary~\ref{cor:opboim} the hypothesis~(i) gives $\SSi(F_U) \subset
\Lambda + N_U^{*e}$.  Then the bound~\eqref{eq:SSmuhom} gives
\begin{align*}
-H(\SSi(\mu hom(F_U,F') )) \subset C(\SSi(F') , \SSi(F_U) )  
 \subset C(\Lambda, \Lambda + N_U^{*e})  .
\end{align*}
Hence by Lemma~\ref{lem:con-bord-TU}~(ii) it is enough to show that $(X_{\varphi
  \circ \pi_M})_p$ is not in $C(\Lambda, \Lambda + N_U^{*e})_p$, for all
$p=(x;\xi) \in \Lambda \cap \partial T^*U$.

We recall that $N_U^{*e} \subset \R_{\leq 0} \cdot d\varphi$. Through the
Hamiltonian isomorphism $H$ the hypothesis~(ii) is equivalent to $X_{\varphi
  \circ \pi_M} \not\in T\Lambda$. Hence we can use Lemma~\ref{lem:tgt-somme}
with $\alpha = d\varphi$ and we find (using also
Lemma~\ref{lem:con-bord-TU}~(i))
$$
C(\Lambda, \Lambda + N_U^{*e})_p 
\subset T_p\Lambda + \R_{\geq 0} \cdot V_p(d\varphi(x))
= T_p\Lambda + \R_{\leq 0} \cdot (X_{\varphi \circ \pi_M})_p  ,
$$
for any $p=(x;\xi) \in \Lambda$.  Hence $(X_{\varphi \circ \pi_M})_p \not\in
C(\Lambda, \Lambda + N_U^{*e})_p$, as required.

\medskip\noindent
(ii) Now we consider the case $N_U^{*e} \cap \Lambda = \emptyset$.  By
Corollary~\ref{cor:opboim} we have $\rsect_U F \simeq F_{\ol U}$.  We set $V = M
\setminus \ol{U}$.  We have distinguished triangles $F_V \to F \to F_{\ol U}
\to[+1]$ and $A_{T^*V} \to A \to A_{\ol{T^*U}} \to[+1]$, where $A = \mu
hom(F,F')$.  By part~(i) of the proof we have $\mu hom(F_V,F')|_{\dT^*M} \simeq
A_{T^*V}|_{\dT^*M}$ and we deduce $\mu hom(F_{\ol V},F')|_{\dT^*M} \simeq
A_{\ol{T^*V}}|_{\dT^*M}$, as required.
\end{proof}

\begin{lemma}\label{lem:sousfaisceau}
Let $X$ be a manifold and $Y$ a closed submanifold of $X$.  Let $F \in
\Derb(\cor_X)$ be such that $\SSi(F) \subset T^*_YX$.  Let $u\cl F \to F'$ be a
morphism in $\Derb(\cor_X)$ and let $V$ be an open subset of $X$ with smooth
boundary such that $\supp(F') \subset \ol{V}$, $u|_V$ is an isomorphism,
$\dT^*_YX \cap \dT^*_{\partial V}X = \emptyset$ and $\dot\SSi(F') \cap N_V^{*e}
= \emptyset$.  Then $F' \simeq F_{\ol V}$.
\end{lemma}
\begin{proof}
(i) Since $\supp(F') \subset \ol{V}$ the morphism $u$ induces $F_{\ol V} \to
F'_{\ol V} \simeq F'$.  We define $G$ by the distinguished triangle $F_{\ol V}
\to F' \to G \to[+1]$.  Since $u|_V$ is an isomorphism we have $\supp G
\subset \partial V$.  This implies $G \simeq i_!(G|_{\partial V})$, where $i
\cl \partial V \to X$ is the inclusion.  By Theorem~\ref{th:opboim}~(ii) we see
that $\SSi(G)$ contains $\supp(G) \times_M T^*_{\partial V}X$.

\medskip\noindent
(ii) Since $\dT^*_YX \cap N_V^{*e} = \emptyset$ we have $\dot\SSi(F_{\ol{V}})
\subset T^*_YX + N_V^*$ by Corollary~\ref{cor:opboim}. Hence
$\dot\SSi(F_{\ol{V}}) \cap N_V^{*e} = \emptyset$. By the hypothesis on
$\dot\SSi(F')$ we deduce $\dot\SSi(G) \cap N_V^{*e} = \emptyset$.  By the
result of~(i) we obtain $\supp(G) = \emptyset$, that is, $G\simeq 0$ and
$F_{\ol V} \isoto F'$.
\end{proof}

Now we check that the hypothesis~(i) and~(ii) of Proposition~\ref{prop:muhomFUF}
are generically satisfied. We prove an additional result which will be useful in
the proof of Lemma~\ref{lem:deform_bonne_pos}.

\begin{lemma}
\label{lem:hyp_muhomFUF_gen}
(i) Let $\Lambda$ be a closed conic Lagrangian submanifold of $\dT^*M$.  Let
$\varphi \cl M \to \R$ be a $\Cinf$ function. Then there exists a $\Cinf$
function $\phi \cl M\times \R^N \to \R$ such that, setting $\varphi_\varepsilon
= \phi(\cdot,\varepsilon)$ and $U_\varepsilon =
\opb{\varphi_\varepsilon}(\mo]0,+\infty[)$, we have $\varphi_0 = \varphi$ and
the set of $\varepsilon \in [-\delta,\delta]^N$ satisfying
$$
\text{$\partial U_\varepsilon$ is smooth, $T^*_{\partial U_\varepsilon}M \cap
  \Lambda = \emptyset$ and $\dT^*_\Lambda \dT^*M \cap \dT^*_{\partial
    \dT^*U_\varepsilon} \dT^*M = \emptyset$}
$$
has a complement of measure $0$ in $[-\delta,\delta]^N$, when $\delta>0$ is
small enough.

\smallskip\noindent
(ii) Let $\Lambda, \Lambda'$ be closed conic Lagrangian submanifolds of
$\dT^*M$.  We assume that $\Lambda/\rspos \to M$ is injective, that
$\Lambda'/\rspos \to M$ is finite and that $\pi_M(\Lambda) \cap \pi_M(\Lambda')$
is the closure of a submanifold of $M$ of codimension at least $2$.  Then we can
add the following properties in~(i): $\Lambda+ T^*_{\partial U_\varepsilon}M$ is
a Lagrangian manifold and $(\Lambda+ T^*_{\partial U_\varepsilon}M) \cap
\Lambda' = \emptyset$.

\smallskip\noindent
(iii) We assume that $M = M' \times \R$ and we take coordinates $(t;\tau)$ on
$T^*\R$.  If $\Lambda$ is transversal to the hypersurfaces $T^*M \times \R
\times \{\tau_0\}$, for all $\tau_0 \in \R$, then we can assume in~(i) and~(ii)
that $U_\varepsilon$ is a product $V \times \mo]a,b[$ for some $V \subset M'$
and $a,b \in \R$.
\end{lemma}
In particular we can find $U = U_\varepsilon$ as close as we want to $U_0$ which
satisfies~(i) and~(ii) of Proposition~\ref{prop:muhomFUF}.
\begin{proof}
(a) We choose $\Cinf$ functions $\varphi_1,\ldots,\varphi_N \cl M \to \R$ such
that the differentials $d\varphi_i$ generate $T^*M$ near $\opb{\varphi}(0)$.  We
define $\phi' \cl M\times \R^N \to \R$ by $\phi'(x,\varepsilon) = \varphi(x) +
\sum_{i=1}^N \varepsilon_i \, \varphi_i(x)$ and $\phi'_\varepsilon =
\phi'(\cdot,\varepsilon)$.  We define also $\Phi' \cl M \times \R^N \to T^*M$ by
$\Phi'(x,\varepsilon) = (x;d(\phi'_\varepsilon)(x)) = (x; d\varphi(x) +
\sum_{i=1}^N \varepsilon_i \, d\varphi_i(x))$.  Then $\Phi'$ is a submersion
near $\opb{\varphi}(0) \times \{0\}^N$, say over
$\opb{\varphi}([-\delta,\delta]) \times [-\delta,\delta]^N$ for some $\delta>0$.
In particular $\Phi'$ is transversal to $\Lambda$.  

\medskip\noindent
(b) We let $\Phi'_\varepsilon \cl M \to T^*M$ be the restriction
$\Phi'_\varepsilon = \Phi'(\cdot,\varepsilon)$.  By the transversality theorem
the subset $V \subset [-\delta,\delta]^N$ of $\varepsilon$ such that
$\Phi'_\varepsilon$ is transversal to $\Lambda$ has a complement of measure
$0$. For $\varepsilon \in V$ the intersection $\Lambda \cap
\Phi'_\varepsilon(M)$ is a discrete set of points, say $p_{\varepsilon,i}$,
$i\in I$. Hence, for $\eta \in\R$ outside a discret set, the boundary of
$U_{\varepsilon,\eta} \eqdot \opb{\phi_\varepsilon'}(\mo]\eta,+\infty[)$ does
not contain any $\pi_M(p_{\varepsilon,i})$.  This gives $T^*_{\partial
  U_{\varepsilon,\eta}} \cap \Lambda = \emptyset$.

\medskip\noindent
(c) For a given $p\in \Lambda$, the condition $(\dT^*_\Lambda \dT^*M)_p \cap
(\dT^*_{\partial \dT^*U_{\varepsilon,\eta}} \dT^*M)_p = \emptyset$ is equivalent
to $d(\phi'_\varepsilon \circ \pi_M)(p) \not\in (\dT^*_\Lambda \dT^*M)_p$, that
is, $(\phi'_\varepsilon \circ \pi_M)|_\Lambda$ is non singular at $p$.  Since
the set of singular values of any $\Cinf$ function is of measure $0$, the
condition $\dT^*_\Lambda \dT^*M \cap \dT^*_{\partial \dT^*U_{\varepsilon,\eta}}
\dT^*M = \emptyset$ is also satisfied for a generic $\eta$.

We define $\phi(x,\varepsilon,\eta) = \phi'(x,\varepsilon) - \eta$. Then $\phi$
satisfies the conclusions of~(i).

\medskip\noindent
(d) We set $\Lambda_1 =\eqdot \Lambda+ T^*_{\partial U_\varepsilon}M =
\{(x;\xi+t d(\phi'_\varepsilon)(x))$; $(x;\xi) \in \Lambda$, $t\in\R \}$, where
$t \in \R$ is well-defined because $\Lambda/\rspos \to M$ is injective.  Hence
multypling $t$ by $u>0$ gives an action of $\rspos$ on $\Lambda_1$, which
contracts $\Lambda_1$ on $\Lambda$. Since $\Lambda_1$ is a manifold near
$\Lambda$, by Lemma~\ref{lem:tgt-somme}, it is a manifold. We can check that it
is Lagrangian.

We set $Z = \pi_M(\Lambda) \cap \pi_M(\Lambda')$.  For $x\in Z$ we choose
generators $\xi$ of $\Lambda\cap T^*_xM$ and $\xi_i$, $i=1,\ldots,p$, of
$\Lambda'\cap T^*_xM$.  We define $P_x \subset T^*_xM$ as the finite union of
the planes $\R\xi \oplus \R\xi'_i$.  Then the condition $(\Lambda+ T^*_{\partial
  U_{\varepsilon,\eta}}M) \cap \Lambda' = \emptyset$ means that
$d(\phi'_\varepsilon)(x) \not\in P_x$, for each $x\in Z \cap \partial
U_{\varepsilon,\eta}$.  Since $Z$ is of codimension at least $2$ this is true
for a generic choice of $\varepsilon,\eta$ and we obtain~(ii).

\medskip\noindent
(e) By the hypothesis on $\Lambda$ it is enough to choose the functions
$\varphi_i$ independent of $t$ in part~(a) of the proof, such that the
differentials $d\varphi_i$ generate $T^*M'\times \{0\}$.  Then $\Phi'$ is no
longer a submersion but it is still transversal to $\Lambda$ and the same
arguments apply.
\end{proof}

\subsection{The canonical simple sheaf on the diagonal}
Let $\Lambda$ be a locally closed Lagrangian submanifold of $\dT^*M$.  One
question considered in the first part of this paper is to give conditions so
that $\kss(\cor_\Lambda)$ admits a global object. We prove in this paragraph
that $\kss(\cor_{\Lambda\times\Lambda^a})$ admits a canonical object defined on
some neighborhood of the diagonal
\begin{equation}\label{eq:de_Delta_lambda}
\Delta_\Lambda = \{(p,p^a) ; p\in  \Lambda\}.
\end{equation}

Let $X$ be a manifold and $Y,Z$ two submanifolds of $X$.  We recall that $Y$ and
$Z$ have a clean intersection if $W=Y\cap Z$ is a submanifold of $X$ and $TW =
TY \cap TZ$. This means that we can find local coordinates $(\ul x,\ul y, \ul
z,\ul w)$ such that $Y = \{ \ul x = \ul z =0\}$ and $Z = \{ \ul x = \ul y
=0\}$. Using these coordinates the following lemma is easy.

\begin{lemma}\label{lem:clean-cone}
Let $X$ be a manifold and $Y,Z$ two submanifolds of $X$ which have a clean
intersection. We set $W=Y\cap Z$. Then $C(Y,Z) = W\times_X TY + W\times_X TZ$.
\end{lemma}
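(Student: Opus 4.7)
The plan is to reduce to a direct coordinate computation using the model provided by the clean intersection hypothesis. Since both sides of the claimed equality are local on $W$ (a subset of $X$ on which we are computing fibers), it suffices to fix a point $w_0 \in W$ and check equality of fibers at $w_0$. Pick local coordinates $(\ul x,\ul y,\ul z,\ul w)$ around $w_0$ so that
\[
Y = \{\ul x=\ul z=0\}, \qquad Z = \{\ul x=\ul y=0\}, \qquad W = \{\ul x=\ul y=\ul z=0\}.
\]
In particular $T_{w_0} Y = \{(0,\ul Y,0,\ul W_Y)\}$ and $T_{w_0} Z = \{(0,0,\ul Z,\ul W_Z)\}$, so $T_{w_0} Y + T_{w_0} Z$ is the subspace of vectors whose $\ul x$-component vanishes.

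For the inclusion $\supset$, I would observe that any $v \in T_{w_0}Y$ is a limit of secants $\lambda_n(y_n - w_0)$ with $y_n \in Y$; but since $w_0 \in W \subset Z$, the sequence $(y_n,w_0) \in Y\times Z$ converges to $(w_0,w_0) \in \Delta_X$, and its difference vector limits to $v$. Via the identification $T_{\Delta_X}(X\times X) \simeq TX$ by the first projection, this places $v$ in $C(Y,Z)_{w_0}$. The same argument applies to $T_{w_0}Z$ (taking sequences $(w_0,z_n)$).

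For the inclusion $\subset$, take $v \in C(Y,Z)_{w_0}$ and write it as a limit $v = \lim_n \lambda_n(y_n - z_n)$ with $y_n \in Y$, $z_n \in Z$, $y_n,z_n \to w_0$. In the local coordinates,
\[
y_n - z_n = (0,\ul y_n,-\ul z_n,\ul w_n^Y - \ul w_n^Z) = (0,\ul y_n,0,\ul w_n^Y-\ul w_n^Z) + (0,0,-\ul z_n,0),
\]
and the two summands sit in $T_{w_0} Y$ and $T_{w_0} Z$ respectively. Since $T_{w_0} Y + T_{w_0} Z$ is closed (a vector subspace of $T_{w_0} X$), the limit $v$ lies in $T_{w_0} Y + T_{w_0} Z$, as claimed.

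There is no real obstacle here: the content of the lemma is precisely the explicit coordinate model coming from clean intersection, and the only thing to be careful about is consistency of the identification $T_{\Delta_X}(X\times X)\simeq TX$ (via the first projection, up to a sign which does not affect the statement since both $TY$ and $TZ$ are linear subspaces). Everything else is the one-line splitting above.
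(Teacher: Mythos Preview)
Your approach is essentially the one the paper intends: the paper's proof consists of the single sentence ``Using these coordinates the following lemma is easy,'' so a direct coordinate check is exactly right.

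There is one small gap in your $\supset$ argument. You show $T_{w_0}Y \subset C(Y,Z)_{w_0}$ and $T_{w_0}Z \subset C(Y,Z)_{w_0}$ separately, but $C(Y,Z)_{w_0}$ is a priori only a cone, not a linear subspace, so containing two subspaces does not automatically imply containing their sum. The fix is immediate: given $v_1 \in T_{w_0}Y$ and $v_2 \in T_{w_0}Z$, choose $y_n \in Y$ and $z_n \in Z$ with $n(y_n-w_0)\to v_1$ and $n(w_0-z_n)\to v_2$ (possible since $Y,Z$ are submanifolds through $w_0$); then $n(y_n-z_n)\to v_1+v_2$, placing $v_1+v_2$ directly in $C(Y,Z)_{w_0}$. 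Alternatively, since your $\subset$ argument already shows $C(Y,Z)_{w_0}$ is contained in the linear subspace $\{\ul x\text{-component}=0\}$, and the coordinate splitting exhibits every such vector as a secant limit, you could just do both inclusions at once in coordinates.
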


\begin{lemma}\label{lem:lagr-clean-inter}
Let $X$ be a manifold and $\Lambda_1,\Lambda_2$ be two Lagrangian submanifolds
of $\dT^*X$. Let $F_1\in \Derb_{(\Lambda_1)}(\cor_X)$ and
$F_2\in \Derb_{(\Lambda_2)}(\cor_X)$.  We assume that $\Lambda_1$ and $\Lambda_2$
have a clean intersection and we set $\Xi= \Lambda_1 \cap \Lambda_2$.
Then there exists a neighborhood $U$ of $\Xi$ in $T^*X$ such that
$\SSi(\mu hom(F_1,F_2) |_U) \subset T^*_\Xi T^*X$, that is,
$\mu hom(F_1,F_2)|_U$ has locally constant cohomology sheaves on $\Xi$.
\end{lemma}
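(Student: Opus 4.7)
The plan is to combine the support and microsupport bounds of Proposition~\ref{prop:SSmuhom} with the clean intersection hypothesis and the Lagrangian symplectic-orthogonality identity. First, choose neighborhoods $\Omega_i$ of $\Lambda_i$ in $T^*X$ with $\SSi(F_i)\cap\Omega_i\subset\Lambda_i$ and set $U=\Omega_1\cap\Omega_2$, a neighborhood of $\Xi$. By \eqref{eq:suppmuhom}, $\supp\mu hom(F_1,F_2)\cap U\subset\SSi(F_1)\cap\SSi(F_2)\cap U\subset\Lambda_1\cap\Lambda_2=\Xi$, so it suffices to control $\SSi(\mu hom(F_1,F_2))$ at points of $\Xi$.

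By \eqref{eq:SSmuhom}, $\SSi(\mu hom(F_1,F_2))\subset -H^{-1}(C(\SSi(F_2),\SSi(F_1)))$. The cone $C(A,B)_p$ depends only on the germs of $A,B$ at $p$; at a point $p\in\Xi$ those germs lie in $\Lambda_2,\Lambda_1$ respectively, so the monotonicity of $C$ gives
$$
C(\SSi(F_2),\SSi(F_1))|_\Xi\subset C(\Lambda_2,\Lambda_1)|_\Xi.
$$
Since $\Lambda_1,\Lambda_2$ have a clean intersection along $\Xi$ as submanifolds of $T^*X$, Lemma~\ref{lem:clean-cone} yields $C(\Lambda_2,\Lambda_1)=\Xi\times_{T^*X}T\Lambda_2+\Xi\times_{T^*X}T\Lambda_1$, i.e.\ fiberwise $T_p\Lambda_1+T_p\Lambda_2\subset T_p(T^*X)$.

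Now comes the key symplectic step. Since $T_p\Lambda_i$ is Lagrangian, it coincides with its own symplectic orthogonal, and the clean intersection gives $T_p\Xi=T_p\Lambda_1\cap T_p\Lambda_2$. Therefore
$$
(T_p\Lambda_1+T_p\Lambda_2)^{\perp_\omega}=T_p\Lambda_1^{\perp_\omega}\cap T_p\Lambda_2^{\perp_\omega}=T_p\Lambda_1\cap T_p\Lambda_2=T_p\Xi,
$$
so $T_p\Lambda_1+T_p\Lambda_2=(T_p\Xi)^{\perp_\omega}$. Under the Hamiltonian isomorphism $H\cl T^*T^*X\isoto TT^*X$, a covector in $T^*_p T^*X$ vanishes on $T_p\Xi$ exactly when the corresponding vector is symplectically orthogonal to $T_p\Xi$. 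Hence $-H^{-1}\bigl((T_p\Xi)^{\perp_\omega}\bigr)=T^*_{\Xi,p}T^*X$, which combined with the preceding inclusions gives $\SSi(\mu hom(F_1,F_2))|_U\subset T^*_\Xi T^*X$.

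By the consequence of Example~\ref{ex:microsupport}~(i) applied on $\Xi$ (the support sits in $\Xi$ and the microsupport in its conormal bundle), this forces the cohomology sheaves of $\mu hom(F_1,F_2)|_U$ to be locally constant on $\Xi$. The only genuine obstacle is the symplectic-linear-algebra identification of $-H^{-1}$ of the $\omega$-orthogonal with the conormal bundle, together with keeping track of sign conventions for $H$; everything else is a direct application of earlier results, and no transversality assumption on the $\SSi(F_i)$ beyond cleanness of $\Lambda_1\cap\Lambda_2$ is needed.
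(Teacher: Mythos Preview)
Your proof is correct and follows essentially the same route as the paper: both arguments choose neighborhoods $\Omega_i$ with $\SSi(F_i)\cap\Omega_i\subset\Lambda_i$, invoke the bound~\eqref{eq:SSmuhom}, reduce via monotonicity of the cone to $C(\Lambda_2,\Lambda_1)$, apply Lemma~\ref{lem:clean-cone}, and use the Lagrangian condition to land in $T^*_\Xi T^*X$. The only cosmetic difference is in the symplectic linear algebra: the paper observes directly that $-H^{-1}(T\Lambda_i)=T^*_{\Lambda_i}T^*X$ for each $i$ separately, restricts to $\Xi$, and sums, whereas you first sum to $(T_p\Xi)^{\perp_\omega}$ and then apply $-H^{-1}$; these are the same computation.
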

\begin{proof}
We have $\SSi(\mu hom(F_1,F_2) ) \subset -H^{-1}(C(\SSi(F_2) , \SSi(F_1) ))$
by the bound~\eqref{eq:SSmuhom}.
Let $U_i$ be a neighborhood of $\Lambda_i$ such that
$\SSi(F_i)\cap U_i  \subset \Lambda_i$, $i=1,2$.
Then $U = U_1 \cap U_2$ is a neighborhood of $\Xi$ and we have
$-H^{-1}(C(\SSi(F_2) , \SSi(F_1) )) \cap T^*U
 \subset -H^{-1}(C(\Lambda_2,\Lambda_1))$.

Since $\Lambda_i$ is Lagrangian we have
$-H^{-1}(T\Lambda_i) = T^*_{\Lambda_i}T^*X$, for $i=1,2$. In particular
$-H^{-1}( \Xi\times_{T^*X} T\Lambda_i) \subset T^*_\Xi T^*X$ and the result
follows from Lemma~\ref{lem:clean-cone}.
\end{proof}

Let $\Lambda_0$ be an open subset of $\Lambda$.  Let
$\Delta_M \subset M\times M$ be the diagonal.  Let
$F \in \Derb_{(\Lambda_0)}(\cor_M)$. Theorem~\ref{th:opboim}~(i) implies
$\DD'F \in \Derb_{(\Lambda_0^a)}(\cor_M)$.  We have $\Delta_{\Lambda_0} =
T^*_{\Delta_M}(M\times M) \cap (\Lambda_0\times \Lambda_0^a)$ and we deduce a
morphism
\begin{equation}
\label{eq:id_micro}
  \begin{split}
\Hom(F,F) & \simeq \Hom(\omega_{\Delta_M|M\times M},F\etens \DD'F) \\
&\to H^0(\Delta_{\Lambda_0};
\mu hom(\omega_{\Delta_M|M\times M},F\etens \DD'F) ).
  \end{split}
\end{equation}
We denote by $\delta_F \in H^0(\Delta_{\Lambda_0};
\mu hom(\omega_{\Delta_M|M\times M},F\etens \DD'F))$
the image of $\id_F$ by~\eqref{eq:id_micro}.

\begin{proposition}\label{prop:pre_Maslov_sheaf}
Let $\Lambda$ be a locally closed Lagrangian submanifold of $\dT^*M$. 
Let $\Lambda_0$ be an open subset of $\Lambda$.
Let $F,G,H \in \Derb_{(\Lambda_0)}(\cor_M)$. We assume that $F$ is simple
along $\Lambda_0$. Then there exists a unique
$$
\delta_{F,G} \in H^0(\Delta_{\Lambda_0}; \mu hom(F\etens \DD'F, G\etens \DD'G)),
$$
such that $\delta_F \mucirc \delta_{F,G} = \delta_G$ (where $\mucirc$ is defined
in Notation~\ref{not:mucomposition}).  If $G$ also is simple, then we have
$\delta_{F,G} \mucirc \delta_{G,H} = \delta_{F,H}$.
\end{proposition}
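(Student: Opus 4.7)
The plan is to exhibit, for $F$ simple along $\Lambda_0$, a sheaf isomorphism on some neighborhood of $\Delta_{\Lambda_0}$
\begin{equation*}
\Phi_F\cl \mu hom(F\etens \DD'F,\, G\etens \DD'G) \isoto \mu hom(\omega_{\Delta_M|M\times M},\, G\etens \DD'G),
\quad a\mapsto a\mucirc \delta_F,
\end{equation*}
and then to set $\delta_{F,G} \eqdot \Phi_F^{-1}(\delta_G)$. Existence and uniqueness of $\delta_{F,G}$ are then tautological, and the formula $\delta_{G,H}\mucirc \delta_{F,G} = \delta_{F,H}$ will follow from the associativity of $\mucirc$.

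The first step is to show that both $\mu hom$ sheaves have locally constant cohomology in a neighborhood of $\Delta_{\Lambda_0}$, which reduces the isomorphism statement to a stalkwise check. For this I will apply Lemma~\ref{lem:lagr-clean-inter} in $T^*(M\times M)$ to the pair of Lagrangian submanifolds $T^*_{\Delta_M}(M\times M)$ and $\Lambda_0\times \Lambda_0^a$, after verifying that they intersect cleanly along $\Delta_{\Lambda_0}$ (a straightforward computation with the standard identification $T^*_{\Delta_M}(M\times M) \simeq T^*M$).

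For the stalkwise check at a point $p_0\in \Lambda_0$, I reduce to the model case of Example~\ref{ex:simple_sheaf}. By Lemma~\ref{lem:simple_local}, after shrinking $\Lambda_0$ around $p_0$ I may replace $F$ up to isomorphism in $\Derb(\cor_M;\Omega)$ by a simple model $F_0 = \cor_Z$, with $Z$ a smooth half-space whose conormal is $\Lambda_0$; then $\DD'F_0 \simeq \cor_{\Int Z}$ up to shift. The key computation is that $\delta_{F_0}$ generates the rank-one local system $\mu hom(\omega_{\Delta_M|M\times M},\, F_0\etens \DD'F_0)$ on $\Delta_{\Lambda_0}$, carried out by explicit microlocalization along the diagonal using $F_0\etens \DD'F_0 \simeq \cor_{Z\times \Int Z}$. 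Granting this, $\mucirc \delta_{F_0}$ becomes, stalkwise, tensoring with a generator of a free rank-one module, hence invertible; functoriality in $F$ with respect to isomorphisms in the quotient category transfers the conclusion to the original $F$.

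Once $\Phi_F$ is known to be an isomorphism, set $\delta_{F,G} \eqdot \Phi_F^{-1}(\delta_G)$. If $G$ is also simple, then $\delta_{G,H}$ is defined by the same procedure, and associativity of $\mucirc$ yields
\begin{equation*}
\Phi_F(\delta_{G,H}\mucirc \delta_{F,G})
= \delta_{G,H}\mucirc (\delta_{F,G}\mucirc \delta_F)
= \delta_{G,H}\mucirc \delta_G = \delta_H = \Phi_F(\delta_{F,H}),
\end{equation*}
so injectivity of $\Phi_F$ gives $\delta_{G,H}\mucirc \delta_{F,G} = \delta_{F,H}$. The step I expect to be the main obstacle is the explicit identification in the model case — specifically, making sure that the generator produced by the coevaluation morphism for $F_0$ matches $\delta_{F_0}$, and that this identification is independent of the local reduction (shrinking, choice of integral transform to reach the half-space picture), so that $\delta_{F,G}$ really assembles into a well-defined global section over $\Delta_{\Lambda_0}$ rather than a merely local one.
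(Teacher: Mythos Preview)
Your approach is essentially the paper's: show that precomposition with $\delta_F$ gives a sheaf isomorphism between $B \eqdot \mu hom(F\etens \DD'F, G\etens \DD'G)$ and $A_G \eqdot \mu hom(\omega_{\Delta_M|M\times M}, G\etens \DD'G)$ on $\Delta_{\Lambda_0}$, then set $\delta_{F,G} = \Phi_F^{-1}(\delta_G)$, with transitivity from associativity of $\mucirc$. The only difference is in the stalkwise check. The paper avoids the reduction to the half-space model: it invokes the stalk formula~\eqref{eq:stalk_muhom} directly to see that $A_F \eqdot \mu hom(\omega_{\Delta_M|M\times M}, F\etens \DD'F)$ has stalks $\cor$ (since $F$ is simple), hence $A_F\simeq \cor_{\Delta_{\Lambda_0}}$; the same formula shows $B$ and $A_G$ have isomorphic stalks, and since $\delta_F$ is the image of $\id_F$ it generates each stalk of $A_F$, so $(\cdot)\mucirc\delta_F$ is an isomorphism on stalks. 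Your explicit model computation would also work but is heavier than necessary.

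Your concern about gluing is not an issue. The map $\Phi_F$ is a globally defined morphism of sheaves on $\Delta_{\Lambda_0}$ and $\delta_G$ is a global section; the only thing verified locally is that $\Phi_F$ is an isomorphism, and being an isomorphism is a stalkwise property. Once that is known, $\Phi_F^{-1}$ exists globally and $\delta_{F,G}=\Phi_F^{-1}(\delta_G)$ is automatically a global section over $\Delta_{\Lambda_0}$; no local choices need to be reconciled.
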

\begin{proof}
We set for short
$A_F \eqdot \mu hom(\omega_{\Delta_M|M\times M},F\etens \DD'F) )$
and $B \eqdot \mu hom(F\etens \DD'F, G\etens \DD'G)$.
Then~\eqref{eq:comp_muhom} gives a morphism $A_F \ltens B \to A_G$.

The intersection $\Delta_{\Lambda_0} = T^*_{\Delta_M}(M\times M) \cap
(\Lambda_0\times \Lambda_0^a)$ is clean. Hence by
Lemma~\ref{lem:lagr-clean-inter}, $A_F$ and $A_G$ are locally constant on
$\Delta_{\Lambda_0}$.  Since $F$ is simple we deduce by~\eqref{eq:stalk_muhom}
that, locally, $A_F \simeq \cor_{\Delta_{\Lambda_0}}$.  The same argument shows
that $B$ is locally constant on $\Lambda_0\times \Lambda_0^a$.
By~\eqref{eq:stalk_muhom} again the stalks of $A_G$ and $B$ are isomorphic.  It
follows that the composition $B_{\Delta_0} \to A_F \ltens B \to A_G$ given by $b
\mapsto \delta_F \tens b \mapsto \delta_F \mucirc b$ gives an isomorphism
$B_{\Delta_0} \isoto A_G$.  Then $\delta_{F,G}$ is the inverse image of
$\delta_G$ by this isomorphism.

The last formula follows from the unicity of $\delta_{F,G}$.
\end{proof}

\begin{corollary}
\label{cor:pre_Maslov_sheaf}
There exists a neighborhood $U$ of $\Delta_\Lambda$ in
$\Lambda\times \Lambda^a$ and $\shk_{\Delta_\Lambda}$ in
$\kss^s(\cor_U)$ such that,
for any open subset $\Lambda_0 \subset \Lambda$,
\begin{itemize}
\item [(i)] for any $F\in \Derb_{(\Lambda_0)}(\cor_M)$, there exist a
  neighborhood $V$ of $\Delta_{\Lambda_0}$ in $\Lambda\times \Lambda^a$ and a
  canonical morphism in $\kss^s(\cor_V)$:
$$
\gamma_F \cl \shk_{\Delta_\Lambda}|_V
\to (\kssfunc_{\Lambda_0\times \Lambda_0^a}(F\etens \DD'F))|_V ,
$$
which is an isomorphism as soon as $F$ is simple,
\item [(ii)] for $F, G\in \Derb_{(\Lambda_0)}(\cor_M)$ with $F$ simple along
  $\Lambda_0$, there exists a neighborhood $W$ of $\Delta_{\Lambda_0}$ in
  $\Lambda\times \Lambda^a$ such that
$\delta_{F,G}|_W \circ \gamma_F |_W = \gamma_G |_W$.
\end{itemize}
Moreover, for other $(U',\shk'_\Delta, \gamma'_F)$ satisfying (i) and (ii)
there exist a neighborhood $U_1$ of $\Delta_\Lambda$ in $\Lambda\times \Lambda^a$
and a unique isomorphism $\gamma\cl \shk_\Delta \to \shk'_\Delta$
in $\kss^s(\cor_{U_1})$ such that
$\gamma'_F|_{U_1} = \gamma \circ \gamma_F|_{U_1}$, for all $F$ as in~(i).
\end{corollary}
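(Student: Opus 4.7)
The plan is to construct $\shk_{\Delta_\Lambda}$ by gluing locally defined simple objects, using Proposition~\ref{prop:pre_Maslov_sheaf} as the source of canonical transition isomorphisms. By Lemma~\ref{lem:simple_local}(i), cover $\Lambda$ by open subsets $\{\Lambda_\alpha\}_{\alpha\in A}$ such that each $\Lambda_\alpha$ carries a simple object $F_\alpha\in \Derb_{(\Lambda_\alpha)}(\cor_M)$. The inclusion $\SSi(F_\alpha\etens\DD'F_\alpha)\subset \SSi(F_\alpha)\times\SSi(F_\alpha)^a$, together with a local computation via~\eqref{eq:stalk_muhom} (reducing to Example~\ref{ex:simple_sheaf}), shows that $F_\alpha\etens\DD'F_\alpha$ is simple along a neighborhood $U_\alpha$ of $\Delta_{\Lambda_\alpha}$ inside $\Lambda\times\Lambda^a$, hence defines an object of $\kss^s(\cor_{U_\alpha})$.

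On each double intersection $\Lambda_{\alpha\beta}\eqdot\Lambda_\alpha\cap\Lambda_\beta$, Proposition~\ref{prop:pre_Maslov_sheaf} applied with $F=F_\alpha$, $G=F_\beta$ produces canonical sections
$$
\delta_{F_\alpha,F_\beta}\in H^0(\Delta_{\Lambda_{\alpha\beta}};\mu hom(F_\alpha\etens\DD'F_\alpha,F_\beta\etens\DD'F_\beta)).
$$
The uniqueness clause forces $\delta_{F_\alpha,F_\alpha}=\id$, and the composition formula yields both the cocycle relation $\delta_{F_\beta,F_\gamma}\mucirc\delta_{F_\alpha,F_\beta}=\delta_{F_\alpha,F_\gamma}$ on triple overlaps and the identity $\delta_{F_\beta,F_\alpha}\mucirc\delta_{F_\alpha,F_\beta}=\id$, so each $\delta_{F_\alpha,F_\beta}$ is an isomorphism in $\kss^s$ on a suitable neighborhood of $\Delta_{\Lambda_{\alpha\beta}}$. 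Because $\kss(\cor_{\Lambda\times\Lambda^a})$ is a stack, the descent datum $(F_\alpha\etens\DD'F_\alpha,\delta_{F_\alpha,F_\beta})$ glues to an object $\shk_{\Delta_\Lambda}\in \kss^s(\cor_U)$ on a neighborhood $U$ of $\Delta_\Lambda$.

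For (i) and (ii), given $F\in \Derb_{(\Lambda_0)}(\cor_M)$, Proposition~\ref{prop:pre_Maslov_sheaf} applied with the simple $F_\alpha$ and $G=F$ produces sections $\delta_{F_\alpha,F}$ on $\Delta_{\Lambda_0\cap\Lambda_\alpha}$; the composition formula $\delta_{F_\beta,F}\mucirc\delta_{F_\alpha,F_\beta}=\delta_{F_\alpha,F}$ is precisely the compatibility of these local morphisms with the transition isomorphisms defining $\shk_{\Delta_\Lambda}$. Hence they glue to a morphism $\gamma_F\in \kss^s(\cor_V)$ for some neighborhood $V$ of $\Delta_{\Lambda_0}$. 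When $F$ is simple, each $\delta_{F_\alpha,F}$ is invertible by the same two-sided inverse argument as above, so $\gamma_F$ is an isomorphism. Property (ii) is the identity $\delta_{F,G}\mucirc\delta_{F_\alpha,F}=\delta_{F_\alpha,G}$ of Proposition~\ref{prop:pre_Maslov_sheaf}, rewritten under the local identification $\shk_{\Delta_\Lambda}|_{U_\alpha}\simeq F_\alpha\etens\DD'F_\alpha$.

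For uniqueness, given another triple $(U',\shk'_\Delta,\gamma'_\bullet)$ satisfying (i) and (ii), apply (i) with $F=F_\alpha$ to obtain local isomorphisms $\gamma_{F_\alpha}$ and $\gamma'_{F_\alpha}$, and set locally $\gamma^\alpha\eqdot(\gamma'_{F_\alpha})^{-1}\circ\gamma_{F_\alpha}$; property (ii) with $F=F_\alpha$, $G=F_\beta$ gives $\gamma_{F_\beta}=\delta_{F_\alpha,F_\beta}\circ\gamma_{F_\alpha}$ and likewise for $\gamma'$, forcing $\gamma^\alpha=\gamma^\beta$ on overlaps, so they glue to the required $\gamma\cl \shk_\Delta\isoto \shk'_\Delta$. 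Uniqueness of $\gamma$ is forced by the single relation $\gamma'_{F_\alpha}\circ\gamma=\gamma_{F_\alpha}$. The substance of the argument lies in Proposition~\ref{prop:pre_Maslov_sheaf}; once its composition and cocycle formulas are in hand, the present corollary reduces to stack descent, the only mild subtlety being the simplicity of $F_\alpha\etens\DD'F_\alpha$ near $\Delta_{\Lambda_\alpha}$.
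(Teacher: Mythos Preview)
Your argument is correct and follows essentially the same route as the paper: choose local simple sheaves $F_\alpha$, use $F_\alpha\etens\DD'F_\alpha$ as local models near $\Delta_{\Lambda_\alpha}$, take the $\delta_{F_\alpha,F_\beta}$ from Proposition~\ref{prop:pre_Maslov_sheaf} as transition isomorphisms satisfying the cocycle condition, and glue via the stack property; then (i), (ii) and uniqueness are read off the same proposition. The one point the paper makes explicit and you gloss over is that the $\delta_{F_\alpha,F_\beta}$ and the cocycle relations only hold on certain neighborhoods $U^2_{\alpha\beta}$, $U^3_{\alpha\beta\gamma}$ of the diagonals, not on all of $U_\alpha\cap U_\beta$; the paper takes the covering $\{\Lambda_\alpha\}$ locally finite so that one can shrink the $U_\alpha$ to ensure $U_\alpha\cap U_\beta\subset U^2_{\alpha\beta}$ and $U_\alpha\cap U_\beta\cap U_\gamma\subset U^3_{\alpha\beta\gamma}$, which is what makes the descent datum genuinely defined on a covering of some neighborhood $U$ of $\Delta_\Lambda$.
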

\begin{proof}
We can find a locally finite open covering
$\Lambda = \bigcup_{i\in I} \Lambda_i$ and $F_i \in \Derb_{(\Lambda_i)}(\cor_M)$
which is simple along $\Lambda_i$, for all $i\in I$. We set
$G_i =\kssfunc_{\Lambda_i\times \Lambda_i^a}(F_i\etens \DD'F_i)
\in \kss^s(\cor_{\Lambda_i\times \Lambda_i^a})$.
By Proposition~\ref{prop:pre_Maslov_sheaf}
and Corollary~\ref{cor:defbiskss}, for any 
$i,j\in I$, there exist a neighborhood $U^2_{ij}$ of $\Delta_{\Lambda_{ij}}$
in $\Lambda\times \Lambda^a$ and an isomorphism
$\delta_{ij}\cl G_i \isoto G_j$ in $\kss^s(\cor_{U^2_{ij}})$.
Moreover, for $i,j,k\in I$, there exists a neighborhood $U^3_{ijk}$ of
$\Delta_{\Lambda_{ijk}}$ in $\Lambda\times \Lambda^a$ such that
$\delta_{ik} = \delta_{jk} \circ \delta_{ij}$
in $\kss^s(\cor_{U^3_{ijk}})$.

Since the covering is locally finite we can find a neighborhood $U_i$ of
$\Delta_{\Lambda_i}$ in $\Lambda\times \Lambda^a$, for each $i\in I$, such that
$U_i\cap U_j \subset U^2_{ij}$ and $U_i\cap U_j \cap U_k \subset U^3_{ijk}$,
for all $i,j,k\in I$. Then, the $G_i$ glue into an object 
$\shk_{\Delta_\Lambda} \in \kss^s(\cor_{\bigcup_{i\in I}U_i})$.

Then (i) and (ii) follow from Proposition~\ref{prop:pre_Maslov_sheaf}.
The unicity follows easily from~(i) and~(ii).
\end{proof}

\subsection{Stalks of simple sheaves}

We prove that the stalks of a simple sheaf at a generic point are free.  Let $M$
be a manifold and let $\Lambda \subset \dT^*M$ be a smooth closed conic
Lagrangian submanifold. We set
\begin{align*}
Z_\Lambda = \{ &x\in \dot\pi_M(\Lambda);\;
\text{there exist a neighborhood $W$ of $x$ and a} \\
&\text{smooth hypersurface $S \subset W$ such that
$\Lambda \cap T^*W \subset T^*_SW$} \} .
\end{align*}

\begin{lemma}\label{lem:def_chemin}
Let $x,y\in M \setminus \dot\pi_M(\Lambda)$.
Let $I$ be an open interval containing $0$ and $1$.
Then there exists a $C^\infty$ embedding $c \cl I \to M$ such that $c(0) = x$,
$c(1) = y$ and $c([0,1])$ only meets $\dot\pi_M(\Lambda)$ at points of
$Z_\Lambda$, with a transverse intersection.
\end{lemma}
\begin{proof}
(i) Let $n$ be the dimension of $M$ and let $B \subset \R^{n-1}$ be the open
ball of radius $1$ and center $0$.  We choose a $C^\infty$ embedding $\gamma\cl
B\times I \to M$ such that $\gamma_0(0) = x$ and $\gamma_0(1) = y$, where
$\gamma_s \eqdot \gamma|_{\{s\} \times I}$ for $s\in B$.  For example we can
define $\gamma$ by integrating a vector field which admits an integral curve
from $x$ to $y$.  We also assume that $\gamma(B\times \{0\})$ and
$\gamma(B\times \{1\})$ do not meet $\dot\pi_M(\Lambda)$.

We choose a trivialization of the vector bundle $\gamma^*(T^*M)$. It gives a map
$\gamma'\cl B\times I \times \R^n \to T^*M$ and we set $\gamma'_s \eqdot
\gamma|_{\{s\} \times I\times \R^n}$. Then $\gamma'$ is an open embedding.  In
particular $\gamma'$ is transversal to $\Lambda$.

\medskip\noindent
(ii) By the transversality theorem there exists $s\in B$ such that $\gamma'_s$
is transversal to $\Lambda$.  In particular $\Lambda \cap \gamma'_s([0,1]\times
\R^n)$ consists of finitely many half lines, say $\rspos\cdot p_i$, $i=1,\ldots,
N$.  The transversality also implies that $V_i\eqdot T_{p_i}\Lambda \cap
T_{p_i}(\gamma'_s(I\times \R^n))$ is of dimension $1$.  We write $p_i =
\gamma'_s(t_i,v_i)$ and $x_i = \gamma_s(t_i)$.  Then $T^*_{x_i}M =
\gamma'_s(t_i,\R^n)$ and $T_{p_i}\Lambda \cap T_{p_i}T^*_{x_i}M$ is contained in
$V_i$, hence also of dimension $1$. This means that $\dot\pi_M|_{\Lambda} \cl
\Lambda \to M$ is of maximal rank $n-1$ at $p_i$.  Hence there exists a
neighborhood $\Omega_i$ of $p_i$ and a smooth hypersurface $S_i$ around $x_i$
such that $\Lambda \cap \Omega_i = T^*_{S_i}M \cap \Omega_i$.  If $x_i = x_j$
for $i\not= j$, then $S_i$ and $S_j$ meet transversally at $x_i$ and $S_i\cap
S_j$ is a submanifold of codimension $2$ in a neighborhood of $x_i$. Hence, by
deforming $\gamma_s$, we can assume moreover that $\gamma_s([0,1])$ avoids
$S_i\cap S_j$.  Then all $x_i$ are distinct and belong to $Z_\Lambda$.  We also
see that the intersection of $S_i$ and $\gamma_s(I)$ is transversal.  By joining
$x$ to $\gamma_s(0)$ and $y$ to $\gamma_s(1)$, we obtain the embedding $c$ of
the lemma.
\end{proof}

\begin{lemma}\label{lem:stalk_simple_sh}
We assume that $M$ is connected.  Let $F\in \Derb(\cor_M)$ be such that
$\dot\SSi(F) \subset \Lambda$ and $F$ is simple along $\Lambda$.  We set $U =
M\setminus \dot\pi_M(\Lambda)$.  We assume that there exists $x_0\in U$ such
that $H^iF_{x_0}$ is free of finite rank over $\cor$, for all $i\in \Z$.  Then
$H^iF_x$ is free of finite rank over $\cor$, for all $x\in U$ and all $i\in \Z$.
\end{lemma}
\begin{proof}
(i) Let $x\in U$ and let $I$ be an open interval containing $0$ and $1$.  
By Lemma~\ref{lem:def_chemin} we can choose a $C^\infty$ path
$\gamma\cl I \to M$ such that $\gamma(0) = x_0$, $\gamma(1) = x$ and
$\gamma([0,1])$ meets $\dot\pi_M(\Lambda)$ at finitely many points, all
contained in $Z_\Lambda$ and with a transversal intersection.
We denote these points by $\gamma(t_i)$, where $0< t_1< \cdots < t_k <1$.

\medskip\noindent
(ii) Since $F$ is locally constant on $U$, the stalk $F_{\gamma(t)}$ is constant
for $t\in \mo]t_i,t_{i+1}[$.  By Example~\ref{ex:simple_sheaf}, for $t_{i-1} < t
< t_i < u < t_{i+1}$, the stalks $F_{\gamma(t)}$ and $F_{\gamma(u)}$ differ by
$\cor[d_i]$, for some degree $d_i\in\Z$. Hence $H^iF_{\gamma(t)}$ is free of
finite rank over $\cor$, for all $i\in \Z$, if and ony if the same holds for
$F_{\gamma(u)}$.  The lemma follows.
\end{proof}

\section{Microlocal germs}
\label{sec:defmicrogerms}

We use the notations of~\cite[\S 7.5]{KS90}, in particular the
notations~\eqref{eq:def_lambdas_p} and~\eqref{eq:def_tau}.  Let $M$ be a
manifold of dimension $n$ and $\Lambda$ a locally closed conic Lagrangian
submanifold of $\dT^*M$. We let
\begin{equation}\label{eq:def_shl_M}
\sigma_{T^*M}\cl \lag_M \to T^*M
\end{equation}
be the fiber bundle of Lagrangian Grassmannian of $T^*M$. By definition the
fiber of $\lag_M$ over $p\in T^*M$ is the Grassmannian manifold of Lagrangian
subspaces of $T_pT^*M$. We let
\begin{equation}\label{eq:def_shl_M0}
\sigma_{T^*M}^0 \cl \lag_M^0 \to T^*M
\end{equation}
be the subbundle of $\lag_M$ whose fiber over $p\in T^*M$ is the set of
Lagrangian subspaces of $T_pT^*M$ which are transversal to $\lambda_0(p)$.
Then $\lag_M^0$ is an open subset of $\lag_M$. For a given $p\in T^*M$ we set
$V=T_{\pi_M(p)}M$ and we identify $T_pT^*M$ with $V\times V^*$.
We use coordinates $(\nu;\eta)$ on $T_pT^*M$.
Then we can see that any $l\in (\lag_M^0)_p$ is of the form
\begin{equation}\label{eq:LM0=matsym}
l = \{ (\nu;\eta)\in T_pT^*M;\; \eta = A\cdot \nu \},
\end{equation}
where $A \cl V \to V^*$ is a symmetric matrix.  This identifies the fiber
$(\lag_M^0)_p$ with the space of $n\times n$-symmetric matrices.

For a function $\varphi$ defined on a product $X\times Y$ and for a given
$x\in X$ we use the general notation $\varphi_x = \varphi|_{\{x\}\times Y}$.
\begin{lemma}\label{lem:exist-fcttest}
There exists a function $\varphi\cl \lag_M^0\times M \to \R$ of class
$C^\infty$ such that, for any $l\in \lag_M^0$ with $\sigma_{T^*M}(l) = (x;\xi)$,
$$
\varphi_l(x) = 0, \quad
d\varphi_l(x) = \xi, \quad
\lambda_{\varphi_l}( \sigma_{T^*M}(l)) = l.
$$
\end{lemma}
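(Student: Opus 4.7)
The plan is to construct $\varphi$ by gluing explicit local formulas with a partition of unity. The key observation is that the three imposed conditions on $\varphi_l$ are \emph{affine} constraints on the $2$-jet of $\varphi_l$ at $x=\pi_M(\sigma_{T^*M}(l))$: in any local coordinate system $(y^i)$ near $x$ they read $\varphi_l(x)=0$, $\partial_i\varphi_l(x)=\xi_i$, and $\partial_i\partial_j\varphi_l(x)=A_{ij}$, where $A$ is the symmetric matrix of~\eqref{eq:LM0=matsym} representing $l$ in these coordinates. Consequently, any convex combination of candidates satisfying the three conditions again satisfies them.

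In a coordinate chart $(U_\alpha,(y^1,\ldots,y^n))$ of $M$, any $l\in\lag_M^0$ over $T^*U_\alpha$ is described by a triple $(x;\xi;A_\alpha(l))$ via~\eqref{eq:LM0=matsym}, and I set
$$
\psi^\alpha(l,y) \eqdot \sum_i \xi_i\,(y^i-x^i) + \textstyle\frac{1}{2}\sum_{i,j}A_\alpha(l)_{ij}(y^i-x^i)(y^j-x^j),
$$
which is smooth in $(l,y)$ on $\lag_M^0|_{T^*U_\alpha}\times U_\alpha$ and manifestly satisfies the three conditions at $y=x$.

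For the global construction, I pick a locally finite open cover $\{U_\alpha\}$ of $M$ by coordinate charts, a subordinate smooth partition of unity $\{\rho_\alpha\}$, and smooth cut-offs $\chi_\alpha\cl M\to[0,1]$ supported in $U_\alpha$ with $\chi_\alpha\equiv 1$ on an open neighborhood of $\supp(\rho_\alpha)$. Writing $\pi\cl\lag_M^0\to M$ for $l\mapsto\pi_M(\sigma_{T^*M}(l))$, I define
$$
\varphi(l,y)\eqdot\sum_\alpha \rho_\alpha(\pi(l))\,\chi_\alpha(y)\,\psi^\alpha(l,y),
$$
where each summand is extended by zero outside $U_\alpha$ in the $y$ variable; local finiteness makes the sum smooth on $\lag_M^0\times M$.

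To verify the conditions at a fixed $l$ with $x=\pi(l)$: for every $\alpha$ with $\rho_\alpha(x)>0$ we have $x\in\supp(\rho_\alpha)$, so $\chi_\alpha\equiv 1$ on a neighborhood of $x$, whence $\chi_\alpha\,\psi^\alpha(l,\cdot)$ has the same $2$-jet at $x$ as $\psi^\alpha(l,\cdot)$ and therefore satisfies the three conditions. Since $\sum_\alpha\rho_\alpha(x)=1$, the function $\varphi_l$ is a convex combination of such candidates, and the affine-constraint remark of the first paragraph yields the lemma. The main subtlety, and the reason for the convex-combination trick, is that the matrix $A$ representing $l$ depends on the chart, so a naive global quadratic formula is unavailable; the requirement $\chi_\alpha\equiv 1$ on a neighborhood of $\supp(\rho_\alpha)$ is precisely what guarantees that the cut-offs do not perturb the prescribed $2$-jets in the sum.
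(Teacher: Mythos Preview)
Your proof is correct and complete. The extension-by-zero of each summand is smooth because $\rho_\alpha\circ\pi$ vanishes on a neighborhood of $\{l:\pi(l)\notin U_\alpha\}$ and $\chi_\alpha$ vanishes on a neighborhood of $M\setminus U_\alpha$; and your convex-combination argument for the third condition is valid because the coordinate-free statement $\lambda_{\psi}(p)=l$ is equivalent, in \emph{any} chart containing $x$, to the affine constraint $\mathrm{Hess}(\psi)(x)=A(l)$, so each $\psi^\beta_l$ with $\rho_\beta(x)>0$ has the correct Hessian in the $\alpha$-chart as well.

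Your route, however, differs from the paper's. The paper does not glue local formulas by a partition of unity on $M$. Instead it chooses a closed embedding $i\cl M\hookrightarrow X=\R^N$, observes that symplectic reduction along the coisotropic $M\times_X T^*X$ induces a surjective submersion $r\cl\lag_X^0|_{M\times_X T^*X}\to\lag_M^0$ which is an affine-space bundle, picks a global section $j$ of $r$, and then simply restricts the explicit Euclidean formula: $\varphi(l,y)=\varphi_0(j(l),i(y))$. Both arguments ultimately exploit an affine structure---you use affineness of the $2$-jet constraints to average candidates, while the paper uses affineness of the fibers of $r$ to find a section---but the geometry is organized differently. Your approach is more elementary and self-contained, requiring no symplectic reduction; the paper's approach yields a single closed formula once $j$ is chosen and fits naturally with the ambient-space viewpoint used elsewhere in the microlocal theory.
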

\begin{proof}
(i) We first assume that $M$ is the vector space $V=\R^n$. 
We identify $T^*M$ and $M\times V^*$. For $p=(x;\xi) \in M\times V^*$ the fiber
$(\lag_M^0)_p$ is identified with the space of quadratic forms on $V$
through~\eqref{eq:LM0=matsym}.  For $l\in (\lag_M^0)_p$ we let $q_l$ be the
corresponding quadratic form.  Now we define $\varphi_0$ by
$$
\varphi_0(l,y) = \langle y-x; \xi \rangle  + \pdemi\, q_l(y-x),
\quad \text{where $(x;\xi) = \sigma_{T^*M}^0(l)$.}
$$
We can check that $\varphi_0$ satisfies the conclusion of the lemma.

\medskip\noindent
(ii) In general we choose an embedding $i\cl M\hookrightarrow X\eqdot \R^N$.
For a given $p' =(x;\xi') \in M\times_X T^*X$ the subspace
$T_{p'}(M\times_X T^*X)$ of $T_{p'}T^*X$ is coisotropic.  The symplectic
reduction of $T_{p'}T^*X$ by $T_{p'}(M\times_X T^*X)$ is canonically identified
with $T_pT^*M$, where $p = i_d(p')$.  The symplectic reduction sends Lagrangian
subspaces to Lagrangian subspaces and we deduce a map, say
$r_{p'} \cl \lag_{X,p'} \to \lag_{M,p}$.
The restriction of $r_{p'}$ to the set of Lagrangian subspaces which are
transversal to $T_{p'}(M\times_X T^*X)$ is an actual morphism of manifolds.
In particular it induces a morphism $r^0_{p'} \cl \lag^0_{X,p'} \to \lag^0_{M,p}$.
We can see that $r^0_{p'}$ is onto and is a submersion.
When $p'$ runs over $M\times_X T^*X$ we obtain a surjective morphism of
bundles, say $r$:
$$
\xymatrix@R=.5cm{
\lag_X^0|_{M\times_X T^*X} \ar[r]^-r \ar[d] & \lag_M^0 \ar[d]\\
M\times_X T^*X \ar[r]^-{i_d} & T^*M. }
$$
We can see that $r$ is a fiber bundle, with fiber an affine space.
Hence we can find a section, say $j\cl \lag_M^0 \to \lag_X^0$.
For $(l,x) \in \lag_M^0\times M$ we set $\varphi(l,x) = \varphi_0(j(l), i(x))$,
where $\varphi_0$ is defined in~(i). Then $\varphi$ satisfies the conclusion
of the lemma.
\end{proof}
We come back to the Lagrangian submanifold $\Lambda$ of $\dT^*M$.  We let
\begin{equation}\label{eq:def_U_Lambda}
U_\Lambda \subset \lag_M^0|_\Lambda
\end{equation}
be the subset of $\lag_M^0|_\Lambda$ consisting of Lagrangian subspaces of
$T_pT^*M$ which are transversal to $\lambda_\Lambda(p)$. We define
$\sigma_\Lambda = \sigma_{T^*M}|_{U_\Lambda}$ and
$\tau_M = \pi_M|_\Lambda \circ \sigma_\Lambda$:
$$
\xymatrix{
U_\Lambda \ar[rr]^{\sigma_\Lambda} \ar[dr]_{\tau_M}
&&  \Lambda \ar[dl]^{\pi_M|_\Lambda}  \\
&M. }
$$
We note that $U_\Lambda$ is not a fiber bundle over $\Lambda$ but only an
open subset of $\lag_M^0|_\Lambda$. However, for a given $p\in \Lambda$, we
will use the notation
\begin{equation}\label{eq:def_U_Lambda_p}
U_{\Lambda,p} = \opb{\sigma_\Lambda}(p).
\end{equation}
We also introduce a notation for the graph of $\tau_M$ and the natural
``half-line bundle'' over it:
\begin{align}
\label{eq:ILambda}
I_\Lambda \subset U_\Lambda\times M , \;\;
I_\Lambda &=\{(l,\tau_M(l));\; l\in U_\Lambda \},  \\
\label{eq:JLambda}
J_\Lambda \subset \dT^*(U_\Lambda\times M) , \;\;
J_\Lambda &= \{ (l,x; 0, \lambda \xi);\;
(x;\xi) = \sigma_\Lambda(l), \lambda>0\}.
\end{align}

\begin{definition}\label{def:fcttest}
We let $\sht_\Lambda$ be the space of functions
$\varphi\cl U_\Lambda\times M \to \R$ of class $C^\infty$ such that, for any
$l\in U_\Lambda$ with $\sigma_\Lambda(l) = (x;\xi)$,
$$
\varphi_l(x) = 0, \quad
d\varphi_l(x) = \xi, \quad
\lambda_{\varphi_l}( \sigma_\Lambda(l)) = l.
$$
\end{definition}

\begin{lemma}\label{lem:fcttestLambda}
For any $\varphi \in \sht_\Lambda$ and any $l_0\in U_\Lambda$ we have
$$
\frac{\partial \varphi}{\partial l} (l_0,\tau_M(l_0)) = 0.
$$
\end{lemma}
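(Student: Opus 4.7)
The plan is to differentiate the identity $\varphi(l,\tau_M(l)) = 0$ along $l$ and then exploit the fact that $\Lambda$, being a conic Lagrangian, carries a vanishing Liouville form.

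First I would consider the smooth function $f\cl U_\Lambda \to \R$ defined by $f(l) = \varphi(l,\tau_M(l))$. By the first condition of Definition~\ref{def:fcttest}, $f \equiv 0$, hence $df \equiv 0$. Applying the chain rule at $l_0$ gives, for every $w \in T_{l_0}U_\Lambda$,
\begin{equation*}
\frac{\partial \varphi}{\partial l}(l_0,\tau_M(l_0))\cdot w
\;+\;
\frac{\partial \varphi}{\partial x}(l_0,\tau_M(l_0))\cdot d\tau_M(l_0)(w) = 0.
\end{equation*}
Writing $\sigma_\Lambda(l_0) = (x_0;\xi_0)$, the second condition of Definition~\ref{def:fcttest} gives $\frac{\partial \varphi}{\partial x}(l_0,x_0) = d\varphi_{l_0}(x_0) = \xi_0$, so the identity becomes
\begin{equation*}
\frac{\partial \varphi}{\partial l}(l_0,\tau_M(l_0))\cdot w = -\,\xi_0 \cdot d\tau_M(l_0)(w).
\end{equation*}

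The remaining step, which is the key geometric point, is to show that the right-hand side vanishes for all $w$. Since $\tau_M = \pi_M|_\Lambda \circ \sigma_\Lambda$, we have $d\tau_M(l_0)(w) = d\pi_M|_\Lambda\bigl(d\sigma_\Lambda(l_0)(w)\bigr)$, and $v \eqdot d\sigma_\Lambda(l_0)(w)$ lies in $T_{p}\Lambda$ with $p = \sigma_\Lambda(l_0)$. By definition of the Liouville form $\alpha_M$,
\begin{equation*}
\xi_0 \cdot d\tau_M(l_0)(w) = \xi_0 \cdot d\pi_M(v) = \alpha_M(v).
\end{equation*}
Now I use that $\Lambda \subset \dT^*M$ is a conic Lagrangian submanifold. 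It is a standard fact that on any $\R^+$-conic Lagrangian submanifold of $T^*M$ the Liouville form vanishes: if $\mathrm{eu}$ denotes the Euler (radial) vector field on $T^*M$, then $\alpha_M(u) = d\alpha_M(\mathrm{eu},u)$ for all $u$; since $\mathrm{eu}$ is tangent to $\Lambda$ (by conicity) and $d\alpha_M$ vanishes on $T\Lambda \times T\Lambda$ (by the Lagrangian condition), $\alpha_M|_{T\Lambda} = 0$. Hence $\alpha_M(v) = 0$, and the identity above yields $\frac{\partial \varphi}{\partial l}(l_0,\tau_M(l_0))\cdot w = 0$ for every $w$, which is what we wanted.

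The only nontrivial ingredient is the vanishing of $\alpha_M$ on a conic Lagrangian, which I expect to state explicitly since it is what converts the general Lagrangian situation (where only the symplectic form vanishes on $T\Lambda$) into the stronger statement needed here.
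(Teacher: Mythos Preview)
Your proof is correct and follows essentially the same argument as the paper: differentiate the identity $\varphi(l,\tau_M(l))=0$, identify $\partial\varphi/\partial x$ with $\xi_0$, factor $\tau_M=\pi_M|_\Lambda\circ\sigma_\Lambda$, and recognize the resulting expression as the Liouville form evaluated on a tangent vector to the conic Lagrangian $\Lambda$, which therefore vanishes. The only difference is notational---the paper phrases the same computation via transpose derivatives $\tau_{M,d},\sigma_{\Lambda,d},(\pi_M|_\Lambda)_d$ acting on covectors rather than pairing with tangent vectors---and the paper simply asserts the vanishing of $\alpha_M$ on $\Lambda$ where you supply the standard Euler-field argument.
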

\begin{proof}
For a given $l_0$ and $(x_0;\xi_0) = \sigma_\Lambda(l_0)$, we have the
transpose derivatives of $\tau_M, \sigma_\Lambda, \pi_M$:
\begin{gather*}
\tau_{M,d} \cl T^*_{x_0}M \to T^*_{l_0}(U_\Lambda), \qquad
\sigma_{\Lambda,d} \cl T^*_{(x_0;\xi_0)}\Lambda \to T^*_{l_0}(U_\Lambda),  \\
\pi_{M,d} \cl T^*_{x_0}M \to T^*_{(x_0;\xi_0)}T^*M , \qquad
(\pi_M|_\Lambda)_d \cl T^*_{x_0}M \to T^*_{(x_0;\xi_0)}\Lambda .
\end{gather*}
By definition we have $\varphi (l,\tau_M(l)) = 0$ for all $l \in U_\Lambda$.
By differentiation we obtain
$$
-\frac{\partial \varphi}{\partial l} (l_0,x_0)
=  \tau_{M,d}(\frac{\partial \varphi}{\partial x} (l_0,x_0) )
=  \tau_{M,d}(x_0; \xi_0 )
=  \sigma_{\Lambda,d} (\pi_M|_\Lambda)_d (x_0;\xi_0) .
$$
We remark that $\pi_{M,d} (x_0;\xi_0)$ is the Liouville $1$-form at
$(x_0;\xi_0)$. Since $\Lambda$ is conic Lagrangian it vanishes on $\Lambda$ and
$(\pi_M|_\Lambda)_d (x_0;\xi_0) =0$.
\end{proof}

\begin{lemma}\label{lem:clean-famille}
Let $X$ be a manifold and $p\cl E\to X$ a fiber bundle over $X$.  Let
$Y_1,Y_2\subset E$ be two submanifolds of $E$ and set $Y_3=Y_1\cap Y_2$.  We
assume that $Y_3$ is a submanifold, that $p|_{Y_3} \cl Y_3 \to X$ is a
submersion and that, for any $x\in X$, the submanifolds $Y_1\cap \opb{p}(x)$
and $Y_2\cap \opb{p}(x)$ have a clean intersection. Then $Y_1$ and $Y_2$ have a
clean intersection.
\end{lemma}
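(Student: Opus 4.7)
The plan is to verify the tangent-space condition $T_y Y_3 = T_y Y_1 \cap T_y Y_2$ at each $y \in Y_3$, since $Y_3$ is already assumed to be a submanifold. The inclusion $T_y Y_3 \subset T_y Y_1 \cap T_y Y_2$ is automatic; the work is the reverse inclusion, and the idea is to exploit the submersion hypothesis on $p|_{Y_3}$ in two separate roles.

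First I will observe that, because $T_y Y_3 \subset T_y Y_i$ for $i=1,2$ and $dp_y(T_y Y_3) = T_x X$ with $x = p(y)$, the restriction $p|_{Y_i}$ is itself submersive at $y$. This forces $Y_i$ to meet the fibre $F = \opb{p}(x)$ transversally at $y$, which in particular gives the clean-intersection identity $T_y Y_i \cap T_y F = T_y(Y_i \cap F) = T_y Y_i^x$, where $Y_i^x \eqdot Y_i \cap F$. Thus the fibrewise clean-intersection hypothesis (which requires $Y_i^x$ to be submanifolds) is actually compatible with the submersion hypothesis at points of $Y_3$.

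Given $v \in T_y Y_1 \cap T_y Y_2$, I will then lift $dp_y(v)$ to an element $w \in T_y Y_3$ using submersivity of $p|_{Y_3}$, so that $v-w \in T_y F$. Combining this with the previous paragraph puts $v-w$ in $T_y F \cap T_y Y_1 \cap T_y Y_2 = T_y Y_1^x \cap T_y Y_2^x$, and the fibrewise clean intersection of $Y_1^x$ and $Y_2^x$ identifies this last space with $T_y(Y_1^x \cap Y_2^x) = T_y Y_3^x \subset T_y Y_3$. Hence $v = w + (v-w) \in T_y Y_3$, which closes the argument. The proof is essentially linear algebra once the key small remark ``$p|_{Y_3}$ submersive implies $p|_{Y_i}$ submersive at points of $Y_3$'' is in place, so I do not anticipate any real obstacle.
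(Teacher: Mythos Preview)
Your proof is correct and follows essentially the same approach as the paper's: both use submersivity of $p|_{Y_3}$ to deduce submersivity of $p|_{Y_i}$ at points of $Y_3$ (hence $T_yY_i \cap T_yF = T_y(Y_i\cap F)$), lift $dp_y(v)$ to $w\in T_yY_3$, and then apply the fibrewise clean-intersection hypothesis to the vertical vector $v-w$. The only cosmetic difference is that the paper localises to a product $E = X\times F$ and writes everything in coordinates, while you argue intrinsically.
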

\begin{proof}
We have to check that $T_yY_3 = T_yY_1 \cap T_yY_2$, for all $y\in Y_3$.
Since this is a local problem we can write $E = X\times F$ and $y=(x,z)$.
We set $F_i = Y_i \cap \opb{p}(x) \subset F$. By hypothesis the projection
$T_yY_3 \to T_xX$ is onto, hence a fortiori $T_yY_i \to T_xX$, $i=1,2$.
We deduce $T_zF_i = T_yY_i \cap (\{0\} \times T_zF)$. 
Let $v = (v_x,v_z) \in T_xX \times T_zF$ be in $T_yY_1 \cap T_yY_2$.
Since $p|_{Y_3}$ is a submersion we can find $w_z\in  T_zF_3$ such that
$w=(v_x,w_z) \in T_yY_3$. Then $v-w=(0,v_z-w_z) \in T_zF_1 \cap T_zF_2$.
By hypothesis $v_z-w_z \in  T_zF_3$ and it follows that $v\in T_yY_3$.
This proves the lemma.
\end{proof}

For a function $\varphi\cl M\to \R$ of class $C^\infty$ we have introduced the
Lagrangian submanifold $\Lambda_\varphi$ in~\eqref{eq:def_Lambdaphi}. We also
define
\begin{equation}
\label{eq:def-Lambdaphip}
\Lambda'_\varphi = \{(x; \lambda \cdot d\varphi(x));\; x\in M, \;
 \lambda >0,\; \varphi(x) = 0\} .
\end{equation}
Since $\opb{\varphi}(0)$ is smooth at the points where $d\varphi$ is
not zero, $\Lambda'_\varphi \cap \dT^*M$ is a locally closed
conic Lagrangian submanifold of $\dT^*M$.

\begin{proposition}\label{prop:clean-inter}
For any $\varphi \in \sht_\Lambda$ there exists a neighborhood $V$ of $I_\Lambda$
(defined in~\eqref{eq:ILambda}) in $U_\Lambda\times M$ such that
\begin{itemize}
\item [(i)] $\SSi(\cor_{\opb{\varphi}([0,+\infty[)}) \cap \dT^*V = 
\Lambda'_\varphi \cap \dT^*V$ is a submanifold of $\dT^*V$,
\item [(ii)] $(T^*_{U_\Lambda}U_\Lambda \times \Lambda)
 \cap \Lambda'_\varphi \cap \dT^*V = J_\Lambda$, with $J_\Lambda$ given
in~\eqref{eq:JLambda},
\item [(iii)] $(T^*_{U_\Lambda}U_\Lambda \times \Lambda) \cap \dT^*V$
and $\Lambda'_\varphi \cap \dT^*V$ have a clean intersection.
\end{itemize}
\end{proposition}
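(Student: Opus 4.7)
The plan is to verify each of (i)--(iii) locally at an arbitrary point $(l_0, x_0)\in I_\Lambda$ with $(x_0;\xi_0)=\sigma_\Lambda(l_0)\in\Lambda\subset\dT^*M$, using the identities that follow from Lemma~\ref{lem:fcttestLambda} together with the three-fold transversality built into the definition of $U_\Lambda$. For~(i), at $(l_0,x_0)$ one has $\partial_x\varphi = \xi_0\neq 0$ by the definition of $\sht_\Lambda$ and $\partial_l\varphi = 0$ by Lemma~\ref{lem:fcttestLambda}, so $d\varphi\neq 0$. By continuity $d\varphi\neq 0$ on a neighborhood $V$ of $I_\Lambda$ wherever $\varphi=0$, so $\opb\varphi(0)\cap V$ is a smooth hypersurface and Example~\ref{ex:microsupport}~(iii) identifies $\SSi(\cor_{\opb{\varphi}([0,+\infty[)})\cap\dT^*V$ with the smooth conic Lagrangian $\Lambda'_\varphi\cap\dT^*V$.

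For~(ii), I would introduce $\Phi\cl U_\Lambda\times M \to T^*M$, $(l,x)\mapsto(x,\partial_x\varphi(l,x))$; since $\Phi(l,\tau_M(l))=\sigma_\Lambda(l)\in\Lambda$, we have $I_\Lambda\subset\opb{\Phi}(\Lambda)$. A $\delta x$-variation shows that the image of $d\Phi$ at $(l_0,x_0)$ contains $\lambda_{\varphi_{l_0}}(p_0)=l_0$, which is transverse to $\lambda_\Lambda(p_0)=T_{p_0}\Lambda$ by definition of $U_\Lambda$; hence $\Phi$ is transverse to $\Lambda$ at $(l_0,x_0)$, and a dimension count makes $\opb{\Phi}(\Lambda)$ locally a submanifold of dimension $\dim U_\Lambda=\dim I_\Lambda$, so $\opb{\Phi}(\Lambda)=I_\Lambda$ on some neighborhood $V$. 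Any point of $\Lambda'_\varphi\cap(T^*_{U_\Lambda}U_\Lambda\times\Lambda)\cap\dT^*V$ has the form $(l,x;\lambda\partial_l\varphi,\lambda\partial_x\varphi)$ with $\lambda>0$, $\varphi(l,x)=0$, $\partial_l\varphi(l,x)=0$, and $(x;\partial_x\varphi)\in\Lambda$ (by conicity of $\Lambda$); the last condition says $(l,x)\in I_\Lambda$, and the other two constraints are then automatic. Letting $\lambda$ range over $\R_{>0}$ recovers exactly $J_\Lambda$.

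For~(iii), fix $q_0=(l_0,x_0;0,\mu_0\xi_0)\in J_\Lambda$ and set $A=T^*_{U_\Lambda}U_\Lambda\times\Lambda$, $B=\Lambda'_\varphi$. The inclusion $T_{q_0}J_\Lambda\subset T_{q_0}A\cap T_{q_0}B$ is automatic; for the reverse, a vector $v=(\delta l,\delta x;0,\delta\eta)\in T_{q_0}A\cap T_{q_0}B$ has $(\delta x,\delta\eta)$ tangent to $\Lambda$ and, from tangency to $B$, satisfies
\[
d^2_l\varphi\cdot\delta l + D^T\cdot\delta x = 0,\qquad
\delta\eta = \delta\lambda\,\xi_0 + \mu_0\bigl(D\cdot\delta l + d^2_x\varphi\cdot\delta x\bigr)
\]
for some $\delta\lambda\in\R$, where $D=\partial_l\partial_x\varphi\cl T_{l_0}U_\Lambda\to T^*_{x_0}M$ and $D^T$ is its transpose. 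Differentiating $\partial_l\varphi(l,\tau_M(l))=0$ (Lemma~\ref{lem:fcttestLambda}) yields $d^2_l\varphi + D^T\cdot d\tau_M=0$, so the first equation becomes $D^T(\delta x - d\tau_M\cdot\delta l)=0$. The crux is injectivity of $D^T$: differentiating the companion identity $\partial_x\varphi(l,\tau_M(l))=\xi(l)$ gives $D = d_l\xi - q_{l_0}\cdot d\tau_M$, where $q_{l_0}$ is the symmetric form with graph $l_0$, and this is exactly $\pi_{l_0}\circ d\sigma_\Lambda$ with $\pi_{l_0}\cl\lambda_\Lambda(p_0)\isoto\lambda_0(p_0)\cong T^*_{x_0}M$ the projection along $l_0$. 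Both factors are surjective (the first as the differential of the submersion $\sigma_\Lambda$, the second as an isomorphism by the three-fold transversality of $l_0,\lambda_\Lambda(p_0),\lambda_0(p_0)$), so $D$ is onto and $D^T$ injective, forcing $\delta x=d\tau_M\cdot\delta l$. Substituting back gives $\delta\eta=\delta\lambda\,\xi_0+\mu_0\, d_l\xi\cdot\delta l$; combined with $d\sigma_\Lambda\cdot\delta l\in\lambda_\Lambda$ and $(0,\xi_0)\in\lambda_\Lambda$ (by conicity, via the Euler vector field), this exhibits $v$ as tangent to $J_\Lambda$.

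The main obstacle is the last computation, and more precisely the identification of $D=\partial_l\partial_x\varphi$ with $\pi_{l_0}\circ d\sigma_\Lambda$: this is the one step where the three-fold transversality in the definition of $U_\Lambda$ is used concretely, rather than merely as an open condition that makes $U_\Lambda$ nonempty.
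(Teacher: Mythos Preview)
Your proof is correct, but for part~(iii) it takes a genuinely different route from the paper. The paper argues fiberwise over $U_\Lambda$: for each fixed $l$, the submanifolds $\Lambda$ and $\Lambda'_{\varphi_l}$ of $\dT^*M$ intersect cleanly near $\sigma_\Lambda(l)$ (this is immediate from the transversality of $\Lambda$ and $\Lambda_{\varphi_l}$, since their intersection in the conormal picture is the single ray $\R_{>0}\cdot\sigma_\Lambda(l)$). It then invokes a general lemma (Lemma~\ref{lem:clean-famille}) saying that a family of clean intersections whose total intersection submerses onto the base is itself clean. Your argument bypasses this two-step reduction and computes $T_{q_0}A\cap T_{q_0}B$ directly; the linear-algebraic heart is your identification of the mixed Hessian $\partial_l\partial_x\varphi$ with $\pi_{l_0}\circ d\sigma_\Lambda$, which makes the surjectivity of $D$ (hence the cleanness) a consequence of the three-fold transversality in the definition of $U_\Lambda$. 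The paper's approach is more structural and reusable (the fiberwise statement is geometrically transparent, and the family lemma is of independent interest), while yours is self-contained and pinpoints exactly where the defining transversality of $U_\Lambda$ enters. Your treatment of~(ii) via the map $\Phi(l,x)=(x,\partial_x\varphi)$ and transversality to $\Lambda$ is also slightly different from the paper's fiberwise argument, but equivalent in spirit. One small remark: in your last sentence of~(iii), the appeal to $d\sigma_\Lambda\cdot\delta l\in\lambda_\Lambda$ and $(0,\xi_0)\in\lambda_\Lambda$ is unnecessary --- once you have $v=(\delta l,\,d\tau_M\delta l;\,0,\,\delta\lambda\,\xi_0+\mu_0\,d_l\xi\cdot\delta l)$, this is already the general form of a tangent vector to $J_\Lambda$ under its parametrization by $(l,\lambda)$.
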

\begin{proof}
(a) For a given $l\in U_\Lambda$, the manifolds $\Lambda_{\varphi_l}$ and $\Lambda$
have a transverse intersection at $\sigma_\Lambda(l) =(x_l;\xi_l)$. Hence we
can find a neighborhood $V_l$ of $x_l$ in $M$ such that:
\begin{itemize}
\item [(a-i)] $\SSi(\cor_{\opb{\varphi_l}([0,+\infty[)}) \cap \dT^*V_l = 
\Lambda'_{\varphi_l} \cap \dT^*V_l$  is a submanifold of $\dT^*V_l$,
\item [(a-ii)] $\Lambda \cap \Lambda'_{\varphi_l} \cap \dT^*V_l = \rspos \cdot
  \sigma_\Lambda(l)$,
\item [(a-iii)] $\Lambda \cap \dT^*V_l$ and $\Lambda'_{\varphi_l} \cap \dT^*V_l$
  have a clean intersection.
\end{itemize}
The assertion~(a-i) follows from Example~\ref{ex:microsupport}~(iii), the
assertions~(a-ii) and~(a-iii) from the transversality of $\Lambda_{\varphi_l}$ and
$\Lambda$.  We can also assume that
$V \eqdot \bigsqcup_{l\in U_\Lambda}\{l\} \times V_l$ is a neighborhood of
$I_\Lambda$ in $U_\Lambda \times M$.

\medskip\noindent
(b) Let $((l;0),(x;\xi)) \in
(T^*_{U_\Lambda}U_\Lambda \times \Lambda) \cap \dT^*V$.
If $((l;0),(x;\xi)) \in \Lambda'_{\varphi}$, then
$(x;\xi) \in \Lambda \cap \Lambda'_{\varphi_l} \cap \dT^*V_l$.
Hence, by~(a-ii), $((l;0),(x;\xi)) \in J_\Lambda$.
Conversely, Lemma~\ref{lem:fcttestLambda} implies
$J_\Lambda \subset T^*_{U_\Lambda}U_\Lambda \times \Lambda$
and we deduce~(ii).

Now~(iii) follows from (a-iii) and Lemma~\ref{lem:clean-famille} applied to
$E= T^*(U_\Lambda\times M)$, $X=U_\Lambda$, 
$Y_1 =(T^*_{U_\Lambda}U_\Lambda \times \Lambda)$ and
$Y_2 = \Lambda'_\varphi \cap \dT^*V$.
\end{proof}

\begin{theorem}\label{thm:SSmicrogerm}
Let $\varphi \in \sht_\Lambda$ and let $F\in \Derb_{(\Lambda)}(\cor_M)$.
Let $q_1\cl U_\Lambda \times M \to U_\Lambda$ and
$q_2\cl U_\Lambda \times M \to M$  be the projections. We set
\begin{alignat*}{2}
\shm_{\varphi,F} &=  \mu hom (\cor_{\opb{\varphi}([0,+\infty[)},\opb{q_2}F) 
&\quad  &\in \Derb(\cor_{T^*(U_\Lambda \times M)}), \\
\shn_{\varphi,F} &= (\rsect_{\opb{\varphi}([0,+\infty[)}(\opb{q_2}F))_{I_\Lambda}
&\quad  &\in \Derb(\cor_{U_\Lambda \times M}).
\end{alignat*}
Then there exists a neighborhood $V$ of $I_\Lambda$ in $U_\Lambda \times M$
such that
\begin{itemize}
\item [(i)] $\dT^*V \cap \supp ( \shm_{\varphi,F} ) \subset J_\Lambda$
and $\SSi(\shm_{\varphi,F}|_{\dT^*V})  \subset
T^*_{J_\Lambda}T^*(U_\Lambda\times M)$,
\item [(ii)] $\roim{\dot\pi_{V}{}}( \shm_{\varphi,F} |_ {\dT^*V})
\simeq \shn_{\varphi,F}$,
\item [(iii)] $\SSi(\roim{q_1} \shn_{\varphi,F}) \subset  T^*_{U_\Lambda}U_\Lambda$,
\item [(iv)] for any $l\in U_\Lambda$ we have
$(\roim{q_1} \shn_{\varphi,F})_l \simeq (\rsect_{\opb{\varphi_l}([0,+\infty[)}(F))_x$,
where $x= \tau_M(l)$.
\end{itemize}
\end{theorem}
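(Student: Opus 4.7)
The plan is to prove the four assertions in order, using Proposition~\ref{prop:clean-inter} as the geometric input and Proposition~\ref{prop:SSmuhom} for the microsupport estimates.

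For~(i), shrink $V$ so that on $\dT^*V$ one has $\SSi(\cor_{\opb{\varphi}([0,+\infty[)}) \subset \Lambda'_\varphi$ by Proposition~\ref{prop:clean-inter}(i), and $\SSi(\opb{q_2}F) \subset T^*_{U_\Lambda}U_\Lambda \times \Lambda$ using $F\in \Derb_{(\Lambda)}(\cor_M)$ together with Theorem~\ref{th:opboim}~(iii) applied to the submersion $q_2$. The support bound of Proposition~\ref{prop:SSmuhom} combined with Proposition~\ref{prop:clean-inter}(ii) then gives $\supp(\shm_{\varphi,F}) \cap \dT^*V \subset J_\Lambda$. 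For the microsupport I apply $\SSi(\shm_{\varphi,F}) \subset -H^{-1}(C(\SSi(\opb{q_2}F), \SSi(\cor_{\opb{\varphi}([0,+\infty[)})))$. Writing $\Lambda_1 = T^*_{U_\Lambda}U_\Lambda \times \Lambda$ and $\Lambda_2 = \Lambda'_\varphi$, both Lagrangian in $T^*V$, Proposition~\ref{prop:clean-inter}(iii) and Lemma~\ref{lem:clean-cone} give $C(\Lambda_1, \Lambda_2) = T\Lambda_1 + T\Lambda_2$ along $J_\Lambda$. Since $-H^{-1}$ sends $T_p\Lambda_i$ to the annihilator $T^*_{\Lambda_i,p}T^*V$ by Lagrangianity, and at a clean intersection $T_p\Lambda_1 \cap T_p\Lambda_2 = T_pJ_\Lambda$ so that sums of annihilators equal the annihilator of $T_pJ_\Lambda$, we conclude $\SSi(\shm_{\varphi,F}|_{\dT^*V}) \subset T^*_{J_\Lambda}T^*V$.

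For~(ii), I apply Sato's distinguished triangle~\eqref{eq:SatoDTmuhom} to the cohomologically constructible $F_1 = \cor_Z$, with $Z = \opb{\varphi}([0,+\infty[)$, and $G_1 = \opb{q_2}F$:
\begin{equation*}
\DD'(\cor_Z) \ltens \opb{q_2}F \to \rhom(\cor_Z, \opb{q_2}F) \to \roim{\dot\pi_{U_\Lambda\times M}{}}\bigl(\shm_{\varphi,F}|_{\dT^*(U_\Lambda\times M)}\bigr) \to[+1].
\end{equation*}
Now $\DD'(\cor_Z) \simeq \cor_{\Int(Z)}$ and $I_\Lambda \subset \partial Z$, so the first term vanishes on a neighborhood of $I_\Lambda$; the middle term is $\rsect_Z(\opb{q_2}F)$, whose localization on $I_\Lambda$ is $\shn_{\varphi,F}$. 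By~(i), $\shm_{\varphi,F}|_{\dT^*V}$ is supported in $J_\Lambda$, so restricting the pushforward along $\dot\pi$ to $V$ does not alter it near $I_\Lambda$, yielding~(ii).

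For~(iv) and~(iii), at any $(l_0, x_0) \in I_\Lambda$ the transversality of $\Lambda$ and $\Lambda_{\varphi_{l_0}}$ forces $d\varphi_{l_0}(x_0) \neq 0$, so by the implicit function theorem there exist local coordinates $(l, y, s)$ around $(l_0, x_0)$ with $\varphi(l, x) = s$, straightening $Z$ to $\{s \geq 0\}$ independently of $l$. Since $q_1|_{I_\Lambda} \colon I_\Lambda \to U_\Lambda$ is a homeomorphism and $\shn_{\varphi,F}$ is supported in $I_\Lambda$, the stalk $(\roim{q_1}\shn_{\varphi,F})_{l_0}$ equals the stalk of $\rsect_Z(\opb{q_2}F)$ at $(l_0, x_0)$, which in the straightened coordinates is $(\rsect_{\{s\geq 0\}}F')_0$; undoing the $x$-coordinate change gives $(\rsect_{\{\varphi_{l_0}\geq 0\}}F)_{x_0}$, proving~(iv). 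The same straightening writes the sheaf locally as a pullback from the $(y,s)$-direction, whose microsupport has vanishing $l$-cotangent component, so applying Theorem~\ref{th:opboim}~(ii) to $q_1$ (proper on $\supp \shn_{\varphi,F} \subset I_\Lambda$) yields $\SSi(\roim{q_1}\shn_{\varphi,F}) \subset T^*_{U_\Lambda}U_\Lambda$, consistent with the pointwise constancy already provided by Proposition~\ref{prop:inv_microgerm}. The main obstacle is the clean-intersection microsupport computation in~(i), where the identification $-H^{-1}(T\Lambda_1 + T\Lambda_2) = T^*_{J_\Lambda}T^*V$ requires combining Lagrangianity with the clean intersection to pass from a sum of tangents to the annihilator of an intersection; a secondary subtlety in~(iv) is that proper base change does not apply directly to $\rsect_Z = \rhom(\cor_Z, -)$, so the explicit local straightening is what makes the computation concrete.
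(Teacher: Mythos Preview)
Your treatment of~(i) and~(ii) is correct and matches the paper: for~(i) you are essentially unpacking the content of Lemma~\ref{lem:lagr-clean-inter}, which the paper simply invokes together with Proposition~\ref{prop:clean-inter}; for~(ii) both you and the paper use Sato's triangle in the same way.

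Your argument for~(iii) and~(iv), however, has a genuine gap. The coordinate change $(l,x)\mapsto(l,y,s)$ with $s=\varphi(l,x)$ does straighten $Z$ to $\{s\geq 0\}$, but it \emph{destroys the product structure} of $\opb{q_2}F$: in the original coordinates $\opb{q_2}F$ is constant in $l$, yet the diffeomorphism depends on $l$, so in the new coordinates $\opb{q_2}F$ is no longer a pullback from the $(y,s)$-factor. Consequently your claim that ``the sheaf is locally a pullback from the $(y,s)$-direction'' is false, and neither the microsupport bound for~(iii) nor the stalk identification for~(iv) follows. Concretely, the step from the stalk of $\rsect_Z(\opb{q_2}F)$ at $(l_0,x_0)$ to the stalk of $\rsect_{\{\varphi_{l_0}\geq 0\}}(F)$ at $x_0$ is precisely a base-change along the slice $i_{l_0}\cl\{l_0\}\times M\hookrightarrow U_\Lambda\times M$, and this requires justification; straightening $Z$ alone merely restates the same base-change question with a simpler closed set but a more complicated sheaf.

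The paper proceeds differently. For~(iii) it does not touch coordinates at all: from~(i) the cohomology of $\shm_{\varphi,F}$ is locally constant on $J_\Lambda$; since $J_\Lambda\to I_\Lambda$ is an $\R_{>0}$-bundle,~(ii) forces $\shn_{\varphi,F}$ to have locally constant cohomology on $I_\Lambda$; and $q_1|_{I_\Lambda}\cl I_\Lambda\isoto U_\Lambda$ then gives the bound on $\SSi(\roim{q_1}\shn_{\varphi,F})$. For~(iv) the paper proves directly that $i_l$ is non-characteristic for $\rsect_{\opb{\varphi}([0,+\infty[)}(\opb{q_2}F)$ near $x=\tau_M(l)$, using the $\hplus$-estimate of Theorem~\ref{thm:SSrhom} together with the geometry already established (Lemma~\ref{lem:fcttestLambda} for the $+$-part and the containment $-H^{-1}C(\cdot,\cdot)\subset T^*_{J_\Lambda}$ from~(i) for the $\hinfplus$-part). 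Non-characteristicity then gives $\opb{i_l}\rsect_Z(\opb{q_2}F)\simeq\rsect_{\opb{\varphi_l}([0,+\infty[)}(F)$, which is exactly the base-change step your straightening failed to supply.
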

\begin{proof}
(i) We take the neighborhood $V$ given by Proposition~\ref{prop:clean-inter}.
Then the result follows from Proposition~\ref{prop:clean-inter}
and Lemma~\ref{lem:lagr-clean-inter}.

\medskip\noindent
(ii) Sato's triangle~\eqref{eq:SatoDTmuhom} gives
$$
\DD'(\cor_{\opb{\varphi}([0,+\infty[)}) \tens \opb{q_2}F \tens \cor_{I_\Lambda}
\to \shn_{\varphi,F} \to
\roim{\dot\pi_{U_\Lambda \times M}{}}( \shm_{\varphi,F} )_{I_\Lambda}
\to[+1].
$$
By definition $d\varphi$ does not vanish in a neighborhood of $I_\Lambda$.
Hence $\opb{\varphi}(0)$ is a smooth hypersurface near $I_\Lambda$ and
$\DD'(\cor_{\opb{\varphi}([0,+\infty[)}) \simeq \cor_{\opb{\varphi}(]0,+\infty[)}$.
Since $I_\Lambda \subset \opb{\varphi}(0)$, the first term of the above triangle
is zero. By~(i) the support of
$\roim{\dot\pi_{V}{}}( \shm_{\varphi,F} |_ {\dT^*V})$ is already contained
in $I_\Lambda$. So we can forget the subscript $I_\Lambda$ in the third term and
we obtain~(ii).

\medskip\noindent
(iii) By (i) the cohomology sheaves of $\shm_{\varphi,F}$ are locally constant
sheaves on $J_\Lambda$. Since $J_\Lambda$ is a fiber bundle over $I_\Lambda$
with fiber $\rspos$ we deduce from~(ii) that $\shn_{\varphi,F}$ has
locally constant sheaves cohomology sheaves on $I_\Lambda$.
Since $q_1$ induces an isomorphism $I_\Lambda \isoto U_\Lambda$ we obtain~(iii).

\medskip\noindent
(iv) We first prove that
$i_l\cl \{l\}\times M \hookrightarrow U_\Lambda\times M$ is non-characteristic
for $\rsect_{\opb{\varphi}([0,+\infty[)}(\opb{q_2}F)$ in a neighborhood of
$x$. We use the bound in Theorem~\ref{thm:SSrhom}.
We set for short $A = T^*_{U_\Lambda}U_\Lambda \times \SSi(F)$ and
$B = T^*_{U_\Lambda}U_\Lambda \times \Lambda$.
Since microsupports are closed subsets it is enough to prove
\begin{itemize}
\item [(a)]\ \  $(A+\Lambda'_\varphi)  \cap 
(T^*_lU_\Lambda \times T^*_{M,x}M) \subset \{(l,x;0,0)\}$,
\item [(b)] \ \ $\dot\pi_{\pi} \opb{ \dot\pi_{d} }(-H^{-1}C(A,\Lambda'_\varphi))
 \cap  (T^*_lU_\Lambda \times T^*_{M,x}M) \subset \{(l,x;0,0)\}$.
\end{itemize}
Since $A\subset T^*_{U_\Lambda}U_\Lambda \times T^*M$ and
$\Lambda'_\varphi \cap T^*_{(l,x)}(U_\Lambda \times M) \subset
T^*_{U_\Lambda}U_\Lambda \times T^*M$, by Lemma~\ref{lem:fcttestLambda},
the statement (a) is clear.  Since $A\subset B$ in some neighborhood of
$A \cap\Lambda'_\varphi$, we may replace $A$ by $B$ in (b).
We have seen that $-H^{-1}C(B,\Lambda'_\varphi) \subset
T^*_{J_\Lambda}T^*(U_\Lambda \times \Lambda)$.
Hence $\dot\pi_{\pi} \opb{ \dot\pi_{d} } (-H^{-1}C(B,\Lambda'_\varphi))
\subset T^*_{I_\Lambda}(U_\Lambda \times \Lambda)$
and this gives (b).

Now the non-characteristicity implies
$$
\opb{i_l} \shn_{\varphi,F} \simeq 
(\epb{i_l} \rsect_{\opb{\varphi}([0,+\infty[)}(\opb{q_2}F))_x
 \tens \omega_{\{l\}|U_\Lambda}^{\tens-1}
\simeq (\rsect_{\opb{\varphi_l}([0,+\infty[)}(F))_x
$$
and we deduce (iv).
\end{proof}

For $\varphi\in \sht_\Lambda$ we define a functor, using the notations of
Theorem~\ref{thm:SSmicrogerm},
\begin{align*}
m_\Lambda^\varphi \cl \Derb_{(\Lambda)}(\cor_M) 
& \to \Derb(\cor_{U_\Lambda})   \\
F &\mapsto
\roim{q_1} (\rsect_{\opb{\varphi}([0,+\infty[)}(\opb{q_2}F)_{I_\Lambda}) .
\end{align*}
By Theorem~\ref{thm:SSmicrogerm} the cohomology sheaves of
$m_\Lambda^\varphi(F)$ are locally constant sheaves on $U_\Lambda$
and we have
$(m_\Lambda^\varphi(F))_l \simeq (\rsect_{\opb{\varphi_l}([0,+\infty[)}(F))_x$,
for any $l\in U_\Lambda$ and $x= \tau_M(l)$.

Let $\varphi_0, \varphi_1\in \sht_\Lambda$.
We define $\varphi\cl U_\Lambda\times M \times \R \to \R$ 
by $\varphi(l,x,t) = t\varphi_0(l,x) +(1-t) \varphi_1(l,x)$.
We let $q_{13} \cl U_\Lambda\times M \times \R \to U_\Lambda\times \R$
and $q_2 \cl U_\Lambda\times M \times \R \to M$ be the projections
and we define a functor
\begin{align*}
m_\Lambda^{\varphi_0,\varphi_1} \cl \Derb_{(\Lambda)}(\cor_M) 
& \to \Derb(\cor_{U_\Lambda\times \R})   \\
F &\mapsto
\roim{q_{13}} (\rsect_{\opb{\varphi}([0,+\infty[)}(\opb{q_2}F)_{I_\Lambda\times\R}) .
\end{align*}
Theorem~\ref{thm:SSmicrogerm} works as well with the parameter $t$
and we obtain:
\begin{lemma}\label{lem:mLambda_indpt_phi}
Let $\varphi_0, \varphi_1\in \sht_\Lambda$.
For $t\in \R$ we let
$i_t \cl U_\Lambda\times \{t\} \hookrightarrow U_\Lambda\times \R$
be the inclusion.
Then, for any $F\in \Derb_{(\Lambda)}(\cor_M)$, the cohomology sheaves of
$m_\Lambda^{\varphi_0,\varphi_1}(F)$ are locally constant sheaves on
$U_\Lambda\times\R$ and we have natural isomorphisms
$\opb{i_t} m_\Lambda^{\varphi_0,\varphi_1}(F) \simeq m_\Lambda^{\varphi_t}(F)$,
for all $t\in \R$.
In particular we have a canonical isomorphism
$m_\Lambda^{\varphi_0}(F) \simeq m_\Lambda^{\varphi_1}(F)$.
\end{lemma}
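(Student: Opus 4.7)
The plan is to extend Theorem~\ref{thm:SSmicrogerm} verbatim to the parameter space $U_\Lambda \times \R$, using $\varphi$ as a family, and then extract the $t$-slice isomorphisms from local constancy and a base-change argument.

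First I would check that for each fixed $t \in \R$, the function $\varphi_t(l,x) \eqdot t\varphi_0(l,x) + (1-t)\varphi_1(l,x)$ again belongs to $\sht_\Lambda$. The vanishing and first-derivative conditions at $\tau_M(l)$ are immediate from linearity. For the Lagrangian condition $\lambda_{\varphi_{t,l}}(\sigma_\Lambda(l)) = l$, the identification~\eqref{eq:LM0=matsym} expresses $l$ by a unique symmetric matrix $A$; both $D^2\varphi_{0,l}(x)$ and $D^2\varphi_{1,l}(x)$ must equal $A$, so their convex combination also equals $A$. Moreover the analogue of Lemma~\ref{lem:fcttestLambda} in the $t$-direction holds: since $\varphi_0$ and $\varphi_1$ both vanish on $I_\Lambda$, the derivative $\partial \varphi/\partial t$ vanishes on $I_\Lambda \times \R$.

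Next I would rerun the proofs of Proposition~\ref{prop:clean-inter} and Theorem~\ref{thm:SSmicrogerm} with $U_\Lambda \times \R$ in place of $U_\Lambda$. For each $t$, applying Proposition~\ref{prop:clean-inter} to $\varphi_t$ gives cleanness of the intersection along the slice; since all relevant submanifolds fiber smoothly over $\R$, Lemma~\ref{lem:clean-famille} (with $X = \R$ being the $t$-factor) upgrades this to cleanness of the full family. Combining with Lemma~\ref{lem:lagr-clean-inter}, the parametrized $\shm_{\varphi,F}$ has microsupport in the conormal to the parametrized $J_\Lambda \times \R$, and parts (ii)--(iii) of Theorem~\ref{thm:SSmicrogerm} then show that $m_\Lambda^{\varphi_0,\varphi_1}(F) = \roim{q_{13}}(\rsect_{\opb{\varphi}([0,+\infty[)}(\opb{q_2}F)_{I_\Lambda \times \R})$ has locally constant cohomology sheaves on $U_\Lambda \times \R$.

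For the slice isomorphism $\opb{i_t} m_\Lambda^{\varphi_0,\varphi_1}(F) \simeq m_\Lambda^{\varphi_t}(F)$, I would apply non-characteristic base change along $i_t$ exactly as in part~(iv) of Theorem~\ref{thm:SSmicrogerm}: the $t$-component of the bound $\SSi(\rsect_{\opb{\varphi}([0,+\infty[)}(\opb{q_2}F)) \subset (\Lambda'_\varphi)^a \hplus (T^*_{U_\Lambda \times \R}(U_\Lambda \times \R) \times \SSi(F))$ vanishes along $I_\Lambda \times \R$ by the analogue of Lemma~\ref{lem:fcttestLambda} checked above, giving the non-characteristicity; base change then identifies the pullback to $U_\Lambda \times \{t\}$ with the pushforward of $\rsect_{\opb{\varphi_t}([0,+\infty[)}(\opb{q_2}F)_{I_\Lambda}$. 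The canonical isomorphism $m_\Lambda^{\varphi_0}(F) \simeq m_\Lambda^{\varphi_1}(F)$ then follows by composing the slice isomorphisms at $t=0$ and $t=1$ with the canonical identification between these two slices furnished by local constancy over the contractible $\R$. The main obstacle is purely bookkeeping: one must check at each step that the clean intersection, the conormal estimate, and the non-characteristic condition all survive the introduction of the parameter $t$. All of these reductions feed back to the preservation of the defining conditions of $\sht_\Lambda$ under convex combinations, which is the single geometric input.
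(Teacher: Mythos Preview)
Your proposal is correct and follows exactly the approach the paper indicates: the paper's entire proof is the single sentence ``Theorem~\ref{thm:SSmicrogerm} works as well with the parameter $t$ and we obtain:'' placed just before the lemma statement. You have simply unpacked this, verifying that $\varphi_t\in\sht_\Lambda$ for each $t$ (in particular the Hessian condition via~\eqref{eq:LM0=matsym}), that the analogue of Lemma~\ref{lem:fcttestLambda} holds in the $t$-direction, and that the clean-intersection and non-characteristicity arguments of Proposition~\ref{prop:clean-inter} and Theorem~\ref{thm:SSmicrogerm} go through with the extra $\R$-factor; this is precisely what the paper intends but does not spell out.
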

By this lemma the following definition is meaningful.
\begin{definition}\label{def:micro-germ}
Let $\Lambda$ be a locally closed conic Lagrangian submanifold of $T^*M$.
We let $m_\Lambda \cl \Derb_{(\Lambda)}(\cor_M) \to
\Derb(\cor_{U_\Lambda})$ be the functor $m_\Lambda^\varphi$ for an arbitrary
$\varphi\in \sht_\Lambda$. For a given $l\in U_\Lambda$ and
$F\in \Derb_{(\Lambda)}(\cor_M)$ we set
$m_{\Lambda,l}(F) = (m_\Lambda(F))_l$ and call it the microlocal germ of $F$ at
$l$.
\end{definition}

\begin{proposition}
\label{prop:microgerm_via_kss}
Let $\Lambda$ be a locally closed conic Lagrangian submanifold of $T^*M$.
Then the functors $m_{\Lambda_0} \cl \Derb_{(\Lambda_0)}(\cor_M) \to
\Derb(\cor_{U_{\Lambda_0}})$, where $\Lambda_0$ runs over the open subsets
of $\Lambda$, induce a functor of stacks
$$
\mks_\Lambda \cl \kss(\cor_\Lambda)
 \to \oim{\sigma_\Lambda}(\Dloc(\cor_{U_\Lambda})).
$$
In particular, for $F,G \in \Derb_{(\Lambda)}(\cor_M)$, we have a canonical
morphism 
\begin{equation}\label{eq:muhom_microgerm}
\opb{\sigma_\Lambda} H^0\mu hom(F,G)
\isoto H^0\rhom(m_\Lambda(F), m_\Lambda(G)) ,
\end{equation}
which is actually an isomorphism.
\end{proposition}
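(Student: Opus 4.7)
The plan is to first show that $m_{\Lambda_0}$ descends to the Kashiwara-Schapira prestack and then apply stackification. Fix $L \in \Derb_{(\Lambda_0)}(\cor_M)$ with $\SSi(L) \cap \Omega = \emptyset$ for some open neighborhood $\Omega$ of $\Lambda_0$ in $T^*M$. For any $l \in U_{\Lambda_0}$ set $(x;\xi) = \sigma_\Lambda(l) \in \Lambda_0 \subset \Omega$. By Theorem~\ref{thm:SSmicrogerm}~(iv), $m_{\Lambda_0,l}(L) \simeq (\rsect_{\opb{\varphi_l}([0,+\infty[)}(L))_x$, and the choice of $\varphi_l$ satisfies $\varphi_l(x) = 0$, $d\varphi_l(x) = \xi \notin \SSi(L)$, so the very definition of microsupport forces this stalk to vanish. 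Combined with the local constancy of the cohomology sheaves of $m_{\Lambda_0}(L)$ on $U_{\Lambda_0}$ provided by Theorem~\ref{thm:SSmicrogerm}~(iii), we obtain $m_{\Lambda_0}(L) = 0$. Consequently $m_{\Lambda_0}$ inverts every morphism of $\Derb_{(\Lambda_0)}(\cor_M)$ whose cone has microsupport disjoint from a neighborhood of $\Lambda_0$, so it factors through $\kss^0_\Lambda(\Lambda_0)$ with values in $\Dloc(\cor_{U_{\Lambda_0}})$, functorially in $\Lambda_0$.

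Since $\Dloc(\cor_{U_\Lambda})$ is a stack on $U_\Lambda$ and direct images preserve the descent condition, $\oim{\sigma_\Lambda}\Dloc(\cor_{U_\Lambda})$ is a stack on $\Lambda$. The universal property of the stack associated to a prestack then produces the desired functor $\mks_\Lambda \cl \kss(\cor_\Lambda) \to \oim{\sigma_\Lambda}\Dloc(\cor_{U_\Lambda})$.

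The canonical morphism~\eqref{eq:muhom_microgerm} is constructed as follows. By Corollary~\ref{cor:defbiskss}, $H^0\mu hom(F,G)|_\Lambda$ is the $\Hom$ sheaf of $F$ and $G$ in $\kss(\cor_\Lambda)$. Functoriality of $\mks_\Lambda$ sends it to the $\Hom$ sheaf of $m_\Lambda(F)$ and $m_\Lambda(G)$ in $\oim{\sigma_\Lambda}\Dloc(\cor_{U_\Lambda})$, namely $\oim{\sigma_\Lambda} H^0\rhom(m_\Lambda(F), m_\Lambda(G))$, and $(\opb{\sigma_\Lambda}, \oim{\sigma_\Lambda})$-adjunction produces~\eqref{eq:muhom_microgerm}. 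Both sides are locally constant on $U_\Lambda$: the left because by~\eqref{eq:SSmuhom} the microsupport of $\mu hom(F,G)$ lies in $-H^{-1}(C(\SSi(G), \SSi(F)))$, which near $\Lambda$ is contained in $T^*_\Lambda T^*M$ by Lemma~\ref{lem:clean-cone} and the Lagrangian identity $-H^{-1}(T\Lambda) = T^*_\Lambda T^*M$ (as in the proof of Lemma~\ref{lem:lagr-clean-inter} applied with $\Lambda_1 = \Lambda_2 = \Lambda$), and the right because $m_\Lambda(F), m_\Lambda(G) \in \Dloc(\cor_{U_\Lambda})$.

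It therefore suffices to check bijectivity at a stalk $l \in U_\Lambda$ with $p = \sigma_\Lambda(l)$. The left-hand stalk is $H^0\mu hom(F,G)_p \simeq \Hom((\rsect_{\opb{\varphi_l}([0,+\infty[)}F)_x, (\rsect_{\opb{\varphi_l}([0,+\infty[)}G)_x)$ by~\eqref{eq:stalk_muhom}. The right-hand stalk is $\Hom(m_{\Lambda,l}(F), m_{\Lambda,l}(G))$, the same group by Theorem~\ref{thm:SSmicrogerm}~(iv). Under these identifications the canonical map is the identity by construction: via Theorem~\ref{thm:HomDkMp=muhomp} an element of $H^0\mu hom(F,G)_p$ corresponds to a morphism $F \to G$ in $\Derb(\cor_M;p)$, and the stalk of $m_\Lambda$ applied to it is the induced map on the corresponding sections $(\rsect_{\opb{\varphi_l}([0,+\infty[)}F)_x \to (\rsect_{\opb{\varphi_l}([0,+\infty[)}G)_x$. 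The main obstacle is precisely this last compatibility: tracing $m_\Lambda$ through a roof representative $F \leftarrow F' \to G$ of a morphism in $\Derb(\cor_M;p)$ and matching the induced map on stalks with the stalk formula~\eqref{eq:stalk_muhom} is formal but requires careful bookkeeping of the identifications provided by Theorem~\ref{thm:SSmicrogerm}~(iv).
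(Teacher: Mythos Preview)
Your proof is correct and follows essentially the same approach as the paper: you verify that $m_{\Lambda_0}$ kills objects whose microsupport misses $\Lambda_0$ via the stalk formula of Theorem~\ref{thm:SSmicrogerm}~(iv), pass to the associated stack, identify the $\hom$ sheaves via Corollary~\ref{cor:defbiskss}, and check the isomorphism on stalks using~\eqref{eq:stalk_muhom}. The paper's proof is terser (it simply cites~\eqref{eq:stalk_muhom} for the isomorphism without spelling out the local-constancy and compatibility checks), but the logic is the same.
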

\begin{proof}
(i) Let $\Lambda_0$ be an open subset of $\Lambda$.
Let $F\in \Derb_{(\Lambda_0)}(\cor_M)$ be such that
$\SSi(F)\cap \Lambda_0 = \emptyset$. Then, by~(iv) of
Theorem~\ref{thm:SSmicrogerm}, we have $m_{\Lambda_0}(F) = 0$.
Hence the functor $m_{\Lambda_0}$
factorizes through $\Derb(\cor_M;\Lambda_0)$.  On the other hand, by~(iii) of
Theorem~\ref{thm:SSmicrogerm}, this functor takes value in the subcategory
of $\Derb(\cor_{U_{\Lambda_0}})$ of objects with locally constant cohomology
sheaves.  Hence we obtain a functor
$\mks^0_\Lambda \cl \kss^0_\Lambda \to \oim{\sigma_\Lambda}\Dloc^0(\cor_{U_\Lambda})$
between the prestacks of Definitions~\ref{def:KSstack} and~\ref{def:Dloc}.
We deduce $\mks_\Lambda$ as the composition of $(\mks^0_\Lambda)^a$
with the natural functor $(\oim{\sigma_\Lambda}\Dloc^0(\cor_{U_\Lambda}))^a
\to \oim{\sigma_\Lambda}\Dloc(\cor_{U_\Lambda})$, where $(\cdot)^a$ denotes
the associated stack.

\medskip\noindent
(ii) By Corollary~\ref{cor:defbiskss} the $\hom$ sheaf in the stack
$\kss(\cor_\Lambda)$ is $H^0\mu hom(\cdot,\cdot)$.
It follows from Definition~\ref{def:Dloc} that the 
$\hom$ sheaf in the stack $\Dloc(\cor_X)$ is $H^0\rhom(\cdot,\cdot)$.
This gives the morphism~\eqref{eq:muhom_microgerm}.
It is an isomorphism by~\eqref{eq:stalk_muhom}.
\end{proof}

Now let $M'$ be another manifold and $\Lambda'$ be a locally closed conic
Lagrangian submanifold of $T^*M'$. We have the obvious embedding
$i_{\Lambda,\Lambda'} \cl U_\Lambda \times U_{\Lambda'} \hookrightarrow
U_{\Lambda\times \Lambda'}$, $(l,l') \mapsto l\oplus l'$.
Proposition~7.5.10 of~\cite{KS90} gives
\begin{proposition}\label{prop:microgerm_prod}
There exists an isomorphism of functors \\
$\opb{i_{\Lambda,\Lambda'}} \circ \mks_{\Lambda\times \Lambda'}
\simeq \mks_\Lambda \etens \mks_{\Lambda'}$.
\end{proposition}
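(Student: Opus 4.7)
The plan is to identify the two functors stalkwise, using the pointwise description of the microgerm from Theorem~\ref{thm:SSmicrogerm}(iv), and then to check that the resulting stalkwise isomorphism extends globally and is natural. First I would produce a compatible test function. Given $\varphi\in\sht_\Lambda$ and $\varphi'\in\sht_{\Lambda'}$, the function
\[
\psi_0(l,l',y,y') \eqdot \varphi(l,y) + \varphi'(l',y')
\]
on $(U_\Lambda\times U_{\Lambda'})\times(M\times M')$ already satisfies Definition~\ref{def:fcttest} along the image of $i_{\Lambda,\Lambda'}$: the Lagrangian associated with $\psi_0$ at $(p,p')$ is
$\lambda_{\varphi_l+\varphi'_{l'}}(p,p') = \lambda_{\varphi_l}(p)\oplus\lambda_{\varphi'_{l'}}(p') = l\oplus l'$. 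An extension to $\psi\in\sht_{\Lambda\times\Lambda'}$ on all of $U_{\Lambda\times\Lambda'}\times(M\times M')$ is obtained by adapting the embedding-and-section construction in the proof of Lemma~\ref{lem:exist-fcttest} (choosing the section on $\lag^0_{M\times M'}$ so that along $\lag^0_M\oplus\lag^0_{M'}$ it becomes a direct sum); by Lemma~\ref{lem:mLambda_indpt_phi} any such extension computes the same $m_{\Lambda\times\Lambda'}$ up to canonical isomorphism.

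With this $\psi$, Theorem~\ref{thm:SSmicrogerm}(iv) identifies, for $(l,l')\in U_\Lambda\times U_{\Lambda'}$ with $(y,y')=(\tau_M(l),\tau_{M'}(l'))$,
\[
(\opb{i_{\Lambda,\Lambda'}}m_{\Lambda\times\Lambda'}(F\etens F'))_{(l,l')} \simeq \rsect_{\{\varphi_l+\varphi'_{l'}\geq 0\}}(F\etens F')_{(y,y')},
\]
whereas the stalk of $m_\Lambda(F)\etens m_{\Lambda'}(F')$ at $(l,l')$ is $\rsect_{\{\varphi_l\geq 0\}}(F)_y \ltens \rsect_{\{\varphi'_{l'}\geq 0\}}(F')_{y'}$. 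The main step is a canonical Künneth-type identification between these two stalks. By Lemma~\ref{lem:simple_local} we may replace $F,F'$ near $p,p'$ by the simple half-space models of Example~\ref{ex:simple_sheaf} tensored with local systems, reducing to checking that with $Z=\R_{\geq 0}\subset\R$,
\[
\rsect_{\{a+b\geq 0\}}(\cor_{Z\times Z})_{(0,0)} \simeq \cor \simeq \rsect_Z(\cor_Z)_0 \ltens \rsect_Z(\cor_Z)_0.
\]
Both equalities follow from the distinguished triangle $\rsect_S(G)\to G\to\roim{j_U}\opb{j_U}G\to[+1]$, since near the origin $Z\times Z$ meets neither $\{a+b<0\}$ nor $(\R\setminus Z)\times\R \cup \R\times(\R\setminus Z)$, so the $\roim{j_U}\opb{j_U}$-terms vanish and both stalks reduce to the stalk of the constant sheaf, namely $\cor$.

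To globalize, both $\opb{i_{\Lambda,\Lambda'}}\mks_{\Lambda\times\Lambda'}(F\etens F')$ and $\mks_\Lambda(F)\etens\mks_{\Lambda'}(F')$ are objects of $\Dloc(\cor_{U_\Lambda\times U_{\Lambda'}})$: the first by Theorem~\ref{thm:SSmicrogerm}(iii) applied to $\Lambda\times\Lambda'$ and restriction along $i_{\Lambda,\Lambda'}$, the second by stability of locally constant cohomology under $\etens$. The stalkwise Künneth map is natural in $F$ and $F'$ and, via Proposition~\ref{prop:microgerm_via_kss} together with the standard external-product compatibility of $\mu hom$, is compatible with morphisms; hence it assembles to an isomorphism of sheaves, functorial in $(F,F')$, and yields the claimed isomorphism of functors of stacks. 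The principal obstacle is the Künneth identification in the middle paragraph, where the geometric discrepancy between the half-space $\{\varphi_l+\varphi'_{l'}\geq 0\}$ and the ``cornered'' product $\{\varphi_l\geq 0\}\times\{\varphi'_{l'}\geq 0\}$ must be negotiated; the reduction to simple half-space models localizes the question to a point where this discrepancy lies outside the support and thus does not affect the stalk.
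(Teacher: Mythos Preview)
The paper offers no proof here; the sentence immediately preceding the statement reads ``Proposition~7.5.10 of~\cite{KS90} gives'', so the K\"unneth identity for microlocal germs is simply cited. Your outline follows a natural strategy, but the passage from the simple models of Example~\ref{ex:simple_sheaf} to the one-dimensional case $Z=\R_{\geq 0}\subset\R$ is a real gap. Lemma~\ref{lem:simple_local} and Example~\ref{ex:simple_sheaf} only reduce you to $F=\cor_Z$ with $Z=\R^{n-1}\times[0,\infty)\subset\R^n$; they do not reduce the dimension. For $n>1$ any $l\in U_{\Lambda,p}$ forces the test function to carry a non-degenerate quadratic term in the hypersurface directions (transversality to $\lambda_\Lambda(p)$ requires the $(n-1)\times(n-1)$ block of the Hessian of $\varphi_l$ to be invertible), so $\varphi_l(x)=x_n+q(x')$ with $q$ non-degenerate. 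Once $q$ has a negative eigenvalue your support argument collapses: with $n=2$ and $q(x_1)=-x_1^2$ one has $\{\varphi_l\geq 0\}=\{x_2\geq x_1^2\}\subsetneq Z$ and a direct computation gives $(\rsect_{\{\varphi_l\geq 0\}}(\cor_Z))_0\simeq\cor[-1]$, not $\cor$. On the product side the half-space $\{\varphi_l+\varphi'_{l'}\geq 0\}$ then genuinely meets $\supp(\cor_{Z\times Z'})$ beyond the corner $\{\varphi_l\geq 0\}\times\{\varphi'_{l'}\geq 0\}$, and matching the two sides---with the correct shifts, over every connected component of $U_\Lambda\times U_{\Lambda'}$---is precisely the content of \cite[Prop.~7.5.10]{KS90} that you have not reproduced.

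A smaller issue: the globalization paragraph asserts that naturally constructed stalkwise isomorphisms between objects of $\Dloc$ assemble, but this needs an actual morphism of sheaves to test. A clean way to produce one is via the closed inclusion $\{\varphi\geq 0\}\times\{\varphi'\geq 0\}\subset\{\varphi+\varphi'\geq 0\}$, which yields $\rsect_{\{\varphi\geq 0\}\times\{\varphi'\geq 0\}}\to\rsect_{\{\varphi+\varphi'\geq 0\}}$; combined with the K\"unneth map $\rsect_{B_1}(G_1)\etens\rsect_{B_2}(G_2)\to\rsect_{B_1\times B_2}(G_1\etens G_2)$ and carried through Definition~\ref{def:micro-germ}, this gives a global morphism $m_\Lambda(F)\etens m_{\Lambda'}(F')\to\opb{i_{\Lambda,\Lambda'}}m_{\Lambda\times\Lambda'}(F\etens F')$ whose stalks can then be checked.
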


\section{The Maslov sheaf}

The functor $m_\Lambda \cl \Derb_{(\Lambda)}(\cor_M) \to
\Derb(\cor_{U_\Lambda})$ of Definition~\ref{def:micro-germ} send pure sheaves
along $\Lambda$ to local systems (with shifts in cohomological degrees) on
$U_\Lambda$.  When we apply this functor to the canonical object
$\shk_{\Delta_\Lambda}$ constructed in Corollary~\ref{cor:pre_Maslov_sheaf}, we
obtain a canonical local system on $\tU_\Lambda = \Delta_\Lambda
\times_{\Lambda\times \Lambda^a} U_{\Lambda\times \Lambda^a}$. We call it the
Maslov sheaf of $\Lambda$ because the degrees where it is concentrated is
related with the Maslov index.

In Proposition~\ref{prop:action_Maslov_sheaf} the Maslov sheaf is used to give
conditions on the locally constant objects of $\Derb(\cor_{U_\Lambda})$ which
are of the form $m_\Lambda(F)$.  This gives another description of
$\kss(\cor_\Lambda)$ (see Theorem~\ref{thm:mksprime_equiv}).

\subsection{Definition of the Maslov sheaf}
Let $\Lambda$ be a locally closed conic Lagrangian submanifold of $T^*M$.
By Corollary~\ref{cor:pre_Maslov_sheaf} we have a canonical object
$\shk_{\Delta_\Lambda}$ in $\kss^s(\cor_U)$, where $U$ is
a neighborhood of $\Delta_\Lambda$ in $\Lambda\times \Lambda^a$.

We set $\tU_\Lambda = 
\Delta_\Lambda \times_{\Lambda\times \Lambda^a} U_{\Lambda\times \Lambda^a}$.
The antipodal map $(\cdot)^a \cl T^*M \to T^*M$ induces a morphism on the 
Grassmannian of Lagrangian subspaces of $T_pT^*M$ and $T_{p^a}T^*M$.
We also denote this morphism by $(\cdot)^a \cl \lag_{M,p} \to \lag_{M,p^a}$,
$l\mapsto l^a$.
We define an embedding $\upsilon_\Lambda \cl U_\Lambda \times_\Lambda U_\Lambda
\to \tU_\Lambda$, $(l_1,l_2) \mapsto l_1\oplus l_2^a$.
\begin{definition}
\label{def:Maslovsheaf}
We define the Maslov sheaf $\tshm_\Lambda\in \Dloc(\cor_{\tU_\Lambda})$ by
$\tshm_\Lambda = 
\mks_{\Lambda\times\Lambda^a}(\shk_{\Delta_\Lambda})|_{\tU_\Lambda}$.  We also
set $\shm_\Lambda = \opb{\upsilon_\Lambda}(\tshm_\Lambda) \in
\Dloc(\cor_{U_\Lambda \times_\Lambda U_\Lambda})$.
\end{definition}

By Corollary~\ref{cor:pre_Maslov_sheaf}, for any open subset
$\Lambda_0 \subset \Lambda$ and $F\in \Derb_{(\Lambda_0)}(\cor_M)$, we have a
canonical morphism in $\Dloc(\cor_{\tU_\Lambda})$
\begin{equation}\label{eq:mor_can_Maslov}
\gamma'_F = m_{\Lambda\times \Lambda^a} (\gamma_F) 
\cl \tshm_\Lambda |_{\tU_{\Lambda_0}} \to 
m_{\Lambda\times \Lambda^a}(F\etens \DD'F) |_{\tU_{\Lambda_0}},
\end{equation}
which is  an isomorphism as soon as $F$ is simple along $\Lambda_0$.
The following results are well-known and can be deduced for example
from~\cite[Appendix]{KS90}.
\begin{proposition}\label{prop:Maslov_index}
Let $U \subset \tU_\Lambda$ be a connected component of $\tU_\Lambda$.
Let $l\in U$ and let $(p,p^a) \in \Delta_\Lambda$ be the projection
of $l$ to $\Delta_\Lambda$ (so that $l$ is a Lagrangian subspace
of $T_{(p,p^a)}T^*(M\times M)$).
Then the Maslov index
\begin{equation}\label{eq:def_tau_l}
\tau_\Lambda(l) \eqdot
\tau_{\Lambda\times \Lambda^a}( \lambda_0(p)\times \lambda_0(p^a),
\lambda_\Lambda(p)\times \lambda_{\Lambda^a}(p^a), l)
\end{equation}
is independent of $l\in U$. It is an even number.
Moreover, for any $p\in \Lambda$, $l_1 \in U_{\Lambda,p}$ and  
$l_2 \in U_{\Lambda^a,p^a}$, we have
\begin{equation}\label{eq:def_tau_l1l2}
\tau_\Lambda(l_1,l_2) = \tau_\Lambda( \lambda_0(p), \lambda_\Lambda(p), l_1) - 
\tau_{\Lambda^a}( \lambda_0(p^a), \lambda_{\Lambda^a}(p^a), l_2).
\end{equation}
\end{proposition}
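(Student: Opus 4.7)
All three assertions will be deduced from standard properties of the inertia index of three Lagrangians recalled in \cite{KS90}, Appendix~A.3, applied in the symplectic vector space $T_{(p,p^a)}T^*(M\times M) \simeq T_pT^*M \oplus T_{p^a}T^*M$ with its direct-sum symplectic structure.

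I would first treat the product formula, as it is the most concrete. It is a direct instance of the additivity of the inertia index under symplectic direct sums: for Lagrangians $L_i = \mu_i \oplus \nu_i$ in $E_1 \oplus E_2$, the defining quadratic form $q$ from the definition of $\tau$ decomposes as an orthogonal direct sum, so its signature adds, giving $\tau_{E_1 \oplus E_2}(L_1,L_2,L_3) = \tau_{E_1}(\mu_1,\mu_2,\mu_3) + \tau_{E_2}(\nu_1,\nu_2,\nu_3)$. In our situation the two ``fixed'' Lagrangians $\lambda_0(p)\times\lambda_0(p^a)$ and $\lambda_\Lambda(p)\times\lambda_{\Lambda^a}(p^a)$ are already split, and the hypothesis puts $l = l_1\oplus l_2$ in split form as well, so the formula drops out. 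The minus sign in the stated formula reflects the sign convention adopted on the second factor: the antipodal is anti-symplectic, so the inertia index associated with $\Lambda^a$ differs by a sign from the inertia index in $T_{p^a}T^*M$ with the standard symplectic form from $T^*M$. I would verify this carefully by unwinding the conventions surrounding $\lambda_{\Lambda^a}(p^a)$ and $\tau_{\Lambda^a}$.

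For evenness, the key input is the parity formula for the inertia index: $\tau_E(L_1,L_2,L_3) \pmod 2$ depends only on the dimensions of the pairwise and triple intersections of the $L_i$. In our setting $l \in \tU_\Lambda$ is transverse to both $L_1 = \lambda_0(p)\times\lambda_0(p^a)$ and $L_2 = \lambda_\Lambda(p)\times\lambda_{\Lambda^a}(p^a)$, so $L_1 \cap L_3 = L_2 \cap L_3 = 0$. Moreover $L_1 \cap L_2 = (\lambda_0(p)\cap\lambda_\Lambda(p)) \oplus (\lambda_0(p^a)\cap\lambda_{\Lambda^a}(p^a))$, whose two summands have equal dimension under the antipodal identification; hence $\dim(L_1\cap L_2)$ is always even. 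Plugging these values into the parity formula from Appendix~A.3 yields $\tau_\Lambda(l) \equiv 0 \pmod 2$.

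Local constancy is the step I expect to require the most care. On any connected component of $\tU_\Lambda$ the transversality conditions persist (they are open), so $L_1\cap L_3$ and $L_2\cap L_3$ remain zero. The only potential source of jumps in $\tau_\Lambda$ is the locus where $\dim(\lambda_0(p)\cap\lambda_\Lambda(p))$ drops. To handle this cleanly I would perform symplectic reduction along $L_1\cap L_2$, reducing to a configuration of three pairwise transverse Lagrangians in a smaller symplectic space; there the inertia index is the signature of a continuously varying non-degenerate form and is therefore locally constant, and the contribution of the reduced-out part is absorbed into a fixed correction. Combined with the preceding two assertions, this finishes the proof.
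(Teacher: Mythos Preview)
The paper does not give a proof of this proposition; it simply records that ``the following results are well-known and can be deduced for example from~\cite[Appendix]{KS90}''. So there is no argument in the paper to compare against, and your task is really to supply a self-contained justification from the properties in that appendix.

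Your treatment of the product formula~\eqref{eq:def_tau_l1l2} and of evenness is correct and is exactly how one extracts these facts from \cite[A.3]{KS90}: additivity of the signature under orthogonal direct sums, the anti-symplectic nature of the antipodal map for the sign, and the parity formula together with the observation that $\dim(L_1\cap L_2)=2\dim(\lambda_0(p)\cap\lambda_\Lambda(p))$ is always even while the other two pairwise intersections vanish.

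The local-constancy step, however, has a genuine gap as you have written it. Your symplectic-reduction idea reduces along $I=L_1\cap L_2$, but $\dim I$ is not constant on $\tU_\Lambda$ (it jumps along the caustic of $\Lambda$), so the reduced symplectic space changes dimension and the comparison breaks down exactly at the interesting locus. More to the point, the statement ``$\tau(L_1,L_2,L_3)$ is locally constant whenever $L_3$ stays transverse to $L_1$ and $L_2$'' is \emph{false} in general: already in $\R^2$, with $L_1=\{x=0\}$, $L_3=\{\xi=0\}$ and $L_2(a)=\{x=a\xi\}$, one has $\tau=\operatorname{sgn}(a)$, which jumps at $a=0$ while $L_3$ remains transverse to both throughout. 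So the product structure of $L_1=\lambda_0\oplus\lambda_0$ and $L_2=\lambda_\Lambda\oplus\lambda_\Lambda$ inside $E_1\oplus E_1^{-}$ is not incidental --- it is what makes the proposition true. One way to see the mechanism on split Lagrangians $l=l_1\oplus l_2$: by~\eqref{eq:def_tau_l1l2} the quantity $\tau_\Lambda(l)$ is a \emph{difference} of two copies of $\tau(\lambda_0(p),\lambda_\Lambda(p),\cdot)$, and the jump of each summand across the caustic depends only on how $\lambda_\Lambda(p)$ degenerates towards $\lambda_0(p)$, not on the transverse $l_i$; hence the two jumps cancel. To complete the argument you still need to handle non-split $l\in\tU_\Lambda$, for instance by showing that each fibre $\tU_{\Lambda}\cap\tilde\sigma_\Lambda^{-1}(p)$ meets the split locus $U_{\Lambda,p}\times U_{\Lambda^a,p^a}$ in every connected component, or by invoking directly the relevant statement in \cite[A.3]{KS90} on the behaviour of $\tau$ under the hypothesis that \emph{two} of the three pairwise intersections vanish and the third has even dimension.
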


\begin{notation}\label{not:tau_U}
For a connected component $U$ of $\tU_\Lambda$ we set $\tau(U)= \tau_\Lambda(l)
\in 2\Z$, for any $l\in U$.  Let $U_1$ and $U_2$ be connected components of
$U_\Lambda$ and $U_{\Lambda^a}$.  Let $\Lambda_0$ be a connected component of
$\sigma_\Lambda(U_1) \cap \sigma_{\Lambda^a}(U_2)$.  Then $U_1\times_{\Lambda_0}
U_2$ is contained in a connected component, say $U$, of $\tU_\Lambda$. In this
case we set $\tau_{\Lambda_0}(U_1,U_2)= \tau(U)$.  For $k\in 2\Z$ we define
\begin{equation}\label{eq:def_tUk}
\tU_\Lambda^k = \{ l\in \tU_\Lambda; \; \tau_\Lambda(l) = k \}.
\end{equation}
\end{notation}

\begin{proposition}
\label{prop:comp_conn_tULambda}
Let $\tilde\sigma_\Lambda \cl \tU_\Lambda \to \Lambda$ be the projection to the
base. Then we have, for any $k\in 2\Z$,
\begin{itemize}
\item [(i)] for any $p\in \Lambda$, the fiber
$\tU_\Lambda^k \cap \opb{\tilde\sigma_\Lambda}(p)$ is connected,
\item [(ii)] the image of the restriction
$\tilde\sigma_\Lambda|_{\tU_\Lambda^k} \cl \tU_\Lambda^k \to \Lambda$ is 
$$
\{ p \in \Lambda;\;
\codim_{\lambda_0(p)}(\lambda_0(p) \cap \lambda_\Lambda(p)) \geq |k|/2 \} .
$$
\end{itemize}
In particular, the restriction $\tilde\sigma_\Lambda|_{\tU_\Lambda^0} \cl
\tU_\Lambda^0 \to \Lambda$ is onto.
\end{proposition}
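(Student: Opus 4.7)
The plan is to reduce the proposition to a linear algebra statement about signatures of symmetric bilinear forms and finish by Sylvester's law of inertia. First I would fix $p\in\Lambda$ and identify the fiber $\tU_\Lambda \cap \opb{\tilde\sigma_\Lambda}(p)$ with the set $\Sigma$ of Lagrangian subspaces of $W := T_{(p,p^a)}T^*(M\times M)$ transversal to both $\lambda := \lambda_0(p)\oplus\lambda_0(p^a)$ and $\lambda' := \lambda_\Lambda(p)\oplus\lambda_{\Lambda^a}(p^a)$. By the definition~\eqref{eq:def_tau_l} of $\tau_\Lambda$, this identification converts $\tau_\Lambda$ into the Maslov index $\tau_W(\lambda,\lambda',\cdot)$, so the problem becomes to describe the level sets $\Sigma^k := \tau_W(\lambda,\lambda',\cdot)^{-1}(k)$ and their non-emptiness.

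Then I would compute $\codim_\lambda(\lambda\cap\lambda')$. The antipodal involution of $T^*M$ is an anti-symplectic linear isomorphism $T_pT^*M \to T_{p^a}T^*M$ sending $\lambda_0(p)$ and $\lambda_\Lambda(p)$ to $\lambda_0(p^a)$ and $\lambda_{\Lambda^a}(p^a)$ respectively, so with $d_0 := \codim_{\lambda_0(p)}(\lambda_0(p)\cap\lambda_\Lambda(p))$ one obtains $\codim_\lambda(\lambda\cap\lambda') = 2d_0$. I would then perform the symplectic reduction by $N := \lambda\cap\lambda'$: the quotient $\bar W := N^\perp/N$ is symplectic of dimension $4d_0$, and $\bar\lambda := \lambda/N$, $\bar\lambda' := \lambda'/N$ are transversal Lagrangians of dimension $2d_0$. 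The reduction map $\rho\cl\mu\mapsto(\mu\cap N^\perp+N)/N$ sends $\Sigma$ onto the analogous set $\bar\Sigma$ of Lagrangians in $\bar W$ transversal to $\bar\lambda$ and $\bar\lambda'$; it is a fibration with affine (hence contractible) fibers, and by a standard property of the inertia index (cf.\ the Appendix of~\cite{KS90}) it preserves the Maslov index: $\tau_W(\lambda,\lambda',\mu)=\tau_{\bar W}(\bar\lambda,\bar\lambda',\rho(\mu))$. So $\rho$ induces a bijection between the connected components of $\Sigma^k$ and those of $\bar\Sigma^k$ for each $k$.

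For the transversal case I would take $\bar\lambda'$ itself as a Lagrangian complement to $\bar\lambda$ in $\bar W$: every Lagrangian $\mu$ transversal to $\bar\lambda$ is then the graph of a unique symmetric map $A\cl\bar\lambda'\to\bar\lambda$, equivalently a symmetric bilinear form $q_A$ on $\bar\lambda'$ via the identification $\bar\lambda\simeq(\bar\lambda')^*$ induced by $\omega$. Transversality of $\mu$ to $\bar\lambda'$ is equivalent to non-degeneracy of $q_A$, and an explicit calculation gives $\tau_{\bar W}(\bar\lambda,\bar\lambda',\mu) = \pm\operatorname{sgn}(q_A)$. By Sylvester's law, the non-degenerate symmetric forms on the $2d_0$-dimensional space $\bar\lambda'$ are classified up to the action of $GL(\bar\lambda')$ by the signature $k\in\{-2d_0,-2d_0+2,\ldots,2d_0\}$, and each orbit is the homogeneous space $GL(\bar\lambda')/O(r,s)$ with $r+s=2d_0$ and $r-s=\pm k$; this is connected because $O(r,s)$ contains elements of negative determinant (reflect any basis vector).

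Combining these steps yields: $\Sigma^k$ is non-empty iff $|k|\leq 2d_0(p)$, i.e.\ $|k|/2\leq\codim_{\lambda_0(p)}(\lambda_0(p)\cap\lambda_\Lambda(p))$, and is connected when non-empty, which gives (i) and (ii); the last sentence of the proposition is the case $k=0$ of~(ii). The main obstacle will be the verification of the two properties of the reduction map $\rho$---the Maslov-index invariance and the affine fiber structure---because $\lambda$ and $\lambda'$ are not themselves transversal and one must analyze the map directly; both are standard in the linear theory of the inertia index but deserve care.
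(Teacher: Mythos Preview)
The paper does not actually prove this proposition: it is stated (together with Propositions~\ref{prop:Maslov_index} and~\ref{prop:shift_Maslovsheaf}) under the blanket remark that ``the following results are well-known and can be deduced for example from~\cite[Appendix]{KS90}'', and no argument is given. Your proposal is therefore not competing with a proof in the paper but rather supplying one, and the route you take---reduce to a single fiber, perform symplectic reduction by $N=\lambda\cap\lambda'$ to reach a pair of transversal Lagrangians, then invoke the graph parametrization and Sylvester's law---is exactly the standard argument one extracts from the linear theory of the inertia index in the cited appendix.

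Your outline is correct. A couple of small points worth tightening when you write it out: for the fiber of $\rho$, it is cleanest to choose a Lagrangian complement $N^*$ of $N$ inside $(N_1\oplus N_1')^\perp$ (where $N_1,N_1'$ are complements of $N$ in $\lambda,\lambda'$), obtaining a symplectic splitting $W\simeq(N\oplus N^*)\oplus \bar W$; then every $\mu\in\Sigma$ with $\rho(\mu)=\bar\mu$ is the graph of a symmetric map $N^*\oplus\bar\mu\to N\oplus(\text{complement of }\bar\mu)$ whose block restricted to $\bar\mu$ is determined, leaving an affine space of choices for the remaining blocks. This makes the contractibility of the fibers explicit. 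For the connectedness of $GL_{2d_0}(\R)/O(r,s)$ your argument is fine; note only that when $r=0$ or $s=0$ one should observe that $O(2d_0)$ still meets both components of $GL_{2d_0}(\R)$.
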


\begin{proposition}\label{prop:shift_Maslovsheaf}
Let $U \subset \tU_\Lambda$ be a connected component of $\tU_\Lambda$.
Then $\tshm_\Lambda |_U \in \Dloc(\cor_U)$ is a local system of rank $1$
concentrated in degree $\demi \tau(U)$.
\end{proposition}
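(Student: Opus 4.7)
My plan is to compute the stalk $(\tshm_\Lambda)_l$ for an arbitrary $l \in U$ and identify it with $\cor$ placed in degree $\demi \tau(U)$. The argument combines three ingredients: simpleness of $\shk_{\Delta_\Lambda}$ along $\Lambda\times\Lambda^a$, Proposition~\ref{prop:inv_microgerm} to transport the degree of concentration across different test Lagrangians, and Proposition~\ref{prop:microgerm_prod} (product compatibility of $\mks$) at a well-chosen reference point. Since $\shk_{\Delta_\Lambda} \in \kss^s$, each stalk $(\tshm_\Lambda)_l = m_{\Lambda\times\Lambda^a,\,l}(\shk_{\Delta_\Lambda})$ is concentrated in a single degree $d(l)$. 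Proposition~\ref{prop:inv_microgerm} then shows that for $l, l'$ over the same $(p, p^a) \in \Delta_\Lambda$ one has $d(l) - d(l') = \demi(\tau_{l'} - \tau_l)$, where $\tau_l$ is the Maslov index $\tau_{T^*(M\times M)}(\lambda_0(p)\times\lambda_0(p^a),\, \lambda_\Lambda(p)\times\lambda_{\Lambda^a}(p^a),\, l)$ used to define $\tau_\Lambda$ in Proposition~\ref{prop:Maslov_index}. Because $\tshm_\Lambda$ has locally constant cohomology on $\tU_\Lambda$ and $\tau_\Lambda$ is constant on components, the function $d(l) - \demi\tau_\Lambda(l)$ is constant on each $U$; it suffices to verify $d(l_0) = \demi\tau_\Lambda(l_0)$ at a single point $l_0 \in U$.

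To pin down this normalization, I take $l_1 \in U_{\Lambda,p}$ and consider the product point $l_0 = l_1 \oplus l_1^a$, which lies in the image of the map $(l_1,l_2) \mapsto l_1\oplus l_2$ from $U_\Lambda \times_\Lambda U_{\Lambda^a}$; such an $l_0$ exists in every fiber. Corollary~\ref{cor:pre_Maslov_sheaf} provides, locally, an isomorphism in $\kss^s$ between $\shk_{\Delta_\Lambda}$ and $\kssfunc(F\etens\DD'F)$ for some simple $F\in \Derb_{(\Lambda_0)}(\cor_M)$ near $p$ (whose existence is granted by Lemma~\ref{lem:simple_local}). Applying Proposition~\ref{prop:microgerm_prod} at the product point $l_0$ then yields
$$
(\tshm_\Lambda)_{l_0} \simeq m_{\Lambda,\,l_1}(F) \tens m_{\Lambda^a,\,l_1^a}(\DD'F).
$$
Each factor is $\cor$ in some degree by simpleness, so the sum of the two shifts recovers $d(l_0)$, which I must identify with $\demi\tau_\Lambda(l_0)$.

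The main obstacle is precisely this identification. Although the individual shifts depend on the chosen normalization of $F$, their sum is canonical because $\gamma_F$ represents $\id_F$ under the canonical isomorphism from Proposition~\ref{prop:pre_Maslov_sheaf}. My plan is to transport $\id_F$ through the identification $F\etens\DD'F \simeq \rhom(\opb{q_2}F,\opb{q_1}F)$ (cohomological constructibility of $F$, Corollary~\ref{cor:opboim}) together with $\mu_{\Delta_M}(\rhom(\opb{q_2}F,\opb{q_1}F)) \simeq \mu hom(F,F)$ from equation~\eqref{eq:def_muhom}, and then to match the canonical degree shift that appears with $\demi\tau_\Lambda(l_0)$ using formula~\eqref{eq:def_tau_l1l2}. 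Reconciling the signature-based Maslov index with the grading of $\omega_{\Delta_M|M\times M}$ and with the sign produced by the antipodal (anti-symplectic) identification between $T_pT^*M$ and $T_{p^a}T^*M$ is the delicate step, but it is a finite-dimensional linear-algebra check that I expect to go through using the additivity of $\tau$ under direct sums.
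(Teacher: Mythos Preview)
The paper does not give a proof of this proposition; it is one of several results stated after the sentence ``The following results are well-known and can be deduced for example from~\cite[Appendix]{KS90}.'' So there is no proof in the paper to compare against directly. Your strategy---use simpleness to get a single degree $d(l)$ at each $l$, use Proposition~\ref{prop:inv_microgerm} to show $d(l)-\demi\tau_\Lambda(l)$ is independent of $l$ within a fiber, use local constancy of $\tshm_\Lambda$ to propagate this across $\tU_\Lambda$, and then normalize at one point---is exactly the standard approach underlying the results in~\cite[\S7.5 and Appendix]{KS90}.

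Two remarks on the normalization step. First, your choice $l_0=l_1\oplus l_1^a$ is well-suited: by additivity of the inertia index under direct sums together with the fact that the antipodal map $T_pT^*M\to T_{p^a}T^*M$ is anti-symplectic, one gets $\tau_\Lambda(l_1\oplus l_1^a)=0$, so the normalization reduces to showing $(\tshm_\Lambda)_{l_0}$ sits in degree~$0$, i.e.\ that the degrees of $m_{\Lambda,l_1}(F)$ and $m_{\Lambda^a,l_1^a}(\DD'F)$ are opposite. Second, rather than routing this through $\mu hom$ and~\eqref{eq:def_muhom} as you propose (which is correct but, as you say, delicate with the $\omega_{\Delta_M|M\times M}$ shifts), it is cleaner to pass to a local model as in Example~\ref{ex:simple_sheaf}: take $\Lambda$ to be half of a conormal to a hypersurface, $F=\cor_Z$ with $Z$ a closed half-space, and compute both microlocal germs by hand for a quadratic $\varphi$. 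This is how~\cite{KS90} ultimately pins down the constant, and it avoids the bookkeeping you flag as the ``delicate step.'' Your outline is sound; what is missing is precisely this finite computation.
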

\begin{proof}
This follows from the formula $(m_\Lambda^\varphi(F))_l \simeq
(\rsect_{\opb{\varphi_l}([0,+\infty[)}(F))_x$, for $l\in U_\Lambda$ and $x=
\tau_M(l)$, and from~\cite[Prop.~7.5.3]{KS90}.
\end{proof}

\subsection{Composition with the Maslov sheaf}

All Grothendieck operations do not induce functors on the categories
$\Dloc(\cor_\bullet)$. But this works for the tensor product and the inverse
image.
Let $X$ be a topological space. The functor $\ltens$ on $\Derb(\cor_X)$ clearly
induces a bifunctor on the subprestack $\Dloc^0(\cor_X)$.  Taking the
associated stack we obtain a bifunctor on $\Dloc(\cor_X)$. We denote it also by
$\ltens$ since it commutes with the natural functor
$\Dloc^0(\cor_X) \to \Dloc(\cor_X)$.

Let $f\cl X \to Y$ be a continuous map between topological spaces.  Then
$\opb{f}$ induces a functor of prestacks
$\opb{f} \cl \Dloc^0(\cor_Y) \to \oim{f}\Dloc^0(\cor_X)$.
We note that we have a functor of stacks
$(\oim{f}\Dloc^0(\cor_X) )^a \to \oim{f}\Dloc(\cor_X)$, where $(\cdot)^a$
denotes the associated stack. Hence $\opb{f}$ induces a functor of stacks
$\opb{f} \cl \Dloc(\cor_Y) \to \oim{f}\Dloc(\cor_X)$, which commutes
with the functor $\Dloc^0(\cor_\bullet) \to \Dloc(\cor_\bullet)$.

We denote by $U_\Lambda^n$ the fiber product of $n$ factors $U_\Lambda$ over
$\Lambda$. In Propositions~\ref{prop:prod_Maslov_sheaf}
and~\ref{prop:action_Maslov_sheaf} below we denote by
$\qq_{ij} \cl U_\Lambda^3 \to U_\Lambda^2$ the projection to the factors $i$ and
$j$. We use similar notations $q_i\cl U_\Lambda^2 \to U_\Lambda$,
$\qqq_{ij}\cl U_\Lambda^4 \to U_\Lambda^2$ and
$\qqq_{ijk}\cl U_\Lambda^4 \to U_\Lambda^3$.

\begin{proposition}
\label{prop:prod_Maslov_sheaf}
There exists a canonical isomorphism in $\Dloc(\cor_{U_\Lambda^3})$
$$
u\cl \opb{\qq_{12}} \shm_\Lambda \dltens \opb{\qq_{23}} \shm_\Lambda 
 \isoto \opb{\qq_{13}} \shm_\Lambda
$$
such that the following diagram commutes in $\Dloc(\cor_{U_\Lambda^4})$
\begin{equation}\label{eq:diag_struc_Maslov}
\vcenter{\xymatrix@C=2.3cm{
\opb{\qqq_{12}} \shm_\Lambda \dltens \opb{\qqq_{23}} \shm_\Lambda
\dltens \opb{\qqq_{34}} \shm_\Lambda
\ar[d]_{\opb{\qqq_{123}}u \tens \id} \ar[r]^-{\id \tens \opb{\qqq_{234}}u}
&
\opb{\qqq_{12}} \shm_\Lambda \dltens \opb{\qqq_{24}} \shm_\Lambda
\ar[d]^{\opb{\qqq_{124}}u}
\\
\opb{\qqq_{13}} \shm_\Lambda \dltens \opb{\qqq_{34}} \shm_\Lambda 
\ar[r]^-{\opb{\qqq_{134}}u}
& \opb{\qqq_{14}} \shm_\Lambda .
}}
\end{equation}
\end{proposition}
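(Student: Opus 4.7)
My approach is to construct $u$ locally on $\Lambda$ using the canonical identification of $\shk_{\Delta_\Lambda}$ with $F_0\etens \DD'F_0$ for a local simple sheaf $F_0$, and then to glue by the uniqueness clause of Corollary~\ref{cor:pre_Maslov_sheaf} together with the stack property of $\Dloc(\cor_{U_\Lambda^3})$. Fix $p\in \Lambda$ and, via Lemma~\ref{lem:simple_local}, choose a small open neighborhood $\Lambda_0$ of $p$ carrying a simple sheaf $F_0\in \Derb_{(\Lambda_0)}(\cor_M)$. Corollary~\ref{cor:pre_Maslov_sheaf}~(i) supplies an isomorphism $\gamma_{F_0}\cl \shk_{\Delta_\Lambda}\isoto \kssfunc(F_0\etens \DD'F_0)$ in $\kss^s$ on a neighborhood of $\Delta_{\Lambda_0}$. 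Applying the microgerm functor $\mks_{\Lambda\times \Lambda^a}$ (Proposition~\ref{prop:microgerm_via_kss}), using Proposition~\ref{prop:microgerm_prod} to split the external product, and pulling back via $\upsilon_\Lambda$, I would write
$$
\shm_\Lambda|_{\Lambda_0} \;\simeq\; L\etens L^{\vee}
$$
on $(U_\Lambda\times_\Lambda U_\Lambda)|_{\Lambda_0}$, where $L\eqdot m_\Lambda(F_0)$ is locally of rank one (up to a shift) by simplicity and $L^{\vee}$ stands for $m_{\Lambda^a}(\DD'F_0)$ viewed on $U_\Lambda$ through the antipodal identification.

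\textbf{Local definition of $u$.} The natural duality pairing $F_0 \ltens \DD'F_0 \to \omega_M$ microlocalizes to a canonical evaluation $L\dltens L^{\vee} \to \cor$ on $U_{\Lambda_0}$. Under the local model above, the stalk of $\opb{\qq_{12}}\shm_\Lambda \dltens \opb{\qq_{23}}\shm_\Lambda$ at $(l_1,l_2,l_3)\in U_\Lambda^3$ is $L(l_1)\otimes L^{\vee}(l_2)\otimes L(l_2)\otimes L^{\vee}(l_3)$; contracting the middle two factors by the evaluation yields $L(l_1)\otimes L^{\vee}(l_3)$, which is the stalk of $\opb{\qq_{13}}\shm_\Lambda$. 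This defines $u$ locally. It is an isomorphism because evaluation of a rank-one module against its dual is non-degenerate, and the degree shifts match by the additivity of Maslov indices $\tau_\Lambda(l_1,l_2)+\tau_\Lambda(l_2,l_3)=\tau_\Lambda(l_1,l_3)$, which is immediate from~\eqref{eq:def_tau_l1l2}.

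\textbf{Canonicity, gluing, associativity.} For $u$ to be well defined I would verify independence of the choice of $F_0$: if $F'_0$ is another simple sheaf on $\Lambda_0$, the uniqueness in Corollary~\ref{cor:pre_Maslov_sheaf} produces a unique compatible identification of the two local models, transporting one candidate $u$ onto the other. The local isomorphisms then glue, via the stack property of $\Dloc(\cor_{U_\Lambda^3})$, to a global $u$ on all of $U_\Lambda^3$. Under the local model, the associativity diagram~\eqref{eq:diag_struc_Maslov} reduces on $U_\Lambda^4$ to the tautological associativity of iterated $L$-against-$L^{\vee}$ contractions inside a six-fold tensor product, hence holds. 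The main technical obstacle I anticipate is pinning down the canonicity of the duality $L^{\vee}\simeq m_{\Lambda^a}(\DD'F_0)$ and the intrinsic character of the induced evaluation $L\dltens L^{\vee}\to \cor$; this should ultimately follow from the compatibility of the distinguished section $\delta_F$ of Proposition~\ref{prop:pre_Maslov_sheaf} with the $\mucirc$-composition of Notation~\ref{not:mucomposition} and the alternative description of $\kss(\cor_\Lambda)$ in Corollary~\ref{cor:defbiskss}.
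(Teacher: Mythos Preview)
Your approach is essentially the same as the paper's: work locally with a simple $F_0$, identify $\shm_\Lambda$ with an external square $L\etens L^\vee$ via $\gamma'_{F_0}$ and Proposition~\ref{prop:microgerm_prod}, define $u$ by contracting the middle pair, check independence of $F_0$, and glue. The one place where the paper proceeds differently is exactly the point you flag as a technical obstacle: rather than trying to microlocalize the duality pairing $F_0\ltens\DD'F_0\to\omega_M$ to obtain the evaluation $L\dltens L^\vee\to\cor$, the paper simply observes that on each connected component $L$ is a rank-one local system up to a shift, so the contraction $\DD'L\tens L\isoto\cor$ is canonical for elementary reasons; likewise, for independence of $F_0$ the paper uses that any two local simple sheaves are isomorphic up to a shift in $\Derb(\cor_M;\Omega)$ (Lemma~\ref{lem:simple_local}) and checks directly that the induced isomorphism $w\cl L\isoto L'$ intertwines the two contractions, rather than appealing to the abstract uniqueness in Corollary~\ref{cor:pre_Maslov_sheaf}.
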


If we had a good notion of direct image, we could define a composition of
kernels in $\Dloc(\cor_\bullet)$ as in~\eqref{eq:def_comp_gene}.  We could
restate the result as $\shm_\Lambda \circ \shm_\Lambda \simeq \shm_\Lambda$
which justifies the title of this paragraph.

\begin{proof}
By Proposition~\ref{prop:shift_Maslovsheaf} $\shm_\Lambda$ is concentrated in
a single degree over each connected component of $U_\Lambda^2$. Hence it is
enough to define $u$ locally.

\medskip\noindent
(i) We consider an open subset $\Lambda_0$ of $\Lambda$ such
that there exists $F\in \Derb_{(\Lambda_0)}(\cor_M)$ which is simple along
$\Lambda_0$.  Then we have the isomorphism
$\gamma'_F \cl \tshm_\Lambda|_{\tU_{\Lambda_0}}
 \isoto m_{\Lambda_0 \times \Lambda_0^a}(F\etens \DD'F)|_{\tU_{\Lambda_0}}$
given in~\eqref{eq:mor_can_Maslov}.
Let us set for short $V= U_{\Lambda_0}$ and $L= m_{\Lambda_0}(F)$.
Then Proposition~\ref{prop:microgerm_prod} gives
$\shm_\Lambda \simeq L\etens_{\Lambda_0} \DD'L$.
On each connected component $V_i$ of $V$ we have $L|_{V_i} \simeq L_i[d_i]$
where $L_i$ is a local system of rank $1$ and $d_i\in \Z$. Hence we have
a canonical isomorphism $\DD'L \tens L \simeq \cor_V$ and we deduce
the sequence of isomorphisms:
\begin{equation}\label{eq:prod_Maslov_sheaf}
  \begin{split}
\opb{\qq_{12}} \shm_\Lambda \dltens & \opb{\qq_{23}} \shm_\Lambda  \\
&\simeq (L\etens_{\Lambda_0} \DD'L \etens_{\Lambda_0} \cor_V) \tens
(\cor_V \etens_{\Lambda_0} L\etens_{\Lambda_0} \DD'L)  \\
&\simeq L\etens_{\Lambda_0} (\DD'L \tens L) \etens_{\Lambda_0} \DD'L  \\
&\simeq L\etens_{\Lambda_0} \cor_V \etens_{\Lambda_0} \DD'L \\
&\simeq \opb{\qq_{13}} \shm_\Lambda .
  \end{split}
\end{equation}
We denote by $u(F)$ the composition in~\eqref{eq:prod_Maslov_sheaf}.

\medskip\noindent
(ii) Let us check that $u(F)$ is independent of $F$.
Let $F' \in \Derb_{(\Lambda_0)}(\cor_M)$ be another simple sheaf along
$\Lambda_0$. Up to shrinking $\Lambda_0$ we have $F\simeq F'[i]$
in $\Derb(\cor_M;\Omega)$, for some neighborhood $\Omega$ of
$\Lambda_0$ and some $i\in\Z$. Since morphisms in 
$\Derb(\cor_M;\Omega)$ are compositions of morphisms in
$\Derb(\cor_M)$ and their inverses, we may even assume that we have
$v\cl F \to F'[i]$ in $\Derb(\cor_M)$ which induces an isomorphism
in $\Derb(\cor_M;\Omega)$, hence in $\kss(\cor_{\Lambda_0})$.
Then 
$w = m_{\Lambda_0}(v) \cl L= m_{\Lambda_0}(F) \isoto L'= m_{\Lambda_0}(F'[i])$
is an isomorphism and we have
$\gamma'_{F'} \circ (\gamma'_F)^{-1} = w\etens \DD'(w^{-1})$.
Using $w$ it is easy to draw a commutative square between
any two consecutive lines of the definitions of  $u(F)$ and $u(F')$
in~\eqref{eq:prod_Maslov_sheaf}.
Then we obtain $u(F) = u(F')$ as required.
In particular taking a covering of $\Lambda$ by open subsets
like $\Lambda_0$ we can glue the local definitions $u(F)$ and
obtain our morphism $u$.

\medskip\noindent
(iii) The commutativity of the diagram~\eqref{eq:diag_struc_Maslov} is also a
local question. It is a consequence of the sequence~\eqref{eq:prod_Maslov_sheaf}
which defines $u$.
\end{proof}

\begin{definition}\label{def:kssmg}
We define a stack $\kss_{mg}(\cor_\Lambda)$ as follows (the subscript ``$mg$''
stands for microlocal germs). For an open subset $\Lambda_0$ of $\Lambda$, the
objects of $\kss_{mg}(\cor_{\Lambda_0})$ are the pairs $(\mgL,u_\mgL)$, where
$\mgL\in \Dloc(\cor_{U_{\Lambda_0}})$ and $u_\mgL$ is an isomorphism in
$\Dloc(\cor_{U_{\Lambda_0}^2})$
\begin{equation}
\label{eq:def_morph_struct_kssmg}
u_\mgL \cl \shm_\Lambda  \dltens \opb{q_2} \mgL  \isoto \opb{q_1} \mgL 
\end{equation}
such that the following diagram commutes in $\Dloc(\cor_{U_{\Lambda_0}^3})$
\begin{equation}
\label{eq:def_diag_kssmg}
\vcenter{\xymatrix@C=2.5cm{
\opb{\qq_{12}} \shm_\Lambda \dltens \opb{\qq_{23}} \shm_\Lambda
\dltens \opb{\qq_{3}} \mgL
\ar[d]_{u \tens \opb{\qq_{3}}\id} \ar[r]^-{\id \tens \opb{\qq_{23}}u_\mgL}
&
\opb{\qq_{12}} \shm_\Lambda \dltens \opb{\qq_{2}} \mgL
\ar[d]^{\opb{\qq_{12}}u_\mgL}
\\
\opb{\qq_{13}} \shm_\Lambda \dltens \opb{\qq_{3}} \mgL
\ar[r]^-{\opb{\qq_{13}}u_\mgL}
& \opb{\qq_{1}} \mgL . }}
\end{equation}
We define $\Hom_{\kss_{mg}(\cor_{\Lambda_0})} ((\mgL,u_\mgL), (\mgL',u_{\mgL'}))$, for
two objects $(\mgL,u_\mgL)$ and $(\mgL',u_{\mgL'})$ of $\kss_{mg}(\cor_{\Lambda_0})$,
as the set of $v \in \Hom_{\Dloc(\cor_{U_{\Lambda_0}})} (\mgL,\mgL')$ such that
\begin{equation}
\label{eq:def_mor_kssmg}
\vcenter{\xymatrix{
\shm_\Lambda  \dltens \opb{q_2} \mgL
 \ar[r]^-{u_\mgL}   \ar[d]_{\id \tens \opb{q_2}(v)}
&  \opb{q_1} \mgL   \ar[d]^{\opb{q_1}(v)}  \\
\shm_\Lambda  \dltens \opb{q_2} \mgL' \ar[r]^-{u_{\mgL'}}
&  \opb{q_1} \mgL' }}
\end{equation}
is commutative.
\end{definition}

\begin{proposition}\label{prop:action_Maslov_sheaf}
Let $\Lambda_0$ be an open subset of $\Lambda$ and
$F \in \kss(\cor_{\Lambda_0})$. Then we have a canonical isomorphism
in $\Dloc(\cor_{U_\Lambda^2})$
$$
u_F \cl \shm_\Lambda  \dltens \opb{q_2} \mks_{\Lambda_0}(F)
 \isoto \opb{q_1} \mks_{\Lambda_0}(F) 
$$
such that $(\mks_{\Lambda_0}(F),u_F) \in \kss_{mg}(\cor_{\Lambda_0})$.
In other words $\mks_\Lambda$ induces a functor
\begin{equation}\label{eq:functor_mksprime}
\mks'_\Lambda \cl \kss(\cor_\Lambda) \to \kss_{mg}(\cor_\Lambda) .
\end{equation}
\end{proposition}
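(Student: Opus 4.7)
My plan is to mimic the local-to-global strategy of Proposition~\ref{prop:prod_Maslov_sheaf}. I will construct $u_F$ on a covering of $\Lambda_0$ by open sets where things become explicit, check independence of the local choices, and then glue. So I first pick a small open $\Lambda_1 \subset \Lambda_0$ on which Lemma~\ref{lem:simple_local}(i) provides a simple object $F_0 \in \kss^s(\cor_{\Lambda_1})$, represent $F|_{\Lambda_1}$ by some object of $\Derb_{(\Lambda_1)}(\cor_M)$ (possible since $\kss$ is the stack associated with a prestack built from $\Derb_{(\Lambda_0)}$), and then shrink $\Lambda_1$ further so that Lemma~\ref{lem:simple_local}(ii) provides an isomorphism $F|_{\Lambda_1} \simeq F_0 \ltens K_M$ in $\kss(\cor_{\Lambda_1})$ for some $K \in \Derb(\cor)$. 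A projection-formula argument applied to the very definition of $m_\Lambda^\varphi$ then yields a canonical isomorphism $\mks_{\Lambda_1}(F) \simeq L_0 \dltens K_V$, where I write $L_0 = \mks_{\Lambda_1}(F_0)$ and $V = U_{\Lambda_1}$.

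On $V^2$ I now use the key input from the proof of Proposition~\ref{prop:prod_Maslov_sheaf}, namely the canonical identification $\shm_\Lambda|_{V^2} \simeq L_0 \etens_{\Lambda_1} \DD'L_0$ coming from Proposition~\ref{prop:microgerm_prod} applied to the isomorphism~\eqref{eq:mor_can_Maslov}, together with the fact that $L_0$ is a shifted rank-one local system on each connected component of $V$, so that $\DD'L_0 \dltens L_0 \simeq \cor_V$ canonically. Concatenating these identifications with the equality $\opb{q_2}K_V = \opb{q_1}K_V$ (which holds because $K_V$ is pulled back from $\Lambda_1$) yields a chain of canonical isomorphisms exactly parallel to~\eqref{eq:prod_Maslov_sheaf} whose composition I define to be $u_F$. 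Independence of the auxiliary choice of $(F_0, K)$ follows verbatim from step~(ii) of the proof of Proposition~\ref{prop:prod_Maslov_sheaf}: any two presentations of $F$ are related by an isomorphism which, under $\mks_\Lambda$, induces a commutative diagram between the two resulting chains.

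Once $u_F$ is defined locally and shown to be canonical, the pieces glue to a global isomorphism on $U_{\Lambda_0}^2$ because $\Dloc$ is a stack, and the cocycle condition~\eqref{eq:def_diag_kssmg} becomes a corollary of the diagram~\eqref{eq:diag_struc_Maslov} already verified for $\shm_\Lambda$: checking it locally uses the very same canonical chain. To upgrade to a functor $\mks'_\Lambda$ I still have to verify that for any morphism $v \cl F \to F'$ in $\kss(\cor_{\Lambda_0})$ the induced $\mks_\Lambda(v)$ satisfies the naturality square~\eqref{eq:def_mor_kssmg}. After reducing locally to presentations $F \simeq F_0 \ltens K_M$ and $F' \simeq F_0 \ltens K'_M$ with a common simple $F_0$, $v$ corresponds to a morphism $K \to K'$ in $\Derb(\cor)$, and the square reduces to the evident naturality of the defining chain with respect to tensoring by this morphism.

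The main technical obstacle I anticipate is the canonical decomposition $\mks_{\Lambda_1}(F) \simeq L_0 \dltens K_V$ in the first paragraph: it requires checking that the projection-formula identification is compatible both with the localization implicit in passing from $\Derb_{(\Lambda)}(\cor_M)$ to $\kss(\cor_\Lambda)$ and with the local system structure needed to make the whole composition independent of auxiliary choices. Once this decomposition is in hand, the remainder is essentially a bookkeeping exercise parallel to Proposition~\ref{prop:prod_Maslov_sheaf}.
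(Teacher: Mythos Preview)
Your proposal is correct and follows essentially the same approach as the paper: locally write $F \simeq F_0 \ltens K_M$ with $F_0$ simple via Lemma~\ref{lem:simple_local}, identify $\mks_\Lambda(F)$ with $\mks_\Lambda(F_0) \dltens K_V$ and $\shm_\Lambda$ with $\mks_\Lambda(F_0) \etens_\Lambda \DD'\mks_\Lambda(F_0)$, and then run the argument of Proposition~\ref{prop:prod_Maslov_sheaf} verbatim. The paper's proof is a two-line sketch pointing back to that proposition, whereas you have spelled out the independence-of-choice and gluing steps and the naturality for morphisms; your anticipated obstacle (compatibility of the projection-formula identification with localization) is real but is simply asserted in the paper as well.
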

\begin{proof}
By Lemma~\ref{lem:simple_local} we can write locally $F= G\ltens L_M$, where $G$
is simple along $\Lambda$ and $L\in \Derb(\cor)$.  Then we have locally the
canonical isomorphisms $m_\Lambda(F) \simeq m_\Lambda(G) \tens L_{U_\Lambda}$
and $\tshm_\Lambda \simeq m_{\Lambda\times \Lambda^a}(G\etens \DD'G)$.  Now the
proof goes on like the proof of Proposition~\ref{prop:prod_Maslov_sheaf}.
\end{proof}

Since $\sigma_\Lambda \circ q_1 = \sigma_\Lambda \circ q_2$ we have a canonical
isomorphism $\opb{q_1} \opb{\sigma_{\Lambda}} F \simeq \opb{q_2}
\opb{\sigma_\Lambda} F \simeq$ for all $F \in \Dloc(\cor_\Lambda)$.  Now let
$\Lambda_0$ be an open subset of $\Lambda$ and let $(\mgL_0,u_{\mgL_0})\in
\kss_{mg}(\cor_{\Lambda_0})$ be given.  Then we obtain a functor of stacks
\begin{equation}\label{eq:dloc-to-kssmg}
t_{\mgL_0} \cl \Dloc(\cor_{\Lambda_0}) \to \kss_{mg}(\cor_{\Lambda_0}),
\quad
L \mapsto (\mgL_0 \tens \opb{\sigma_{\Lambda^0}}(L), u_{\mgL_0} \tens \id_L).
\end{equation}

\begin{lemma}\label{lem:kssmg=dloc}
Let $\Lambda_0$ be an open subset of $\Lambda$. We assume that there exists a
connected component $U$ of $U_{\Lambda_0}$ such that
$\sigma_\Lambda(U) = \Lambda_0$.  We also assume that there exists
$(\mgL_0,u_{\mgL_0})\in \kss_{mg}(\cor_{\Lambda_0})$ such that
$(\mgL_0)_l \simeq \cor$ for $l\in U$.  Then the functor $t_{\mgL_0}$
of~\eqref{eq:dloc-to-kssmg} is an equivalence of stacks.
\end{lemma}
\begin{proof}
Since the result is local on $\Lambda_0$ we may as well assume that $\Lambda_0$
is contractible and that there exists a section $i\cl \Lambda_0 \to U$
of $\sigma_\Lambda|_U$.
For $(\mgL,u_{\mgL})\in \kss_{mg}(\cor_{\Lambda_0})$
we set $\mgLp = \mgL \tens \DD'\mgL_0 \in \Dloc(\cor_{U_{\Lambda_0}})$
and $j_{\mgL_0}(\mgL) = \opb{i}(\mgLp) \in \Dloc(\cor_{\Lambda_0})$.
This defines a functor $j_{\mgL_0} \cl \kss_{mg}(\cor_{\Lambda_0})
 \to \Dloc(\cor_{\Lambda_0})$.
Let us prove that $t_{\mgL_0}$ and $j_{\mgL_0}$ are mutually inverse.

It is clear that $j_{\mgL_0}\circ t_{\mgL_0}$ is the identity functor.  So we have
to prove that, for a given $(\mgL,u_{\mgL})\in \kss_{mg}(\cor_{\Lambda_0})$, there
exists an isomorphism 
$(\mgL,u_{\mgL}) \simeq t_{\mgL_0} \circ j_{\mgL_0} (\mgL,u_{\mgL})$.
We set $\mgLp = \mgL \tens \DD'\mgL_0$, $L = \opb{i}(\mgLp)$
and $\mgL' = j_{\mgL_0}(L)$.

The isomorphism $u_{\mgL_0}$ in~\eqref{eq:def_morph_struct_kssmg} gives an
isomorphism $\shm_\Lambda \simeq \mgL_0 \etens_{\Lambda_0} \DD'\mgL_0$.
Hence $u_\mgL$ induces
\begin{equation}\label{eq:isomgLp}
  u_\mgL \tens \id_{\DD'\mgL_0} \cl \opb{q_2} \mgLp \isoto \opb{q_1} \mgLp .
\end{equation}
We define $i_2 \cl U_{\Lambda_0} \to U_{\Lambda_0}^2$ by
$i_2(l) = (l, i(\sigma_\Lambda(l)))$. Then
$q_1 \circ i_2 = \id_{U_{\Lambda_0}}$ and  $q_2 \circ i_2 = i\circ \sigma_\Lambda$.
Applying $\opb{i_2}$ to~\eqref{eq:isomgLp} we deduce
$\mgLp \isoto \opb{\sigma_\Lambda} L$ or, as well,
$v\cl \mgL \simeq \mgL_0 \tens \opb{\sigma_{\Lambda^0}}(L)$.

It only remains to see that $u_\mgL = u_{\mgL_0} \tens \id_L$.
We define $i_3 \cl U_{\Lambda_0}^2 \to U_{\Lambda_0}^3$ by
$i_3(l,l') = (l,l', i(\sigma_\Lambda(l)))$. Then applying $\opb{i_2}$
to~\eqref{eq:def_diag_kssmg} we obtain
\begin{equation*}
\vcenter{\xymatrix@C=2.5cm{
\shm_\Lambda \dltens \opb{q_{2}} \mgL_0 \tens \opb{\sigma_{\Lambda^0}}(L)
\ar[d]_{u_{\mgL_0} \tens \opb{\qq_{3}}\id} \ar[r]^-{\id \tens \opb{q_{2}}v}
& \shm_\Lambda \dltens \opb{q_{2}} \mgL \ar[d]^{u_\mgL}  \\
\opb{q_{1}} \mgL_0 \tens \opb{\sigma_{\Lambda^0}}(L) \ar[r]^-{\opb{q_{1}}v}
& \opb{q_{1}} \mgL , }}
\end{equation*}
which implies $u_\mgL = u_{\mgL_0} \tens \id_L$ and concludes the proof.
\end{proof}

\begin{theorem}\label{thm:mksprime_equiv}
Let $\Lambda$ be a locally closed conic Lagrangian submanifold of $T^*M$.
Then the functor $\mks'_\Lambda$ of Proposition~\ref{prop:action_Maslov_sheaf}
is an equivalence of stacks.
\end{theorem}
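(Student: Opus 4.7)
The plan is to show $\mks'_\Lambda$ is locally an equivalence of stacks; since both sides are stacks and $\mks'_\Lambda$ is a morphism of stacks, this implies the global statement. Fix $p \in \Lambda$ and, using Lemma~\ref{lem:simple_local}, choose a contractible open neighborhood $\Lambda_0 \subset \Lambda$ carrying a simple object $F_0$. Set $M_0 = m_\Lambda(F_0) \in \Dloc(\cor_{U_{\Lambda_0}})$, so $\mks'_\Lambda(F_0) = (M_0, u_{F_0})$. Because $F_0$ is simple, $M_0$ is locally on each connected component of $U_{\Lambda_0}$ a shift of $\cor$, hence $\dltens$-invertible in $\Dloc(\cor_{U_{\Lambda_0}})$; write $M_0^{-1}$ for its tensor inverse. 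The isomorphism $u_{F_0}$ identifies $\shm_\Lambda|_{U_{\Lambda_0}^2}$ with $\opb{q_1}M_0 \dltens \opb{q_2}M_0^{-1}$.

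For essential surjectivity, let $(L, u_L) \in \kss_{mg}(\cor_{\Lambda_0})$. Composing $u_L$ with $(u_{F_0})^{-1}$ yields an isomorphism
\begin{equation*}
\tilde u_L \cl \opb{q_2}(L \dltens M_0^{-1}) \isoto \opb{q_1}(L \dltens M_0^{-1})
\end{equation*}
in $\Dloc(\cor_{U_{\Lambda_0}^2})$. The compatibility diagram~\eqref{eq:def_diag_kssmg} for both $u_L$ and $u_{F_0}$, together with the associativity~\eqref{eq:diag_struc_Maslov} for $\shm_\Lambda$, implies that $\tilde u_L$ satisfies the cocycle condition $\opb{\qq_{13}}\tilde u_L = \opb{\qq_{12}}\tilde u_L \circ \opb{\qq_{23}}\tilde u_L$ on $U_{\Lambda_0}^3$. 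The map $\sigma_\Lambda \cl U_{\Lambda_0} \to \Lambda_0$ is a surjective submersion (any two Lagrangian subspaces of a symplectic vector space admit a common transverse Lagrangian), so admits local sections; descent for the stack $\Dloc$ therefore yields $N_0 \in \Dloc(\cor_{\Lambda_0})$ together with an isomorphism $\opb{\sigma_\Lambda}N_0 \simeq L \dltens M_0^{-1}$ matching $\tilde u_L$. Since $\Lambda_0$ is contractible, $N_0 \simeq (L_0)_{\Lambda_0}$ for some $L_0 \in \Derb(\cor)$. Setting $F \eqdot F_0 \ltens (L_0)_M$, one computes $m_\Lambda(F) \simeq M_0 \dltens \opb{\tau_M}(L_0)_M \simeq L$, and the naturality of the construction of $u$ in Proposition~\ref{prop:action_Maslov_sheaf} shows that $\mks'_\Lambda(F) \simeq (L, u_L)$.

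For full faithfulness, Corollary~\ref{cor:defbiskss} identifies the $\hom$ sheaf of $\kss(\cor_\Lambda)$ with $H^0 \mu hom$, and Proposition~\ref{prop:microgerm_via_kss} gives an isomorphism $\opb{\sigma_\Lambda} H^0\mu hom(F,G) \isoto H^0\rhom(m_\Lambda F, m_\Lambda G)$. The $\hom$ sheaf of $\kss_{mg}$ is the subsheaf of morphisms $v$ making~\eqref{eq:def_mor_kssmg} commute; after untwisting by $u_{F_0}$ as above, this condition becomes the descent condition for $v \dltens \id_{M_0^{-1}}$ along $\sigma_\Lambda$, and the same descent argument shows these morphisms correspond bijectively to sections of $H^0 \mu hom(F,G)$ over $\Lambda_0$. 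The main obstacle lies in the cocycle verification: one must carefully combine~\eqref{eq:diag_struc_Maslov} and~\eqref{eq:def_diag_kssmg} with the naturality of the identifications induced by $u_{F_0}$ to confirm that $\tilde u_L$ is a genuine descent datum, which is the only nontrivial bookkeeping and constitutes the technical heart of the proof.
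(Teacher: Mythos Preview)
Your proposal is correct and reaches the same conclusion, but the argument is organized differently from the paper's. The paper does not invoke descent for $\Dloc$ along the submersion $\sigma_\Lambda$; instead it picks a single connected component $U^0_{\Lambda_1} \subset U_{\Lambda_1}$ surjecting onto $\Lambda_1$ together with a section $i_1 \cl \Lambda_1 \to U^0_{\Lambda_1}$, defines the candidate $F = F_1 \ltens i_1^{-1}(\DD'(m_{\Lambda_1}(F_1)) \tens L)$ directly from the restriction of $L$ to that one component, obtains the isomorphism $m_{\Lambda_1}(F)|_{U^0_{\Lambda_1}} \simeq L|_{U^0_{\Lambda_1}}$ by construction, and then \emph{propagates} this isomorphism to the remaining components of $U_{\Lambda_1}$ (and checks compatibility with $u$) by an explicit diagram chase using $u_F$ and $u$ on $U_{\Lambda_1} \times_\Lambda U^0_{\Lambda_1}$ and $U_{\Lambda_1}^2 \times_\Lambda U^0_{\Lambda_1}$. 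Full faithfulness is handled the same way: the paper produces $v$ from $v'$ at a single $l_0$ via~\eqref{eq:stalk_muhom} and then checks $(m_\Lambda(v))_{l'} = v'_{l'}$ for all $l'$ by restricting~\eqref{eq:def_mor_kssmg} to pairs $(l',l)$ with $l \in U^0_{\Lambda_1}$. Your descent argument packages exactly this propagation-and-compatibility step into a single abstract statement, which is conceptually cleaner; the price is that you rely on two facts you use without comment---that a stack on a space satisfies effective descent along any surjection admitting local sections, and that $\Dloc(\cor_X) \simeq \Derb(\cor)$ for $X$ contractible (the latter via the adjunction $a_X^{-1}\roim{a_X} \to \id$ being an isomorphism on objects with constant cohomology). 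Both are true and not hard, but they deserve a line of justification. The paper's hands-on route avoids these auxiliary lemmas at the cost of the longer explicit verification in steps~(A-ii) and~(B).
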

\begin{proof}
This is a local problem and we may restrict to an open subset $\Lambda_0$ of
$\Lambda$ such that there exists a simple sheaf $F$ along $\Lambda_0$.
We set $\mgL_0 = \mks_{\Lambda_0}(F)$.
Then $\mks'_\Lambda = t_{\mgL_0} \circ \ol{\mu hom}(F,\cdot)$, where
$t_{\mgL_0}$ is defined in~\eqref{eq:dloc-to-kssmg} and $\ol{\mu hom}(F,\cdot)$
in Proposition~\ref{prop:KSstack=Dloc}.  Now the result follows from
Proposition~\ref{prop:KSstack=Dloc} and Lemma~\ref{lem:kssmg=dloc}.
\end{proof}

\section{Monodromy morphism}

For $F\in \Derb_{(\Lambda)}(\cor_M)$ we know that $m_\Lambda(F)$ is a locally
constant object on $U_\Lambda$.  Here we describe the monodromy of its
restriction to a fiber $U_{\Lambda,p}$ of $\sigma_\Lambda \cl U_\Lambda \to
\Lambda$.

\medskip

We first recall well-known results on locally constant sheaves and introduce
some notations.  Let $X$ be a manifold and $L\in \Derb(\cor_X)$ such that
$\SSi(L) \subset T^*_XX$, that is, $L$ has locally constant cohomology sheaves.
Then any path $\gamma \cl [0,1] \to X$ induces an isomorphism
\begin{equation}\label{eq:def_action_chemin}
M_\gamma(L) \cl L_{\gamma(0)} \isofrom \rsect([0,1]; \opb{\gamma}L)
\isoto L_{\gamma(1)}.
\end{equation}
Moreover, $M_\gamma(L)$ only depends on the homotopy class of $\gamma$ with
fixed ends. We will use the notation $M_{[\gamma]}(L) \eqdot M_\gamma(L)$,
where $[\gamma]$ is the class of $\gamma$.
For another path $\gamma' \cl [0,1] \to X$ such that $\gamma'(0) = \gamma(1)$,
we have $M_{\gamma'\cdot \gamma}(L) = M_{\gamma'}(L) \circ M_\gamma(L)$.
In particular, if we fix a base point $x_0\in X$, we obtain the monodromy
morphism
\begin{equation}\label{eq:def_monodr}
  \begin{split}
M(L) \cl \pi_1(X;x_0) &\to \Iso(L_{x_0})  \\
\gamma & \mapsto M_\gamma(L),
  \end{split}
\end{equation}
where $\pi_1(X;x_0)$ is the fundamental group of $(X,x_0)$ and $\Iso(L_{x_0})$
is the group of isomorphisms of $L_{x_0}$ in $\Derb(\cor)$.
For any $\cor$-module $M$ we have the sign morphism
$s_M \cl \Z/2\Z \to \Iso(M)$, which sends $1\in \Z/2\Z$ to the multiplication
by $-1 \in \cor$.
When we have a morphism $\varepsilon \cl \pi_1(X;x_0) \to \Z/2\Z$, we will say
``$L$ has monodromy $\varepsilon$'' if $M(L) = s_{L_{x_0}} \circ \varepsilon$.
We remark that a morphism $\pi_1(X;x_0) \to \Z/2\Z$ is necessarily 
invariant by conjugation in $\pi_1(X;x_0)$. Hence we do not have to
choose a base point and we will write abusively
$\varepsilon \cl \pi_1(X) \to \Z/2\Z$.

Now we go back to the situation of section~\ref{sec:defmicrogerms}.
In particular $M$ is a manifold of dimension $n$ and $\Lambda$ is a locally
closed conic Lagrangian submanifold of $\dT^*M$.
For $F\in \Derb_{(\Lambda)}(\cor_M)$ we have defined
$m_\Lambda(F) \in \Derb(\cor_{U_\Lambda})$, which has locally constant
cohomology sheaves.  For a given $p\in \Lambda$ we will describe
$M(m_\Lambda(F)|_{U_{\Lambda,p}})$.  We first define an embedding of
$U_{\Lambda,p}$ into a connected manifold.

By definition, an element $l\in U_{\Lambda,p}$ is a Lagrangian subspace of
$T_p\Lambda$ which is transversal to $\lambda_0(p)$ and $\lambda_\Lambda(p)$.
The decomposition $T_p\Lambda \simeq l \oplus \lambda_\Lambda(p)$ gives a
projection $T_p\Lambda \to \lambda_\Lambda(p)$ and its restriction to
$\lambda_0(p)$ is an isomorphism that we denote by
\begin{equation}\label{eq:def_upl}
  u_p(l) \cl \lambda_0(p) \isoto \lambda_\Lambda(p).
\end{equation}

For two vector spaces $V,W$ of dimension $n$ we denote by $\Iso(V,W)$ the space
of isomorphisms from $V$ to $W$. If $V$ and $W$ are oriented, we let
$\Iso^+(V,W)$ be the connected component of orientation preserving
isomorphisms. We remark that $V\otimes \Lambda^n V$ has a canonical orientation
and we set
\begin{equation}\label{eq:iso_plus}
 \tIso(V,W) = \Iso^+(V\otimes \Lambda^n V,W\otimes \Lambda^n W).
\end{equation}
Now we have a natural embedding
\begin{equation}\label{eq:def_u_p}
  \begin{split}
i_{U_{\Lambda,p}} \cl  U_{\Lambda,p} & \to  \tIso(\lambda_0(p),\lambda_\Lambda(p)) \\
l & \mapsto u_p(l) \otimes  \Lambda^n u_p(l). 
  \end{split}
\end{equation}
The topological space $\tIso(\lambda_0(p),\lambda_\Lambda(p))$ is isomorphic to
$GL_n^+(\R)$. The fundamental group $\pi_1(GL_n^+(\R))$ is trivial for $n=1$,
isomorphic to $\Z$ for $n=2$ and to $\Z/2\Z$ for $n\geq 3$. In any case we have
a canonical morphism $\pi_1(GL_n^+(\R)) \to \Z/2\Z$ which does not depend on the
choice of a base point. So we obtain canonical morphisms, for any connected
component $U_{\Lambda,p}^0$ of $ U_{\Lambda,p}$
\begin{equation}\label{eq:def_sign_loop}
\vcenter{
\xymatrix@C=.5cm@R=.7cm{
\pi_1(U_{\Lambda,p}^0) \ar[dr]_{\varepsilon'_p}
\ar[rr]^-{\pi_1(i_{U_{\Lambda,p}} )}
&& \pi_1( \tIso(\lambda_0(p),\lambda_\Lambda(p)) )
\ar[dl]^{\varepsilon_p}  \\
&\Z/2\Z.
}}
\end{equation}

\begin{proposition}\label{prop:monodr=sign}
Let $F\in \Derb_{(\Lambda)}(\cor_M)$ and $p\in \Lambda$.
Then, for any connected component $U_{\Lambda,p}^0$, the monodromy of
$m_\Lambda(F)|_{U_{\Lambda,p}^0}$ is $\varepsilon'_p$.
\end{proposition}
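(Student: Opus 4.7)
The plan is to reduce the computation of the monodromy to that of the Maslov sheaf $\shm_\Lambda$ on $U_\Lambda^2$, and then identify the latter with the orientation character on the Lagrangian Grassmannian fiber via an explicit model calculation.

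First, I would reduce to the case where $F$ is simple along a neighborhood of $p$. By Lemma~\ref{lem:simple_local}, on a neighborhood $\Lambda_0$ of $p$ there exists a simple sheaf $F_0$ and an isomorphism $F|_{\Lambda_0}\simeq F_0\ltens L_M$ in $\kss(\cor_{\Lambda_0})$ with $L\in\Derb(\cor)$. Applying the functor $\mks_\Lambda$ of Proposition~\ref{prop:microgerm_via_kss} yields $m_\Lambda(F)\simeq m_\Lambda(F_0)\tens L_{U_{\Lambda_0}}$ on $U_{\Lambda_0}$. Tensoring by the constant complex $L_{U_{\Lambda_0}}$ does not alter the monodromy, so it is enough to prove the statement for $F_0$; in that case $m_\Lambda(F_0)|_{U_{\Lambda,p}^0}$ is a rank-one local system in a single degree.

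Next, I would use the action of the Maslov sheaf to transfer the problem from $F_0$ to $\shm_\Lambda$. Fix $l_0\in U_{\Lambda,p}^0$ and consider the embedding $j\cl U_{\Lambda,p}^0\hookrightarrow U_\Lambda^2$, $l\mapsto(l,l_0)$. Pulling back the isomorphism $u_{F_0}$ of Proposition~\ref{prop:action_Maslov_sheaf} along $j$, and using that $\opb{j}\opb{q_2}m_\Lambda(F_0)$ is the constant sheaf on $U_{\Lambda,p}^0$ with fiber $m_{\Lambda,l_0}(F_0)$, we obtain an isomorphism
\begin{equation*}
\opb{j}\shm_\Lambda\dltens (m_{\Lambda,l_0}(F_0))_{U_{\Lambda,p}^0}\isoto m_\Lambda(F_0)|_{U_{\Lambda,p}^0}.
\end{equation*}
Since monodromies of constant objects are trivial, $M(m_\Lambda(F_0)|_{U_{\Lambda,p}^0})=M(\opb{j}\shm_\Lambda)$. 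This reduces the problem to a computation that is independent of $F_0$: determine the monodromy of $\shm_\Lambda$ along the loop $t\mapsto(\gamma(t),l_0)$ in $U_\Lambda\times_\Lambda U_\Lambda$ for a loop $\gamma$ in $U_{\Lambda,p}^0$.

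To carry out this computation, note first that by Proposition~\ref{prop:shift_Maslovsheaf} $\shm_\Lambda$ is of rank one in a single degree on each connected component of $U_\Lambda^2$, so its monodromy automatically takes values in $\{\pm1\}\subset\cor^\times$. To identify this sign I would reduce, via a quantized contact transformation (Theorem~7.2.1 of~\cite{KS90}), to the model situation of Example~\ref{ex:simple_sheaf}: $\Lambda$ is an open subset of the positive half of $T^*_XM$ for a smooth hypersurface $X\subset M$, and $F_0=\cor_Z$. Here all test functions $\varphi_l$ can be taken affine-quadratic as in the proof of Lemma~\ref{lem:exist-fcttest}, and the family $l\mapsto (\rsect_{\varphi_l\geq 0}(F_0))_x$ can be tracked explicitly; its sign change along a loop records exactly whether the family of oriented isomorphisms $u_p(\gamma(t))\oplus\Lambda^n u_p(\gamma(t))$ realizes the nontrivial class of $\pi_1(GL_{n+1}^+(\R))$.

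Finally, I would match this sign with $\varepsilon'_p$ by construction: by~\eqref{eq:def_u_p} the path $i_{U_{\Lambda,p}}\circ\gamma$ in $\tIso(\lambda_0(p),\lambda_\Lambda(p))\cong GL_{n+1}^+(\R)$ is precisely the family just described, so by definition~\eqref{eq:def_sign_loop} the sign equals $\varepsilon'_p([\gamma])$. The main obstacle is the third step: doing the sign bookkeeping rigorously in the model case and verifying naturality of $i_{U_{\Lambda,p}}$ and of the Maslov-index shift $\pdemi\tau_\Lambda$ of Proposition~\ref{prop:Maslov_index} under the contact transformation used to reach it, so that the $\{\pm1\}$-monodromy of $\shm_\Lambda$ transports correctly to the pullback of the canonical sign character $\varepsilon_p$ on $\pi_1(GL_{n+1}^+(\R))$.
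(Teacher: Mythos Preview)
Your overall structure differs from the paper's, and the contact-transformation step carries a difficulty the paper's argument avoids entirely.

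The paper does not use a contact transformation. Instead, after your Step~1 (reduction to a simple sheaf), it observes that the connected component $U_\Lambda^0 \supset U_{\Lambda,p}^0$ is open in $\lag_M^0|_\Lambda$ and $m_\Lambda(F)$ is locally constant on all of it, so any loop in the fiber over $p$ may be homotoped into the fiber over a nearby point $q$ without changing the monodromy, and $\varepsilon'_q$ agrees with $\varepsilon'_p$ under this deformation. Choosing $q$ generic, $\Lambda$ is locally $T^*_N M$ for a submanifold $N$ of some codimension $k$, and one may take $\cor=\Z$, $F=\Z_N$. In explicit coordinates with $N=\{x_1=\cdots=x_k=0\}$, the fiber $U_{\Lambda,q}$ is the set of symmetric $n\times n$ matrices whose lower $(n-k)\times(n-k)$ block is nondegenerate; generating loops of $\pi_1(U_{\Lambda,q}^0)$ are rotations in a $2\times2$ subblock mixing a $+1$ and a $-1$ eigenvalue, and the monodromy is read off directly from the impossibility of consistently orienting the family of subspaces $V_\theta$ that carry $\rsect_{\{\varphi_\theta\geq0\}}(\Z_N)$.

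Your detour through $\shm_\Lambda$ is valid but does not simplify matters: since $\shm_\Lambda$ is locally $m_{\Lambda\times\Lambda^a}(F_0\etens\DD'F_0)$ for simple $F_0$, its monodromy along $l\mapsto(l,l_0)$ is the same computation as for $m_\Lambda(F_0)$. The serious issue is the contact transformation. A homogeneous symplectomorphism $\chi$ carries $\lambda_\Lambda(p)$ to $\lambda_{\chi(\Lambda)}(\chi(p))$ but in general does \emph{not} carry $\lambda_0(p)$ to $\lambda_0(\chi(p))$: the vertical Lagrangian depends on the cotangent fibration, which is not a symplectic invariant. Hence there is no induced map $U_{\Lambda,p}\to U_{\chi(\Lambda),\chi(p)}$, the embedding $i_{U_{\Lambda,p}}$ of~\eqref{eq:def_u_p} has no direct counterpart after $\chi$, and the functor $m_\Lambda$ (built from test functions $\varphi_l$ tied to the fibration) does not transport. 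Verifying that both the monodromy of $\shm_\Lambda$ and the character $\varepsilon'_p$ transform compatibly under $\chi$ is therefore not a bookkeeping exercise but a substantive statement of roughly the same order as the proposition itself. The paper's deformation-to-a-generic-fiber argument stays within a fixed $\Lambda$ and a fixed cotangent fibration, so none of this arises.
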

\begin{proof}
(i) We let $U_\Lambda^0$ be the connected component of $U_\Lambda$ which contains
$U_{\Lambda,p}^0$. Since $U_\Lambda^0$ is open in $\lag_M^0$ 
we can deform any loop $\gamma$ in $U_{\Lambda,p}^0$ into a loop $\gamma'$
in a nearby fiber $U_{\Lambda,q}^0$. This does not change the monodromy.
We also have $\varepsilon'_p(\gamma) = \varepsilon'_q(\gamma')$.
Hence we may as well assume that $p$ is a generic point of $\Lambda$,
that is, in a neighborhood of $p$ we have $\Lambda = T^*_NM$, for a
submanifold $N \subset M$.
Then $F$ is isomorphic to $L_N$ in $\kss(\cor_\Lambda)$, for some
$L\in \Derb(\cor)$, and we have $m_\Lambda(F) \simeq m_\Lambda(\Z_N)\tens_\Z L$.
Hence we can also assume that $\cor=\Z$ and $F=\Z_N$.

\medskip\noindent
(ii) We take coordinates $(x_1,\ldots,x_n)$ so that
$N = \{x_1 =\cdots= x_k = 0\}$ and $p= (0;1,0)$. We identify $(\lag_M^0)_p$
with a space of matrices as in~\eqref{eq:LM0=matsym}.
Then $U_{\Lambda,p}$ is the space of symmetric matrices $A$
such that $\det(A_k) \not= 0$, where $A_k$ is the matrix obtained from
$A$ by deleting the $k$ first lines and columns.
Choosing a connected component $U_{\Lambda,p}^0$ means prescribing the
signature of $A_k$. We can choose a base point $B \in U_{\Lambda,p}^0$
represented by a diagonal matrix
$B = \operatorname{diag}(0,\ldots,0,1,\ldots,1,-1,\ldots,-1)$ with $k$ zeroes
and $l$ $1$'s.
We choose indices $i<j$ such that $B_{ii} = 1$ and $B_{jj}=-1$.
For $\theta \in [0,2\pi]$, we define the matrix
$B(\theta)$ which is equal to $B$ except
$$
\begin{pmatrix}
B_{ii}(\theta) & B_{ij}(\theta) \\
B_{ji}(\theta) & B_{jj}(\theta)
\end{pmatrix}
=
\begin{pmatrix}
\cos(\theta) & \sin(\theta) \\ 
\sin(\theta) & -\cos(\theta)
\end{pmatrix} .
$$
Then $\gamma\cl \theta \mapsto B(\theta)$ defines a loop in $U_{\Lambda,p}^0$ and
$\pi_1(U_{\Lambda,p}^0)$ is generated by loops of this form, where $i,j$ run
over the possible indices.
We have $\varepsilon'_p(\gamma) =1 \in \Z/2\Z$. Hence it remains to
prove that the monodromy of $m_\Lambda(\Z_N)$ around $\gamma$ is $-1$.
Since $m_\Lambda(\Z_N)$ has stalk $\Z$ up to some shift, the monodromy
can only be $1$ or $-1$. So we only have to check that the monodromy
of $m_\Lambda(\Z_N)$ around $\gamma$ is not trivial.

\medskip\noindent
(iii) We define $\varphi\cl [0,2\pi] \times M \to \R$ by
$\varphi(\theta, \ul x) = x_1 + \ul x \, B(\theta) \, {}^t\ul x$.
Then $\{\varphi_\theta\geq 0\} \cap N$ is a quadratic cone which is
homotopically equivalent to the subspace
$V_\theta = \langle e_\theta, e_p; p=k+1,\ldots, k+l, \, p\not= i \rangle$,
where $e_p = (0,\underset{\scriptscriptstyle p}{1},0)$
and $e_\theta = (0, \underset{\scriptscriptstyle i}\cos(\frac{\theta}{2}),0,
 \underset{\scriptscriptstyle j}\sin(\frac{\theta}{2}) , 0)$.
The stalk of $m_\Lambda(\Z_N)$ at $B(\theta) \in U_{\Lambda,p}^0$ is
\begin{equation}
\label{eq:calcul_monodr}
m_\Lambda(\Z_N)_{B(\theta)} \simeq  \rsect_{\{\varphi_\theta\geq 0\}}(\Z_N)
\simeq \rsect_{V_\theta}(\Z_N) \simeq \Z[k+l-n]
\end{equation}
and the choice of the isomorphism~\eqref{eq:calcul_monodr} is equivalent
to the choice of an orientation of $V_\theta$. Since we can not choose
compatible orientations of all $V_\theta$, $\theta \in [0,2\pi]$, the monodromy
of $m_\Lambda(\Z_N)$ is not $1$, as required.
\end{proof}

Let $L_p$ be the locally constant sheaf on
$\tIso(\lambda_0(p),\lambda_\Lambda(p))$ with stalk $\cor$ and monodromy
$\varepsilon_p$.
Let $F\in \Derb_{(\Lambda)}(\cor_M)$ be simple along $\Lambda$.
Then, for any connected component $U_{\Lambda,p}^0$ of $U_{\Lambda,p}$,
$m_\Lambda(F)|_{U_{\Lambda,p}^0}$ is concentrated in a single degree.
Proposition~\ref{prop:monodr=sign} says that
\begin{equation}\label{eq:coller_mg_ponctlmt}
m_\Lambda(F)|_{U_{\Lambda,p}^0} \simeq L_p|_{U_{\Lambda,p}^0}[d_0],
\end{equation}
for some integer $d_0$.

\section{Extension of microlocal germs}\label{sec:extension_mg}

We have seen in~\eqref{eq:coller_mg_ponctlmt} that the sheaf of microlocal
germs $m_\Lambda(F)$ of a simple sheaf $F$ extends 
from $U_{\Lambda,p}$ to $\tIso(\lambda_0(p),\lambda_\Lambda(p))$, via
the natural embedding $i_{U_{\Lambda,p}}$, as a local system on
$\tIso(\lambda_0(p),\lambda_\Lambda(p))$.
In this section we prove that such an extension exists not only over $p$
but globally over $\Lambda$ if the Maslov class of $\Lambda$ is zero.
We also prove that in the case of the Maslov sheaf we can also extend the
structural morphism of Proposition~\ref{prop:prod_Maslov_sheaf}.
We deduce a description of a twisted version of the Kashiwara-Schapira
stack in Corollary~\ref{cor:loceps=mgprime}.

\subsection{Stabilisation}\label{sec:stabilisation}

We let $\shi_\Lambda$ be the fiber bundle over $\Lambda$ whose fiber over a
point $p$ is $\tIso(\lambda_0(p),\lambda_\Lambda(p))$.  The inclusions
$i_{U_{\Lambda,p}}$ in~\eqref{eq:def_u_p} give 
\begin{equation}\label{eq:def_uLambda}
i_{U_\Lambda} \cl  U_\Lambda  \hookrightarrow  \shi_\Lambda .
\end{equation}

For an integer $N>0$ we define $\Xi_N = \{(\ul x, 0; 0,\xi_N)$; $\xi_N>0\}$
$\subset \dT^*_{\R^{N-1}}\R^N$ and $p_N = (0;0,1)\in \Xi_N$.
We have the inclusions
$$
\xymatrix{
 U_\Lambda \times   U_{\Xi_N}
\ar[r] \ar[d]_{i_{U_\Lambda} \times i_{\Xi_N}}
& U_{\Lambda\times \Xi_N} \ar[d]^{i_{U_{\Lambda\times \Xi_N}}}  \\
\shi_\Lambda \times \shi_{\Xi_N} \ar[r]
& \shi_{\Lambda\times \Xi_N}  , }
\quad
\xymatrix{
 U_\Lambda \times   U_{\Xi_N}|_{\{p_N\}} 
\ar[r] \ar[d]
& U_{\Lambda\times \Xi_N}|_{\Lambda\times \{p_N\}}  \ar[d]  \\
\shi_\Lambda \times \shi_{\Xi_N}|_{\{p_N\}}  \ar[r]
& \shi_{\Lambda\times \Xi_N} |_{\Lambda\times \{p_N\}} . }
$$
We remark that $\Xi_N \simeq \R^N$ and that the diagram in the left hand side is
the product of the one in the right hand side by $\Xi_N$.  The Maslov index
$\tau(\lambda_0, \lambda_{\Xi_N},l)$ is constant for $l$ in a given connected
component of $U_{\Xi_N}$ and takes distinct values for distinct components.  It
can take the values $-N+1, -N+3,\ldots, N-1$. Hence $U_{\Xi_N}$ has $N$
connected components that we label by the Maslov index $U_{\Xi_N}^{-N+1},\ldots,
U_{\Xi_N}^{N-1}$.

\begin{proposition}\label{prop:reunir_comp_conn}
Let $\Lambda$ be a locally closed conic Lagrangian submanifold of $\dT^*M$.  We
assume that the Maslov class of $\Lambda$ is zero and that $U_\Lambda$ has a
finite number of connected components, say $U_i$, $i\in I$.  Then there exist an
integer $N$ and a family of connected components $V_i \subset U_{\Xi_N}$, $i\in
I$, such that all the products $U_i\times V_i$ are in the same connected
component $W \subset U_{\Lambda\times \Xi_N}$.  Moreover the projection $W\to
\Lambda\times \Xi_N$ is onto.
\end{proposition}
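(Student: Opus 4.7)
The strategy is to use the Maslov index as an invariant distinguishing connected components of $U_\Lambda$ and $U_{\Lambda\times\Xi_N}$, and to exploit its additivity under direct sum to shift the indices by a suitable choice of component in $U_{\Xi_N}$. The vanishing of the Maslov class of $\Lambda$ will ensure that the level sets of the Maslov index are globally connected in $U_{\Lambda\times\Xi_N}$.

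First, on the open subset of $\lag_M|_\Lambda$ where $\lambda_0(\sigma_\Lambda(l))$, $\lambda_\Lambda(\sigma_\Lambda(l))$ and $l$ are pairwise transverse, the Maslov index $\tau(\lambda_0(\sigma_\Lambda(l)), \lambda_\Lambda(\sigma_\Lambda(l)), l)$ is locally constant; as $U_\Lambda$ lies inside this open subset and each $U_i$ is connected, I set $\tau_i\in\Z$ equal to the constant value of this index on $U_i$. Likewise the Maslov index equals $\nu$ on each component $U_{\Xi_N}^\nu$ of $U_{\Xi_N}$. By the standard additivity of the Maslov index under direct sum of Lagrangians, the image of $U_i\times U_{\Xi_N}^{\nu_i}$ under the embedding $U_\Lambda\times U_{\Xi_N}\hookrightarrow U_{\Lambda\times\Xi_N}$, $(l,m)\mapsto l\oplus m$, lies in the level set
\[
U_{\Lambda\times\Xi_N}^c \eqdot \{l\in U_{\Lambda\times\Xi_N} : \tau(\lambda_0, \lambda_{\Lambda\times\Xi_N}, l) = c\}
\]
with $c = \tau_i + \nu_i$. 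Since $I$ is finite, I can choose an integer $c$ of suitable parity and $N$ large enough that $c - \tau_i \in \{-N+1, -N+3, \ldots, N-1\}$ for every $i\in I$. Setting $V_i = U_{\Xi_N}^{c-\tau_i}$ then places every $U_i\times V_i$ inside $U_{\Lambda\times\Xi_N}^c$, and each product is connected.

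The proof is thus reduced to showing that $U_{\Lambda\times\Xi_N}^c$ is connected and surjects onto $\Lambda\times\Xi_N$; this set will be the required $W$. Fiberwise, at a point $(p,q)\in\Lambda\times\Xi_N$, the fiber of $U_{\Lambda\times\Xi_N}\to\Lambda\times\Xi_N$ is identified via~\eqref{eq:LM0=matsym} with an open subset of the space of symmetric $(n+N)\times(n+N)$-matrices cut out by the nondegeneracy of a submatrix whose connected components are indexed by the signature of that submatrix---equivalently, by the Maslov index. The classical fact that the set of real symmetric matrices of prescribed rank and signature is connected yields connectedness of the fiber of $U_{\Lambda\times\Xi_N}^c\to\Lambda\times\Xi_N$ at $(p,q)$, and for $N$ large enough the Maslov index $c$ is realised at every $(p,q)$, giving fiberwise nonemptiness and hence surjectivity of the projection.

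Finally, the global connectedness of $U_{\Lambda\times\Xi_N}^c$ is where the Maslov class zero hypothesis is essential. Since $\Xi_N$ is contractible, the Maslov class of $\Lambda\times\Xi_N$ coincides with the pullback of the Maslov class of $\Lambda$ and hence vanishes. The monodromy action of $\pi_1(\Lambda\times\Xi_N)$ on the set of connected components of a fiber of $U_{\Lambda\times\Xi_N}\to\Lambda\times\Xi_N$ acts by translating the Maslov index, and this action is controlled by the Maslov class; since the latter vanishes, the monodromy is trivial. Together with the fibrewise connectedness established above and the connectedness of $\Lambda\times\Xi_N$ (assuming $\Lambda$ connected, with the argument applied componentwise otherwise), this gives global connectedness of $U_{\Lambda\times\Xi_N}^c$. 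The principal obstacle is making precise this relation between the Maslov class and the monodromy on components of the fibers of $U_{\Lambda\times\Xi_N}\to\Lambda\times\Xi_N$---a global counterpart to the fibrewise structure exhibited in Proposition~\ref{prop:comp_conn_tULambda}---which is the technical heart of the proof.
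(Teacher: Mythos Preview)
Your argument rests on the claim that $U_\Lambda$ lies inside the open subset of $\lag_M|_\Lambda$ where $\lambda_0$, $\lambda_\Lambda$, and $l$ are \emph{pairwise} transverse, so that the Maslov index $\tau(\lambda_0,\lambda_\Lambda,l)$ is locally constant there and hence constant on each $U_i$. This is false: by definition $U_\Lambda$ only imposes $l\pitchfork\lambda_0(p)$ and $l\pitchfork\lambda_\Lambda(p)$, but says nothing about $\lambda_0(p)\cap\lambda_\Lambda(p)$. The dimension of this last intersection jumps along the caustic of $\Lambda$ (the locus where $d(\pi_M|_\Lambda)$ drops rank), and the inertia index jumps with it. A single connected component $U_i$ can project onto a region of $\Lambda$ crossing the caustic, so your integer $\tau_i$ is not well-defined. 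For the same reason the level set $U_{\Lambda\times\Xi_N}^c$ is not a union of connected components: adding the $\Xi_N$ factor shifts $\dim(\lambda_0\cap\lambda_{\Lambda\times\Xi_N})$ by a constant but does not make it globally constant.

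What survives is the \emph{difference} of Maslov indices at a common fiber: for $l_i\in U_i$ and $l_j\in U_j$ with $\sigma_\Lambda(l_i)=\sigma_\Lambda(l_j)=p$, the number $\tau(\lambda_0(p),\lambda_\Lambda(p),l_i)-\tau(\lambda_0(p),\lambda_\Lambda(p),l_j)$ is independent of $p$ in the overlap (this is $\tau(U_i,U_j)$ in Notation~\ref{not:tau_U}). These differences form a \v Cech $1$-cocycle on the covering $\{\sigma_\Lambda(U_i)\}$ whose class is the Maslov class; its vanishing gives integers $n_i$ with $n_i-n_j=\tfrac12\tau(U_i,U_j)$, and one then takes $V_i=U_{\Xi_N}^{2n_i}$. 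The conclusion that all $U_i\times V_i$ lie in one component follows from the \emph{fiberwise} fact that components of $U_{\Lambda\times\Xi_N,(p,q)}$ are distinguished by the Maslov index, applied at each pairwise overlap --- no global constancy is needed or true. This is exactly the route the paper takes; your monodromy discussion, which you flag as the technical heart and do not carry out, would in any case have to be rebuilt on these \v Cech data rather than on the nonexistent global $\tau_i$.
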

\begin{proof}
(i) For $l\in U_\Lambda$ and $p = \sigma_\Lambda(l) \in \Lambda$ we set for
short $\tau_\Lambda(l) = \tau(\lambda_0(p), \lambda_\Lambda(p),l)$.
For $l \in U_{\Xi_N}$ we define $\tau_{U_{\Xi_N}}(l)$ in the same way.

Let $p\in \Lambda$ and $q\in \Xi_N$ be two given points and let 
$U_1,U_2 \subset U_\Lambda$ and $V_1,V_2 \subset U_{\Xi_N}$ be connected
components such that $U_i\cap U_{\Lambda,p}$ and $V_i\cap U_{\Xi_N,q}$,
$i=1,2$, are non empty. We choose $l_i \in U_i\cap U_{\Lambda,p}$ and
$l'_i \in V_i\cap U_{\Xi_N,q}$, $i=1,2$.
Since the connected components of $U_{\Lambda\times \Xi_N, (p,q)}$ are
distinguished by the Maslov index, we see that $U_1\times V_1$ and
$U_2\times V_2$ are in the same connected component of
$U_{\Lambda\times \Xi_N}$ if $\tau_\Lambda(l_1) - \tau_{U_{\Xi_N}}(l'_1)
= \tau_\Lambda(l_2) - \tau_{U_{\Xi_N}}(l'_2)$.

\medskip\noindent
(ii) We set $\Lambda'_i = \sigma_\Lambda(U_i)$.  We recall the
notations~\ref{not:tau_U}: for $l_i \in U_i$, $l_j \in U_j$ and $\Lambda''_{ij}$
a connected component of $\Lambda'_{ij}$ we have defined
$\tau_{\Lambda''_{ij}}(U_i,U_j) = \tau_\Lambda(l_i) - \tau_\Lambda(l_j)$, which
is an even integer independent of the choice of $l_i, l_j$.
This defines a \v Cech cocycle on the covering $\Lambda = \bigcup_{i\in I}
\Lambda'_i$.  Its class in $H^1(\Lambda;\Z_\Lambda)$ is twice the Maslov class of
$\Lambda$.  By hypothesis it is zero, so we can find a family of integers $n_i$,
$i\in I$, such that $n_i -n_j = \demi (\tau_\Lambda(l_i) - \tau_\Lambda(l_j))$.

We choose $N$ odd such that $N > \sup\{2 |n_i|$; $i\in I\}$. By~(i) the products
$U_i\times U_{\Xi_N}^{2n_i}$ are all in the same connected component of
$U_{\Lambda\times \Xi_N}$.  It is clear that $U_i\times U_{\Xi_N}^{2n_i}$ maps
surjectively onto $\Lambda\times \Xi_N$ for any $i\in I$ and this implies the
last assertion.
\end{proof}

\subsection{Twisted local systems}

We consider a fiber bundle $p\cl E\to X$ over a connected manifold $X$. We let
$E_x$ be the fiber over $x\in X$. We assume that $E_x$ is path connected.  We
let $\ul H_1(E) \in \Mod(\Z_X)$ be the local system with stalk $H_1(E_x;\Z)$ (we
have $H_1(E_x;\Z) \simeq (\pi_1(E_x;b_x))^{ab}$, for any base point $b_x\in
E_x$).

\begin{definition}
\label{def:loc_syst_monodr_eps}
We assume to be given a morphism of local systems
$\varepsilon \cl \ul H_1(E) \to (\Z/2\Z)_X$.
We consider an open subset $U\subset E$ such that, for all $x\in X$,
the ``fiber'' $U_x \eqdot U \cap E_x$ is non-empty and connected.
We let $\Dloceps(\cor_{U|X})$ be the substack of $\oim{p}(\Dloc(\cor_U))$
formed by the $F$ such that, for all $x\in X$, $F|_{U_x}$ has monodromy
$\varepsilon$.
Similarly we let $\loceps(\cor_{U|X})$ be the substack of
$\oim{p}(\loc(\cor_U))$ formed by the local systems $F$ such that, for all
$x\in X$, $F|_{U_x}$ has monodromy $\varepsilon$.
\end{definition}

\begin{lemma}\label{lem:equiv_monodr_eps}
In the situation of Definition~\ref{def:loc_syst_monodr_eps}
the restriction map from $E$ to $U$ gives an equivalence of stacks
$\loceps(\cor_{E|X}) \isoto \loceps(\cor_{U|X})$.
\end{lemma}
\begin{proof}
This is a local problem on $X$. Hence we may assume that $X$ is contractible.
Then $\loceps(\cor_{E|X})$ contains a unique object with stalk $\cor$ (up to
isomorphism), say $L$.  Taking the tensor product with $L$ induces an
equivalence $\Mod(\cor) \isoto \loc^{\varepsilon}(\cor_{E|W})$,
$M \mapsto M_E \tens L$.  The same holds with $E$ replaced by $U$ and we obtain
the commutative diagram
$$
\xymatrix{
& \Mod(\cor) \ar[dl]_{ \cdot \tens L}^\sim \ar[dr]^{ \cdot \tens L|_U}_\sim \\
\loceps(\cor_{E|W}) \ar[rr]^{r}  && \loceps(\cor_{U|X}) ,}
$$
which shows that the restriction map $r$ is an equivalence.
\end{proof}

\subsection{Microlocal germs of the trivial simple sheaf}

We use the notations of the paragraph~\ref{sec:stabilisation},
in particular $\Xi_N \subset \dT^*_{\R^{N-1}}\R^N$ for an integer $N$ and
$U_{\Xi_N}^{-N+1}, U_{\Xi_N}^{-N+3},\ldots,
U_{\Xi_N}^{N-1}$, the $N$ components of $U_{\Xi_N}$.

We set $F_N = \cor_{\R^{N-1}} \in \Derb_{(\Xi_N)}(\cor_M)$.  For a given $p_N
\in \Xi_N$ the inclusion of the fiber $\shi_{\Xi_N,p_N} \subset \shi_{\Xi_N}$ is
a homotopy equivalence. Hence $\loceps(\cor_{\shi_{\Xi_N}|\Xi_N})$ contains a
unique object, say $L_N$, up to isomorphism, with stalks isomorphic to $\cor$.
In the same way $\loceps(\cor_{\tshi_{\Xi_N}|\Xi_N})$ contains a unique object
with stalks isomorphic to $\cor$, that we denote by $K_N$

In this case we can compute $m_{\Xi_N}(F_N)$ as in the proof of
Proposition~\ref{prop:monodr=sign}. We find the following description.

\begin{lemma}\label{lem:extension_m_XiN}
There exist isomorphisms
\begin{itemize}
\item [(i)] $\alpha_k\cl m_{\Xi_N}(F_N)|_{U_{\Xi_N}^k} \isoto L_N|_{U_{\Xi_N}^k} [i_k]$,
where $i_k = \lfloor \demi k \rfloor$ and $\lfloor\cdot \rfloor$
is the integer part, for $k= -N+1, -N+3, \ldots,N-1$,
\item [(ii)] $\beta_l\cl m_{\Xi_N\times \Xi_N^a}(F_N\etens \DD'F_N)|_{\tU_{\Xi_N}^{2l}}
\isoto K_N|_{\tU_{\Xi_N}^{2l}} [-l]$,
for $l= -N+1, -N+2, \ldots,N-1$,
\item [(iii)] $\gamma\cl L_N\etens_{\Xi_N} \DD'L_N \isoto K_N|_{\shi_{\Xi_N}^2}$,
\end{itemize}
such that, for all $k,k'= -N+1, -N+3, \ldots,N-1$ we have
\begin{equation}\label{eq:iso_ext_m_XiN}
\gamma|_{{U_{\Xi_N}^k}\times_{\Xi_N} {U_{\Xi_N}^{k'}}} 
\circ (\alpha_{k} \etens \DD'(\alpha_{k'}^{-1}))
= \beta_{(k+k')/2}|_{{U_{\Xi_N}^k}\times_{\Xi_N} {U_{\Xi_N}^{k'}}}.
\end{equation}
Moreover, for other $(L_N',K_N',\alpha',\beta',\gamma')$ as in~(i)-(iii)
satisfying~\eqref{eq:iso_ext_m_XiN} there exists a unique isomorphism $u\cl L_N
\isoto L_N'$ in $\loceps(\cor_{\shi_{\Xi_N}|\Xi_N})$ such that $\alpha'_k =
u|_{U_{\Xi_N}^k} \circ \alpha_k$, for all connected components $U_{\Xi_N}^k$ of
$U_{\Xi_N}$.
\end{lemma}

\subsection{Extension of the Maslov sheaf}

Recall the fiber bundle $\shi_\Lambda \to \Lambda$ and the inclusion
$i_{U_\Lambda} \cl U_\Lambda \hookrightarrow \shi_\Lambda$ defined
in~\eqref{eq:def_uLambda}.
We also set $\tshi_\Lambda = \shi_{\Lambda\times \Lambda^a}|_{\Delta_\Lambda}$
and we denote by $i_{\tU_\Lambda} \cl  \tU_\Lambda  \hookrightarrow  \tshi_\Lambda$
the natural inclusion.
Similarly we set $\shi_\Lambda^2 = \shi_\Lambda \times_\Lambda \shi_\Lambda$,
$\shi_\Lambda^3 = \shi_\Lambda \times_\Lambda \shi_\Lambda^2$
and we have the inclusions
$i_{U_\Lambda^2} \cl  U_\Lambda^2  \hookrightarrow  \shi_\Lambda^2$,
$i_{U_\Lambda^3} \cl  U_\Lambda^3  \hookrightarrow  \shi_\Lambda^3$.

We recall the notation $\tU_\Lambda^k = \{ l\in \tU_\Lambda; \; \tau_\Lambda(l)
= k \}$ for $k\in 2\Z$, introduced in~\eqref{eq:def_tUk}.

\begin{lemma}\label{lem:extension_m_lambda+}
Let $\Lambda$ be a locally closed conic Lagrangian submanifold of $\dT^*M$.  We
assume that the Maslov class of $\Lambda$ is zero and that $U_\Lambda$ has a
finite number of connected components, say $U_i$, $i\in I$.
Let $F\in \Derb_{(\Lambda)}(\cor_M)$ be simple along $\Lambda$.
Then there exist $L \in \loceps(\cor_{\shi_\Lambda|\Lambda})$,
$K \in \loceps(\cor_{\tshi_\Lambda|\Lambda})$ and isomorphisms
\begin{itemize}
\item [(i)] $\alpha_U\cl m_\Lambda(F)|_U \isoto L|_U [d_U]$, for any connected
  component $U$ of $U_\Lambda$, where $d_U$ is some integer,
\item [(ii)] $\beta_l\cl m_{\Lambda\times \Lambda^a}(F\etens \DD'F)|_{\tU_\Lambda^{2l}}
\isoto K|_{\tU_\Lambda^{2l}} [-l]$, for $l\in\Z$,
\item [(iii)] $\gamma\cl L\etens_\Lambda \DD'L \isoto K|_{\shi_\Lambda^2}$,
\end{itemize}
such that, for all connected components, $U,V$, of $U_\Lambda$, 
\begin{equation}\label{eq:iso_ext_m_lambda+}
\gamma|_{U\times_\Lambda V} \circ (\alpha_U \etens \DD'(\alpha_V^{-1}))
= \beta_l|_{U\times_\Lambda V},
\end{equation}
where $l = \demi \tau(U,V)$.

Moreover, for other $(L',K',\alpha_U',\beta',\gamma')$ as in~(i)-(iii)
satisfying~\eqref{eq:iso_ext_m_lambda+} there exists a unique isomorphism
$u\cl L \isoto L'$ in $\loceps(\cor_{\shi_\Lambda|\Lambda})$ such that
$\alpha'_U = u|_U \circ \alpha_U$, for all connected components $U$ of
$U_\Lambda$.
\end{lemma}
\begin{proof}
(i) We choose an integer $N$ and a connected component $W$ of $U_{\Lambda\times
  \Xi_N}$ given by Proposition~\ref{prop:reunir_comp_conn}.  Recall that for any
component $U_i$ of $U_\Lambda$ there exists a component $V_i$ of $U_{\Xi_N}$
such that $U_i\times V_i \subset W$.

We use the notation $F_N = \cor_{\R^{N-1}}$ of Lemma~\ref{lem:extension_m_XiN}.
We set $L^0 = m_{\Lambda\times \Xi_N}(F \etens F_N )|_W [d] \in
\loceps(\cor_{W|\Lambda\times \Xi_N})$, where the shift $d$ is chosen so that
$L^0$ is in degree $0$.
By Lemma~\ref{lem:equiv_monodr_eps} we have an equivalence of stacks
$\loceps(\cor_{\shi_{\Lambda\times \Xi_N}|\Lambda\times \Xi_N}) \isoto
\loceps(\cor_{W|\Lambda\times \Xi_N})$. 
Hence there exists
$L^1 \in \loceps(\cor_{\shi_{\Lambda\times \Xi_N}|\Lambda\times \Xi_N})$ such
that $L^1|_W \simeq L^0$.

We recall that $L_N$ is the unique object of
$\loceps(\cor_{\shi_{\Xi_N}|\Xi_N})$ with stalks isomorphic to $\cor$.  Hence
$L^1|_{\shi_\Lambda \times \shi_{\Xi_N}}$ decomposes in a unique way as
$L^1|_{\shi_\Lambda \times \shi_{\Xi_N}} \simeq L \etens L_N$, with $L\in
\loceps(\cor_{\shi_\Lambda|\Lambda})$. We obtain finally
\begin{equation}\label{eq:extension_m_lambda1}
\begin{split}
m_\Lambda(F)|_{U_i} \etens m_{\Xi_N}(F_N)|_{V_i} 
& \simeq m_{\Lambda\times \Xi_N}(F \etens F_N)|_{U_i \times V_i}  \\
& \simeq (L \etens L_N)|_{U_i \times V_i}.
\end{split}
\end{equation}
We have defined $\alpha_k$ in Lemma~\ref{lem:extension_m_XiN}~(i).  There exists
a unique isomorphism $\alpha_{U_i}$ as in~(i) of the current lemma whose tensor
product with $\alpha_k$ gives~\eqref{eq:extension_m_lambda1}.  We have $d_{U_i}
= -i_k$ for the $k$ such that $V_i = U_{\Xi_N}^k$.

\medskip\noindent
(ii) The same argument as in~(i) with $F \etens F_N$ replaced by
$F \etens F_N \etens \DD'F \etens \DD'F_N$ gives
$K \in \loceps(\cor_{\tshi_\Lambda|\Lambda})$ and isomorphisms
$$
m_{\Lambda\times \Lambda^a}(F\etens \DD'F)|_{\tU_\Lambda^{2l}}
\etens
m_{\Xi_N\times \Xi_N^a}(F_N\etens \DD'F_N)|_{\tU_{\Xi_N}^{-2l}}
\simeq
 K|_{\tU_\Lambda^{2l}} \etens K_N|_{\tU_{\Xi_N}^{-2l}} ,
$$
for $l\in \Z$.  Using $\beta_l$ in Lemma~\ref{lem:extension_m_XiN}~(ii) we
deduce the isomorphism $\beta_l$ of~(ii) in the current lemma.  Then~(iii)
and~\eqref{eq:iso_ext_m_lambda+} follow from
Lemma~\ref{lem:extension_m_XiN}~(iii) and~\eqref{eq:iso_ext_m_XiN}.
\end{proof}

By Proposition~\ref{prop:shift_Maslovsheaf} we can shift $\shm_\Lambda$ to
obtain an object of $\loc(\cor_{U_\Lambda^2})$ as follows.  We define
$\shm'_\Lambda \in \loc(\cor_{U_\Lambda^2})$ by $\shm'_\Lambda|_U \eqdot
\shm_\Lambda|_U \, [\demi \tau(U)]$, for any connected component $U$ of
$U_\Lambda^2$.  Then Proposition~\ref{prop:prod_Maslov_sheaf} defines an
isomorphism:
\begin{equation}\label{eq:prod_Maslov_sheaf_prime}
u' \cl \opb{\qq_{12}} (\shm'_\Lambda) \tens \opb{\qq_{23}} (\shm'_\Lambda) 
 \isoto \opb{\qq_{13}} (\shm'_\Lambda)  .
\end{equation}

\begin{theorem}
\label{thm:extension_maslov_sheaf}
There exist $\nshm_\Lambda \in \loceps(\cor_{\shi_\Lambda^2|\Lambda})$
together with two isomorphisms
$\alpha\cl \shm'_\Lambda \isoto \opb{i_{U_\Lambda^2}} \nshm_\Lambda$ and
$v\cl \opb{q_{12}}  (\nshm_\Lambda) \tens \opb{q_{23}}  (\nshm_\Lambda)
\isoto \opb{q_{13}}  (\nshm_\Lambda)$,
such that the following diagram is commutative
\begin{equation}\label{eq:diag_ext_mas_sheaf}
\vcenter{\xymatrix@C=2cm{
\opb{q_{12}} (\shm'_\Lambda) \tens \opb{q_{23}} (\shm'_\Lambda) 
 \ar[r]^-{u'}_-\sim \ar[d]_\wr 
& \opb{q_{13}} (\shm'_\Lambda)  \ar[d]^\wr \\
\opb{q_{12}} \opb{i_{U_\Lambda^2}} (\nshm_\Lambda)
\tens \opb{q_{23}} \opb{i_{U_\Lambda^2}} (\nshm_\Lambda)
 \ar[r]^-{v'}_-\sim
& \opb{q_{13}} \opb{i_{U_\Lambda^2}} (\nshm_\Lambda) ,   }}
\end{equation}
where $u'$ is~\eqref{eq:prod_Maslov_sheaf_prime},
$v' = \opb{i_{U_\Lambda^3}}(v)$ and the vertical arrows are induced by $\alpha$.
Moreover $(\nshm_\Lambda,v)$ satisfies a commutative diagram similar
to~\eqref{eq:diag_struc_Maslov}.
\end{theorem}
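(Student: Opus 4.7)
My plan is to build $(\nshm_\Lambda, \alpha, v)$ locally from the data of a local simple sheaf on $\Lambda$, and then glue, using as the main rigidification tool Lemma~\ref{lem:extension_m_lambda+} together with the equivalence of stacks in Lemma~\ref{lem:equiv_monodr_eps}.

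For the local construction, I would cover $\Lambda$ by open subsets $\Lambda_i$ small enough (for instance contractible) that (a) there exists a simple sheaf $F_i\in \Derb_{(\Lambda_i)}(\cor_M)$ along $\Lambda_i$ by Lemma~\ref{lem:simple_local}, and (b) some connected component $U_i^0\subset U_{\Lambda_i}$ surjects onto $\Lambda_i$ under $\sigma_\Lambda$.  Applying Lemma~\ref{lem:extension_m_lambda+} to $F_i$ gives $L_i\in \loceps(\cor_{\shi_{\Lambda_i}|\Lambda_i})$ together with rigidifying isomorphisms $\alpha_U^i, \beta^i, \gamma^i$.  I then set
\[
\nshm_i \eqdot L_i \etens_{\Lambda_i} \DD'L_i
\;\in\; \loceps(\cor_{\shi_{\Lambda_i}^2|\Lambda_i}),
\]
and use Proposition~\ref{prop:microgerm_prod} together with $\gamma^i$ to identify $\shm'_\Lambda|_{U_{\Lambda_i}^2}$ canonically with $\opb{i_{U_\Lambda^2}}\nshm_i$, which defines $\alpha_i$.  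The canonical evaluation pairing $\DD'L_i \tens L_i \isoto \cor$ (available since $L_i$ is a rank-one local system in degree $0$) produces $v_i\cl \opb{q_{12}}\nshm_i \tens \opb{q_{23}}\nshm_i \isoto \opb{q_{13}}\nshm_i$ exactly along the pattern of the chain~\eqref{eq:prod_Maslov_sheaf}.  Associativity of $v_i$ (the analog of~\eqref{eq:diag_struc_Maslov}) reduces to the formal associativity of the evaluation pairing, and the commutativity of~\eqref{eq:diag_ext_mas_sheaf} holds by construction: restricted to $U_{\Lambda_i}^3$, $v_i$ literally coincides with the local definition of $u'$ given in the proof of Proposition~\ref{prop:prod_Maslov_sheaf}.

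To glue, I would compare the local data attached to two simple sheaves $F_i, F_j$ over $\Lambda_{ij}$.  By Lemma~\ref{lem:simple_local}, after shrinking we have $F_i\simeq F_j[k]$ in $\kss(\cor_{\Lambda_{ij}})$ for some $k\in \Z$; the uniqueness clause of Lemma~\ref{lem:extension_m_lambda+} then yields a unique isomorphism $L_i|_{\Lambda_{ij}}\simeq L_j|_{\Lambda_{ij}}[k]$ compatible with the $\alpha_U$.  The shift cancels in $L\etens \DD'L$, producing a unique comparison $\nshm_i|_{\Lambda_{ij}}\simeq \nshm_j|_{\Lambda_{ij}}$ intertwining both the $\alpha$'s and the $v$'s.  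The cocycle condition on triple overlaps is automatic by the same uniqueness.  Since we work within the stack $\loceps(\cor_{\shi_\Lambda^2|\Lambda})$, the data descend to the sought global object $(\nshm_\Lambda, \alpha, v)$.

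The main obstacle, and what forces us to invoke Lemma~\ref{lem:extension_m_lambda+} in its full strength, is the elimination of the sign ambiguity in choosing extensions of the microlocal germ sheaf to $\shi_\Lambda$: without the rigidification by $(\alpha_U^i,\beta^i,\gamma^i)$, the local tensor products $L_i\etens \DD'L_i$ would only be defined up to sign on each connected component of $\shi_{\Lambda_i}^2$ and could not be glued coherently.  Once that rigidification is in place, everything else, including the compatibility with $u'$ and the associativity of $v$, follows from the formal calculation~\eqref{eq:prod_Maslov_sheaf} and a pointwise verification on $\shi_\Lambda^3$.
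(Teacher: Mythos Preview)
Your proposal is correct and follows essentially the same approach as the paper: build $\nshm_i = L_i \etens_{\Lambda_i} \DD'L_i$ locally from a simple sheaf $F_i$ and the extension $L_i$ provided by Lemma~\ref{lem:extension_m_lambda+}, define $v_i$ via the contraction $\DD'L_i \tens L_i \to \cor$, and glue using the comparison of the data attached to different simple sheaves on overlaps. The only nuance you elide (as does the paper) is that the isomorphism $L_i|_{\Lambda_{ij}} \simeq L_j|_{\Lambda_{ij}}[k]$ obtained from the uniqueness clause still depends on the chosen isomorphism $F_i \simeq F_j[k]$ in $\kss(\cor_{\Lambda_{ij}})$, but since this ambiguity is only by a scalar (both sheaves being simple), it cancels in $L\etens \DD'L$ and the induced comparison $\nshm_i \simeq \nshm_j$ is indeed canonical.
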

\begin{proof}
(i) We first define $\nshm_\Lambda$ locally. Let $\Lambda_0$ be an open subset
of $\Lambda$ such that
\begin{equation}\label{eq:hyp_rec_Lambda}
\begin{minipage}{11cm}
the Maslov class of $\Lambda_0$ is zero, $U_{\Lambda_0}$ has a finite number of
connected components and there exists $F_0\in \Derb_{(\Lambda_0)}(\cor_M)$ which
is simple along $\Lambda_0$.
\end{minipage}
\end{equation}
We choose $L_0 \in \loceps(\cor_{\shi_{\Lambda_0}|\Lambda_0})$ and
$\alpha_U\cl m_\Lambda(F_0)|_U \isoto L_0|_U [d_U]$, for $U$ any connected
component of $U_{\Lambda_0}$, given by Lemma~\ref{lem:extension_m_lambda+}.
We set $\nshm_0 = L_0 \etens_{\Lambda_0} \DD'L_0$. 
The contraction $L_0 \tens \DD'L_0 \to \cor_{U_{\Lambda_0}}$ induces
$v_0\cl \opb{q_{12}}  (\nshm_0) \tens \opb{q_{23}}  (\nshm_0)
\isoto \opb{q_{13}}  (\nshm_0)$
and the $\alpha_U$'s induce
$\alpha_0\cl m_{\Lambda_0}(F_0) \etens  m_{\Lambda_0}(\DD'F_0)
\isoto \opb{i_{U_\Lambda^2}} \nshm_0$.

\medskip\noindent
(ii) Let $F'_0\in \Derb_{(\Lambda_0)}(\cor_M)$ be another simple sheaf and
$a\cl F_0 \to F'_0$ a morphism such that $m_\Lambda(a)$ is an isomorphism.
We choose $L'_0 \in \loceps(\cor_{\shi_{\Lambda_0}|\Lambda_0})$ and
$\alpha'_U\cl m_\Lambda(F_0)|_U \isoto L'_0|_U [d_U]$ as in part~(i) of the
proof.
Then there exists an isomorphism $b\cl L_0 \isoto L'_0$ such that
$\alpha'_U \circ m_\Lambda(a)|_U = b|_U \circ \alpha_U$, for all connected
components $U$ of $U_{\Lambda_0}$.  Defining $\nshm'_0$, $v'_0$ and $\alpha'_0$
the same way as $\nshm_0$, $v_0$ and $\alpha_0$, we see that $b$ induces
$\beta\cl \nshm_0 \isoto \nshm'_0$ such that the following diagrams commutes
$$
\xymatrix{
m_{\Lambda_0}(F_0) {\etens}  m_{\Lambda_0}(\DD'F_0)
\ar[r]^-{\alpha_0} \ar[d] 
&  \opb{i_{U_\Lambda^2}} \nshm_0  \ar[d]  \\
m_{\Lambda_0}(F'_0) {\etens}  m_{\Lambda_0}(\DD'F'_0)
\ar[r]^-{\alpha'_0}
&  \opb{i_{U_\Lambda^2}} \nshm'_0   }
\quad
\xymatrix{
\opb{q_{12}}  (\nshm_0) {\tens} \opb{q_{23}}  (\nshm_0)
\ar[r]^-{v_0} \ar[d] 
& \opb{q_{13}}  (\nshm_0) \ar[d]  \\
\opb{q_{12}}  (\nshm'_0) {\tens} \opb{q_{23}}  (\nshm'_0)
\ar[r]^-{v'_0}
& \opb{q_{13}}  (\nshm'_0) ,
 }
$$
where the vertical arrows are induced by $a$ or $\beta$.

\medskip\noindent
(iii) We cover $\Lambda$ by open subsets $\Lambda_i$, $i\in I$,
satisfying~\eqref{eq:hyp_rec_Lambda}.  We choose simple sheaves $F_i\in
\Derb_{(\Lambda_i)}(\cor_M)$ and we construct $\nshm_i$, $v_i$ and $\alpha_i$
the same way as $\nshm_0$, $v_0$ and $\alpha_0$.  By part~(ii) we can glue them
in $\nshm$, $v$ and $\alpha$.  The commutativity of the
diagram~\eqref{eq:diag_ext_mas_sheaf}, as well as the last assertion, are local
statements and follow the construction of $v_0$ and $\alpha_0$.
\end{proof}

\begin{definition}\label{def:kssmgprime}
Replacing $\shm_\Lambda$ by $\shm'_\Lambda$ in Definition~\ref{def:kssmg},
we define a stack $\kss'_{mg}(\cor_\Lambda)$ similar to $\kss_{mg}(\cor_\Lambda)$:
for an open subset $\Lambda_0$ of $\Lambda$, the objects of
$\kss'_{mg}(\cor_{\Lambda_0})$ are the pairs $(L,u_L)$, where
$L\in \Dloc(\cor_{U_{\Lambda_0}})$ and $u_L$ is an isomorphism in
$\Dloc(\cor_{U_{\Lambda_0}^2})$
\begin{equation}
\label{eq:def_morph_struct_kssmgprime}
u_L \cl \shm'_\Lambda  \dltens \opb{q_2} L  \isoto \opb{q_1} L 
\end{equation}
such that the diagram obtained from~\eqref{eq:def_diag_kssmg} by replacing
$\shm_\Lambda$ by $\shm'_\Lambda$ is commutative.  The morphisms in
$\kss'_{mg}(\cor_\Lambda)$ are defined as in the case of
$\kss_{mg}(\cor_\Lambda)$, replacing $\shm_\Lambda$ by $\shm'_\Lambda$ in the
diagram~\eqref{eq:def_mor_kssmg}.
\end{definition}

In the next lemma we use the same notations $q_i$,\dots for the projections
from $\shi_\Lambda^p$ to $\shi_\Lambda^q$ as in the case of $U_\Lambda$.
\begin{lemma}\label{lem:mor_str_dloceps}
Let $L\in \Dloceps(\cor_{\shi_\Lambda|\Lambda})$. Then there exists a unique
isomorphism $v_L\cl \nshm_\Lambda \dltens \opb{p_2} L  \isoto \opb{p_1} L$
such that
$$
\xymatrix@C=2.5cm{
\opb{\qq_{12}} \nshm_\Lambda \dltens \opb{\qq_{23}} \nshm_\Lambda
\dltens \opb{\qq_{3}} L
\ar[d]_{v \tens \opb{\qq_{3}}\id} \ar[r]^-{\id \tens \opb{\qq_{23}}v_L}
&
\opb{\qq_{12}} \nshm_\Lambda \dltens \opb{\qq_{2}} L
\ar[d]^{\opb{\qq_{12}}v_L}
\\
\opb{\qq_{13}} \nshm_\Lambda \dltens \opb{\qq_{3}} L
\ar[r]^-{\opb{\qq_{13}}v_L}
& \opb{\qq_{1}} L  }
$$
commutes.
\end{lemma}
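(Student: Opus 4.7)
The plan is to construct $v_L$ locally using the description of $\nshm_\Lambda$ furnished by the proof of Theorem~\ref{thm:extension_maslov_sheaf} and then glue. On open subsets $\Lambda_i \subset \Lambda$ chosen as in part~(i) of that proof, there is a simple sheaf $F_i \in \Derb_{(\Lambda_i)}(\cor_M)$ with associated local system $L_i \in \loceps(\cor_{\shi_{\Lambda_i}|\Lambda_i})$ such that $\nshm_\Lambda|_{\shi_{\Lambda_i}^2} \simeq L_i \etens_{\Lambda_i} \DD'L_i$. For any $L \in \Dloceps(\cor_{\shi_{\Lambda_i}|\Lambda_i})$, the tensor product $\DD'L_i \dltens L$ has trivial monodromy on each fiber of the projection $r_i\cl \shi_{\Lambda_i} \to \Lambda_i$ (since $\varepsilon$ takes values in $\Z/2\Z$ and both factors have monodromy $\varepsilon$). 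Applying Lemma~\ref{lem:equiv_monodr_eps} with the zero monodromy gives $\DD'L_i \dltens L \simeq \opb{r_i}M$ for some $M \in \Dloc(\cor_{\Lambda_i})$, whence $L \simeq L_i \dltens \opb{r_i}M$.

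Using this presentation, I would define $v_L$ locally on $\shi_{\Lambda_i}^2$ as the composition
\begin{equation*}
\nshm_\Lambda \dltens \opb{q_2}L \simeq \opb{q_1}L_i \dltens \opb{q_2}(\DD'L_i \dltens L_i) \dltens \opb{q_2}\opb{r_i}M \to \opb{q_1}L_i \dltens \opb{q_1}\opb{r_i}M \simeq \opb{q_1}L,
\end{equation*}
where the middle arrow applies the canonical contraction $\DD'L_i \dltens L_i \to \cor$ and the last identification uses $r_i\circ q_1 = r_i\circ q_2$. The argument in part~(ii) of the proof of Theorem~\ref{thm:extension_maslov_sheaf} shows this construction is independent of the choice of $F_i$ and $L_i$: the compatibility squares displayed there translate directly into the equality of the two candidate $v_L$'s. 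Hence the local isomorphisms glue to a global $v_L$ on $\shi_\Lambda^2$. The associativity diagram is a local question and reduces to the associativity of the contraction, using the fact that the multiplication $v$ on $\nshm_\Lambda$ was itself constructed from the same contraction in Theorem~\ref{thm:extension_maslov_sheaf}.

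For uniqueness, the key observation is that the restriction of $\nshm_\Lambda$ to the diagonal $\Delta_{\shi_\Lambda} \subset \shi_\Lambda^2$ is canonically isomorphic to $\cor$ via the contraction $L_i \dltens \DD'L_i \to \cor$, and this identification does not depend on $L_i$. If $v_L$ and $v_L'$ both satisfy the associativity diagram, specialize it to $l_1 = l_2$: the canonical trivialization of $\nshm_\Lambda|_\Delta$ plus the unit property of $v$ on the diagonal forces both $v_L|_\Delta$ and $v_L'|_\Delta$ to coincide with the identity of $L$ under the canonical isomorphism $\nshm_\Lambda|_\Delta \dltens L \simeq L$. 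Since every connected component of $\shi_\Lambda^2$ meets the diagonal (fibers of $q_1$ are connected and the diagonal hits each fiber), and $\opb{q_1}L$ is locally constant in the second variable, the automorphism $v_L' \circ v_L^{-1}$ of $\opb{q_1}L$ is forced to be the identity, giving $v_L = v_L'$.

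I expect the main obstacle to be verifying carefully the independence statement in Step~3: different choices of $(F_i, L_i)$ produce \emph{a priori} different presentations $L \simeq L_i \dltens \opb{r_i}M$, and one must track through the compatibility isomorphisms from the proof of Theorem~\ref{thm:extension_maslov_sheaf} to see that the resulting $v_L$ is unchanged. The remaining steps are formal once this is in place.
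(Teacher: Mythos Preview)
Your proof is correct and uses essentially the same local ingredients as the paper: on a contractible open $\Lambda_0$ one has $\nshm_\Lambda \simeq L_0 \etens_{\Lambda_0} \DD'L_0$ for a rank-one $L_0 \in \loceps(\cor_{\shi_{\Lambda_0}|\Lambda_0})$, any $L$ decomposes as $L_0 \dltens (\text{pullback from } \Lambda_0)$, and $v_L$ is the contraction. The difference is organizational. The paper invokes the standard shortcut that a statement asserting \emph{existence and uniqueness} can be checked locally, because the uniquely determined local objects automatically agree on overlaps; this dispenses entirely with your compatibility check from part~(ii) of Theorem~\ref{thm:extension_maslov_sheaf} and with your separate uniqueness argument via the diagonal. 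Your route trades that one-line reduction for two explicit verifications: independence of the choice of $(F_i,L_i)$ to glue, and the diagonal argument for uniqueness. Both are valid, but the paper's order (prove uniqueness first, or rather observe that uniqueness is part of the claim, then localize) is shorter.

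Your uniqueness argument is fine but note what it actually uses: the identification $\nshm_\Lambda|_{\Delta} \simeq \cor$ and the fact that $v$ restricts to the identity under this identification are not stated in the paper as separate lemmas; they come out of the explicit construction of $v$ as a contraction. You correctly flag this as the ``unit property''. The final step, that an automorphism of $\opb{q_1}L$ which is the identity on the diagonal must be the identity everywhere, relies on the fibers of $q_1\cl \shi_\Lambda^2 \to \shi_\Lambda$ being connected (they are $\simeq GL_{n+1}^+(\R)$), so that sections of the pulled-back local system $\opb{q_1}H^0\rhom(L,L)$ are already pulled back from $\shi_\Lambda$.
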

\begin{proof}
Since the statement contains the unicity of $v_L$ it is enough to prove the
result locally on $\Lambda$. So we can assume that $\Lambda$ is contractible.
Hence $\loceps(\cor_{\shi_\Lambda|\Lambda})$ contains a unique object with
stalk $\Z$, say $L_0$. Then $L \simeq L_0 \tens \opb{\sigma_\Lambda}(L')$
for a unique $L'\in \Dloc(\cor_\Lambda)$. We also have
$\nshm_\Lambda \simeq L_0 \etens_{\Lambda_0} \DD'L_0$ and the result is obvious.
\end{proof}

Theorem~\ref{thm:extension_maslov_sheaf}
and Lemma~\ref{lem:mor_str_dloceps} imply the following result.

\begin{corollary}
\label{cor:loceps=mgprime}
The restriction functor $\Dloceps(\cor_{\shi_\Lambda|\Lambda}) \to
\Dloceps(\cor_{U_\Lambda|\Lambda})$ induces an equivalence of stacks
$\Dloceps(\cor_{\shi_\Lambda|\Lambda}) \isoto \kss'_{mg}(\cor_\Lambda)$.
\end{corollary}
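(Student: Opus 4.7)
The plan is to define a functor $\Phi\cl \Dloceps(\cor_{\shi_\Lambda|\Lambda}) \to \kss'_{mg}(\cor_\Lambda)$ that realizes the restriction, verify directly that it takes values in $\kss'_{mg}$, and then exhibit a quasi-inverse by using Lemma~\ref{lem:equiv_monodr_eps} to extend objects uniquely from $U_\Lambda$ to $\shi_\Lambda$.

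For $L \in \Dloceps(\cor_{\shi_\Lambda|\Lambda})$, Lemma~\ref{lem:mor_str_dloceps} supplies a canonical action $v_L\cl \nshm_\Lambda \dltens \opb{p_2}L \isoto \opb{p_1}L$ satisfying a cocycle condition on $\shi_\Lambda^3$. Pulling back along $i_{U_\Lambda^2}$ and transporting through the isomorphism $\alpha\cl \shm'_\Lambda \isoto \opb{i_{U_\Lambda^2}}\nshm_\Lambda$ of Theorem~\ref{thm:extension_maslov_sheaf} produces an isomorphism $u_{L_0}\cl \shm'_\Lambda \dltens \opb{q_2}L_0 \isoto \opb{q_1}L_0$, where $L_0 \eqdot \opb{i_{U_\Lambda}}L$. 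The compatibility diagram required for $(L_0,u_{L_0})$ to lie in $\kss'_{mg}(\cor_\Lambda)$ is obtained by pulling back the cocycle for $v_L$ along $i_{U_\Lambda^3}$ and applying the commutative diagram~\eqref{eq:diag_ext_mas_sheaf} of Theorem~\ref{thm:extension_maslov_sheaf} to convert each occurrence of $\opb{i_{U_\Lambda^2}}\nshm_\Lambda$ into $\shm'_\Lambda$. This defines $\Phi$ on objects; morphisms $L \to L'$ in $\Dloceps$ induce morphisms of the associated pairs because $v_{L'}$ is natural in $L$, and functoriality gives a functor of stacks.

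To produce a quasi-inverse $\Psi$, I work locally on $\Lambda$. Given $(L_0, u_{L_0}) \in \kss'_{mg}(\cor_{\Lambda_0})$ over a sufficiently small open $\Lambda_0$, the key claim is that $L_0$ already lies in $\Dloceps(\cor_{U_{\Lambda_0}|\Lambda_0})$: restricting $u_{L_0}$ to a loop $\gamma\cl [0,1] \to U_{\Lambda,p}^0$ sitting in the diagonal $U_{\Lambda,p}^0 \hookrightarrow U_{\Lambda,p}^0 \times_\Lambda U_{\Lambda,p}^0$, and combining the trivial action of $\shm'_\Lambda$ on the diagonal with the structure of $\shm'_\Lambda$ encoded in Theorem~\ref{thm:extension_maslov_sheaf} and computed in Proposition~\ref{prop:monodr=sign}, forces the monodromy of the cohomology sheaves of $L_0$ along $\gamma$ to be multiplication by $\varepsilon'_p(\gamma)$. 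Granting this, Lemma~\ref{lem:equiv_monodr_eps}, applied cohomology sheaf by cohomology sheaf to extend from the $\loceps$ setting to $\Dloceps$, extends $L_0$ uniquely to some $L \in \Dloceps(\cor_{\shi_{\Lambda_0}|\Lambda_0})$. The uniqueness clause of Lemma~\ref{lem:mor_str_dloceps} then forces the restriction of the canonical action $v_L$ to match $u_{L_0}$ through $\alpha$. Setting $\Psi(L_0,u_{L_0}) = L$ defines the quasi-inverse (on morphisms again via Lemma~\ref{lem:equiv_monodr_eps}), and the natural isomorphisms $\Phi \circ \Psi \simeq \id$ and $\Psi \circ \Phi \simeq \id$ follow directly from the two uniqueness statements.

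The principal obstacle is the monodromy claim that $L_0$ automatically lies in $\Dloceps(\cor_{U_{\Lambda_0}|\Lambda_0})$. This requires pairing the combinatorial structure of $\shm'_\Lambda$ on the fibers $U_{\Lambda,p} \times_\Lambda U_{\Lambda,p}$ (whose cross-component transitions are pinned down by $\nshm_\Lambda$ and diagram~\eqref{eq:diag_ext_mas_sheaf}) with the sign character appearing in the local monodromy calculation of Proposition~\ref{prop:monodr=sign}. Once this compatibility is unpacked, the remainder of the argument is a formal manipulation of the cocycle diagrams defining $v_L$ and $u_{L_0}$, and the equivalence statement drops out.
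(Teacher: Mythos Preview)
Your overall architecture is right and is exactly what the paper's one-line ``follows from Theorem~\ref{thm:extension_maslov_sheaf} and Lemma~\ref{lem:mor_str_dloceps}'' is hiding: restrict and equip with the canonical $v_L$ to go forward, extend via Lemma~\ref{lem:equiv_monodr_eps} and invoke uniqueness to go back. Two points deserve correction or tightening.

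\textbf{The diagonal argument does not compute the monodromy.} On the diagonal $\Delta \subset U_{\Lambda,p}^0 \times_\Lambda U_{\Lambda,p}^0$ the sheaf $\shm'_\Lambda$ has \emph{trivial} fibre monodromy: along the diagonal loop $(\gamma,\gamma)$ the twist is $\varepsilon'_p(\gamma)^2 = 1$, so $u_{L_0}|_\Delta$ is just an automorphism of $L_0$ that is compatible with whatever monodromy $L_0$ already has and imposes no constraint. The correct extraction is to freeze one factor: fix $l_0 \in U_{\Lambda,p}^0$ and restrict $u_{L_0}$ to $U_{\Lambda,p}^0 \times \{l_0\}$. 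This gives an isomorphism
\[
(\shm'_\Lambda|_{U_{\Lambda,p}^0 \times \{l_0\}}) \dltens (L_0)_{l_0}\; \isoto\; L_0|_{U_{\Lambda,p}^0},
\]
and now the left-hand side has fibre monodromy $\varepsilon'_p$ by Proposition~\ref{prop:monodr=sign} (transported through $\alpha$ and $\nshm_\Lambda$), forcing the same on $L_0$. This is the step that actually lands $L_0$ in $\Dloceps(\cor_{U_{\Lambda_0}|\Lambda_0})$.

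\textbf{Extending Lemma~\ref{lem:equiv_monodr_eps} to $\Dloceps$.} The lemma as stated is for $\loceps$, and ``cohomology sheaf by cohomology sheaf'' does not literally reconstruct an object of $\Dloceps$ over a general ring. The clean fix is to rerun the proof of Lemma~\ref{lem:equiv_monodr_eps} itself at the level of $\Dloceps$: over a contractible $W\subset\Lambda$, tensoring with the unique rank-one object $L_W\in\loceps(\cor_{\shi_\Lambda|W})$ reduces to $\varepsilon=0$, where both $\Dloc^{\varepsilon_0}(\cor_{\shi_\Lambda|W})$ and $\Dloc^{\varepsilon_0}(\cor_{U_\Lambda|W})$ are equivalent to $\Dloc(\cor_W)$ via pullback. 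This gives the local equivalence and hence the stack equivalence you need. With this adjustment and the corrected monodromy argument, your proof goes through and agrees with the paper's.
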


\section{Twisted stacks}

\begin{definition}\label{def:twisted_stack}
Let $X$ be a topological space and $\catc$ a stack over $X$ equipped with an
autoequivalence $T\cl \catc \isoto \catc$. Let $\{U_i\}_{i\in I}$ be a locally
finite covering of $X$ and let $\check a = \{a_{ij}\}_{i,j\in I}$ be a \v Cech
cocycle of degree $1$ with values in $\Z$.  We define a stack $\catc_{T,\check
  a}$ (denoted $\catc_{\check a}$ if there is no ambiguity about $T$) on $X$ by
the data of stacks $\catc_i$ on $U_i$, $\catc_i \eqdot \catc|_{U_i}$ and the
equivalences $\catc_i|_{U_{ij}} \isoto \catc_j|_{U_{ij}}$, $F \mapsto
T^{a_{ij}}(F)$.
\end{definition}

We quote the following easy lemma.

\begin{lemma}\label{lem:eq_tw_stacks}
In the situation of Definition~\ref{def:twisted_stack} let $\check a, \check b$
be two cohomologous cocycles and let $\check c = \{c_i\}_{i\in I}$ be a cochain
such that $\check b =  \check a + \partial \check c$.
Then the equivalences $T^{c_i} \cl \catc_i \isoto \catc_i$ induce an equivalence
$\phi_{\check c} \cl \catc_{\check a} \isoto \catc_{\check b}$.
Moreover, for any $d\in \Z$, we have $\phi_{\check c + d} \simeq T^d \circ
\phi_{\check c}$, where $\check c +d = \{c_i+d\}_{i\in I}$.
\end{lemma}

In what follows we consider the notion of twisted stack for the case of the
Kashiwara-Schapira stack and the autoequivalence $T$ induced by the shift
functor $F\mapsto F[1]$. We write $(\kss(\cor_\Lambda) )_{\check a}$ for
$(\kss(\cor_\Lambda) )_{T, \check a}$.

\begin{remark}\label{rem:description_twKSstack}
For an open covering $\{\Lambda_i\}_{i\in I}$ of $\Lambda$ and a \v Cech cocycle
$\check a = \{a_{ij}\}_{i,j\in I}$, we have a description of the objects of
$(\kss(\cor_\Lambda))_{\check a}$ as in Remark~\ref{rem:description_KSstack} as
follows.  An object of $(\kss(\cor_\Lambda) )_{\check a}$ is determined by the
data of objects $F_i \in \Derb_{(\Lambda_i)}(\cor_M)$, for any $i\in I$, and
sections $u_{ji}\in H^{a_{ij}}(\Lambda_{ij};\mu hom(F_i,F_j)|_{\Lambda_{ij}})$,
for any $i,j\in I$, such that
\begin{itemize}
\item [(i)] $u_{ii}$ is induced by $\id_{F_i}$, for any $i\in I$,
\item [(ii)] $u_{kj} \mucirc u_{ji} = u_{ki}$, for any $i,j,k \in I$.
\end{itemize}
For a cochain $\check c = \{c_i\}_{i\in I}$ and $ \check b = \check a + \partial
\check c$ as in Lemma~\ref{lem:eq_tw_stacks} the equivalence $\phi_{\check c}\cl
(\kss(\cor_\Lambda) )_{\check a} \isoto (\kss(\cor_\Lambda) )_{\check b}$ is
given by $F_i \mapsto F_i[c_i]$.
\end{remark}

\begin{lemma}\label{lem:kssmg'=kssmgtwisted}
Let $m_\Lambda \in H^1(\Lambda;\Z_\Lambda)$ be the Maslov class of $\Lambda$.
Then there exist a covering $\{\Lambda_i\}_{i\in I}$ of $\Lambda$ and
a \v Cech cocycle $\check m_\Lambda$ representing  $m_\Lambda$ such that
we have $\kss_{mg}(\cor_\Lambda) \simeq (\kss'_{mg}(\cor_\Lambda))_{\check
  m_\Lambda}$.
\end{lemma}
\begin{proof}
(i) We choose a covering $\{\Lambda_i\}_{i\in I}$ of $\Lambda$ such that there
exist simple sheaves $F_i\in \Derb_{(\Lambda_i)}(\cor_M)$ for all $i\in I$.
We let $U_i^\alpha$, $\alpha\in A_i$, be the connected components of
$U_{\Lambda_i}$.  The object
$m_{\Lambda_i}(F_i)|_{U_i^\alpha} \in \Derb(\cor_{U_i^\alpha})$ is concentrated
in a single degree that we denote by $d_i^\alpha$.
Then, for $i,j \in I$ and $\alpha\in A_i, \beta\in A_j$ such that
$U_i^\alpha \cap U_j^\beta \not= \emptyset$, the difference
$c_{ij} = d_i^\alpha - d_j^\beta$ only depends on $i$ and $j$. The \v Cech
cochain $\{c_{ij}\}_{i,j\in I}$ is a cocycle on $\Lambda$ with represents
$m_\Lambda$.

\medskip\noindent
(ii) For any given $i\in I$ we define
$\phi_i \cl \kss_{mg}(\cor_\Lambda)|_{\Lambda_i}
\isoto \kss'_{mg}(\cor_\Lambda)|_{\Lambda_i}$,
$(\mgL,u_\mgL) \mapsto (\mgL',u_{\mgL'})$, where we set
$\mgL'|_{U_i^\alpha} = \mgL|_{U_i^\alpha}[-d_i^\alpha]$ for all $\alpha\in A_i$.
By~(i) we see that the $\phi_i$'s, $i\in I$, glue together into an equivalence
of stacks $\kss_{mg}(\cor_\Lambda) \simeq (\kss'_{mg}(\cor_\Lambda))_{m_\Lambda}$.
\end{proof}

Putting together Theorem~\ref{thm:mksprime_equiv},
Lemma~\ref{lem:kssmg'=kssmgtwisted} and Corollary~\ref{cor:loceps=mgprime} we
obtain a description of the Kashiwara-Schapira stack.

\begin{theorem}\label{thm:KSstack=faisc_tordus}
Let $\Lambda$ be a locally closed conic Lagrangian submanifold of $\dT^*M$.
Then there exists a covering $\{\Lambda_i\}_{i\in I}$ of $\Lambda$ and
a \v Cech cocycle $\check m_\Lambda$ representing the Maslov class of $\Lambda$
such that we have an equivalence of stacks
$\kss(\cor_\Lambda) \simeq (\Dloceps(\cor_{\shi_\Lambda|\Lambda}))_{\check m_\Lambda}$.

If $m_\Lambda = 0$, we obtain $\kss(\cor_\Lambda) \simeq
\Dloceps(\cor_{\shi_\Lambda|\Lambda}) \simeq \bigoplus_{i\in \Z}
\loceps(\cor_{\shi_\Lambda|\Lambda})[i]$ and $\kss^p(\cor_\Lambda)$ is the
substack formed by the objects concentrated in one degree.
\end{theorem}

\begin{remark}\label{rem:thmKSstack=faisc_tordus}
(i) By Lemma~\ref{lem:eq_tw_stacks} the last equivalence of
Theorem~\ref{thm:KSstack=faisc_tordus} is only defined up to a shift $d\in\Z$.

\smallskip\noindent

(ii) If $\cor = \Z/2\Z$, then $-1=1$ and the monodromy $\varepsilon$ is trivial.
Hence the inverse image by the projection $\shi_\Lambda \to \Lambda$ induces a
functor of stacks $\Dloc(\cor_\Lambda) \to
\Dloceps(\cor_{\shi_\Lambda|\Lambda})$, which is easily seen to be an
equivalence.  We deduce $(\kss(\cor_\Lambda))_{-\check m_\Lambda} \simeq
\Dloc(\cor_{\Lambda})$.  Since $GL_1(\cor) = \cor^\times$ is trivial there is in
fact no non trivial local system of rank $1$. It follows that
$(\kss(\cor_\Lambda))_{-\check m_\Lambda}$ has global simple objects, which are
all isomorphic up to a shift in degree.

\smallskip\noindent
(iii) If $\cor = \Z/2\Z$ and the Maslov class $m_\Lambda$ vanishes, we obtain
$\kss(\cor_\Lambda) \simeq \Dloc(\cor_{\Lambda}) \simeq \bigoplus_{i\in \Z}
\loc(\cor_{\Lambda})[i]$ and $\kss(\cor_\Lambda)$ has global simple objects,
unique up to shift as in~(ii).
\end{remark}

\subsection*{Topological obstructions}
We quickly recall the definition of the second Stiefel-Whitney class.
We first recall some facts on topological obstructions.  We consider 
a connected manifold $X$ and a fiber
bundle $p\cl E\to X$ with path connected fibers.
We assume to be given a morphism of local systems
$\varepsilon \cl \ul H_1(E) \to (\Z/2\Z)_X$.
The second obstruction class of $E$ and $\varepsilon$ is the obstruction for
the stack $\loceps(\Z_X)$ to have a global object, locally free of rank $1$.
It is defined as follows.
We first remark that, if $U$ is contractible, $\loceps(\Z_U)$ has only one rank
$1$ locally free object, up to isomorphism. Clearly the automorphism group of
this object is $\Z/2\Z = \{\pm 1\}$.
We consider a covering $X = \bigcup_{i\in I} U_i$ such that, for all
$i,j,k \in I$, the open sets $U_i$, $U_{ij}$ and $U_{ijk}$ are contractible.
We choose $L_i \in \loceps(\Z_{U_i})$ and isomorphisms
$u_{ij} \cl L_i|_{U_{ij}} \isoto L_j|_{U_{ij}}$, for all $i,j\in I$.
Then, for $i,j,k \in I$, the composition
$c_{ijk} = u_{ki} \circ u_{jk} \circ u_{ij}$ belongs to $\Z/2\Z$.
We can check that $\{c_{ijk}\}_{i,j,k\in I}$ defines a \v Cech cocyle
over $X$ and that its class in $H^2(X;\Z/2\Z)$ is independent of the
choices of the covering, the objects $L_i$ and the morphisms $u_{ij}$.
\begin{definition}\label{def:sec_obstr}
For a bundle $E\to X$ and a morphism
$\varepsilon \cl \ul H_1(E) \to (\Z/2\Z)_X$,
we let $o_2(E,\varepsilon) \in H^2(X;\Z/2\Z)$ be the class defined
by the cocyle $\{c_{ijk}\}_{i,j,k\in I}$.
\end{definition}
By construction $\loceps(\Z_X)$ has a global object locally free of rank $1$ if
and only if $o_2(E,\varepsilon) =0$.

We will use this obstruction class in the following case. Let $F_1,F_2 \to X$ be
two real vector bundles over $X$ of the same rank, say $r$.  Let
$\shi_{F_1,F_2}$ be the fiber bundle with fiber $\tIso(F_1(x),F_2(x))$ at $x\in
X$ (see~\eqref{eq:iso_plus}).  The fiber is isomorphic to $GL_r^+(\R)$ and, as
explain before~\eqref{eq:def_sign_loop}, we have a morphism $\varepsilon \cl \ul
H_1(\shi_{F_1,F_2}) \to (\Z/2\Z)_X$.
\begin{definition}\label{def:relStieWhit}
We define the relative second Stiefel-Whitney class of $F_1$ and $F_2$ by
$rw_2(F_1,F_2) \eqdot o_2(\shi_{F_1,F_2},\varepsilon) \in H^2(X;\Z/2\Z)$.
\end{definition}
If $F_1$ is the trivial vector bundle we have
$rw_2(F_1,F_2) = w_2(F_2 \otimes \Lambda^r F_2) = w_1^2(F_2) + w_2(F_2)$.

Now we deduce from Theorem~\ref{thm:KSstack=faisc_tordus}:
\begin{corollary}\label{cor:objet_global_KSstack}
In the situation of Theorem~\ref{thm:KSstack=faisc_tordus} we assume that the
Maslov class of $\Lambda$ is zero.
Then the stack of simple sheaves $\kss^s(\cor_\Lambda)$ has a global object if
and only if the image of $rw_2(\lambda_0,\lambda_\Lambda)$ in
$H^2(\Lambda;\cor^\times)$ is zero.
\end{corollary}

\section{The Kashiwara-Schapira stack for orbit categories}

In this section we set $\cor = \Z/2\Z$.
We have defined the usual sheaf operations for the triangulated orbit categories
$\Orb(\cor_M)$ and we can define a Kashiwara-Schapira stack in this situation.
We give quickly the analogs of the results obtained in the previous sections.

For a subset $S$ of $T^*M$ we recalled in Notations~\ref{not:micro_categories}
the categories $\Derb_{S}(\cor_M)$, $\Derb_{(S)}(\cor_M)$ and
$\Derb(\cor_M;S)$. We define $\OrbL{S}(\cor_M)$, $\OrbL{(S)}(\cor_M)$ and
$\Orb(\cor_M;S)$ in the same way, replacing $\Derb$ by $\Orb$ and $\SSi$ by
$\SSo$.  Let $\Lambda \subset \dT^*M$ be a locally closed conic Lagrangian
submanifold. We define a stack $\kss^\orb(\cor_\Lambda)$ on $\Lambda$ as in
Definition~\ref{def:KSstack}, again replacing $\Derb$ by $\Orb$.  It comes with
a functor $\kssfunc^\orb_\Lambda \cl \OrbL{(\Lambda)}(\cor_M) \to
\kss^\orb(\cor_\Lambda)$.

We say that $F\in \Orb(\cor_M)$ is simple along $\Lambda$ if $\SSo(F)\cap \dT^*M
\subset \Lambda$ and, for any $p\in \Lambda$, there exists $F'\in \Derb(\Cor_M)$
such that $Q(F') \simeq F$ and $F'$ is simple along $\Lambda$ in a neighborhood
of $p$.  As in section~\ref{sec:KSstack} we can define the substack
$\kss^{\orb,s}(\cor_\Lambda)$ of $\kss^\orb(\cor_\Lambda)$ associated with the
simple sheaves.

\begin{lemma}\label{lem:HomDkMp=muhomporb}
In the situation of~\eqref{eq:HomDkMOmega_vers_muhom} we have a morphism \\
$\Hom_{\Orb(\cor_M;\Omega)}(F,G) \to \Hom_{\Orb(\cor_\Omega)}(\cor_\Omega,\mu
hom^\varepsilon(F,G)|_\Omega) $.  If $\Omega = \{p\}$ for some $p\in T^*M$, then
it is an isomorphism.
\end{lemma}

\begin{lemma}\label{lem:unique-fais-simple-orb}
Let $\Lambda \subset \dT^*M$ be a locally closed conic Lagrangian
submanifold. We assume that $\Lambda$ is contractible.  Let $F,F' \in
\Orb(\cor_M)$ be two simple sheaves along $\Lambda$ and let $\Omega$ be a
neighborhood of $\Lambda$ such that $\SSo(F) \cap \Omega =\SSo(F') \cap \Omega =
\Lambda$.  Then we have a unique isomorphism $\mu hom^\varepsilon(F,F')|_\Omega
\simeq \cor_\Lambda$ in $\Orb(\cor_\Omega)$.
\end{lemma}

By Lemma~\ref{lem:unique-fais-simple-orb} there exists a unique simple sheaf in
$\kss^{\orb}(\cor_{\Lambda_0})$ for any contractible open subset $\Lambda_0
\subset \Lambda$, up to a unique isomorphism.  In other words
$\kss^{\orb,s}(\cor_{\Lambda})$ has locally a unique object with the identity as
unique isomorphism. Hence gluing is trivial.  Since
$\kss^{\orb,s}(\cor_{\Lambda})$ is a stack it follows that it has a unique
global object.

We recall that $\loc(\cor_X)$ is the substack of $\Mod(\cor_X)$ formed by the
locally constant sheaves.

\begin{definition}\label{def:Orbloc}
Let $X$ be a manifold. We let $\Oloc^0(\cor_X)$ be the subprestack of
$U \mapsto \Orb(\cor_U)$, $U$ open in $X$, formed by the $F\in \Orb(\cor_U)$
such that $\SSo(F) \subset T^*_UU$.  We let $\Oloc(\cor_X)$ be the
stack associated with $\Oloc^0(\cor_X)$.
\end{definition}

The composition of the functors $\loc(\cor_U) \to \Mod(\cor_U) \to
\Derb(\cor_U)$ and $\iota_U \cl \Derb(\cor_U) \to \Orb(\cor_U)$ (see
Definition~\ref{def:categorie-orb}), for any $U\subset X$, induce a functor of
stacks $i_X\cl \loc(\cor_X) \to \Oloc(\cor_X)$.

Let us denote by $\psh(\cor_X)$ the prestack of presheaves of $\cor$-vector
spaces on $X$. For a given $F\in \Orb(\cor_X)$ we define a presheaf $h_X(F)$ by
$h_X(F)(U) = \Hom_{\Orb(\cor_U)}(\cor_U,F|_U)$, for any open subset $U$ of $X$.
Then $F\mapsto h_X(F)$ induces a functor of prestack $h_X \cl \Orb(\cor_X) \to
\psh(\cor_X)$.

\begin{lemma}\label{lem:equiv_loc-Oloc}
The functor of prestacks $h_X \cl \Orb(\cor_X) \to \psh(\cor_X)$ induces a
functor of stacks, denoted in the same way $h_X \cl \Oloc(\cor_X) \to
\loc(\cor_X)$.  The functors $i_X$ and $h_X$ are mutually inverse equivalences
of stacks.
\end{lemma}
\begin{proof}
The first assertion follows from Proposition~\ref{prop:SSorb-sectnulle}.
To prove the last claim, it is enough to see that $i_X$ is locally an
equivalence, that is, essentially surjective and fully faithful, and that $h_X
\circ i_X \simeq \id_{\loc(\cor_X)}$.

The functor $\Mod(\cor) \to \Orb(\cor)$ is an equivalence.  Hence
Proposition~\ref{prop:SSorb-sectnulle} also implies that $i_X|_U$ is essentially
surjective as soon as $U$ is contractible.  Let us prove that $h_X \circ i_X
\simeq \id_{\loc(\cor_X)}$. We recall the formula of
Corollary~\ref{cor:morph-QRF-QRG}
\begin{align*}
\Hom_{\Orb(\cor_U)}(\cor_U,F|_U)
& \simeq \bigoplus_{n\in\Z}  \Hom_{\Derb(\cor_U)}(\cor_U[-n],F|_U) \\
& \simeq \bigoplus_{n\in\Z}  H^n(U;F) .
\end{align*}
If $F$ is a local system we deduce $\Hom_{\Orb(\cor_U)}(\cor_U,F|_U) \simeq
F(U)$ for all contractible open subsets $U$. Hence
$h_X \circ i_X \simeq \id_{\loc(\cor_X)}$ as claimed.

For two local systems $F,G \in \loc(\cor_X)$ the formula of
Corollary~\ref{cor:morph-QRF-QRG} gives in the same way, for $U$ contractible,
\begin{align*}
\Hom_{\Orb(\cor_U)}(i_x(F)|_U,i_x(G)|_U) 
& \simeq \bigoplus_{n\in\Z} \Hom_{\Derb(\cor_U)}(F|_U[-n],G|_U) \\
& \simeq \Hom_{\loc(\cor_U)}(F|_U,G|_U).
\end{align*}
Hence $i_x$ is fully faithful and the lemma is proved.
\end{proof}

As we remarked after Lemma~\ref{lem:unique-fais-simple-orb}
$\kss^{\orb,s}(\cor_\Lambda)$ has a unique global object.
Theorem~\ref{thm:KSstack=faisc_tordus} becomes trivial in this case.

\begin{proposition}\label{prop:KSstackorb}
The stack $\kss^{\orb,s}(\cor_\Lambda)$ has a unique object, say $\shf_0$,
defined over $\Lambda$.  Moreover the functor $\OrbL{(\Lambda)}(\cor_M) \to
\Orb(\cor_\Lambda)$, $F \mapsto \mu hom^\varepsilon(F_0,F)|_\Lambda$, where
$F_0$ is local representative of $\shf_0$, and the functor $h_\Lambda$ of
Lemma~\ref{lem:equiv_loc-Oloc} induce an equivalence of stacks
$\kss^\orb(\cor_\Lambda) \isoto \loc(\cor_\Lambda)$.
\end{proposition}

\part{Convolution and microlocalization}
\label{part:conv_mic}

Let $N$ be a manifold and $\Lambda \subset \dT^*N$ a locally closed conic
Lagrangian submanifold.  As explained in Remarks~\ref{rem:description_KSstack}
and~\ref{rem:description_twKSstack} the objects of $\kss^s(\cor_\Lambda)$ are
described by simple sheaves along a covering, $F_i \in
\Derb_{(\Lambda_i)}(\cor_N)$, and gluing ``isomorphisms'' $u_{ji}\in
H^0(\Lambda_{ij};\mu hom(F_i,F_j)|_{\Lambda_{ij}})$.  For a given object of
$\kss^s(\cor_\Lambda)$ we want to find a representative in
$\Derb_{(\Lambda)}(\cor_N)$, or better, $\Derb_{\Lambda}(\cor_N)$.  For this we
would like to glue the $F_i$'s in the category $\Derb(\cor_N)$ instead of
$\kss^s(\cor_\Lambda)$.  A first step for this is to find other representatives
of the $\kssfunc_{\Lambda_i}(F_i)$'s for which the $u_{ji}$ arise from morphisms
in $\Derb(\cor_N)$.  In this part we introduce a functor, $\Psi$, which gives an
answer to this question (see Theorem~\ref{thm:muhom=hompsi} and
Corollary~\ref{cor:muhom=hompsi} below). To define $\Psi$ we need to choose a
direction on $N$ and we assume that $N$ is decomposed $N=M \times\R$.

\medskip

We set for short $\rspos = \mo]0,+\infty[$ and $\rpos = [0,+\infty[$. We usually
endow $\R$ with the coordinate $t$ and $\rpos$, $\rspos$ with the coordinate
$u$. The associated coordinates in the cotangent bundles are $(t;\tau)$ for
$T^*\R$ and $(u;\upsilon)$ for $T^*\rspos$.  We set $T^*_{\tau\geq 0}\R = \{
(t,\tau) \in T^*\R;$ $\tau\geq 0\}$ and we define $T^*_{\tau>0}\R$ similarly.
For a manifold $M$ and an open subset $U\subset M\times\R$ we define
\begin{equation}
  \label{eq:def_tau_positif}
  \begin{alignedat}{2}
T^*_{\tau\geq 0}U &= (T^*M \times T^*_{\tau\geq 0}\R) \cap T^*U, 
& \qquad T^*_{\tau\leq 0}U & = (T^*_{\tau\geq 0}U)^a ,  \\
T^*_{\tau >0}U &= (T^*M \times T^*_{\tau >0}\R) \cap T^*U ,
& \qquad T^*_{\tau < 0}U & = (T^*_{\tau > 0}U)^a.
\end{alignedat}
\end{equation}
\begin{definition}\label{def:derbtp}
Let $U$ be an open subset of $M\times\R$. We let $\Derbtp(\cor_U)$
(resp. $\Derbtpn(\cor_U)$) be the full subcategory of $\Derb(\cor_U)$ of
sheaves $F$ satisfying $\dot\SSi(F) \subset T^*_{\tau >0}U$
(resp. $\SSi(F) \subset T^*_{\tau \geq 0}U$).
\end{definition}

\section{The functor $\Psi$}\label{sec:func_psi}

The convolution product is a variant of the ``composition of kernels''
considered in~\cite{KS90} (denoted by $\circ$ -- see the
notations~\eqref{eq:def_comp_gene} and~\eqref{eq:def_conv_gene}).  It is used
in~\cite{T08} to study the localization of $\Derb(\cor_{M\times\R})$ by the
objects with microsupport in $T^*_{\tau\leq 0}(M\times\R)$, in a framework
similar to the present one. Namely, Tamarkin proves that the functor $F\mapsto
\cor_{M\times[0,+\infty[} \star F$ is a projector from $\Derb(\cor_{M\times\R})$
to the left orthogonal of the subcategory $\Derb_{T^*_{\tau\leq
    0}(M\times\R)}(\cor_{M\times\R})$ of objects with microsupport in
$T^*_{\tau\leq 0}(M\times\R)$ (see~\cite{GS11} for a survey).  We will use a
variant of Tamarkin's definition.

We will use the product~\eqref{eq:def_conv_gene} in the following special
situation.  We define the subsets of $\R\times \rspos$:
\begin{equation}\label{eq:def_cones}
  \begin{split}
\gammaof & = \{(t,u); \; 0\leq t <u\}, \\
  \lambda_0 & = \{0\} \times \rspos, \quad
\lambda_1=  \{(t,u) \in \R\times\rspos; t=u\} .
  \end{split}
\end{equation}

\begin{definition}\label{def:conv_gamma}
Let $M$ be a manifold and let $U subset M\times\R$ be the an open subset.  We
define $U_\gammaof \subset M\times\R \times \rspos$ by
\begin{equation*}%\label{eq:defUgamma}
U_\gammaof 
= \{ (x,t,u) \in M\times \R \times \rspos;\; \{x\}\times [t-u,t] \subset U\}.
\end{equation*}
For $F \in \Derb(\cor_U)$ and $G \in \Derb(\cor_{\R\times \rspos})$ with
$\supp(G) \subset \ol{\gamma}$, we define $G\star F \in
\Derb(\cor_{U_\gammaof})$ by
\begin{equation}\label{eq:defGstarF}
G\star F = (\reim{s}(F \letens G))|_{U_\gammaof} ,
\end{equation}
where $s\cl U \times \R \times \rspos \to M\times \R \times \rspos$ is the sum
$s(x,t_1,t_2,u) = (x,t_1+t_2,u)$.
We define the functor $\Psi_U \cl \Derb(\cor_U) \to \Derb(\cor_{U_\gammaof})$ by 
\begin{equation}\label{eq:defPsiU}
\Psi_U(F) = \cor_{\gammaof} \star F
= (\reim{s}(F\etens \cor_{\{(t,u);\; 0\leq t < u \}}))|_{U_\gammaof}  .
\end{equation}
\end{definition}

\begin{remark}\label{rem:def-Psi}
(i) We see easily on the definition of $U_\gammaof$ that, for any submanifold
$M'$ of $M$, we have $U_\gammaof \cap (M' \times\R \times \rspos) = (U \cap (M'
\times\R))_\gammaof$.  For a disjoint union $U = \bigsqcup_{i\in I} U_i$ we
also have $U_\gammaof = \bigsqcup_{i\in I} U_{i,\gammaof}$.  Hence we can
reduce the description of $U_\gammaof$ to the case where $M$ is a point and $U=
\mo]a,b[$ is an interval of $\R$. Then we have
\begin{equation}\label{eq:Ugamma-interval}
  (]a,b[)_\gammaof = \{ (t,u) \in \R\times\rspos;\; a+u<t<b \}.
\end{equation}

\medskip\noindent
(ii) We have $\opb{s}(U_\gammaof) \cap (M\times \R \times \ol{\gammaof}) \subset U
\times \ol{\gammaof}$.  Since $\supp(G) \subset \ol{\gamma}$, it follows that we
also have $G\star F = (\reim{s'}(F' \letens G))|_{U_\gammaof}$ where $F' \in
\Derb(\cor_{M\times\R})$ is any object such that $F'|_U = F$ and $s'\cl M \times
\R^2 \times \rspos \to M\times \R \times \rspos$ is the sum.

\medskip\noindent
(iii) For the same reason the restriction of $s$ to $\opb{s}(U_\gammaof) \cap (M\times
\R \times \ol{\gammaof})$ is a proper map.  Hence we can replace $\reim{s}$ by
$\roim{s}$ in~\eqref{eq:defGstarF}.
\end{remark}

We define the projections
\begin{equation}\label{eq:def_proj}
  \begin{alignedat}{2}
q \cl M\times \R \times \rspos &\to M\times \R ,
& \qquad  (x,t,u) &\mapsto (x,t),  \\
r \cl M\times \R \times \rspos &\to M\times \R ,
 & (x,t,u) &\mapsto (x,t-u) 
  \end{alignedat}
\end{equation}
and we denote by $q_U,r_U \cl U_\gammaof \to U$ the restrictions of $q,r$ to
$U_\gammaof$. Using the notations~\eqref{eq:def_cones} we have $\cor_{\lambda_0} \star F
\simeq \opb{q_U}(F)$ and $\cor_{\lambda_1} \star F \simeq \opb{r_U}(F)$, for
any $F\in \Derb(\cor_U)$.  Since $\lambda_0 \subset \gammaof$ and $\lambda_1
\subset \ol{\gammaof} \setminus \gammaof$, we have natural morphisms
$\cor_\gammaof \to \cor_{\lambda_0}$ and $\cor_{\lambda_1} [-1] \to
\cor_\gammaof$.  They induce morphisms, for all $F\in \Derb(\cor_U)$,
\begin{equation}\label{eq:def_alphabeta}
\alpha(F) \cl \Psi_U(F) \to  \opb{q_U}(F),
\qquad
\beta(F) \cl \opb{r_U}(F)[-1] \to \Psi_U(F) .
\end{equation}
The morphism $\cor_{\lambda_1} [-1] \to \cor_\gammaof$ factorizes through
$\cor_{\lambda_1} [-1] \to \cor_{\Int(\gammaof)}$ and $\cor_{\Int(\gammaof)}
\to \cor_\gammaof$.  These morphisms induce $\beta'(F) \cl \opb{r_U}(F)[-1] \to
\cor_{\Int(\gammaof)} \star F$ and $\beta''(F) \cl \cor_{\Int(\gammaof)} \star
F \to \Psi_U(F)$.  The excision triangle for the inclusion $\lambda_0 \subset
\gammaof$ induce the distinguished triangle
\begin{equation}\label{eq:dtrPsiq0}
\cor_{\Int(\gammaof)} \star F  \to[\;\;\beta''(F)\;\;] \Psi_U(F) \to[\;\;\alpha(F)\;\;] 
 \opb{q_U}(F)  \to[+1] .
\end{equation}

\begin{lemma}\label{lem:dtgrPsiq}
For $F\in \Derbtpn(\cor_U)$ the morphism $\beta'(F) \cl \opb{r_U}(F)[-1] \to
\cor_{\Int(\gammaof)} \star F$ is an isomorphism and~\eqref{eq:dtrPsiq0} gives
the distinguished triangle
\begin{equation}\label{eq:dtgamqr2}
\opb{r_U}(F)[-1] \to[\;\;\beta(F)\;\;] \Psi_U(F) \to[\;\;\alpha(F)\;\;] 
 \opb{q_U}(F)  \to[+1] .
\end{equation}
\end{lemma}
\begin{proof}
(i) We recall that $\cor_{\lambda_1} \star F \simeq \opb{r_U}(F)$.  We set
$\gamma' = \ol{\gammaof} \setminus \lambda_0$.  Applying $\cdot \star F$ to the
excision triangle given by $\lambda_1 \subset \gamma'$ gives the distinguished
triangle
$$
 \opb{r_U}(F)[-1] \to[\;\;\beta'(F)\;\;] \cor_{\Int(\gammaof)} \star F
\to \cor_{\gamma'} \star F  \to[+1]  .
$$
Hence the first assertion follows from the vanishing of $\cor_{\gamma'} \star
F$, which we prove in~(ii).  Since the second assertion follows from the first,
this will conclude the proof.

\medskip\noindent
(ii) For $x\in M$ and $u>0$ we define $i_{(x,u)} \cl \R \to M\times\R\times
\rspos$, $t\mapsto (x,t,u)$.  To prove that $\cor_{\gamma'} \star F \simeq 0$,
it is enough to see that $\opb{i_{(x,u)}}(\cor_{\gamma'} \star F ) \simeq 0$,
for all $(x,u)$.  By the base change formula we have
$\opb{i_{(x,u)}}(\cor_{\gamma'} \star F ) \simeq \cor_{]0,u]} \star F$ and we
conclude with Lemma~\ref{lem:noyaustar} below.
\end{proof}

\begin{lemma}\label{lem:noyaustar}
Let $a<b\in \R$ and let $F\in \Derbtpn(\cor_\R)$.
Then $\cor_{]a,b]} \star F \simeq 0$.
\end{lemma}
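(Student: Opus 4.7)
The plan is to show that every stalk of $\cor_{]a,b]}\star F$ vanishes. By proper base change for $\reim{s}$ with $s\cl \R^2\to \R$, $s(t_1,t_2)=t_1+t_2$, and parametrising the fibre $\opb{s}(t)\simeq \R$ by $t_2$, one obtains
\[
(\cor_{]a,b]}\star F)_t \;\simeq\; \RR\Gamma_c(\R; F\ltens \cor_{[t-b,t-a[}).
\]
Fixing $t$ and setting $c=t-b$, $d=t-a$ and $G \eqdot F\ltens \cor_{[c,d[}$, it suffices to prove $\RR\Gamma_c(\R;G)=0$.

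First I check that $G\in \Derbtpn(\cor_\R)$ and that $\supp G$ is compact. The distinguished triangle $\cor_{[d,+\infty[}\to\cor_{[c,+\infty[}\to\cor_{[c,d[}\to[+1]$, together with Example~\ref{ex:microsupport}(iii), gives $\dot\SSi(\cor_{[c,d[})\subset T^*_{\tau\geq 0}\R$. Hence $\dot\SSi(\cor_{[c,d[})^a\subset T^*_{\tau\leq 0}\R$, which combined with $\dot\SSi(F)\subset T^*_{\tau\geq 0}\R$ shows that the non-characteristic hypothesis of Corollary~\ref{cor:opboim}(i) is met, yielding $\dot\SSi(G)\subset T^*_{\tau\geq 0}\R$. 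Compactness of $\supp G$ is immediate from $\supp \cor_{[c,d[}=[c,d]$.

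Next I apply the microlocal Morse lemma (Corollary~\ref{cor:Morse}) to $G$ with $\varphi(t)=-t$. Since $d\varphi=-1\notin\dot\SSi(G)$ and $\varphi$ is proper on the compact set $\supp G$, for any $c_1<c$ and $c_2>d$ the lemma gives an isomorphism
\[
\rsect_{]-\infty,c_1]}(\R;G)\isoto \rsect_{]-\infty,c_2]}(\R;G).
\]
The left hand side vanishes because $]-\infty,c_1]\cap \supp G=\emptyset$, while the right hand side equals $\RR\Gamma(\R;G)$ because $\rsect_{]-\infty,c_2]}G\simeq G$ once $]-\infty,c_2]$ contains $\supp G$ in its interior. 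Hence $\RR\Gamma(\R;G)=0$, and since $\supp G$ is compact this coincides with $\RR\Gamma_c(\R;G)$, which therefore also vanishes. As this holds for every $t\in\R$, we conclude that $\cor_{]a,b]}\star F\simeq 0$.

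The main technical point is the observation that $\cor_{[c,d[}$ has microsupport in the same positive half-space as $F$: this both keeps $G$ inside $\Derbtpn$ and makes the downward Morse function $\varphi=-t$ available, producing a sheaf with compact support on which $\RR\Gamma$ and $\RR\Gamma_c$ agree so that Morse theory forces the vanishing.
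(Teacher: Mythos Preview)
Your proof is correct and takes a genuinely different route from the paper's.

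The paper proceeds globally: it first bounds $\SSi(\cor_{]0,1]}\etens F)$ and uses Theorem~\ref{th:opboim} to show that $F'\eqdot\cor_{]0,1]}\star F$ has locally constant cohomology sheaves; for such an $F'$ one has $\cor_{]0,1]}\star F'\simeq 0$ directly. It then exploits the self-convolution identity $\cor_{]0,1]}\star\cor_{]0,1]}\simeq\cor_{]0,1]}[-1]\oplus\cor_{]1,2]}$ together with associativity (Lemma~\ref{lem:starcomass}) to conclude that $\cor_{]0,1]}\star F$ is a direct summand of $0$.

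Your argument instead works stalkwise: proper base change reduces to showing $\RR\Gamma_c(\R;F\ltens\cor_{[c,d[})=0$, and you observe that $F\ltens\cor_{[c,d[}$ stays in $\Derbtpn(\cor_\R)$ with compact support, so the microlocal Morse lemma with $\varphi=-t$ kills its global sections. This is more direct and uses only Corollary~\ref{cor:opboim}(i) and Corollary~\ref{cor:Morse}; the paper's approach, by contrast, gives the conceptual bonus that $\cor_{]a,b]}\star(\cdot)$ lands in locally constant sheaves before it vanishes, and illustrates the idempotent-like behaviour of half-open intervals under convolution, a theme that recurs in Section~\ref{sec:convolution}.
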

\begin{proof}
By the definition of $\cor_{]a,b]} \star F$ its germs at some $x\in \R$ are
\begin{align*}
(\cor_{]a,b]} \star F)_x 
& \simeq  \rsect_c(\opb{s}(x); (F\etens \cor_{]a,b]}) |_{\opb{s}(x)} ) \\
& \simeq \rsect_c(\R;  F\tens \cor_{[x-b,x-a[}) .
\end{align*}
The excision triangle applied to the inclusion $\{x-a\} \subset [x-b,x-a]$
shows that $(\cor_{]a,b]} \star F)_x$ is the cone of the restriction morphism
$\rsect([x-b,x-a];F) \to F_{x-a}$, which is an isomorphism by the hypothesis on
$\SSi(F)$ and by Corollary~\ref{cor:Morse}.  Hence $(\cor_{]a,b]} \star F)_x$
vanishes for all $x\in\R$ and this proves the lemma.
\end{proof}

Let $V$ be an open subset of $U$.  Let $N$ be a submanifold of $M$ and $U' = U
\cap (N\times \R)$. We have
\begin{align}
\label{eq:restr-PsiU-ouvert}
\Psi_V(F|_V) &\simeq (\Psi_U(F))|_{V_\gammaof} ,  \\
\label{eq:restr-PsiU-sousvar}
\Psi_{U'}(F|_{U'}) &\simeq (\Psi_U(F))|_{U'_\gammaof} ,
\end{align}
where the first isomorphism follows from supports estimates as in
Remark~\ref{rem:def-Psi}~(ii) and the second one follows from the base change
formula.

\medskip

In the next lemma we use an analog of the convolution for sets.  For $A \subset
M\times \R$ and $B \subset \R\times \rspos$ we define $B\star A \subset M\times
\R\times \rspos$ by
\begin{equation}\label{eq:1}
B\star A = s(A\times B) ,
\end{equation}
where $s\cl U \times \R \times \rspos \to M\times \R \times \rspos$ is the sum
as Definition~\ref{def:conv_gamma}.

\begin{lemma}\label{lem:suppPsiuL}
Let $F\in \Derb(\cor_U)$ and let $V\subset U$ be an open subset.  We assume
that
\begin{equation}\label{eq:hyp_F_vert_cst}
\text{$F|_{V\cap (\{x\}\times \R)}$ is locally constant, for any $x\in M$.}
\end{equation}
Then $\Psi_U(F)|_{V_\gammaof} \simeq 0$.  As a special case, if $\SSi(F|_V)
\subset T^*_VV$, then $\Psi_U(F)|_{V_\gammaof} \simeq 0$. In particular
$\supp(\Psi_U(F)) \subset (\gammaf \star \dot\pi_U(\dot\SSi(F))) \cap
U_\gammaof$.
\end{lemma}
\begin{proof}
We set $V_x = V \cap (\{x\}\times \R)$. Then $V_\gammaof = \bigsqcup_{x\in M}
(V_x)_\gammaof$ and we have to prove $\Psi_U(F)|_{(V_x)_\gammaof} \simeq 0$,
for all $x\in M$. By~\eqref{eq:restr-PsiU-sousvar} we have
$\Psi_U(F)|_{(V_x)_\gammaof} \simeq \Psi_{V_x}(F|_{V_x})$.  The set $V_x$ is a
disjoint union of open intervals of $\R$ and $F|_{V_x}$ is constant on each of
these intervals. A direct computation gives $\Psi_{V_x}(F|_{V_x}) \simeq 0$ and
we obtain the result.
\end{proof}

\begin{lemma}\label{lem:annulation_qr}
Let $F \in \Derb(\cor_U)$.
\begin{itemize}
\item [(i)] We have $\reim{q_U} \epb{q_U} (F) \isoto F$ and
  $\reim{r_U}(\Psi_U(F)) \simeq 0$.
\item [(ii)] If $F\in \Derbtpn(\cor_U)$, then $\reim{q_U} \opb{r_U} (F)$
  satisfies~\eqref{eq:hyp_F_vert_cst} (with $V=U$). In particular $\Psi_U(
  \reim{q_U} \opb{r_U} (F)) \simeq 0$.
\item [(iii)] We assume that $U = M \times \R$, that $F\in \Derbtpn(\cor_U)$
  and that $\supp(F) \subset M \times [a,+\infty[$ for some $a\in \R$.  Then
  $\reim{q_U} \opb{r_U} (F) \simeq 0$.
\end{itemize}
\end{lemma}
\begin{proof}
(i) The first morphism is the adjunction for $(\reim{q_U}, \epb{q_U})$.
It is an isomorphism because the fibers of $q_U$ are intervals.
Let us prove the second isomorphism.  We define $r'\cl M\times\R^2 \times\rspos
\to M\times\R^2$, $(x,t_1,t_2,u) \mapsto (x,t_1,t_2-u)$ and $s' \cl
M\times\R^2\to M\times\R$, $(x,t_1,t_2) \mapsto (x,t_1+t_2)$.  We have the
commutative diagram
$$
\xymatrix{
M\times\R^2 \times\rspos \ar[r]^-{r'} \ar[d]_s
& M\times\R^2 \ar[d]^{s'} \\
M\times\R \times\rspos \ar[r]_-r & M\times\R . }
$$
We let $j\cl U \to M\times\R$ be the inclusion and we set $F' = \eim{j}F$.
Then $\Psi_U(F) \simeq (\cor_{\gammaof}\star F')|_{U_\gammaof}$ and we have
\begin{align*}
\reim{r_U}(\Psi_U(F))
&\simeq \reim{r} ( (\reim{s}(F' \etens \cor_{\gammaof}))_{U_\gammaof})  \\
&\simeq \reim{(r\circ s)} ( (F'\etens \cor_{\R\times\rspos}) \tens
 \cor_{\opb{q_2}\gammaof \cap \opb{s}U_\gammaof})  \\
&\simeq \reim{(s'\circ r')}( \opb{r'}(F'\etens \cor_\R) \tens 
 \cor_{\opb{q_2}\gammaof \cap \opb{s}U_\gammaof})  \\
&\simeq \reim{s'} ( (F'\etens \cor_\R) \tens
\reim{r'}( \cor_{\opb{q_2}\gammaof \cap \opb{s}U_\gammaof})).
\end{align*}
Hence it is enough to prove that
$\reim{r'}( \cor_{\opb{q_2}\gammaof \cap \opb{s}U_\gammaof}) \simeq 0$.
Our hypothesis on $U$ means that for any $x\in U' = p_M(U)$, there exists
$a_x,b_x\in\R$ such that $(\{x\}\times\R) \cap U = \{x\}\times ]a_x,b_x[$.
Then we have
$U_\gammaof = \{(x,t,u) \in M\times\R\times\rspos$; $x\in U', a_x+u<t<b_x\}$.
  For
any $(x,t_1,t_2) \in M\times\R^2$ the fiber $\opb{r'}(x,t_1,t_2) \cap
(\opb{q_2}\gammaof \cap \opb{s}U_\gammaof)$ is identified with
\begin{align*}
\{u>0; \; (x,t_1,&t_2+u,u) \in \opb{q_2}\gammaof \cap \opb{s}U_\gammaof\} \\
&= \{u>0; \; 0\leq t_2+u<u \text{ and } (x,t_1+t_2+u,u) \in U_\gammaof\} \\
&= \{u>0; \;  -t_2\leq u \text{ and }  u < b_x-t_1-t_2 \},
\end{align*}
where we assume $t_2<0$ and $a_x<t_1+t_2$ (otherwise the fiber is empty).  Since
$-t_2>0$ we see that the fiber is either empty or a half closed interval. This
implies $\reim{r'}( \cor_{\opb{q_2}\gammaof \cap \opb{s}U_\gammaof}) \simeq 0$,
as required.

\medskip\noindent
(ii) We choose $x\in M$ and we set $U_x = U\cap (\{x\}\times \R)$.  By the base
change formula we have $(\reim{q_U} \opb{r_U} (F))|_{U_x} \simeq \reim{q_{U_x}}
\opb{r_{U_x}} (F|_{U_x})$. Hence we can assume that $M$ is a point and that $U$
is an interval, say $U = \mo]a,b[$.  By Example~\ref{ex:microsupport}~(i), to
prove that $\reim{q_U} \opb{r_U} (F)$ is constant it is enough to see that its
microsupport is contained in the zero section.

We let $j\cl U \to \R$ be the inclusion and we set for short $q=q_\R, r=r_\R
\cl \R^2 \to \R$.  We set $F' = \roim{j}F$. Then we have $\reim{q_U} \opb{r_U}
(F) \simeq (\reim{q} (\opb{r} (F') \tens \cor_{\R\times \rspos}))|_U$.

We have $(\rsect_{]-\infty,a]}(F'))_a \simeq 0$ and we can deduce that
$\SSi(F') \cap T^*_a\R \subset T^*_{\tau\geq 0}\R$.  Hence $\SSi(F') \cap
T^*(\mo]-\infty,b[) \subset T^*_{\tau\geq 0}\R$.  By Theorem~\ref{th:opboim}
and Corollary~\ref{cor:opboim} we obtain successively, with coordinates
$(t,u;\tau,\upsilon)$ on $T^*\R^2$,
\begin{align*}
 \SSi(\opb{r}F'|_{U\times \R}) &\subset \{(\tau,-\tau); \;\tau\geq 0\} , \\
 \SSi((\opb{r} (F') \tens \cor_{\R\times \rspos})|_{U\times \R})
&\subset \{(\tau, -\tau + \upsilon); \;\tau\geq 0,\, \upsilon \leq 0\} , \\
 \SSi(\reim{q_U} \opb{r_U} (F)) &\subset \{\tau =  0\} ,
\end{align*}
which proves that $\reim{q_U} \opb{r_U} (F)$ is constant.

\medskip\noindent
(iii) By~(ii) $\reim{q_U} \opb{r_U} (F)$ is constant on the fibers $\{x\}
\times \R$ for all $x\in M$.  By the hypothesis its restriction to $M\times
\{a-1\}$ vanishes.  Hence it is zero.
\end{proof}

\begin{lemma}\label{lem:SSPsiF}
Let $F\in \Derbtpn(\cor_U)$. Then 
\begin{equation}\label{eq:SSPsiF}
\dot\SSi(\Psi_U(F)) =
(q_d\opb{q_\pi}(\dot\SSi(F)) \cup r_d\opb{r_\pi}(\dot\SSi(F))) \cap  \dT^*U_\gammaof .
\end{equation}
\end{lemma}
\begin{proof}
This follows from the triangle~\eqref{eq:dtgamqr2}, the triangular inequality
for the microsupport and the fact that $\dot\SSi(\opb{q}F)$ and
$\dot\SSi(\opb{r}F)$ are disjoint.
\end{proof}

\section{Adjunction properties}\label{sec:adj_prop}

Let $U\subset M\times \R$ be an open subset.  We let
$\Derbra(\cor_{U_\gammaof})$ be the full subcategory of
$\Derbtpn(\cor_{U_\gammaof})$ consisting of $\reim{r}$-acyclic objects, that is,
the objects $G$ such that $\reim{r}G \simeq 0$.  This is a triangulated
category.  By Lemma~\ref{lem:annulation_qr}~(i) the functor $\Psi_U$ takes
values in $\Derbra(\cor_{U_\gammaof})$.  By Theorem~\ref{th:opboim} the functor
$\reim{q_U}$ sends $\Derbtpn(\cor_{U_\gammaof})$ into $\Derbtpn(\cor_U)$ ($q_U$
is not proper, but we can use a homotopy colimit triangle
like~\eqref{eq:homot-lim-bis}, with the $U_n$'s relatively compact, before we
apply Theorem~\ref{th:opboim}).  Moreover the morphism of functors $\alpha \cl
\Psi_U \to \opb{q_U}$ in~\eqref{eq:def_alphabeta} and the adjunction morphism
$\reim{q_U} \epb{q_U} \simeq \reim{q_U} \opb{q_U} [1] \to \id$ induce:
\begin{equation}\label{eq:psiq-id}
  b_U(F) \cl \reim{q_U} \Psi_U (F) [1] \to F,
\qquad \text{for all $F\in \Derbtpn(\cor_U)$.}
\end{equation}

\begin{lemma}\label{lem:adj-Psi-q}
The functor $\reim{q_U}[1] \cl \Derbra(\cor_{U_\gammaof}) \to \Derbtpn(\cor_U)$ is
left adjoint to $\Psi_U \cl \Derbtpn(\cor_U) \to \Derbra(\cor_{U_\gammaof})$.
In particular we have an adjunction morphism
\begin{equation}\label{eq:id-psiq}
  b'_U(G) \cl G \to \Psi_U \reim{q_U} (G) [1] ,
\qquad \text{for all $G\in \Derbra(\cor_{U_\gammaof})$.}
\end{equation}
\end{lemma}
\begin{proof}
Since $\Derbtpn(\cor_U)$ and $\Derbra(\cor_{U_\gammaof})$ are full
subcategories of $\Derb(\cor_U)$ and $\Derb(\cor_{U_\gammaof})$, it is enough
to prove
\begin{equation}\label{eq:psiq-id1}
\Hom_{\Derb(\cor_{U_\gammaof})}(G, \Psi_U(F))
\simeq \Hom_{\Derb(\cor_U)}(\reim{q_U}G[1],F)
\end{equation}
for any $F\in \Derbtpn(\cor_U)$ and $G\in \Derbra(\cor_{U_\gammaof})$.  Since
$r_U$ is a smooth map with fibers homeomorphic to $\R$ we have a canonical
isomorphism of functors $\epb{r_U}F\simeq \opb{r_U}[1]$; hence an adjunction
$(\reim{r_U},\epb{r_U}[1])$. The same holds for $q_U$.  Applying
$\RHom(G,\cdot)$ to~\eqref{eq:dtgamqr2} we obtain the distinguished triangle
\begin{align*}
\RHom(G,\opb{r_U}F[-1]) \to \RHom(G, \Psi_U(F)) \to
\RHom(G, \opb{q_U}F)  \to[+1] .
\end{align*}
The adjunction $(\reim{r_U},\opb{r_U}[1])$ and the hypothesis $G\in
\Derbra(\cor_{U_\gammaof})$ give $\RHom(G,\opb{r_U}F[-1]) \simeq 0$. We deduce
$$
\RHom(G, \Psi_U(F)) \isoto \RHom(G, \opb{q_U}F) \simeq \RHom(\reim{q_U}G[1],F),
$$
which implies~\eqref{eq:psiq-id1}.
\end{proof}

\begin{lemma}\label{lem:qpsi-projA}
Let $F\in \Derbtpn(\cor_U)$.  Then the morphisms $\Psi_U(b_U(F))$ and
$b'_U(\Psi_U (F))$ are mutually inverse isomorphisms:
\begin{align*}
\Psi_U(b_U(F)) &\cl \Psi_U \reim{q_U} \Psi_U (F) [1] \isoto \Psi_U(F) , \\
b'_U(\Psi_U (F)) &\cl \Psi_U (F) \isoto \Psi_U \reim{q_U} \Psi_U (F) [1] .
\end{align*}
\end{lemma}
\begin{proof}
(i) We prove the first isomorphism.
We apply $\reim{q_U}[1]$ to the distinguished triangle~\eqref{eq:dtgamqr2}.
Since $q_U$ has fibers isomorphic to $\R$ the adjunction morphism $\reim{q_U}
\epb{q_U}(F) \to F$ is an isomorphism and we obtain the distinguished triangle:
\begin{equation}\label{eq:qpsi-projA1}
L \to \reim{q_U} \Psi_U (F) [1] \to[b_U(F)]  F \to[+1],
\end{equation}
where $L = \reim{q_U} \opb{r_U}(F)$. By Lemma~\ref{lem:annulation_qr}~(ii) we
have $\Psi_U(L) \simeq 0$. Hence applying $\Psi_U$ to~\eqref{eq:qpsi-projA1}
gives the lemma.

\medskip\noindent
(ii) The composition $\Psi_U(b_U(F)) \circ b'_U(\Psi_U (F))$ is the identity
morphism of $\Psi_U(F)$, by general properties of adjunctions.  Hence
the lemma follows from~(i).
\end{proof}

\begin{proposition}\label{prop:Psipresqueff}
We assume that $U = M \times \R$, that $F\in \Derbtpn(\cor_U)$ and that
$\supp(F) \subset M \times [a,+\infty[$ for some $a\in \R$.  Then the
adjunction morphism $b_U(F) \cl \reim{q_U} \Psi_U (F) [1] \to F$
of~\eqref{eq:psiq-id} is an isomorphism and for any $G\in \Derbtpn(\cor_U)$ we
have
$$
\Hom(F,G) \isoto \Hom(\Psi_U(F), \Psi_U(G)) .
$$
\end{proposition}
\begin{proof}
By Lemma~\ref{lem:annulation_qr}~(i) and~(iii) we have $\reim{q_U} \epb{q_U} (F)
\isoto F$ and $\reim{q_U} \opb{r_U} (F) \simeq 0$.  Hence the first part
follows from by applying $\reim{q_U}$ to the distinguished
triangle~\eqref{eq:dtgamqr2}.  Then the second part is given by the adjunction
$(\reim{q_U}, \Psi_U)$ of Lemma~\ref{lem:adj-Psi-q}.
\end{proof}

\section{Link with microlocalization}

In this section we prove Theorem~\ref{thm:muhom=hompsi} which gives the sections
of $\mu hom$ outside the zero section in terms of homomorphism between the image
by $\Psi_U$. 

We first look at the difference between the sections of $\mu hom(F,G)$ outside
the zero section and $\rhom(F,G)$, that is, the third term of Sato's
distinguished triangle, which is $\DD'(F) \ltens G$ in the constructible case
by~\eqref{eq:SatoDTmuhom}.  In general it is given by~\eqref{eq:not_homprime}
below and we check that it has a similar behaviour as $\DD'(F) \ltens G$.

\begin{definition}\label{def:homprime}
Let $X$ be a manifold. Let $q_{X,1},q_{X,2}\cl X\times X \to X$ be the
projections and $\delta_X \cl X \to X\times X$ the diagonal embedding.  Let
$F,F'\in \Derb(\cor_X)$. We set
\begin{equation}\label{eq:not_homprime}
\hom'(F,F') \eqdot \opb{\delta_X}\rhom(\opb{q_{X,2}}F,\opb{q_{X,1}}F').
\end{equation}
\end{definition}
Then~\eqref{eq:proj_microltion1}-\eqref{eq:proj_muhom} give a distinguished
triangle, for any $F,F'\in \Derb(\cor_X)$,
\begin{equation}\label{eq:SatoDTmuhom2}
  \begin{split}
 \hom'(F,F') \to \rhom (F,F') &  \\
\to \roim{\dot\pi_M{}} & (\mu hom  (F,F') |_{\dT^*M}) \to[+1].
  \end{split}
\end{equation}

\begin{lemma}\label{lem:im_inv_homprime}
(i) Let $f\cl X\to Y$ be a morphism of manifolds. Let $F,F'\in \Derb(\cor_Y)$
such that $f$ is non-characteristic for $\SSi(F)$ and $\SSi(F')$. Then
$$
\opb{f}\hom'(F,F') \isoto \hom'(\opb{f}F,\opb{f}F') .
$$
(ii) For $F,F'\in \Derb(\cor_X)$ we have
 $\SSi(\hom'(F,F'))\subset \SSi(F)^a \hplus \SSi(F')$.
\end{lemma}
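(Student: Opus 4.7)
The plan is to unwind the definition $\hom'(F,F') = \opb{\delta_X}\rhom(\opb{q_{X,2}}F, \opb{q_{X,1}}F')$ and reduce each statement to a direct application of Theorem~\ref{th:opboim} or Corollary~\ref{cor:opboim}.

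For (i), the commutative squares $(f\times f)\circ \delta_X = \delta_Y\circ f$ and $q_{Y,i}\circ (f\times f) = f\circ q_{X,i}$ give
$$
\opb{f}\hom'(F,F') \simeq \opb{\delta_X}\opb{(f\times f)}\rhom(\opb{q_{Y,2}}F, \opb{q_{Y,1}}F'),
$$
so the proof reduces to checking that $\opb{(f\times f)}$ commutes with the $\rhom$. The projections $q_{Y,i}$ are submersions, hence by Theorem~\ref{th:opboim}(iv) we have $\SSi(\opb{q_{Y,2}}F) \subset T^*_YY\times \SSi(F)$ and $\SSi(\opb{q_{Y,1}}F')\subset \SSi(F')\times T^*_YY$. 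A routine verification shows that the non-characteristicity hypothesis on $f$ implies that $f\times f$ is non-characteristic for both of these subsets, so the desired commutation follows from the standard non-characteristic base change formula for $\rhom$ in \cite[Prop.~5.4.14]{KS90} (combined, where appropriate, with the identification $\epb{f}\simeq \opb{f}\tens \omega_{X/Y}$).

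For (ii), we apply microsupport bounds in two steps. First, $\SSi(\opb{q_{X,2}}F)\cap \SSi(\opb{q_{X,1}}F')$ is contained in the zero section of $T^*(X\times X)$ with no extra hypothesis, since lying in both of $T^*_XX\times \SSi(F)$ and $\SSi(F')\times T^*_XX$ forces both covector components to vanish. Corollary~\ref{cor:opboim}(ii) then yields
$$
\SSi(\rhom(\opb{q_{X,2}}F, \opb{q_{X,1}}F')) \subset \SSi(F')\times \SSi(F)^a.
$$
Second, a point $(x;\xi_1,\xi_2)\in \opb{\delta_{X,\pi}}(\SSi(F')\times \SSi(F)^a)$ lies in $T^*_X(X\times X)$ iff $\xi_1+\xi_2=0$, i.e.\ iff $(x;\xi_1)\in \SSi(F)\cap\SSi(F')$; the hypothesis then forces $\xi_1=\xi_2=0$, so $\delta_X$ is non-characteristic for $\SSi(F')\times \SSi(F)^a$. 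Theorem~\ref{th:opboim}(iii) applied to $\delta_X$ yields
$$
\SSi(\hom'(F,F'))\subset \delta_{X,d}\opb{\delta_{X,\pi}}(\SSi(F')\times \SSi(F)^a) = \SSi(F)^a + \SSi(F'),
$$
which is the claim.

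The only real bookkeeping is the translation of the hypothesis on $f$ (resp.\ on $\SSi(F)\cap \SSi(F')$) into non-characteristicity of $f\times f$ (resp.\ $\delta_X$) with respect to the microsupports produced by the projection formulas; once that is done, both parts follow immediately from the microsupport estimates recalled in Section~\ref{section:mts}.
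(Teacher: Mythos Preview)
Your proof is correct and follows essentially the same route as the paper: for (i) the paper also reduces to commuting $\opb{(f\times f)}$ past the $\rhom$ via non-characteristicity (using Theorem~\ref{th:opboim}(iii) directly rather than citing \cite[Prop.~5.4.14]{KS90}), and for (ii) the paper simply says it follows from Theorem~\ref{th:opboim}(i) and (iii), which is exactly what you have spelled out. One minor point: the bound $\SSi(\opb{q_{Y,i}}F)\subset T^*_YY\times\SSi(F)$ is Theorem~\ref{th:opboim}(iii) applied to the submersion $q_{Y,i}$, not (iv).
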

\begin{proof}
(i) We use the notations of Definition~\ref{def:homprime}.
We set \\ $G = \rhom(\opb{q_{Y,2}}F,\opb{q_{Y,1}}F')$. Then
$\SSi(\opb{q_{Y,1}}F')$ and $\SSi(G) \subset \SSi(F)^a \times \SSi(F')$ are
non-characteristic for $f\times f$. By Theorem~\ref{th:opboim} we deduce
$\epb{(f\times f)} \opb{q_{Y,1}}F' \simeq \opb{(f\times f)} \opb{q_{Y,1}}F'
\tens \omega_{X\times X| Y\times Y}$ and $\opb{(f\times f)} G \simeq
\epb{(f\times f)}G \tens \omega_{X\times X| Y\times Y}^{\otimes -1}$.  This
gives the third isomorphism in the following sequence:
\begin{align*}
\opb{f}\hom'(F,F')
& \simeq  \opb{f}\opb{\delta_Y}\rhom(\opb{q_{Y,2}}F,\opb{q_{Y,1}}F')  \\
& \simeq \opb{\delta_X}\opb{(f\times f)} \rhom(\opb{q_{Y,2}}F,\opb{q_{Y,1}}F') \\
& \isoto \opb{\delta_X}
\rhom(\opb{(f\times f)}\opb{q_{Y,2}}F,\opb{(f\times f)}\opb{q_{Y,1}}F') \\
& \simeq \opb{\delta_X}\rhom(\opb{q_{X,2}}\opb{f}F,\opb{q_{X,1}}\opb{f}F').
\end{align*}

\noindent
(ii) follows from Theorem~\ref{th:opboim}~(i) and Theorem~\ref{thm:SSrhom}
applied with $i = \delta_X$.
\end{proof}

\begin{lemma}\label{lem:muhom_Psi_q}
Let $M$ be a manifold and $U$ an open subset of $M\times\R$.  Let $F\in
\Derbtpn(\cor_U)$ and $G\in \Derbtp(\cor_U)$. Then 
$\alpha(F)$ induces an isomorphism
\begin{equation}\label{eq:lem-muhom_Psi_q}
\begin{split}
\mu hom(\Psi_U(F),\opb{q_U}(G)) |_{\dT^*U_\gammaof} 
&\isoto \mu hom(\opb{q_U}(F),\opb{q_U}(G)) |_{\dT^*U_\gammaof}  \\
& \simeq (\eim{{q_{U,d}}} \, \opb{q_{U,\pi}}(\mu hom(F,G)))|_{\dT^*U_\gammaof} .
\end{split}
\end{equation}
\end{lemma}
\begin{proof}
The first morphism in~\eqref{eq:lem-muhom_Psi_q} appears in the distinguished
triangle deduced from~\eqref{eq:dtgamqr2} by applying the functor $\mu
hom(\cdot, \opb{q_U}(G))$. To see that it is an isomorphism we check that $\mu
hom(\opb{r_U}(F), \opb{q_U}(G))$ vanishes on $\dT^*U_\gammaof$.  By
Proposition~\ref{prop:SSmuhom} this object is supported by $S =
\SSi(\opb{r_U}(F)) \cap \SSi(\opb{q_U}(G))$.  By Theorem~\ref{th:opboim} and by
the hypothesis on $G$ we have
$$
S \cap \dT^*U_\gammaof \subset
\{ (x,t,u;\xi,\tau,-\tau) \} \cap \{ (x,t,u;\xi,\tau,0); \; \tau>0 \}
= \emptyset .
$$
Hence we have proved the first isomorphism. The second one is a general fact
stated in~\cite[Prop.~4.4.7]{KS90} which explains the behaviour of $\mu hom$
under an inverse image by a submersion.
\end{proof}

We will consider ``boundary values'' of sheaves on $U_\gammaof$: for $G \in
\Derb(\cor_{U_\gammaof})$ its boundary value is $\opb{i} \roim{j}(G) \in
\Derb(\cor_U)$, where $i,j$ are the maps defined by
\begin{equation}\label{eq:def_imm}
  \begin{alignedat}{2}
i = i_U\cl U  &\to U \times \rpos  ,
& \qquad  (x,t) &\mapsto (x,t,0) ,   \\
j = j_U \cl U \times \rspos &\to U \times \rpos ,
 & (x,t,u) &\mapsto (x,t,u) .
  \end{alignedat}
\end{equation}

\begin{lemma}\label{lem:formules-muhom_Psi_q}
Let $F \in \Derb(\cor_U)$  and $\shf\in \Derb(\cor_{T^*U})$. We have the
canonical isomorphisms:
\begin{gather*}
\roim{\dot\pi_{U_\gammaof}{}} 
((\eim{{q_{U,d}}} \, \opb{q_{U,\pi}}(\shf))|_{\dT^*U_\gammaof})
\simeq \opb{q_U}\roim{\dot\pi_U{}}(\shf|_{\dT^*U}) , \\
  \opb{i} \roim{j} \opb{q_U} F \simeq F .
\end{gather*}
\end{lemma}
\begin{proof}
The first isomorphism follows from the base change formula.
Let us prove the second one.  Let $q_1 \cl U \times \R \to U$ be the projection.
Then $\opb{i} \roim{j} \opb{q_U} F \simeq \opb{i} \rsect_{U \times
  \rspos}(\opb{q_1} F)$.  We have $\SSi(\cor_{U \times \rspos}) \subset T^*_UU
\times T^*\R$.  Hence Theorem~\ref{th:opboim} and Corollary~\ref{cor:opboim}
give
$$
\rsect_{U \times \rspos}(\opb{q_1} F) \simeq \rhom(\cor_{U \times \rspos},
\opb{q_1} F) \simeq \cor_{U \times \rpos} \tens \opb{q_1} F
$$
and we obtain $\opb{i} \roim{j} \opb{q_U} F \simeq \opb{i} (\cor_{U \times
  \rpos} \tens \opb{q_1} F) \simeq F$.
\end{proof}

\begin{lemma}\label{lem:hompPsiq}
Let $F,G \in \Derb(\cor_U)$ and $\shf = \mu hom(\Psi_U(F),\opb{q_U}(G))
\in \Derb(\cor_{U_\gammaof})$.
Then the natural morphism
\begin{align*}
\opb{i}\roim{j} \rhom(\Psi_U(F),\opb{q_U}(G))
& \simeq \opb{i}\roim{j} \roim{\pi_{U_\gammaof}}(\shf) \\
& \to  \opb{i}\roim{j} \roim{\dot\pi_{U_\gammaof}{}}(\shf|_{\dT^*U_\gammaof} )
\end{align*}
is an isomorphism.
\end{lemma}
\begin{proof}
(i) By the triangle~\eqref{eq:SatoDTmuhom2} the cone of the morphism of the
lemma is $\opb{i} \roim{j} \hom'(\Psi_U(F),\opb{q_U}(G))$.  Let us prove that it
vanishes.

Here we consider $\gammaof$ as a subset of $\R^2$ rather than $\R \times
\rspos$.  We also consider the sum $s$, $(x,t_1,t_2,u) \mapsto (x,t_1+t_2,u)$,
as a map from $U \times\R^3$ to $M\times \R^2$ and we define $\Psi'_U(F) \in
\Derb(\cor_{M\times\R^2})$ by $\Psi'_U(F) = \reim{s}(F\etens
\cor_{\gammaof})$.  Then $\Psi_U(F) \simeq \Psi'_U(F)|_{U_\gammaof}$ and,
setting $A = \hom'(\Psi'_U(F),\opb{q_U}(G))$, we have
$$
\roim{j} \hom'(\Psi_U(F),\opb{q_U}(G)) \simeq \rsect_{M\times\R\times\rspos} A .
$$
Hence we only have to prove $\opb{i} \rsect_{M\times\R\times\rspos} A \simeq 0$.

\medskip\noindent
(ii) Setting $V = (U\times \mo]-\infty,0]) \cup U_\gammaof$ we see that $s$ is
proper as a map from $\opb{s}(V) \cap (U \times \ol{\gammaof})$ to $V$.  Hence
we can use Theorem~\ref{th:opboim} to bound $\SSi(\Psi'_U(F)|_V)$.  We have
$\SSi(\cor_\gammaof) \subset \R^2\times C$, where $C$ is the subset of
$(\R^2)^*$ given by $C = \{(\tau,\sigma)$; $-\tau \leq \sigma \leq 0\}$.  We
deduce that $\SSi(\Psi'_U(F)|_V) \subset T^*M \times (\R^2\times C)$.  We also
have $\SSi(\opb{q_U}(G)) \subset \{\sigma = 0\}$ and
Lemma~\ref{lem:im_inv_homprime}~(ii) then implies $\SSi(A) \subset \{\sigma \geq
0\}$.

\medskip\noindent
(iii) By Corollary~\ref{cor:opboim}~(ii), used with $F =
\cor_{M\times\R\times\rspos}$ and $G=A$ we deduce
$\rsect_{M\times\R\times\rspos} A \simeq A_{M\times\R\times\rpos}$. It follows
that $\opb{i} \rsect_{M\times\R\times\rspos} A \simeq \opb{i} A$.  By
Lemma~\ref{lem:im_inv_homprime}~(i) we have $\opb{i} A \simeq \hom'(\opb{i}
\Psi'_U(F),\opb{i}\opb{q_U}(G))$. 
Since $\cor_\gammaof |_{\R \times \{0\}} =0$, the base change formula implies
$\opb{i} \Psi'_U(F) \simeq 0$ and this concludes the proof.
\end{proof}

\begin{proposition}\label{prop:muhom_Psi_q}
Let $M$ be a manifold and $U$ an open subset of $M\times\R$.  Let $F\in
\Derbtpn(\cor_U)$ and $G\in \Derbtp(\cor_U)$. Then we have an isomorphism
\begin{equation}\label{eq:muhom_Psi_q-vers-hom2}
\opb{i} \roim{j}  \rhom(\Psi_U(F),\opb{q_U}(G))  \isoto 
 \roim{\dot\pi_U{}}( \mu hom(F,G)|_{\dT^*U} ) 
\end{equation}
which is functorial in $F$ and $G$.
\end{proposition}
\begin{proof}
By Lemma~\ref{lem:hompPsiq}  the left hand side
of~\eqref{eq:muhom_Psi_q-vers-hom2} is isomorphic to
$$
\opb{i}\roim{j} \roim{\dot\pi_{U_\gammaof}{}}(
\mu hom(\Psi_U(F),\opb{q_U}(G)) |_{\dT^*U_\gammaof} ) .
$$
By Lemma~\ref{lem:muhom_Psi_q} this is again isomorphic to
$$
\opb{i}\roim{j} \roim{\dot\pi_{U_\gammaof}{}}
(\eim{{q_{U,d}}} \, \opb{q_{U,\pi}}(\mu hom(F,G))|_{\dT^*U_\gammaof} )
$$
and we conclude with  Lemma~\ref{lem:formules-muhom_Psi_q}.
\end{proof}

\begin{theorem}\label{thm:muhom=hompsi}
Let $M$ be a manifold and $U$ an open subset of $M\times\R$.  Let $F\in
\Derbtpn(\cor_U)$ and $G\in \Derbtp(\cor_U)$. Then we have an isomorphism
$$
\opb{i}\roim{j} \rhom(\Psi_U(F),\Psi_U(G))
\isoto  \roim{\dot\pi_U{}}(\mu hom(F,G)|_{\dT^*U} )
$$
which is functorial in $F$ and $G$.
\end{theorem}
\begin{proof}
(i) The morphism of the theorem is induced by~\eqref{eq:muhom_Psi_q-vers-hom2} and
the morphism $\alpha(G) \cl \Psi_U(G) \to \opb{q_U}(G)$.
Since~\eqref{eq:muhom_Psi_q-vers-hom2} is an isomorphism, we only have to show
that
$$
\opb{i}\roim{j} \rhom(\Psi_U(F),\Psi_U(G))
\to \opb{i} \roim{j}  \rhom(\Psi_U(F),\opb{q_U}(G)) 
$$
also is an isomorphism, that is, that its cone vanishes.  By the distinguished
triangle~\eqref{eq:dtgamqr2} its cone is $A = \opb{i}\roim{j} \rhom(\Psi_U(F),
\opb{r_U}(G))$.

\smallskip\noindent
(ii) For a given $(x,t) \in U$ and $k\in \Z$ we have
\begin{equation}\label{eq:germopbiroimj}
  H^k(A)_{(x,t)} \simeq \varinjlim_W \Hom(\Psi_U(F)|_W, \opb{r_U}(G)|_W[k]),
\end{equation}
where $W$ runs over the open subsets of $M\times\R\times\rspos$ such that
$\ol{W}$ is a neighborhood of $(x,t,0)$ in $U\times [0,+\infty[$.  We may take
$W = V_\gammaof$, where $V$ runs over the open neighborhoods of $(x,t)$ in $U$.
By~\eqref{eq:restr-PsiU-ouvert} we have $\Psi_U(F)|_{V_\gammaof} \simeq
\Psi_V(F|_V)$.  We also have $\opb{r_U}(G)|_{V_\gammaof} \simeq
\opb{r_V}(G|_V)$.  Since $\epb{r_V} \simeq \opb{r_V}[1]$, the adjunction
$(\reim{r_V}, \epb{r_V})$ gives
$$
\Hom(\Psi_V(F|_V), \opb{r_V}(G|_V)[k])
 \simeq \Hom(\reim{r_V}\Psi_V(F|_V), G|_V[k-1]).
$$
By Lemma~\ref{lem:annulation_qr} we have $\reim{r_V}\Psi_V(F|_V) \simeq 0$ and
we deduce the vanishing of~\eqref{eq:germopbiroimj} for all $(x,t)$ in $U$.
Hence $A \simeq 0$, as required.
\end{proof}

If $Z$ is a compact subset of $U$ and $\shf \in \Derb(\cor_{U_\gammaof})$, we
have the isomorphism $H^k(Z; (\opb{i}\roim{j} \shf) |_Z) \simeq \varinjlim_W
H^k(W;\shf)$ where $W$ is open and $Z \subset \ol{W}$. We deduce:

\begin{corollary}\label{cor:muhom=hompsi}
In the situation of Theorem~\ref{thm:muhom=hompsi} let $Z$ be a compact subset
of $U$ and let $k\in \Z$.  We have
$$
H^k(\opb{\dot\pi_U}(Z); \mu hom(F,G))
\simeq \varinjlim_W \Hom(\Psi_U(F)|_W,\Psi_U(G)[k]|_W) ,
$$
where $W$ runs over the open subsets of $M\times\R\times\rspos$ such that
$\ol{W}$ is a neighborhood of $Z$ in $U \times [0,+\infty[$.
\end{corollary}

\part{Quantization of Lagrangian submanifolds}
\label{part:quant}

In this part and the following we consider a a manifold $M$ and a closed conic
Lagrangian submanifold $\Lambda \subset T^*_{\tau >0}(M\times \R)$ which is the
cone over a compact exact Lagrangian submanifold of $T^*M$ (see
Lemma~\ref{lem:cond_Lambda_exact} below).  We will prove in
Theorem~\ref{thm:quant_canon} that there exists a unique simple sheaf $F$ along
$\Lambda$ such that $\SSi(F) =\Lambda$ and $F|_{M \times \{t_0\}} \simeq 0$ for
$t_0\ll 0$ and $F|_{M \times \{t_0\}} \simeq \cor_M$ for $t_0\gg 0$.  We recover
known results on the topology of $\Lambda$, namely that its Maslov class and
relative Stiefel-Whitney class vanish and that the projection to $M$ is a
homotopy equivalence.

We proceed in several steps.  Let $m_\Lambda$ be the Maslov class of $\Lambda$.
We first assume $\cor = \Z/2\Z$ and construct $F \in
\Derb_{(\Lambda_0)}(\cor_{M\times\R})$ for open subsets $\Lambda_0 \subset
\Lambda$ such that $m_\Lambda|_{\Lambda_0} = 0$.  We do this by a gluing
procedure similar to the gluing of perverse sheaves on a complex manifolds. We
do not work directly on $M\times\R$. In fact we glue the sheaves $\Psi_U(F_i)$
on $M\times\R\times \mo]0,\varepsilon[$ (and take the restriction to $M \times
\R \times \{u\}$ at the end of the construction).  Then $\dot\SSi(F)$ has a
bound given by $\Lambda^+ \eqdot q_d\opb{q_\pi}(\Lambda) \cup
r_d\opb{r_\pi}(\Lambda)$ as in Lemma~\ref{lem:SSPsiF}.  Doing this for two
subsets $\Lambda_0, \Lambda_1 \subset \Lambda$ such that $m_\Lambda|_{\Lambda_0}
= m_\Lambda|_{\Lambda_1} = 0$ and $\Lambda_0 \cup \Lambda_1 = \Lambda$, we can
construct $F \in \OrbL{\Lambda^+}(\cor_{M\times\R \times \mo]0,\varepsilon[})$.
Then $F_u \eqdot F|_{M \times \R \times \{u\}}$ has microsupport $\Lambda_u =
\Lambda \cup T'_u(\Lambda)$ where $T'_u$ is the translation by $u$ is the factor
$\R$ of $M\times \R$.

We can find a Hamiltonian isotopy $\Phi$ of $\dT^*(M\times \R)$ such that
$\Phi_s(\Lambda) = \Lambda$ and $\Phi_s(T'_u(\Lambda)) = T'_{u+s}(\Lambda)$ for
$s,u >0$.  Using~\cite{GKS10} we can associate with $\Phi$ a sheaf on $(M\times
\R)^2$ whose composition with $F_u$ gives $F_{u+s}$ with microsupport
$\Lambda_{u+s}$. For $s\gg 0$ there exists $A\in \R$ such that $\Lambda \subset
T^*(M \times \mo]-\infty,A[)$ and $T'_{u+s}(\Lambda) \subset T^*(M \times
\mo]A,+\infty[)$.  Then $F_{u+s}|_{M \times \mo]-\infty,A[}$ is our quantization
(using a suitable diffeomorphism $\R \simeq \mo]-\infty,A[$).  The result at
this step is stated in Theorem~\ref{thm:quantOrb}: for any $\shf\in
\kss^\orb(\cor_\Lambda)$ there exists $F \in \OrbL{\Lambda}(\cor_{M\times \R})$
such that $\kssfunc^\orb_{\Lambda}(F) \simeq \shf$.

The category $\Orb(\cor_{M\times \R})$ carries of course less information than
$\Derb(\cor_{M\times \R})$ but, as we saw in Lemma~\ref{lem:equiv_loc-Oloc}, the
monodromy of locally constant objects is not lost.  The object $F_+ = F|_{M
  \times \{t_0\}}$ for $t_0\gg 0$ is locally constant (since $\Lambda \subset
T^*(M \times \mo]-\infty,A[)$).  Using some results of microlocal sheaf theory
we can see that $\Hom(F,F') \simeq \Hom(F_+, F'_+)$ for $F, F' \in
\OrbL{\Lambda}(\cor_{M\times \R})$.  Then we deduce a relation between the
monodromy of $\shf\in \kss^\orb(\cor_\Lambda)$ along $\Lambda$ and the monodromy
of $F_+$ along $M$: we prove that the morphism $\pi_1(\Lambda) \to \pi_1(M)$ is
injective.

The last result implies that, for a suitable cyclic cover $r\cl M' \to M$, the
components of $r^*(\Lambda)$ have Maslov class zero.  Then we can quantize them
by $F' \in \Der(\cor_{M'\times \R})$.  We see that a non zero Maslov class for
$\Lambda$ would imply that $F'$ is unbounded, though $F'_+$ is locally constant
and locally bounded, hence bounded. This gives a contradiction and shows that
$m_\Lambda =0$.

Since $m_\Lambda =0$, what we have done in $\Orb(\cor_{M\times \R})$ can be
performed in $\Derb(\cor_{M\times \R})$ (still with $\cor = \Z/2\Z$).  The
relation $\Hom(F,F') \simeq \Hom(F_+, F'_+)$ and
Corollary~\ref{cor:muhom=hompsi} then imply $\rsect(M;\cor_M) \simeq
\rsect(\Lambda;\cor_\Lambda)$.  In particular $H^2(M;\Z/2\Z) \simeq
H^2(\Lambda;\Z/2\Z)$.  Hence the relative Stiefel-Whitney class of $\Lambda$ is
the inverse image of a class $c \in H^2(M;\Z/2\Z)$.  Working with $c$-twisted
sheaves we can define a quantization $F$ with coefficients $\Z$.  Then $F_+$ is
a non zero $c$-twisted locally constant sheaf and this implies $c=0$.  Hence the
relative Stiefel-Whitney class also vanishes and we can finally construct a
quantization of $\Lambda$. We deduce $\rsect(M;\cor_M) \simeq
\rsect(\Lambda;\cor_\Lambda)$ for any ring $\cor$.  We can also prove an
equivalence between local systems on $M$ and $\Lambda$. Hence $\pi_1(\Lambda)
\simeq \pi_1(M)$ and the projection to $\Lambda \to M$ is a homotopy
equivalence.

\section{Deformation of the microsupport}\label{sec:def_micsup}

We recall here a result of~\cite{GKS10} which says that a deformation of a
microsupport of a sheaf by some Hamiltonian isotopy induces a ``deformation'' of
the sheaf. In particular, for a given conic Lagrangian submanifold $\Lambda
\subset \dT^*N$ and a homogeneous Hamiltonian isotopy $\phi$ of $\dT^*N$, the
categories $\Derb_\Lambda(\cor_N)$ and $\Derb_{\phi(\Lambda)}(\cor_N)$ are
equivalent.  The same result is used in Corollary~\ref{cor:restr_it_equiv} to
extend a sheaf with microsupport $\Lambda_\varepsilon \eqdot
(q_d\opb{q_\pi}(\Lambda) \sqcup r_d\opb{r_\pi}(\Lambda)) \cap T^*(M\times \R
\times \mo]0,\varepsilon[)$, for small $\varepsilon$, to a sheaf with
microsupport $\Lambda_\varepsilon$, for all $\varepsilon$.

\medskip

Let $N$ be a manifold and let $I$ be an open interval of $\R$ and let $u_0 \in
I$ be given.  We consider a homogeneous Hamiltonian isotopy $\phi\cl
\dT^*N\times I\to \dT^*N$ of class $C^\infty$, that is, $\phi$ is a
$C^\infty$-map and, denoting by $\phi_u \cl \dT^*N\times \{u\}\to \dT^*N$ the
restriction at time $u$, we have
\begin{itemize}
\item [(i)]  $\phi_{u_0} = \id_{\dT^*N}$,
\item [(ii)] $\phi_u$ is a homogeneous symplectic isomorphism for each $u\in I$.
\end{itemize}
We let $\Gamma'_\phi = \{(\phi(x;\xi),(x;-\xi),u)$; $(x,\;\xi) \in \dT^*N$,
$u\in I\}$ be the union of the graphs of $\phi_u$, $u\in I$. This is a subset of
$(\dT^*N^2) \times I$.  We can check that there exists a unique closed conic
Lagrangian submanifold $\Gamma_\phi \subset \dT^*(N^2\times I)$ which is
identified with $\Gamma'_\phi$ through the projection induced by $T^*I\to I$:
\begin{equation}\label{eq:def_Gamma_phi}
\vcenter{\xymatrix@R=5mm@C=1.5cm{
\Gamma_\phi \ar@{^{(}->}[r] \ar_=[d] & \dT^*(N^2\times I) \ar[d] \\
\Gamma'_\phi \ar@{^{(}->}[r]  &  (\dT^*N^2) \times I .}}
\end{equation}
Now we let $L_{u_0} \subset \dT^*N$ be a closed conic subset and we set $L_u =
\phi_u(L_0)$, for all $u\in I$.  We consider the disjoint union of these
subsets, say $L' = \bigsqcup_{t\in I} (L_u \times \{u\})$, as a subset of
$(\dT^*N) \times I$.  We define
\begin{equation}\label{eq:def_L}
L = \Gamma_\phi \circ L_{u_0} \quad \subset \quad \dT^*(N\times I),
\end{equation}
which is a closed conic subset whose projection to $(\dT^*N) \times I$ is $L'$.
For a given $u \in I$ we let $i_u \cl N \to N\times I$ be the inclusion $x
\mapsto (x,u)$.  Then $L$ is non-characteristic for $i_u$ and we have $L_u =
(i_u)_d(\opb{(i_u)_\pi}(L))$, for any $u\in I$.  By Theorem~\ref{th:opboim} the
inverse image of sheaves by $i_u$ gives a functor
\begin{equation}\label{eq:restr_iu}
  \opb{i_u} \cl \Derlb_L(\cor_{N\times I}) \to  \Derlb_{L_u}(\cor_N).
\end{equation}

\begin{proposition}{\rm(Prop.~3.12 of~\cite{GKS10})}
\label{prop:restr_it_equiv}
For any $u\in I$ the functor~\eqref{eq:restr_iu} is an equivalence of
categories.
\end{proposition}

\begin{remark}
\label{rem:hamisot-kernel}
Proposition~\ref{prop:restr_it_equiv} has the following consequence (which is
also stated in~\cite{GKS10} as corollary of the main theorem): the categories
$\Derlb_{L_0}(\cor_N)$ and $\Derlb_{L_u}(\cor_N)$ are equivalent.  In the same
way, the categories $\Derlb_{(L_0)}(\cor_N)$ and $\Derlb_{(L_u)}(\cor_N)$ are
equivalent. Hence, to prove to prove the existence of an object in
$\Derlb_{\Lambda}(\cor_N)$ or $\Derlb_{(\Lambda)}(\cor_N)$, we may assume that
$\Lambda$ is in a generic position.
\end{remark}

We will use Proposition~\ref{prop:restr_it_equiv} in a case where the set $L$
of~\eqref{eq:restr_iu} is obtained from a Lagrangian submanifold of $T^*_{\tau
  >0}(M\times \R)$ which satisfies the equivalent conditions of the following
easy lemma.

\begin{lemma}\label{lem:cond_Lambda_exact}
Let $\Lambda$ be a closed conic Lagrangian submanifold of $T^*_{\tau >0}(M\times
\R)$.  The following conditions are equivalent:
\begin{itemize}
\item [(i)] the map $T^*_{\tau >0}(M\times\R) \to T^*M$, $(x,t;\xi,\tau) \mapsto
  (x;\xi/\tau)$, induces an injection $\Lambda/\rspos \hookrightarrow T^*M$ and
  $\Lambda/\rspos$ is compact,
\item [(ii)] there exists a compact exact Lagrangian submanifold
  $\widetilde\Lambda \subset T^*M$ and $f\cl \widetilde\Lambda \to \R$ such that
  $df = \alpha_M|_{\widetilde\Lambda}$ and
$$
\Lambda = \{(x,t;\xi,\tau);\; \tau>0, \; (x;\xi/\tau) \in \widetilde\Lambda,
\; t = -f(x;\xi/\tau) \} .
$$
\end{itemize}
\end{lemma}

For $u\in\R$ we define the translation $T_u \cl M\times\R \to M\times\R$, $(x,t)
\mapsto (x,t+u)$. We denote by $T'_u \cl T^*(M\times\R) \to T^*(M\times\R)$,
$(x,t;\xi,\tau) \mapsto (x,t+u;\xi,\tau)$, the induced map on the cotangent
bundle.  We also introduce some notations, for $\Lambda \subset
T^*_{\tau >0}(M\times\R)$:
\begin{equation}\label{eq:def-Lambdaplus}
\begin{split}
\Lambda_u &= \Lambda \cup T'_u(\Lambda) \quad \subset
 \quad T^*_{\tau >0}(M\times\R), \quad \text{for $u>0$,}  \\
\Lambda^+ &= q_d\opb{q_\pi}(\Lambda) \sqcup r_d\opb{r_\pi}(\Lambda)
\quad \subset \quad T^*_{\tau >0}(M\times\R\times\rspos) .
\end{split}
\end{equation}
By Lemma~\ref{lem:SSPsiF}, if $U\subset M\times\R$ and $F \in \Derbtpn(\cor_U)$
is such that $\dot\SSi(F) \subset \Lambda$, we have $\dot\SSi(\Psi_U(F)) \subset
\Lambda^+$.  In particular $\Lambda^+$ is the natural bound of the microsupport
of the object given by Proposition~\ref{prop:quant_ouvert1}.  We remark that
$\Lambda^+$ is non-characteristic for the inclusions $i_u$, $u>0$, and that
$\Lambda_u = (i_u)_d(\opb{(i_u)_\pi}(\Lambda^+))$.

\begin{lemma}\label{lem:isotopy_transl}
Let $\Lambda$ be a closed conic Lagrangian submanifold of $T^*_{\tau >0}(M\times
\R)$ which satisfies the conditions of Lemma~\ref{lem:cond_Lambda_exact}.
Then there exists a homogeneous Hamiltonian isotopy $\phi\cl \dT^*(M\times\R)
\times \rspos \to \dT^*(M\times\R)$ such that $\phi_1 = \id$ and, using the
notations~\eqref{eq:def_Gamma_phi} and~\eqref{eq:def-Lambdaplus}, we have
$\Gamma_\phi \circ \Lambda_1 = \Lambda^+$.  In particular $\phi_u(\Lambda_1) =
\Lambda_u$, for all $u>0$.
\end{lemma}
\begin{proof}
(i) We set $I=\rspos$ (to distinguish between the set of parameters and the
group $\rspos$ acting in the fibers).  Since the map $(x,t;\xi,\tau) \mapsto
(x;\xi/\tau)$ induces an injection $\Lambda/\rspos \hookrightarrow T^*M$, the
sets $\Lambda$ and $T'_u(\Lambda)$ are disjoint for all $u>0$.  Considering all
$u>0$ at once we define the following closed subsets of $\dT^*(M\times \R)
\times I$:
$$
\Lambda^0 = \Lambda \times \rspos ,  \qquad
\Lambda^1 = \bigsqcup_{u>0} (T'_u(\Lambda) \times \{u\}) .
$$
Then $\Lambda^0$ and $\Lambda^1$ are disjoint and the projections
$\Lambda^i/\rspos \to I$ are proper for $i=0,1$.  Hence we can find a conic
neighborhood $\Omega$ of $\Lambda^1$ in $\dT^*(M\times \R) \times I$ such that
$\ol\Omega \cap \Lambda^0 = \emptyset$ and the projection $\ol\Omega/\rspos \to
I$ is proper, that is, $\ol\Omega \cap (\dT^*(M\times \R) \times \{u\})$ is
compact for all $u>0$.

\medskip\noindent
(ii) We choose a $C^\infty$-function $h\cl \dT^*(M\times\R) \times I \to \R$
such that,
\begin{itemize}
\item [(a)] $h_u \eqdot h|_{\dT^*(M\times\R) \times \{u\}}$ is homogeneous of
  degree $1$, for all $u\in I$,
\item [(b)] $h$ vanishes outside $\Omega$,
\item [(c)] there exists a neighborhood $\Omega'$ of $\Lambda^1$ such that
  $h(x,t;\xi,\tau) = -\tau$, for all $(x,t;\xi,\tau) \in \Omega'$.
\end{itemize}
By~(a), (b) and the compactness of $(\ol\Omega \cap (\dT^*(M\times \R) \times
\{u\})) / \rspos$, the Hamiltonian flow of $h$, say $\phi$, is defined on $I$.
We choose for initial time $t_0=1$.  Then $\phi_u$ is the identity map outside
$\Omega$ for all $u\in I$.  Since the Hamiltonian vector field of the function
$-\tau$ is $H_{-\tau} = \partial/\partial t$ we have $\phi_u(x,t;\xi,\tau) =
(x,t+u-1;\xi,\tau)$, for all $((x,t;\xi,\tau),u) \in \Omega'$.  We deduce the
formula for the unique conic Lagrangian $\Gamma_\phi$ above the graph of $\phi$
(introduced in the diagram~\eqref{eq:def_Gamma_phi}):
\begin{align*}
&\Gamma_\phi \cap (\dT^*(M\times\R) \setminus \Omega)^2 \times T^*I
 =  \{((x,t;\xi,\tau),(x,t;-\xi,-\tau),(u;0))\} ,\\
&\Gamma_\phi \cap (\Omega')^2 \times T^*I
 = \{((x,t+u;\xi,\tau),(x,t;-\xi,-\tau),(u;-\tau))\} .
\end{align*}
Since $\Lambda^0 \subset (\dT^*(M\times\R) \setminus \Omega)$ and
$\Lambda^1 \subset \Omega'$ the lemma follows.
\end{proof}

\begin{corollary}\label{cor:restr_it_equiv}
Let $\Lambda$ be a closed conic Lagrangian submanifold of $T^*_{\tau >0}(M\times
\R)$ which satisfies the conditions of Lemma~\ref{lem:cond_Lambda_exact} (or the
conclusions of Lemma~\ref{lem:isotopy_transl}).  Let $\Lambda_u$, $u>0$, and
$\Lambda^+$ be the sets defined in~\eqref{eq:def-Lambdaplus}.
Then we have:

\noindent
{\rm (i)} The inverse image functor induces an equivalence of categories
\begin{equation}\label{eq:restr_iu2}
  \opb{i_u} \cl \Derlb_{\Lambda^+}(\cor_{M\times \R\times \rspos})
 \to  \Derlb_{\Lambda_u}(\cor_{M\times \R}),
\end{equation}
for any $u>0$. In particular, for all $F,G \in
\Derlb_{\Lambda^+}(\cor_{M\times\R\times \rspos})$ we have
\begin{equation}\label{eq:hLambda-hLambdaI2}
  \RHom(F,G) \isoto \RHom(F|_{M\times \R \times \{u\}},
G|_{M\times \R \times \{u\}}) .
\end{equation}

\smallskip\noindent
{\rm (ii)} The restriction functor induces an equivalence of categories
\begin{equation}\label{eq:restr_iu3}
\Derlb_{\Lambda^+}(\cor_{M\times\R\times \rspos}) \to
\Derlb_{\Lambda^+}(\cor_{M\times\R\times\mo]0,u[})
\end{equation}
for any $u>0$. In particular, for all $F,G \in
\Derlb_{\Lambda^+}(\cor_{M\times\R\times \rspos})$ we have
\begin{equation}\label{eq:hLambda-hLambdaI}
  \RHom(F,G) \isoto \RHom(F|_{M\times \R \times \mo]0,u[},
G|_{M\times \R \times \mo]0,u[}) .
\end{equation}
\end{corollary}
\begin{proof}
We choose $0<u<u'$. By Proposition~\ref{prop:restr_it_equiv} and
Lemma~\ref{lem:isotopy_transl}, applied with $I=\rspos$ or $I=\mo]0,u'[$, we
obtain that the functors $A$ and $B$ in the following diagram
$$
\xymatrix@C=0mm{
\Derlb_{\Lambda^+}(\cor_{M\times\R\times \rspos}) 
\ar[rr]^R \ar[dr]_A 
&& \Derlb_{\Lambda^+}(\cor_{M\times\R\times\mo]0,u'[}) \ar[dl]^B \\
& \Derlb_{\Lambda_u}(\cor_{M\times \R})
}
$$
are equivalences of categories. It follows that the restriction functor $R$ is
an equivalence of categories.  The functor $A$ is~\eqref{eq:restr_iu2} and the
functor $R$ is~\eqref{eq:restr_iu3}.  Hence this proves the proposition.
\end{proof}

\begin{corollary}\label{cor:Hom-isomorphes}
We keep the hypothesis of Corollary~\ref{cor:restr_it_equiv}.
Then for any $F_1,F_2 \in \Derb_{\Lambda}(\cor_{M\times\R})$ we have
isomorphisms
\begin{equation*}
\RHom(F_1,F_2)  \isoto \RHom(\opb{q}F_1, \opb{r}F_2) 
 \isoto \RHom(F_1, \oim{T_u}F_2)  ,
\end{equation*}
for any $u\geq 0$.
\end{corollary}
\begin{proof}
Since $r$ is a submersion with fibers diffeomorphic to $\rspos$ we have
$\RHom(F_1,F_2) \isoto \RHom(\opb{r}F_1, \opb{r}F_2)$.  By hypothesis we have
$\Lambda \subset \{\tau>0\}$ and we can consider the distinguished
triangle~\eqref{eq:dtgamqr2} (for $U=M\times\R$).  Applying $\RHom(\cdot,
\opb{r}F_2)$ to~\eqref{eq:dtgamqr2} we obtain the triangle
\begin{equation}\label{eq:Hom-isomorphes1}
\begin{split}
\RHom(\opb{r}F_1, \opb{r}F_2) \to {}&\RHom(\opb{q}F_1, \opb{r}F_2) \\
& \to \RHom(\Psi_{M\times\R} (F_1), \opb{r}F_2) \to[+1] .
\end{split}
\end{equation}
By Lemma~\ref{lem:annulation_qr}~(ii) we have $\reim{r}(\Psi_{M\times\R}(F_1))
\simeq 0$. Hence the third term in~\eqref{eq:Hom-isomorphes1} vanishes and we
obtain the first isomorphism of the corollary.  The second one follows
from~\eqref{eq:hLambda-hLambdaI2} applied with $F = \opb{q}F_1$ and
$G=\opb{r}F_2$.
\end{proof}

In the situation of Corollary~\ref{cor:restr_it_equiv} we will need a stronger
condition than ``locally bounded'' on the objects of
$\Derlb_{\Lambda^+}(\cor_{M\times \R\times \rspos})$.  This is precised in the
following lemma.

\begin{lemma}\label{lem:restr_eq_bounded}
We keep the hypothesis and notations of Corollary~\ref{cor:restr_it_equiv}.
Let $U \subset M$ be a connected relatively compact open subset and let
$\varepsilon>0$ be given.  Let $F \in \Derlb_{\Lambda^+}(\cor_{M\times \R\times
  \rspos})$.  We assume that $F|_{U \times ]-A,A[\times ]0,\varepsilon[}$ is
bounded for any $A>0$.  Then $F|_{U\times \R\times \rspos} \in
\Derb(\cor_{U\times \R\times \rspos})$.
\end{lemma}
\begin{proof}
We choose $A>0$ such that $\dot\pi_{M\times\R}(\Lambda) \subset M \times
\mo]-A,A[$.  Then we also have $\dot\pi_{M\times\R}(\Lambda) \subset M \times
\mo]-A,A[$. We choose $B,C$ such that $A<B<C$.  We introduce the following open
subsets of $U\times \R \times \rspos$:
\begin{align*}
U_1 &= \opb{r}(U \times \mo]A,+\infty[),
\quad\qquad U_2 = \opb{q}(U \times \mo]-\infty,-A[), \\
U_3 &= \opb{r}(U \times \mo]-\infty,-A[) 
\cap \opb{q}(U \times \mo]A,+\infty[), \\
V_1 &= \opb{r}(U \times \mo]-B,B[) \cap (U \times \R \times \mo]2B,+\infty[), \\
V_2 &= U \times \mo]-B,B\mc[ \times \mo]2B,+\infty[ , \\
W_1 &= U \times \mo]-C,3C\mc[ \times \mo]0,\varepsilon[  ,
\qquad W_2 = U \times \mo]-C,3C\mc[ \times \mo]\varepsilon/2,2C[ .
\end{align*}
By hypothesis $F|_{W_1}$ is bounded. Since $\ol{W_2}$ is compact we also know
that $F|_{W_2}$ is bounded. We have $\dot\pi_{M\times\R}(\Lambda) \cap U_i =
\emptyset$ for $i=1,2,3$ and we deduce that $F$ is locally constant on $U_i$,
$i=1,2,3$. Moreover the sets $U_1,U_2,U_3$ are connected and meet $W_2$. We
deduce that $F|_{U_i}$ is bounded, $i=1,2,3$.

By Theorem~\ref{th:opboim} the restriction $F|_{V_1}$ is of the type
$\opb{r}(F_1)$ for some $F_1 \in \Derlb(\cor_{U \times \mo]-B,B[})$.  We set $N
= V_1 \cap (M \times \R \times \{C+B\})$.  Then $N \subset W_2$ and we
deduce that $F|_N$ is bounded. It follows that $F_1$ and then $F|_{V_1}$ are
bounded.  The same argument shows that $F|_{V_2}$ is bounded.
Since $U\times \R\times \rspos$ is covered by the sets $U_i$, $V_i$, $W_i$ this
concludes the proof.
\end{proof}

\section{Genericity hypothesis}

We consider a manifold $M$ and a closed conic Lagrangian submanifold $\Lambda
\subset T^*_{\tau >0}(M\times \R)$. Our aim is to find $F\in
\Derb(\cor_{M\times\R})$ such that $\dot\SSi(F) = \Lambda$. Since the Maslov
class of $\Lambda$ could a priori be non zero and give a topological obstruction
for the existence of such an $F$, we first restrict to an open subset
$\Lambda_0$ of $\Lambda$ and look for $F\in
\Derb_{(\Lambda_0)}(\cor_{M\times\R})$.  The local geometry is easy if we ask
that any $p \in \partial\Lambda_0$ has a neighborhood where we have $\Lambda_0 =
\Lambda \cap T^*U_0$ for some $U_0 \subset M\times\R$ depending on $p$.  Then
$F$ is locally of the form $\rsect_{U_0}(F')$ for some $F'$ with $\dot\SSi(F) =
\Lambda$ (around $p$).  By Corollary~\ref{cor:opboim} we have
$\SSi(\rsect_{U_0}(F')) \subset \Lambda + N^*_{\partial U_0}$ under some
non-characteristicity hypothesis, where the interior conormal bundle
$N^*_{\partial U_0}$ is defined in~\eqref{eq:def_ext_con_bun}. We want that
$\Lambda$ be open in $\Lambda + N^*_{\partial U_0}$. This fails if $\Lambda +
N^*_{\partial U_0}$ has self-intersections. We also want that the hypothesis of
Proposition~\ref{prop:muhomFUF} are satisfied. This motivates the following
assumption.

\begin{assumption}\label{assu:bonne_pos}
Let $\Lambda \subset T^*_{\tau >0}(M\times \R)$ be a closed conic Lagrangian
submanifold and let $\Lambda_0 \subset \Lambda$ be an open subset.  We consider
the following hypothesis on $(\Lambda_0,\Lambda)$: there exists a family of open
subsets $U_i$, $i\in I$, of $M\times\R$ such that $\Lambda \subset \bigcup_{i
  \in I} T^*U_i$ and, for each $i \in I$, we have
\begin{itemize}
\item [(i)] denoting by $\Lambda_{i,j}$, $j \in J_i$, the connected components
  of $\Lambda \cap T^*U_i$ and setting $W_{i,j} = p_M(\dot\pi_{U_i}(\Lambda_0
  \cap \Lambda_{i,j}))$, $V_{i,j} = U_i \cap (W_{i,j} \times \R)$, we have
  $\Lambda_0 \cap \Lambda_{i,j} = T^*V_{i,j} \cap \Lambda_{i,j}$,
\item [(ii)] for each $j \in J_i$, the boundary $\partial V_{i,j} \cap U_i$
  is smooth and
$$
\dT^*U_i \cap \Lambda_{i,j} \cap N_{V_{i,j}}^{*e} =  \emptyset,
 \qquad
\dT^*_{\Lambda_{i,j}} \dT^*U_i \cap \dT^*_{\partial \dT^*V_{i,j}} \dT^*U_i = \emptyset,
$$
\item [(iii)] $\dT^*U_i \cap (\Lambda_{i,j} + N_{V_{i,j}}^*) \cap (\Lambda_{i,j'} +
  N_{V_{i,j'}}^*) = \emptyset$, for all $j\not= j' \in J_i$.
\end{itemize}
\end{assumption}
Our assumption on $(\Lambda,\Lambda_0)$ could as well be written: for any $x \in
M\times \R$ there exists a neighborhood $U$ of $x$ such that~(i)-(iii) hold with
$U_i$ replaced by $U$.

By~(i) we can write $\Lambda \cap T^*U_i = \bigsqcup _{j \in J_i} \Lambda_{i,j}$
and $\Lambda_0 \cap T^*U_i = \bigsqcup _{j \in J_i} \Lambda_{i,j} \cap
T^*V_{i,j}$.  As explained before the statement of the
assumption~\ref{assu:bonne_pos} we have the following bound for the
microsupports of some sheaves associated with $\Lambda_0$:
\begin{equation}\label{eq:def_cloture-ms-Lambda0}
  \Lambda_0^\clms 
= \bigcup_{i\in I} \bigcup_{j\in J_i} (\dT^*U_i \cap (\Lambda_{i,j} + N_{V_{i,j}}^*)) ,
\end{equation}
Near a point of $\partial V_{i,j} \cap U_i$, the set $V_{i,j}$ only depends on
$\Lambda_0$. Hence $\Lambda_0^\clms$ only depends on $(\Lambda_0, \Lambda)$ and
not on the choice of a family $U_i$ satisfying~\mbox{(i)-(iii)}.  By~(iii) we
have $\Lambda_0^\clms \cap \Lambda = \ol{\Lambda_0}$.

\begin{lemma}\label{lem:deform_bonne_pos}
Let $\Lambda \subset T^*_{\tau >0}(M\times \R)$ be a conic Lagrangian submanifold
such that $\Lambda/\rspos$ is compact.  Let $\Lambda_1 ,\Lambda_2$ be open
subsets of $\Lambda$ such that $\ol{\Lambda_1} \subset \Lambda_2$.
Then there exists an open subset $\Lambda_0$ of $\Lambda$ such that $\Lambda_1
\subset \Lambda_0 \subset \Lambda_2$ and $(\Lambda_0,\Lambda)$ satisfies the
assumption~\ref{assu:bonne_pos}.
\end{lemma}
\begin{proof}
We cover $\pi_{M\times \R}(\Lambda_2)$ by a finite number of open subsets
$U_i = \opb{\varphi_i}(]0,+\infty[) \times \mo]a_i,b_i[$, $i=1,\ldots,N$, where
the $\varphi_i \cl M \to \R$ are $\Cinf$ functions.  We take the $U_i$ small
enough so that, setting $I_1 = \{i$; $\Lambda_1\cap T^*U_i \not= \emptyset\}$,
we have $(\Lambda \cap \bigcup_{i\in I_1} \ol{T^*U_i}) \subset \Lambda_2$.

We let $\Lambda_{i,j}$, $j\in J_i$, be the components of $\Lambda \cap
T^*U_i$. We set $\varphi_{i,j} = \varphi_i$ and $V_{i,j} = U_i$.  By
Lemma~\ref{lem:hyp_muhomFUF_gen} we can modify $\varphi_{1,1}$ such that
\begin{equation}\label{eq:deform_bonne_pos1}
T^*_{\partial V_{1,1}}(M\times \R) \cap \Lambda = \emptyset,
\quad
\dT^*_\Lambda \dT^*(M\times\R) \cap \dT^*_{\partial \dT^*V_{1,1}} \dT^*(M\times\R)
= \emptyset .
\end{equation}
We set $\Xi_{1,1} = \Lambda + T^*_{\partial V_{1,1}}(M\times \R)$.  By
Lemma~\ref{lem:hyp_muhomFUF_gen} again, applied with the function
$\varphi_{1,2}$ and $\Lambda ' = \Lambda \cup \Xi_{1,1}$, we can modify
$\varphi_{1,2}$ such that~\eqref{eq:deform_bonne_pos1} holds with $V_{1,1}$
replaced by $V_{1,2}$ and $\Lambda$ replaced by $\Lambda \cup \Xi_{1,1}$.  We
set $\Xi_{1,2} = \Lambda + T^*_{\partial V_{1,2}}(M\times \R)$.

We go on with Lemma~\ref{lem:hyp_muhomFUF_gen} applied with the function
$\varphi_{1,3}$ and $\Lambda' = \Lambda \cup \Xi_{1,1} \cup \Xi_{1,2}$.  We
obtain $V_{1,3}$ and $\Xi_{1,3} = \Lambda + T^*_{\partial V_{1,3}}(M\times
\R)$, etc.  We modify all $V_{i,j}$ in this way by induction.  At the end we
define $\Lambda_0 = \Lambda \cap \bigcup_{i\in I_1,\, j\in J_i} T^*V_{i,j}$.
Then $\Lambda_0$ has the required property.
\end{proof}

\section{Gluing}\label{sec:gluing}

In general the objects of the derived category of sheaves on a space $X$ are not
determined by their restrictions to the open subsets of a covering of $X$.  In
particular we can not glue a family $F_i$, $i\in I$, of locally defined objects
into a global object.  Perverse sheaves are examples of objects of the derived
category of sheaves where gluing is possible (see for
example~\cite[Prop.~10.2.9]{KS90}).  More generally this is possible when the
$\rhom(F_i,F_i)$ are concentrated in non negative degrees.

Let $X$ be a manifold.  Let $\{U_i^N \}_{N\in \N}$, $i\in I$, be a finite number
of decreasing sequences of open subsets of $X$: $U_i^{N+1} \subset U_i^N$, for
all $i \in I$ and all $N\in \N$.  We set $U^N = \bigcup_{i\in I} U_i^N$.

\begin{lemma}\label{lem:gluingNsheaves}
We consider $F_i\in \Derb(\cor_{U^{N_0}_i})$, $i\in I$, defined for some given
$N_0 \in \N$, and morphisms $\varphi_{ji}\cl F_i|_{U^{N_0}_{ij}} \to
F_j|_{U^{N_0}_{ij}}$, for all $i,j \in I$, such that, $\varphi_{ii} = \id_{F_i}$
and $\varphi_{kj}|_{U^{N_0}_{ijk}}\circ \varphi_{ji}|_{U^{N_0}_{ijk}} =
\varphi_{ki}|_{U^{N_0}_{ijk}}$, for all $i,j,k \in I$.  We assume that
for any open subset $V\subset X$, any $k<0$ and any $i\in I$, we have
\begin{equation}\label{eq:gluingNsheaves1}
\varinjlim_N H^k(U^N_i \cap V; \rhom(F_i,F_i) ) \simeq 0 .
%\qquad \text{for all $k<0$ and $i\in I$.}
\end{equation}
Then there exist $N' \in \N$ and $F\in \Derb(\cor_{U^{N'}})$ together with
isomorphisms $\varphi_i\cl F|_{U^{N'}_i} \isoto F_i|_{U^{N'}_i}$, $i \in I$,
such that 
\begin{itemize}
\item [(i)] $ \varphi_{ji}|_{U^{N'}_{ij}} = \varphi_j|_{U^{N'}_{ij}} \circ
  \varphi_i^{-1}|_{U^{N'}_{ij}}$ for all $i\not= j \in I$,
\item [(ii)] for any $G\in \Derb(\cor_{U^{N'}})$ such that for any $V\subset X$,
  $k<0$ and $i\in I$
\begin{equation*}
\varinjlim_N H^k(U^N_i \cap V; \rhom(G,F_i) ) \simeq 0 ,
% \qquad \text{for all $k<0$ and $i\in I$.}
\end{equation*}
we have the exact sequence
\begin{equation}\label{eq:gluingNsheaves2}
\begin{split}
0 \to  \varinjlim_N \Hom(G|_{U^N},F|_{U^N})
\to & \bigoplus_{i \in I}  \varinjlim_N \Hom(G|_{U^N_i}, F_i|_{U^N_i}) \\
& \to \bigoplus_{i\not= j \in I} \varinjlim_N \Hom(G|_{U^N_{ij}}, F_i|_{U^N_{ij}} ).
\end{split}
\end{equation}
\end{itemize}
Moreover, for other $F'$ and $\varphi'_i$ satisfying~(i)-(ii) there exist $N''$
and an isomorphism $\varphi \cl F|_{U^{N''}} \isoto F'|_{U^{N''}}$ such that
$\varphi_i = \varphi'_i \circ \varphi$ for all $i\in I$.
\end{lemma}
\begin{proof}
(a) We proceed by induction on $|I|$.  For $i,j \in I$ we let $u_i \cl U_i^{N_0}
\to X$ and $u_{ij} \cl U_{ij}^{N_0} \to X$ be the inclusions.  Then
the morphism $\varphi_{ji}$ induces $\varphi'_{ji} \cl \roim{u_i}(F_i) \to
\roim{u_{ij}}(F_j|_{U_{ij}^{N_0}})$.  When $I = \{1,2\}$ we define $F$ by the
distinguished triangle
\begin{equation}\label{eq:gluingNsheaves3}
F \to[f]  \roim{u_1}(F_1) \oplus  \roim{u_2}(F_2)
  \to[g] \roim{u_{12}}(F_2|_{U_{12}^{N_0}}) \xto[+1]{h} ,
\end{equation}
where $g = (\varphi'_{21} ,- n_2)$ and $n_2 \cl \roim{u_2}(F_2) \to
\roim{u_{12}}(F_2)$ is the natural morphism.  The morphism $f$ induces
$\varphi_i \cl F|_{U_i^{N_0}} \to F_i$, $i=1,2$.  Since $n_2|_{U_1^{N_0}}$ and
$\varphi'_{21}|_{U_2^{N_0}}$ are isomorphisms, the restrictions of the triangle to
$U_1^{N_0}$ and $U_2^{N_0}$ split. We deduce that $\varphi_1$ and $\varphi_2$ are
isomorphisms and also that $\varphi_{21} = \varphi_1|_{U_{12}^{N_0}} \circ
\varphi_2^{-1}|_{U_{12}^{N_0}}$.

\medskip\noindent
(b) Let $G$ be given as in~(ii) and let $u_1 \cl G|_{U_1^{N}} \to
F_1|_{U_1^{N}}$ , $u_2 \cl G|_{U_2^{N}} \to F_2|_{U_2^{N}}$ be given such that
$(g \circ (u_1,u_2))|_{U_{12}^{N'}} = 0$ for some $N' \geq N$.  We have to prove
that $(u_1,u_2)$ factorizes through  $u \cl G|_{U^{N''}} \to
F|_{U^{N''}}$, for some $N'' \geq N'$, and $u$ is unique in the limit over $N$.

The existence of $u$ results from properties of distinguished triangle (namely
that $\Hom(G,\cdot)$ turn triangles into long exact sequences).  For the same
reason, if $u'$ is another factorization of $(u_1,u_2)$ defined on $U^{N''}$,
then there exists
$$
v \cl G|_{U^{N''}} \to \roim{u_{12}}(F_2|_{U_{12}^{N''}}) [-1]
$$
such that $u|_{U^{N''}} - u' = h[-1] \circ v$.  We remark that $v$ belongs to
$H^{-1}(U_2^{N''} \cap U_{12}^{N_0} ; \rhom(G,F_2))$. The condition on $G$
gives $v|_{U^{N'''}} =0$ for some $N''' \geq N''$.  Hence $u|_{U^{N'''}} =
u'|_{U^{N'''}}$, as required.

\medskip\noindent
(c) Now we assume $I = \{1,\ldots,k\}$.  We let $F_0$ be the $F$ given in~(a)
for $F_1, F_2$.  By~(b) the morphisms $\varphi_{1i}$ and $\varphi_{2i}$, for
$i>2$, factorize through a unique morphism $\varphi_{0,i} \cl F_i|_{U_{0i}^{N'}}
\to F_0|_{U_{0i}^{N'}}$, for some $N'$.  By the unicity, the compatibility
conditions on the $\varphi_{ij}$'s, for $i,j \in I$, give compatibility
conditions on the $\varphi_{ij}$'s, for $i,j \in \{0,3,\ldots,k\}$.  By the
triangle~\eqref{eq:gluingNsheaves3}, the vanishing
condition~\eqref{eq:gluingNsheaves1} on $F_1, F_2$ yields the same condition on
$F_0$.  Then the induction proceeds.
\end{proof}

\section{Quantization}

In Theorem~\ref{thm:quant} we prove the existence of a quantization of an object
$\shf \in \kss(\cor_{\Lambda})$, that is, $F \in \Derb_{\Lambda}(\cor_{M\times
  \R})$ such that $\kssfunc_{\Lambda}(F) \isoto \shf$.  We first built a
quantization in $\Derb_{(\Lambda_0)}(\cor_{M\times \R})$, or rather,
$\Derb_{(q_d\opb{q_\pi}(\Lambda_0))}(\cor_{M\times\R\times\mo]0,\varepsilon[})$
in Proposition~\ref{prop:quant_ouvert1}, for an open subset $\Lambda_0 \subset
\Lambda$.  This will be used in the case of orbit categories in
Theorem~\ref{thm:quantOrb}.  We have already seen that the existence of a non
trivial $\shf \in \kss(\cor_{\Lambda})$ implies the vanishing of the Maslov
class of $\Lambda$, whereas $\kss^\orb(\cor_{\Lambda})$ always has a simple
global object.

We will represent objects of a twisted stack $(\kss(\cor_\Lambda))_{\check a}$
as in Remark~\ref{rem:description_twKSstack}. However we will use the stronger
assumption that the microsupport of the representatives coincides with
$\Lambda$ outside the zero-section.

\begin{definition}\label{def:goodrepr_twKSstack}
Let $\Lambda \subset T^*_{\tau >0}(M\times \R)$ be a closed conic Lagrangian
submanifold and let $\Lambda_0 \subset \Lambda$ be an open subset.  A good
representative of an object $\shf \in (\kss(\cor_{\Lambda_0}) )_{\check c}$ is
the data of open subsets $U_i$ for $i \in I$, and  $\Lambda_{i,j}$ for
$i\in I$, $j \in J_i$ satisfying the assumption~\ref{assu:bonne_pos} together
with $F_{i,j} \in \Derb(\cor_{U_i})$ and $u_{jj'}\in H^{c_{jj'}}(\Lambda_0 \cap
\Lambda_{jj'};\mu hom(F_{i,j},F_{i',j'}))$ such that $\dot\SSi(F_j) \subset
\Lambda_j$, for each $i\in I$ and $j\in J_i$, and the data
$\{(F_i,u_{ij})\}_{i,j\in I}$ represent $\shf$ as in
Remark~\ref{rem:description_twKSstack}.
\end{definition}

By Lemmas~\ref{lem:simple_local_base} and~\ref{lem:deform_bonne_pos} the objects
of $(\kss(\cor_{\Lambda_0}) )_{\check c}$ have good representatives, up to
putting $\Lambda$ in a generic position.

For $\varepsilon>0$ we denote by $q\cl M\times\R \times \mo]0,\varepsilon\mc[
\to M\times\R$ the projection.  For $\Lambda \subset T^*_{\tau >0}(M\times \R)$
we have $q_d\opb{q_\pi}(\Lambda) \subset T^*_{\tau >0}(M\times \R\times
\mo]0,\varepsilon\mc[)$ and the inverse image $\opb{q}$ induces a functor,
denoted in the same way
\begin{equation}\label{eq:opbkss}
  \opb{q} \cl \kss(\cor_\Lambda) \isoto \kss(\cor_{q_d\opb{q_\pi}(\Lambda)}) ,
\end{equation}
which is an equivalence by the following argument.  We recall that a choice of
simple sheaf $F$ gives a local equivalence between $\kss(\cor_\Lambda)$ and
$\Dloc(\cor_\Lambda)$. Then $\opb{q}F$ gives an equivalence between
$\kss(\cor_{q_d\opb{q_\pi}(\Lambda)})$ and
$\Dloc(\cor_{q_d\opb{q_\pi}(\Lambda)})$ and our functor corresponds to the usual
inverse image from $\Dloc(\cor_\Lambda)$ to
$\Dloc(\cor_{q_d\opb{q_\pi}(\Lambda)})$.  Since $q_d\opb{q_\pi}(\Lambda) \simeq
\Lambda \times \mo]0,\varepsilon[$, this last functor is an equivalence.

\begin{proposition}\label{prop:quant_ouvert1}
Let $\Lambda \subset T^*_{\tau >0}(M\times \R)$ be a closed conic Lagrangian
submanifold such that $\Lambda/\rspos$ is compact and let $\Lambda_0 \subset
\Lambda$ be an open subset which satisifes the assumption~\ref{assu:bonne_pos}.
Let $U_i$, $i \in I$, and $V_{j}$, $\Lambda_{j}$, $j \in J_i$ be as in the
assumption~\ref{assu:bonne_pos}.  We set $J = \bigsqcup_{i\in I} J_i$ and $J_0 =
\{j\in J$; $\Lambda_{j} \cap \Lambda_0 \not= \emptyset\}$.  Let $\check c$ be a
\v Cech cocycle over $\Lambda_0$ associated with the covering $\{\Lambda_0 \cap
\Lambda_{j}\}_{j\in J_0}$ of $\Lambda_0$.  Let $\shf \in
(\kss(\cor_{\Lambda_0}))_{\check c}$ be an object with a good representative
$\{F_{j}$, $u_{jj'}\}_{j,j'\in J_0}$ as in
Definition~\ref{def:goodrepr_twKSstack}.  We assume that $\check c$ is a
coboundary and we choose a cochain $\check b = \{b_j\}_{j\in J_0}$ such that
$\check c|_{\{\Lambda_j\}_{j\in J_0}} = \partial \check b$. We assume that
$\shf$ is pure (see Definition~\ref{def:simple_pure}).  Then there exist
$\varepsilon>0$, $F \in
\Derb_{(q_d\opb{q_\pi}(\Lambda))}(\cor_{M\times\R\times\mo]0,\varepsilon[})$ and
isomorphisms, for all $i\in I$,
$$
\varphi_i \cl F|_{U_{i,\varepsilon}} 
\isoto \bigoplus_{j\in J_i\cap J_0} \Psi_{U_i}(\rsect_{V_j}F_j[b_j])|_{U_{i,\varepsilon}},
$$
where
$U_{i,\varepsilon} = U_{i,\gammaof} \cap (M\times\R\times\mo]0,\varepsilon[)$,
such that
\begin{itemize}
\item [(a)] $\supp(F) \subset \ol{\gammaof} \star
  \dot\pi_{M\times\R}(\ol{\Lambda_0})$ and
$$
\dot\SSi(F) \subset   q_d\opb{q_\pi}(\Lambda_0^\clms)
 \cup r_d\opb{r_\pi}( \Lambda_0^\clms) 
\cup T^*M \times T^*_{\R\times\rspos}(\R\times\rspos) ,
$$
where $\Lambda_0^\clms$ is defined in~\eqref{eq:def_cloture-ms-Lambda0},
\item [(b)] for $i,i' \in I$, the morphism $\varphi_{i'} \circ \varphi_i^{-1}
  |_{U_{ii'}}$ represents $(u_{j'j})_{(j',j) \in K_{i'i}}$ through the
  isomorphism of Theorem~\ref{thm:muhom=hompsi}, where \\ $K_{i'i} = \{ (j',j)
  \in J_{i'} \times J_i$; $\Lambda_j \cap \Lambda_{j'} \cap \Lambda_0 \not=
  \emptyset\}$,
  \item [(c)] the $\varphi_i$'s induce an isomorphism
    $\kssfunc_{q_d\opb{q_\pi}(\Lambda_0)}(F) \isoto \opb{q}(\shf)$ in
    $\kss(\cor_{q_d\opb{q_\pi}(\Lambda_0)})$, where $\opb{q}$ is defined
    in~\eqref{eq:opbkss}.
\end{itemize}
Moreover the $F$ and $\varphi_i$'s satisfying~{\rm(a)-(c)} are unique up to
isomorphism.
\end{proposition}
\begin{proof}
(i) We can assume that each $F_i$ is defined on a neighborhood, say $U'_i$, of
$\ol{U_i}$.  We can also assume that $\Lambda$ is non-characteristic for
$\partial U_i$ for each $i$. Then we have $(\cor_\Lambda)_{\ol{\dT^*U_i}} \simeq
\rsect_{\dT^*U_i}(\cor_\Lambda)$ on $\dT^*(M\times\R)$. By the
assumption~\ref{assu:bonne_pos} we also have $(\cor_\Lambda)_{\ol{\dT^*V_j}}
\simeq \rsect_{\dT^*V_j}(\cor_\Lambda)$ on $\dT^*(M\times\R)$.

\medskip\noindent
(ii) For each $i\in I$ we choose a decreasing sequence of open subsets, $U_i^N$,
$N\in \N$, of $U'_{i,\gammaof}$ such that $U_i^N$ contains $\ol{U_{i}} \times
\mo]0,1/N]$, for $N$ bigger than some $N_0$, and that $(M\times\R \times \{0\})
\cap \bigcap_{N\in \N} \ol{U_i^N} = \ol{U_i}$, where $\ol{U_i^N}$ is the closure
of $U_i^N$ in $M\times\R \times\rpos$.  We define $G_i \in \Derb(\cor_{U_i^0})$
by $G_i = \bigoplus_{j\in J_i} \Psi_{U'_i}(\rsect_{V_j}F_j[b_j])|_{U_i^0}$.

We recall from the assumption~\ref{assu:bonne_pos} that $V_j = U_i \cap (W_{i,j}
\times \R)$. We deduce $\Psi_{U_i}(\rsect_{V_j}F) \simeq \rsect_{V_{j,\gammaof}}
\Psi_{U_i}(F)$ and $\Psi_{U_i}(F_{V_j}) \simeq (\Psi_{U_i}(F))_{V_{j,\gammaof}}$
for any $F \in \Derb(\cor_{U_i})$.  For $i,i' \in I$ and $j\in J_i$, $j'\in
J_{i'}$ we obtain
\begin{align*}
\rhom(\Psi_{U_i}(\rsect_{V_j}F_j) &, \Psi_{U_{i'}}(\rsect_{V_{j'}}F_{j'}) ) \\
&\simeq
\rhom(\Psi_{U_i}((\rsect_{V_j}F_j)_{V_{j'}}) , \Psi_{U_{i'}} (F_{j'}) ) .
\end{align*}

\medskip\noindent
(iii) For $i,i' \in I$ we denote by $J_{ii'}$ the set of pairs $(j,j') \in J_i
\times J_{i'}$ such that $\Lambda_j \cap \Lambda_{j'} \cap \Lambda_0 \not=
\emptyset$.  If $(j,j') \not\in J_{ii'}$, we have
\begin{align*}
\supp (\mu hom(\rsect_{V_j}F_j, F_{j'}) )
&\subset \dot\SSi((\rsect_{V_j}F_j)_{V_{j'}}) \cap \dot\SSi(F_{j'})  \\
&\subset (\Lambda_j + \dT^*_{\partial V_{j}} U_i + \dT^*_{\partial V_{j'}} U_i )
 \cap \Lambda_{j'} = \emptyset ,
\end{align*}
by the assumption~\ref{assu:bonne_pos}~(iii).  If $(j,j') \in J_{ii'}$, then
$\Lambda_j \cap T^*U_{ii'} = \Lambda_{j'} \cap T^*U_{ii'}$ and $V_j \cap U_{ii'}
= V_{j' }\cap U_{ii'}$.  Hence $\rsect_{V_j}(F_j)_{V_{j'}} |_{U_{ii'}} \simeq
(F_j)_{V_{j}} |_{U_{ii'}}$. By Proposition~\ref{prop:muhomFUF} we have
$$
\mu hom((F_j)_{V_j}, F_{j'})|_{T^*U_{ii'}}
 \simeq \mu hom(F_j, F_{j'})_{\ol{T^*V_j}}|_{T^*U_{ii'}} .
$$
We remark that $T^*U_{ii'} \cap \Lambda_j \cap T^*V_j = T^*U_{ii'} \cap
\Lambda_{j'} \cap T^*V_{j'} = T^*U_{ii'} \cap \Lambda_j \cap \Lambda_0$. Hence
Corollary~\ref{cor:muhom=hompsi} gives
\begin{align*}
\varinjlim_N H^l&(U_{ii'}^N ; \rhom(G_i,G_{i'})) \\
& \simeq \bigoplus_{j,j'} H^l(\ol{T^*U_{ii'}} \cap \dT^*(M\times\R) ;
 \mu hom((\rsect_{V_j}(F_j))_{V_{j'}}, F_{j'}) )  \\
& \simeq \bigoplus_{(j,j') \in J_{ii'}} H^l(\ol{T^*U_{ii'}} \cap \dT^*(M\times\R) ;
 \mu hom(F_j, F_{j'})_{\ol{T^*V_j}} ) \\
& \simeq
\bigoplus_{(j,j') \in J_{ii'}} H^l(\dT^*U_{ii'} \cap \Lambda_j \cap \Lambda_0;
 \mu hom(F_j, F_{j'})) ,
\end{align*}
where the last isomorphism follows from the hypothesis in part~(i) of the proof.
Since $\shf$ is pure, $\mu hom(F_j, F_{j'})$ is concentrated in degree $0$ (and
is a locally constant sheaf on $\Lambda$).  Hence its cohomology over any subset
vanishes in negative degrees and the hypothesis of
Lemma~\ref{lem:gluingNsheaves} is satisfied.  Moreover the sections $u_{jj'}$ of
$\mu hom(F_j, F_{j'})$ induce morphisms $\varphi_{i'i} \cl G_i|_{U_{ii'}^N} \to
G_{i'}|_{U_{ii'}^N}$ for $N\gg 0$ satisfying the cocycle condition
$\varphi_{i''i'} \varphi_{i'i} = \varphi_{i''i}$.  We apply
Lemma~\ref{lem:gluingNsheaves} to glue the $G_i$ into an object $F$ defined over
the open subset $U^{N'} = \bigcup_{i\in I} U^{N'}_i$, for some $N' \gg 0$.
There exists $\varepsilon>0$ such that $U_{i,\varepsilon} \subset U^{N'}$ for
all $i\in I$.  Then the extension by $0$ of $F|_{U^{N'} \cap
  (M\times\R\times\mo]0,\varepsilon[)}$ satisfies the conclusions of the
proposition.
\end{proof}

When we have two open subsets $\Lambda'_0 \subset \Lambda_0 \subset \Lambda$
there exists a morphism between the objects given by
Proposition~\ref{prop:quant_ouvert1} for $\Lambda_0$ and $\Lambda'_0$.  The next
result follows from Proposition~\ref{prop:quant_ouvert1} and part~(ii) of
Lemma~\ref{lem:gluingNsheaves}.

\begin{proposition}\label{prop:quant_ouvert2}
We consider the situation of Proposition~\ref{prop:quant_ouvert1} and we let
$\Lambda'_0 \subset \Lambda_0$ be an open subset such that
$(\Lambda,\Lambda'_0)$ also satisfies the assumption~\ref{assu:bonne_pos}, with
respect to the same family of open subsets $U_i$, $i \in I$.  We let $V'_{j}$,
$j \in J_i$ be as in the assumption~\ref{assu:bonne_pos} and we let $\shf' \in
(\kss(\cor_{\Lambda'_0}))_{\check c}$ be the restriction of $\shf$. We let
$\varepsilon>0$, $F' \in
\Derb_{(q_d\opb{q_\pi}(\Lambda))}(\cor_{M\times\R\times\mo]0,\varepsilon[})$
and isomorphisms
$$
\varphi'_i \cl F'|_{U_{i,\varepsilon}} 
\isoto \bigoplus_{j\in J_i\cap J_0} \Psi_{U_i}(\rsect_{V'_j}F_j[b_j])|_{U_{i,\varepsilon}},
\quad i\in I,
$$
be given by Proposition~\ref{prop:quant_ouvert1}
such that 
$\kssfunc_{q_d\opb{q_\pi}(\Lambda'_0)}(F') \isoto \opb{q}(\shf')$.
Then there exists a unique morphism $u\cl F \to F'$ such that, for each $i\in
I$, $u|_{U_{i,\varepsilon}}$ is the sum of the morphisms induced by the natural
morphisms $\rsect_{V_j}F_j \to \rsect_{V'_j}F_j$, $j\in J_i\cap J_0$.
\end{proposition}

\begin{theorem}\label{thm:quant}
Let $\Lambda$ be a closed conic Lagrangian submanifold of $T^*_{\tau >0}(M\times
\R)$ which satisfies the conditions of Lemma~\ref{lem:cond_Lambda_exact}.  Then,
for any $\shf \in \kss(\cor_{\Lambda})$ there exists $F \in
\Derb(\cor_{M\times\R})$ such that $\dot\SSi(F) = \Lambda$, $F|_{M\times \{t\}}
\simeq 0$ for $t\ll 0$ and $\kssfunc_{\Lambda}(F) \simeq \shf$.
\end{theorem}
\begin{proof}
(i) By Remark~\ref{rem:hamisot-kernel} and Lemmas~\ref{lem:simple_local_base}
and~\ref{lem:deform_bonne_pos} we can assume that $\Lambda$ satisfies the
hypothesis of Proposition~\ref{prop:quant_ouvert1}.  Let $\Lambda^+ \subset
T^*_{\tau >0}(M\times\R\times\rspos)$ be the set defined
in~\eqref{eq:def-Lambdaplus}.  We consider $\varepsilon>0$ and $F_1 \in
\Derb(\cor_{M\times\R\times\mo]0,\varepsilon[})$ with $\dot\SSi(F_1) \subset
\Lambda^+ \cap \dT^*(M\times\R\times \mo]0,\varepsilon[)$ given by
Proposition~\ref{prop:quant_ouvert1}.  By the equivalence of
categories~\eqref{eq:restr_iu3} there exists $F_2 \in
\Derlb(\cor_{M\times\R\times\rspos})$ extending $F_1$ and such that
$\dot\SSi(F_2) \subset \Lambda^+$.  Since $F_1$ is bounded,
Lemma~\ref{lem:restr_eq_bounded} says that $F_2$ is also bounded.

We choose $A,B$ such that $\Lambda \subset T^*(M \times \mo]A,B\mc[)$.  By~(a)
of Proposition~\ref{prop:quant_ouvert1} we have $\supp(F_1) \subset M\times
[A,+\infty\mc[ \times \mo]0,\varepsilon[$.  We also have $\dot\SSi(F_2) \subset
\Lambda^+ \subset T^*(M \times \mo]A,+\infty\mc[ \times \rspos)$ and it follows
that $F_2$ is locally constant on $M \times \mo]-\infty,A\mc[ \times
\rspos$. Since $F_2$ coincides with $F_1$ on $M\times \R
\times\mo]0,\varepsilon[$ we obtain that $\supp(F_2) \subset [A,+\infty\mc[
\times \rspos$.

We also have $\kssfunc_{q_d\opb{q_\pi}(\Lambda)}(F_2) \isoto \opb{q}(\shf)$ in
$\kss(\cor_{q_d\opb{q_\pi}(\Lambda)})$, where $\opb{q}$ is defined
in~\eqref{eq:opbkss}.

\medskip\noindent
(ii) We choose $u> 2+B-A$ and we let $i_u \cl M \times \R \times \{u\} \to M
\times \R \times \rspos$ be the inclusion. In the notations
of~\eqref{eq:def-Lambdaplus} we have $(i_u)_d(\opb{(i_u)_\pi}(\Lambda^+)) =
\Lambda_u = \Lambda \cup T'_u(\Lambda)$.  We have $\Lambda \subset T^*(M \times
\mo]-\infty,B[)$ and $T'_u(\Lambda) \subset T^*(M \times \mo]B+2,+\infty[)$.

We set $F_3 = (\opb{i_u} F_2)|_{M \times \mo]-\infty,B+2[}$.  By~(i) we have
$\dot\SSi(F_3) \subset \Lambda$, $\supp(F_3) \subset [A,B+2\mc[$ and
$\kssfunc_{\Lambda}(F_3) \isoto \shf$.

We choose a diffeomorphism $f \cl \R \to \mo]-\infty,B+2[$ such that $f$ is the
identity on $\mo]-\infty,B[$ and we set $F = \opb{(\id_M \times f)}(F_3)$. Then
$F$ satisfies the conclusions of the theorem.
\end{proof}

\section{Restriction at infinity}

Let $\Lambda$ be a closed conic Lagrangian submanifold of $T^*_{\tau >0}(M\times
\R)$ which satisfies the conditions of Lemma~\ref{lem:cond_Lambda_exact} or,
more generally, the conclusions of Lemma~\ref{lem:isotopy_transl}.

Since $\Lambda/\rspos$ is compact we can choose $A>0$ such that $\Lambda \subset
T^*_{\tau >0}(M\times \mo]-A,A[)$.  Then, for any $F \in
\Derlb_\Lambda(\cor_{M\times\R})$, the restrictions $F|_{M\times
  \mo]-\infty,-A[}$ and $F|_{M\times \mo]A,+\infty[}$ have locally constant
cohomology sheaves.

\begin{definition}\label{def:DerlbLambdaplus}
For $F \in \Derlb_\Lambda(\cor_{M\times\R})$ we define $F_-, F_+ \in
\Derlb(\cor_{M})$ by $F_- = F|_{M\times \{-t\}}$, $F_+ = F|_{M\times \{t\}}$,
for any $t\in [A,+\infty[$.  Then $F_-, F_+$ are indeed independent of $t \in
[A,+\infty[$ and have locally constant cohomology sheaves.  We let
$\Derlb_{\Lambda,+}(\cor_{M\times\R})$ be the full subcategory of
$\Derlb_\Lambda(\cor_{M\times\R})$ consisting of the $F$ such that $F_- \simeq
0$.
\end{definition}

For $F \in \Derlb_{\Lambda,+}(\cor_{M\times\R})$ we have by definition
\begin{equation}\label{eq:F_restr-infini}
F|_{M\times ]A,+\infty[} \simeq F_+\etens \cor_{]A,+\infty[} ,
\qquad  F|_{M\times ]-\infty,-A[} \simeq 0 .
\end{equation}

\begin{lemma}\label{lem:annulation_reim-pM}
Let $F\in \Derlb(\cor_{M\times\R})$. We assume that there exists $A>0$ such that
$\supp(F) \subset M\times [-A,A]$. We also assume either
$\SSi(F) \subset T^*_{\tau\geq 0}(M\times\R)$ or
$\SSi(F) \subset T^*_{\tau\leq 0}(M\times\R)$.
Let $p_M \cl M\times\R \to M$ be the projection.
Then $\reim{p_M}(F) \simeq \roim{p_M}(F) \simeq 0$.
\end{lemma}
\begin{proof}
By base change we may assume that $M$ is a point.
Then the result follows from the ``Morse lemma'' Corollary~\ref{cor:Morse}.
\end{proof}

\begin{theorem}\label{thm:restr-infini-ff}
Let $F,F' \in \Derlb_{\Lambda,+}(\cor_{M\times\R})$. We let $F_+, F'_+ \in
\Derlb(\cor_{M})$ be their restrictions to $M\times\{t\}$, $t\gg0$, as in
Definition~\ref{def:DerlbLambdaplus}.  Then
\begin{equation}\label{eq:restr-infini-ff1}
\RHom(F,F')  \isoto \RHom(F_+,F'_+) .
\end{equation}
In particular the functor $\Derlb_{\Lambda,+}(\cor_{M\times\R}) \to
\Derlb(\cor_{M})$ given by $F \mapsto F_+$ is fully faithful and we have:
$F\simeq F'$ if and only if $F_+ \simeq F'_+$.
\end{theorem}
\begin{proof}
Let $p_M\cl M\times\R \to M$ be the projection.  Let us choose $A>0$ so
that~\eqref{eq:F_restr-infini} holds for $F$ and $F'$ and let $u>2A$.  Hence
$\supp(\oim{T_u} F') \subset M\times ]A,+\infty[$ and we obtain by
Corollary~\ref{cor:Hom-isomorphes} (applied with $M=M$, $\rec=\id_M$)
\begin{equation}\label{eq:restr-infini-ffA}
  \begin{split}
\RHom(F,F') &\simeq  \RHom(F,\oim{T_u} F') \\
& \simeq \RHom(\opb{p_M}(F_+),\oim{T_u} F')  \\
&\simeq \RHom(F_+, \roim{p_M} \oim{T_u} F') .
  \end{split}
\end{equation}
Let us set $G = (\oim{T_u} F')\tens \cor_{M\times ]-\infty, A+u[}$.
By~\eqref{eq:F_restr-infini} we know that $\oim{T_u} F'$ has locally constant
cohomology sheaves in a neighborhood of $M\times \{A+u\}$. We deduce that
$\SSi(G) \subset T^*_{\tau\geq 0}(M\times\R)$.  By
Lemma~\ref{lem:annulation_reim-pM} we obtain $\roim{p_M}(G) \simeq 0$.
By~\eqref{eq:F_restr-infini} again we have the distinguished triangle $G \to
\oim{T_u} F' \to F'_+ \etens \cor_{[A+u,+\infty[} \to[+1]$.  Hence we obtain
$\roim{p_M} \oim{T_u} F' \simeq \roim{p_M} ( F'_+ \etens \cor_{[A+u,+\infty[} )
\simeq F'_+$ and~\eqref{eq:restr-infini-ffA} translates
into~\eqref{eq:restr-infini-ff1}.
\end{proof}

\begin{theorem}\label{thm:muhom=hom}
We assume moreover that $\Lambda/\rspos$ is compact.
Let $F,F' \in \Derb_{\Lambda,+}(\cor_{M\times\R})$. Then we have an isomorphism
\begin{equation}\label{eq:muhom=hom}
\RHom(F,F')  \isoto \rsect(\Lambda; \mu hom(F,F'))  .
\end{equation}
Its composition with~\eqref{eq:restr-infini-ff1} gives a canonical isomorphism
\begin{equation}\label{eq:muhom=hom-infini1}
\RHom(F_+,F'_+) \simeq \rsect(\Lambda; \mu hom(F,F')). 
\end{equation}
\end{theorem}
\begin{proof}
The second isomorphism follows from the first and
from~\eqref{eq:restr-infini-ff1}. Let us prove that the natural
morphism~\eqref{eq:muhom=hom} is an isomorphism.  Let $\Lambda^+ \subset
T^*_{\tau >0}(M\times\R\times\rspos)$ be the set defined
in~\eqref{eq:def-Lambdaplus}.  By Lemma~\ref{lem:SSPsiF} $\Psi_{M\times \R}(F)$
and $\Psi_{M\times \R}(F')$ belong to $\Derb_{\Lambda^+}(\cor_{M\times\R\times
  \rspos})$. The isomorphism~\eqref{eq:hLambda-hLambdaI} and
Theorem~\ref{thm:muhom=hompsi} give, setting $N_\varepsilon = M\times \R \times
\mo]0,\varepsilon[$,
\begin{align*}
H^i\rsect(\Lambda; \mu hom(F,F')) &\simeq \varinjlim_{\varepsilon>0}
H^i\RHom(\Psi_{M\times \R}(F) |_{N_\varepsilon},
 \Psi_{M\times \R}(F')|_{N_\varepsilon} )) \\
&\simeq H^i\RHom(F,F'),
\end{align*}
for any $i\in \Z$. This implies~\eqref{eq:muhom=hom}.
\end{proof}

\begin{remark}\label{rem:comp_muhom=hom-infini}
We have recalled in~\eqref{eq:comp_muhom} that $\mu hom$ admits a composition
morphism (denoted by $\mucirc$ in Notation~\ref{not:mucomposition}) compatible
with the composition morphism for $\rhom$.  In particular the
isomorphism~\eqref{eq:muhom=hom} is compatible with the composition
morphisms $\circ$ and $\mucirc$.  Since~\eqref{eq:restr-infini-ff1} is
clearly compatible with $\circ$, we deduce that~\eqref{eq:muhom=hom-infini1}
also is compatible with $\circ$ and $\mucirc$.
\end{remark}

\section{The triangulated orbit category case}

We checked in Sections~\ref{sec:def_orb_cat} and~\ref{sec:micsup_orb_cat} that
the results we used in the category $\Derb(\cor_M)$ have analogs in the category
$\Orb(\cor_M)$.  In particular we have already defined a Kashiwara-Schapira
stack $\kss^\orb(\cor_{\Lambda})$ in this situation.  In the same way the
results of Part~\ref{part:quant} also hold in the triangulated orbit category,
except the gluing procedure of Section~\ref{sec:gluing}, since the hypothesis of
Lemma~\ref{lem:gluingNsheaves} makes no sense in this case.  However we can
prove the existence part of Proposition~\ref{prop:quant_ouvert1} for the
triangulated orbit category in Proposition~\ref{prop:quant_orb} below. We first
remark in Lemma~\ref{lem:decomp_deuxouverts} below that we can decompose
$\Lambda$ in two open subsets with vanishing Maslov classes.

\begin{lemma}\label{lem:decomp_deuxouverts}
Let $X$ be a compact manifold and let $c\in H^1(X;\Z_X)$.
Then there exist two open subsets $U_1, U_2$ of $X$ such that $U_1$, $U_2$ and
$U_1\cap U_2$ have a finite number of connected components and the restrictions
$c|_{U_i} \in H^1(U_i;\Z_{U_i})$ vanish, for $i=1,2$.
\end{lemma}
\begin{proof}
We represent the image of $c$ in $H^1(X;\R_X)$ by a $1$-form $\alpha$.  Let
$r\cl X' \to X$ be the universal covering of $X$ and let $f\cl X' \to \R$ be a
primitive of $r^*(\alpha)$.  Then, for any $x_1,x_2 \in X'$ such that
$r(x_1)=r(x_2)$ we have $f(x_1)-f(x_2) = \langle c,\gamma \rangle$, where
$\gamma$ is the loop at $f(x_1)$ determined by $x_1,x_2$. Hence $f(x_1)-f(x_2)$
is an integer and $f$ descends to a map $g\cl X \to S^1$ and we have
$c=g^*(\delta)$, where $\delta \in H^1(S^1;\Z_{S^1})$ is the canonical class.

We choose a covering of $S^1$ by two open intervals $I_1,I_2$ such that the
boundaries points of $I_1$ and $I_2$ are regular values of $g$.  We set $U_i =
\opb{g}(I_i)$, $i=1,2$.  Then the boundaries of $U_1$, $U_2$ and $U_1\cap U_2$
are smooth compact hypersurfaces, hence with a finite number of connected
components. We see also that the boundaries of two distinct components of $U_1$
(or $U_2$ or $U_1\cap U_2$) are disjoint. Hence $U_1$, $U_2$ and $U_1\cap U_2$
have a finite number of components.
By construction we have $c|_{U_i} = g^*(\delta|_{I_i}) = 0$.
\end{proof}

\begin{proposition}\label{prop:quant_orb}
Let $\Lambda \subset T^*_{\tau >0}(M\times \R)$ be a closed conic Lagrangian
submanifold which satisfies the conditions of Lemma~\ref{lem:cond_Lambda_exact}.
Let $\shf \in \kss^\orb(\cor_{\Lambda})$.  Then there exist $\varepsilon>0$ and
$F \in \Orb(\cor_{M\times\R\times\mo]0,\varepsilon[})$ such that
\begin{itemize}
\item [(a)] $\supp(F) \subset \ol{\gammaof} \star \dot\pi_{M\times\R}(\Lambda)$
  and $\dot\SSi(F) \subset q_d\opb{q_\pi}(\Lambda) \cup
  r_d\opb{r_\pi}(\Lambda)$,
\item [(b)] we have $\kssfunc_{q_d\opb{q_\pi}(\Lambda)}(F) \simeq \opb{q}(\shf)$
  in $\kss^\orb(\cor_{q_d\opb{q_\pi}(\Lambda)})$.
\end{itemize}
\end{proposition}
\begin{proof}
We let $m \in H^1(\Lambda;\Z_\Lambda)$ be the  Maslov class of $\Lambda$.
By Lemma~\ref{lem:decomp_deuxouverts} we can find two open subsets $\Lambda^0$,
$\Lambda^1$ of $\Lambda$ such that $m|_{\Lambda^0} = m|_{\Lambda^1} = 0$.
By Lemma~\ref{lem:deform_bonne_pos}, up to moving $\Lambda$ by a
Hamiltonian isotopy, we can choose a family of open subsets $U_i$, $i\in I$, of
$M\times\R$ such that $(\Lambda^0,\Lambda)$ and $(\Lambda^1,\Lambda)$ satisfy
the assumption~\ref{assu:bonne_pos} with respect to this family.  With the
notations of the assumption~\ref{assu:bonne_pos} we set $J= \bigsqcup J_i$ and
we set for short $\Lambda_j = \Lambda_{i,j}$.  We can assume that $\Lambda^0$
and $\Lambda^1$ are unions of some $\Lambda_j$, that is, $\Lambda^0 =
\bigcup_{j\in J^0} \Lambda_j$ and $\Lambda^1 = \bigcup_{j\in J^1} \Lambda_j$ for
$J^0,J^1 \subset J$.

Then we can decompose $\Lambda^0 \cap \Lambda^1 = \Lambda^+ \sqcup \Lambda^-$
and represent $m$ by a \v Cech cocycle $\{c_{j,j'}\}_{j,j' \in J}$ such that
$c_{j,j'} = 1$ if $\Lambda_j \cap \Lambda_{j'} \subset \Lambda^-$ and $c_{j,j'}
= 0$ else.

By Propositions~\ref{prop:quant_ouvert1} and~\ref{prop:quant_ouvert2} there
exist $\varepsilon>0$ and objects, $F^a$, of
$\Derb_{(\Lambda^a)}(\cor_{M\times\R \times \mo]0,\varepsilon[})$, for $a
=0,1,+,-$, such that $F^a$ represents $\shf|_{\Lambda^a}$ and such that we have
morphisms
\begin{equation*}
\varphi_0^+ \cl F^0 \to F^+,  \; \varphi_0^- \cl F^0 \to F^-,  
\; \varphi_1^+ \cl F^1 \to F^+, \; \varphi_1^- \cl F^1 \to F^-[1], 
\end{equation*}
which induce isomorphisms in the Kashiwara-Schapira stack.

In $\Orb(\cor_{M\times\R\times\mo]0,\varepsilon[})$ we have $F^- \simeq F^-[1]$
and we can define $F \in \Orb(\cor_{M\times\R\times\mo]0,\varepsilon[})$ by the
distinguished triangle
$$
F \to F^0 \oplus F^1 \to[\left( \begin{smallmatrix}
  \varphi_0^+ & \varphi_0^- \\   \varphi_1^+ & \varphi_1^-
\end{smallmatrix} \right)] F^+ \oplus F^- \to[+1] .
$$
Then $F$ satisfies the required properties.
\end{proof}

The results of Section~\ref{sec:def_micsup} also hold for the category $\Orb$
and we deduce the analogs of Theorems~\ref{thm:quant}
and~\ref{thm:restr-infini-ff}.

\begin{theorem}\label{thm:quantOrb}
Let $\Lambda \subset T^*_{\tau>0}(M\times\R)$ be a closed conic connected
Lagrangian submanifold which satisfies the conditions of
Lemma~\ref{lem:cond_Lambda_exact}.  Let $\shf\in \kss^\orb(\cor_\Lambda)$.
Then there exists $F \in \Orb(\cor_{M\times \R})$ such that $\dot\SSo(F) =
\Lambda$, $F|_{M\times \{t\}} \simeq 0$ for $t\ll 0$ and
$\kssfunc^\orb_{\Lambda}(F) \simeq \shf$.
\end{theorem}

\begin{theorem}\label{thm:restr-infini-ff-Orb}
Let $F,F' \in \OrbL{\Lambda}(\cor_{M\times\R})$.
We assume that $\dot\SSo(F)$, $\dot\SSo(F') \subset \Lambda$ and that
$F|_{M\times \{t\}} \simeq F'|_{M\times \{t\}} \simeq 0$ for $t\ll0$.  We
define $F_+, F'_+ \in \Orb(\cor_M)$ by $F_+ = F|_{M\times \{t\}}$, $F'_+ =
F'|_{M\times \{t\}}$, for any $t\gg0$. Then we have the isomorphism
\begin{equation}
\label{eq:restr-infini-ff-Orb1}
\Hom_{\Orb(\cor_{M\times\R})}(F,F')  \isoto \Hom_{\Orb(\cor_M)}(F_+,F'_+) .
\end{equation}
In particular $F\simeq F'$ if and only if $F_+ \simeq F'_+$.
\end{theorem}

\part{Topological consequences}

In this part we let $M$ be a connected manifold and $\Lambda \subset
T^*_{\tau>0}(M\times\R)$ a closed conic connected Lagrangian submanifold which
satisfies the conditions of Lemma~\ref{lem:cond_Lambda_exact}.  We recall that
this means that $\Lambda$ is obtained from a compact exact Lagrangian
submanifold $\widetilde\Lambda \subset T^*M$ by adding a variable.  We recover
results of~\cite{FSS08} and~\cite{A12} which say that the projection $\Lambda
\to M$ is a homotopy equivalence, assuming the vanishing of the Maslov class of
$\Lambda$, and also~\cite{Kr13} which says that, indeed, the Maslov class of
$\Lambda$ vanishes.  We also see that the relative Stiefel-Whitney class of
$\Lambda$ vanishes.

\section{Poincar\'e groups}

We let $\pi_\Lambda \cl \Lambda \to M$ be the projection to the base and we
denote by $\pi_1(\pi_\Lambda) \cl \pi_1(\Lambda) \to \pi_1(M)$ the induced
morphism of Poincar\'e groups.

\begin{proposition}\label{prop:morph-Poincare-inj}
The morphism $\pi_1(\pi_\Lambda) \cl \pi_1(\Lambda) \to \pi_1(M)$ is injectif.
\end{proposition}
\begin{proof}
(i) We set $\cor=\Z/2\Z$ and $G= \pi_1(\Lambda)$.  We let $\rho \cl G \to
GL(\cor[G])$ be the regular representation of $G$. This means that $\cor[G]$ is
the vector space with basis $\{e_g\}_{g \in G}$ and the action of $G$ is given
by $g\cdot e_h = e_{gh}$, for all $g,h \in G$.  We let $\shl_\rho$ be the local
system on $\Lambda$ with stalks $\cor[G]$ corresponding to this representation
$\rho$.

\medskip\noindent
(ii) By Proposition~\ref{prop:KSstackorb} we have a unique simple object
$\shf_0 \in \kss^\orb(\cor_\Lambda)$.  Then the functor $\mu
hom^\varepsilon(F_0,\cdot)$ induces an equivalence $\kss^\orb(\cor_\Lambda)
\isoto \loc(\cor_\Lambda)$.  We let $\shf_\rho \in \kss^\orb(\cor_\Lambda)$ be
the object associated with $\shl_\rho$ by this equivalence.  By
Theorem~\ref{thm:quantOrb}, there exist $F_0, F_\rho \in \Orb(\cor_{M\times
  \R})$ such that $\kssfunc^\orb_\Lambda(F_0) \simeq \shf_0$ and
$\kssfunc^\orb_\Lambda(F_\rho) \simeq \shf_\rho$.  We then have $\mu
hom^\varepsilon(F_0,F_\rho)|_\Lambda \simeq \shl_\rho$.  We define $L_0, L_1 \in
\Orb(\cor_M)$ by $L_0 = F_0|_{M\times \{t\}}$ and $L_1 = F_\rho|_{M\times
  \{t\}}$ for $t\gg 0$.  We let $p\cl M\times\R \to M$ be the projection and we
set $F=F_0 \epstens \opb{p}L_1$ and $F' = F_\rho \epstens \opb{p}L_0$.  Then
$F|_{M\times \{t\}} \simeq L_0 \epstens L_1 \simeq F'|_{M\times \{t\}}$ for
$t\gg 0$ and Theorem~\ref{thm:restr-infini-ff-Orb} implies
\begin{equation}\label{eq:morph-Poincare-inj1}
  F_0 \epstens \opb{p}L_1 \simeq F_\rho \epstens \opb{p}L_0 .
\end{equation}

\noindent
(iii) As in Lemma~\ref{lem:equiv_loc-Oloc}, we let $L'_i$ be the sheaf on $M$
associated with the presheaf $U \mapsto \Hom_{\Orb(\cor_U)}(\cor_U,L_i)$, for
$i=0,1$.  Then $L'_0$ and $L'_1$ are local systems on $M$. Applying
$\kssfunc^\orb_\Lambda$ to~\eqref{eq:morph-Poincare-inj1} and the equivalence
$\kss^\orb(\cor_\Lambda) \isoto \loc(\cor_\Lambda)$, we find $\opb{\pi_\Lambda}
L'_1 \simeq \shl_\rho \tens \opb{\pi_\Lambda} L'_0$.

\medskip\noindent
(iv) We let $\rho'_0$ and $\rho'_1$ be the representations of $\pi_1(M)$
corresponding to the local systems $L'_0$ and $L'_1$.  They induce
representations of $G= \pi_1(\Lambda)$, say $\rho''_0$ and $\rho''_1$, through
the morphism $\pi_1(\pi_\Lambda)$. Then the result of~(iii) gives the
isomorphism of representations of $G$, $\rho''_1 \simeq \rho \tens \rho''_0$.
We restrict these representations to the subgroup $K =
\ker(\pi_1(\pi_\Lambda))$ of $G$. Then $\rho''_0|_K$ and $\rho''_1|_K$ are
trivial representations and we deduce that $\rho|_K$ also is trivial.  Since
$\rho$ is a faithful representation of $G$, this gives $K= \{1\}$, as required.
\end{proof}

Let $r\cl M' \to M$ be a covering. The derivative of $r$ induces a covering $r'
\cl T^*M' \to T^*M$.  We let $\Lambda'_0$ be a connected component of
$\opb{r'}(\Lambda')$.  Then $\Lambda'_0 \to \Lambda$ is a covering and
$\pi_1(\Lambda'_0)$ is a subgroup of $\pi_1(\Lambda)$.  We have the commutative
diagram
\begin{equation}\label{eq:diag_Poinc-groups}
\vcenter{\xymatrix{
\pi_1(\Lambda'_0) \ar@{^{(}->}[r]  \ar[d]
  & \pi_1(\Lambda)  \ar@{^{(}->}[d]^{\pi_1(\pi_\Lambda)}  \\
\pi_1(M') \ar[r] & \pi_1(M) ,  }}
\end{equation}
where $\pi_1(\pi_\Lambda)$ is injective by
Proposition~\ref{prop:morph-Poincare-inj}.  This implies that the morphism
$\pi_1(\Lambda'_0) \to \pi_1(M')$ is injective.  In particular, if $M'$ is the
universal cover of $M$, then $\pi_1(\Lambda'_0)$ vanishes, that is, $\Lambda'_0$
is the universal cover of $\Lambda$.

We denote by $m_\Lambda$ the Maslov class of $\Lambda$, which is a group
morphism $m_\Lambda \cl \pi_1(\Lambda) \to \Z$.
\begin{corollary}\label{cor:revet-Maslov}
We assume that $m_\Lambda \not= 0$. Then there exist covering maps $f\cl M_0
\to M_1$, $g\cl M_1 \to M$ where $f$ is a cyclic cover of group $\Z$ and closed
conic connected Lagrangian submanifolds $\Lambda_i \subset \dT^*(M_i \times \R)$
for $i=0,1$, such that the derivatives of $f$ and $g$ induce a cyclic cover of
group $\Z$, $\Lambda_0 \to \Lambda_1$, and an isomorphism $\Lambda_1 \isoto
\Lambda$:
\begin{equation}\label{eq:diag_revet-Maslov0}
\vcenter{\xymatrix@C=1.5cm{
\Lambda_0  \ar[r]^{\Z} \ar[d] & \Lambda_1  \ar[r]^\sim \ar[d] 
& \Lambda  \ar[d]  \\
M_0 \ar[r]^{\Z}_f  & M_1 \ar[r]_g  & M .   }}
\end{equation}
Moreover the isomorphism $\Lambda_1 \isoto \Lambda$ identifies the Maslov
classes of $\Lambda_1$ and $\Lambda$ and the Maslov class of $\Lambda_0$ is
zero.
\end{corollary}
\begin{proof}
We set $K = \ker(m_\Lambda)$. Since $m_\Lambda \not= 0$ we have $\pi_1(\Lambda)
/ K \simeq \Z$.  We let $M'$ be the universal cover of $M$ and we define
$\Lambda'_0$ as in the diagram~\eqref{eq:diag_Poinc-groups}.  Hence $\Lambda'_0$
is the universal cover of $\Lambda$.  Since $K$ and $\pi_1(\Lambda)$ are
subgroups of $\pi_1(M)$ they act freely on $M'$.  These actions commute with
their actions on $\Lambda'_0$ through the map $\Lambda'_0 \to M$.  We obtain the
diagram of quotient manifolds
\begin{equation*}
\vcenter{\xymatrix@C=1.5cm{
\Lambda'_0  \ar[r] \ar[d] & \Lambda'_0/K  \ar[r]^{f'} \ar[d] 
& \Lambda'_0 / \pi_1(\Lambda)  \ar[d]  \\
M' \ar[r]  & M'/K \ar[r]^f  & M' / \pi_1(\Lambda) ,   }}
\end{equation*}
where $f'$ and $f$ are covering maps with group $\pi_1(\Lambda) / K \simeq \Z$.
We set $\Lambda_0 = \Lambda'_0/K$, $M_0 = M'/K$, $\Lambda_1 =
\Lambda'_0/\pi_1(\Lambda)$ and $M_1 = M'/\pi_1(\Lambda)$.  Then $\Lambda_1$ is
identified with $\Lambda$ since it is the quotient of the universal cover of
$\Lambda$ by its Poincar\'e group.  This gives the
diagram~\eqref{eq:diag_revet-Maslov0}. The claim on the Maslov classes follows
easily.
\end{proof}

\section{Vanishing of the Maslov class}

We recall that $m_\Lambda \cl \pi_1(\Lambda) \to \Z$ is the Maslov class of
$\Lambda$.  Since $\Lambda$ satisfies the conditions of
Lemma~\ref{lem:cond_Lambda_exact}, it is actually obtained from a compact exact
Lagrangian submanifold $\widetilde\Lambda \subset T^*M$ by adding a variable.
We have $\widetilde\Lambda \simeq \Lambda/\rspos$.  Hence $\Lambda$ and
$\widetilde\Lambda$ are homotopic and we can see that they have the same Maslov
class.  In~\cite{Kr13} Kragh and Abouzaid (using a result of~\cite{A11}) prove
that the Maslov class of any compact exact Lagrangian submanifold of a cotangent
bundle vanishes.  Now we can give a new proof of this result.

\begin{theorem}\label{thm:Maslovzero}
We have $m_\Lambda = 0$.
\end{theorem}
\begin{proof}
(i) We set $\cor=\Z/2\Z$.  We apply Corollary~\ref{cor:revet-Maslov} and we
replace $M$ by $M_1$. Hence we have cyclic covers of group $\Z$
\begin{equation}\label{eq:diag-Maslovzero}
\vcenter{\xymatrix@C=1.5cm{
\Lambda_0  \ar[r]^{\Z} \ar[d] & \Lambda  \ar[d]_{\pi_\Lambda}  \\
M_0 \ar[r]^{\Z}_f  & M   }}
\end{equation}
such that $m_{\Lambda_0} =0$.  Hence have an exact sequence $\pi_1(\Lambda_0)
\to \pi_1(\Lambda) \to[m_\Lambda] \Z$. The diagram~\eqref{eq:diag-Maslovzero}
induces the isomorphisms $\pi_1(\Lambda) / \pi_1(\Lambda_0) \simeq \Z \simeq
\pi_1(M) / \pi_1(M_0)$ and we deduce that $m_\Lambda$ factorizes through a
morphism $m\cl \pi_1(M) \to \Z$ such that $\ker(m) \simeq \pi_1(M_0)$.

We let $c \in H^1(M;\Z)$ be the cohomology class corresponding to $m$. Then
$\pi_\Lambda^*(c)$ is the Maslov class of $\Lambda$ (viewed as a cohomology
class) and $f^*(c) =0$.

We will obtain a contradiction by constructing a quantization $G$ of
$\Lambda_0$ on $M_0 \times \R$ with opposite properties: (i) $G$ should be
unbounded because of the non-vanishing of $c$ and (ii) $G$ sould be bounded
because $G|_{M_0 \times \{t\}}$, $t\gg0$, is a locally bounded locally constant
object.

The construction of $G$ consists in checking that Theorem~\ref{thm:quant} holds
when we replace $M$ by a cyclic cover.

\medskip\noindent
(ii) By Lemma~\ref{lem:decomp_deuxouverts} there exist two open subsets $U_1,
U_2$ of $M$ such that $U_1$, $U_2$ and $U_1\cap U_2$ have a finite number of
connected components and the restrictions $c|_{U_i} \in H^1(U_i;\Z_{U_i})$
vanish, for $i=1,2$.  We can decompose $U_1 \cap U_2 = V_+ \sqcup V_-$ into two
disjoint open subsets such that $c$ is represented by the \v Cech cocyle $c_{12}
\cl U_{12} \to \Z$ ($c_{12}$ is a locally constant function) with values $0$ on
$V_+$ and $d$ on $V_-$, for some $d\in \Z$.  Since $m_\Lambda \not=0$ we have
$d\not=0$.

For $\varepsilon>0$ we set $U_i^\varepsilon = U_i \times \R \times
\mo]0,\varepsilon[$ and $\Lambda_i^\varepsilon = T^*U_i^\varepsilon \cap (
q_d\opb{q_\pi}(\Lambda) \cup r_d\opb{r_\pi}( \Lambda))$ for $i=1,2$, and
$V_\pm^\varepsilon = V_\pm \times \R \times \mo]0,\varepsilon[$.  By
Proposition~\ref{prop:quant_ouvert1} there exist $\varepsilon>0$ and $F_i \in
\Derb(\cor_{U_i^\varepsilon})$ for $i=1,2$, such that $\dot\SSi(F_i) =
\Lambda_i^\varepsilon$, $F_i$ is simple along $\Lambda_i^\varepsilon$ and we
have isomorphisms $\varphi_+ \cl F_1|_{V_+^\varepsilon} \isoto
F_2|_{V_+^\varepsilon}$ and $\varphi_- \cl F_1|_{V_-^\varepsilon} \isoto
F_2|_{V_-^\varepsilon}[d]$.

\medskip\noindent
(iii) We set $M_0^\varepsilon = M_0\times \R \times \mo]0,\varepsilon[$.  The
action of $\Z$ on $M_0$ induces an action on $M_0^\varepsilon$. For $n\in \Z$ we
denote by $\psi_n \cl M_0^\varepsilon \to M_0^\varepsilon$ the action of $n$.

We set $U'_i = \opb{f}(U_i^\varepsilon)$, $i=1,2$, and we decompose $U'_i =
\bigsqcup_{n\in \Z} U_i^n$ in such a way that $f$ identifies each $U_i^n$ with
$U_i \times \R$ and $U_i^n = \psi_n(U_i^0)$.  We can also assume that $f$
identifies $U_1^0\cap U_2^0$ with $V_+$ and $U_1^0\cap U_2^1$ with $V_-$.

We define $G_i$ on $U'_i$ by $G_i|_{U_i^n} = (\opb{f}(F_i))|_{U_i^n}[nd]$.  Then
the isomorphisms $\varphi_\pm$ of~(ii) induce an isomorphism $\varphi \cl
G_1|_{U'_{12}} \isoto G_2|_{U'_{12}}$.  We let $j_i, j_{12}$ be the inclusions
of $U'_i$, $U'_{12}$ in $M_0^\varepsilon$. We define $G' \in
\Der(\cor_{M_0^\varepsilon})$ by the distinguished triangle
$$
G' \to \roim{j_1} G_1 \oplus \roim{j_2} G_2 
\to[(\varphi,-\id)] \roim{j_{12}} (G_2|_{U'_{12}})  \to[+1] .
$$
Then $G'$ is simple along $q_d\opb{q_\pi}(\Lambda_0) \cup
r_d\opb{r_\pi}(\Lambda_0)$ and we have isomorphisms $\opb{\psi_n}(G') \simeq
G'[nd]$, for each $n\in \Z$. Since $F_1$ and $F_2$ are bounded, $G'$ is locally
bounded.

\medskip\noindent
(iv) We apply Lemma~\ref{lem:isotopy_transl} to $\Lambda$ and we obtain a
homogeneous Hamiltonian isotopy $\phi$ of $\dT^*(M\times\R)$ which keeps
$\Lambda$ fixed and translates $T'_1(\Lambda)$ vertically.  Lifting $\phi$ to
$\dT^*(M_0\times\R)$ we see that $\Lambda_0$ satisfies the conclusions of
Lemma~\ref{lem:isotopy_transl}.  Hence we can apply
Corollary~\ref{cor:restr_it_equiv} to $\Lambda_0$ and we deduce from the object
$G' \in \Derlb(\cor_{M_0^\varepsilon})$ defined in~(iii) an object $G \in
\Derlb(\cor_{M_0\times\R})$ which is simple along $\Lambda_0$ and such that we
have isomorphisms $\opb{\psi_n}(G) \simeq G[nd]$, for each $n\in \Z$.

We define $G_+ = G|_{M_0 \times \{t_0\}}$ for $t_0\gg 0$ as in
Definition~\ref{def:DerlbLambdaplus}.  Theorem~\ref{thm:restr-infini-ff} gives
$\RHom(G,G) \isoto \RHom(G_+,G_+)$.  Since $G\not\simeq 0$ it follows that $G_+
\not\simeq 0$.  We also know that $G_+$ has locally constant cohomology sheaves
and is locally bounded (since $G$ is). Since $M_0$ is connected we deduce that
$G_+$ is bounded.

The isomorphisms $\opb{\psi_n}(G) \simeq G[nd]$ give $\opb{\psi_n}(G_+) \simeq
G_+[nd]$, for all $n\in \Z$.  Since $G_+$ is bounded and non zero, we obtain
$d=0$ but this contradicts the hypothesis $m_\Lambda \not=0$.
\end{proof}

\section{Behaviour at infinity}

We recall the notation $\Derb_{\Lambda,+}(\cor_{M\times\R})$ of
Definition~\ref{def:DerlbLambdaplus} and $F_+ = F|_{M \times \{t_0\}}$ for
$t_0\gg 0$, for $F \in \Derb_{\Lambda,+}(\cor_{M\times\R})$.

\begin{proposition}\label{prop:restr-inf-corps}
We assume that $\cor = \Z$ or $\cor$ is a finite field.  Let $F \in
\Derb_{\Lambda,+}(\cor_{M\times\R})$.  We assume that $F$ is simple along
$\Lambda$.  Then $F_+$ is concentrated in one degree, say $i$, and $H^iF_+$ is a
local system with stalks isomorphic to $\cor$.
\end{proposition}
\begin{proof}
(i) We first assume that $\cor$ is a finite field. 
Let us prove that $F_+$ is concentrated in one degree.  Let $a\leq b$ be
respectively the minimal and maximal integers $i$ such that $H^iF_+ \not\simeq
0$. By Lemma~\ref{lem:stalk_simple_sh} the local systems $H^iF_+$ are of finite
rank.  Since $\cor$ is finite we can find a finite cover $r\cl M' \to M$ such
that $\opb{r}( H^iF_+)$ are trivial, for $i=a,b$.  We set $F' = \opb{(r\times
  \id_\R)} F$ and $\Lambda' = \opb{d(r\times \id_\R)}(\Lambda)$. Then $\opb{r}(
H^iF_+) \simeq H^iF'_+$, $F'$ is simple along $\Lambda'$ and we have $\mu
hom(F',F') \simeq \cor_{\Lambda'}$. Since $\Lambda'/\rspos$ is compact,
Theorem~\ref{thm:muhom=hom} gives
\begin{equation}\label{eq:restr-inf-corps1}
\RHom(F'_+,F'_+) \simeq \rsect(\Lambda'; \cor_{\Lambda'}).
\end{equation}
On the other hand the complex $G=\rhom(F'_+,F'_+)$ is concentrated in degrees
greater than $a-b$ and $H^{a-b}G \simeq \hom(H^aF'_+,H^bF'_+)$ is a non zero
constant sheaf.  Hence $H^{a-b}\RHom(F'_+,F'_+)$ is non zero.
By~\eqref{eq:restr-inf-corps1} we deduce that $H^{a-b}\rsect(\Lambda';
\cor_{\Lambda'})$ also is non zero, which implies $a-b\geq 0$.  Hence $a=b$ and
$F_+$ is concentrated in a single degree.

\medskip\noindent
(ii) Now we prove that $H^aF_+$ is of rank one, that is, $H^aF'_+ \simeq
\cor_{M'}$. There exists $d\geq 1$ such that $H^aF'_+ \simeq \cor^d_{M'}$. The
isomorphism~\ref{eq:restr-inf-corps1} gives in degree $0$:
\begin{equation}\label{eq:restr-inf-corps2}
\Hom(\cor^d,\cor^d) \simeq H^0(\Lambda'; \cor_{\Lambda'}) .
\end{equation}
By Remark~\ref{rem:comp_muhom=hom-infini} this isomorphism is compatible with
the algebra structures of both terms.  Let $I$ be the set of connected
components of $\Lambda'$. We obtain $|I| = d^2$.  The natural decomposition
$H^0(\Lambda'; \cor_{\Lambda'}) \simeq \bigoplus_{i\in I} H^0(\Lambda'_i;
\cor_{\Lambda'_i})$ gives an expression of the unit as a sum of orthogonal
idempotents, $1 = \sum_{i\in I} e_i$, where $e_i$ is the projection
$$
e_i \cl H^0(\Lambda'; \cor_{\Lambda'}) 
 \to H^0(\Lambda'_i; \cor_{\Lambda'_i}), \qquad i\in I.
$$
We let $m_i \in \Hom(\cor^d,\cor^d)$ be the image of $e_i$
by~\eqref{eq:restr-inf-corps2}.  The relation $1 = \sum_{i\in I} e_i$ gives a
decomposition of the identity matrix $I_d = \sum_{i\in I} m_i$ as a sum of $|I|$
non-zero orthogonal projections, that is, $m_i^2 = m_i$ and $m_im_j=0$, for
$i\not= j$. We deduce that $|I| \leq d$, that is, $d^2 \leq d$. Hence $d=1$, as
claimed.

\medskip\noindent
(iii) Now we assume that $\cor = \Z$.
By Lemma~\ref{lem:stalk_simple_sh}, for each $i\in \Z$, there exists $d_i\in \N$
such that the stalks of the local system $H^iF_+$ are isomorphic to $\Z^{d_i}$.
We set $G = F \ltens_\Z \Z/2\Z$. Then $G$ is simple along $\Lambda$ and $G_+
\simeq F_+ \tens_\Z \Z/2\Z$. In particular the stalks of $H^iG_+$ are
isomorphic to $(\Z/2\Z)^{d_i}$. By~(i) and~(ii) we deduce that there exists
$a\in \Z$ such that $d_i=0$ for all $i\not= a$ and $d_a=1$, as claimed.
\end{proof}

\begin{corollary}\label{cor:cohomology_Lambda}
We assume that $\cor = \Z$ or $\cor$ is a finite field and that
$\kss(\cor_\Lambda)$ has at least one global simple object. Then the projection
$\Lambda \to M$ induces an isomorphism $\rsect(M;\cor_M) \isoto
\rsect(\Lambda;\cor_\Lambda)$.
\end{corollary}
\begin{proof}
We choose a simple object $\shf \in \kss(\cor_\Lambda)$. By
Theorem~\ref{thm:quant} there exists $F \in \Derb_{\Lambda,+}(\cor_{M\times\R})$
such that $\kssfunc_{\Lambda}(F) \simeq \shf$.
By Proposition~\ref{prop:restr-inf-corps} $F_+$ is concentrated in one degree,
say $i$, and $H^iF_+$ is a local system with stalks isomorphic to $\cor$.  Hence
$\rhom(F_+,F_+) \simeq \cor_M$ and $\RHom(F_+,F_+) \simeq \rsect(M;\cor_M)$.
Since $F$ is simple we also have $\mu hom(F,F)|_\Lambda \simeq \cor_\Lambda$.
By Theorem~\ref{thm:muhom=hom} we deduce an isomorphism
\begin{equation}\label{eq:cohomology_Lambda1}
\rsect(M;\cor_M) \simeq \rsect(\Lambda;\cor_\Lambda) .
\end{equation}
By construction~\eqref{eq:cohomology_Lambda1} is given by taking the global
sections in the bottom morphism of the commutative diagram:
$$
\def\objectstyle{\scriptstyle}
\xymatrix{
\cor_{M\times\R} \ar[rr]^a \ar[d]^b
 &&  \roim{(\dot\pi_{M\times \R})}(\cor_\lambda) \ar[d]^c_\wr  \\
\rhom(F,F) \ar[r]^-\sim & \roim{(\pi_{M\times \R})} \mu hom(F,F) \ar[r]
& \roim{(\dot\pi_{M\times \R})} (\mu hom(F,F)|_\Lambda) , }
$$
where $b$ and $c$ map the sections $1$ to the identity morphisms.  When taking
global sections, $b$ and $c$ induce isomorphisms and~$a$ induces the natural
morphism $\rsect(M;\cor_M) \to \rsect(\Lambda;\cor_\Lambda)$ given by the
projection of $\Lambda$ to the base $M$. The bottom horizontal arrow
induces~\eqref{eq:cohomology_Lambda1}.  This shows
that~\eqref{eq:cohomology_Lambda1} is indeed induced by the projection to the
base.
\end{proof}

\begin{remark}\label{rem:cohom_Z2Z}
We have seen in Corollary~\ref{cor:objet_global_KSstack} that
$\kss(\cor_\Lambda)$ has at least one global simple object if the Maslov class
of $\Lambda$ and the image of its relative Stiefel-Whitney class in
$H^2(\Lambda;\cor^\times)$ is zero.
By Theorem~\ref{thm:Maslovzero} the Maslov class vanishes in our case.  Hence,
when $\cor = \Z/2\Z$ the stack $\kss(\cor_\Lambda)$ has a global simple object
and Corollary~\ref{cor:cohomology_Lambda} gives: the projection $\Lambda \to M$
induces an isomorphism
$$
\rsect(M;\Z/2\Z_M) \isoto \rsect(\Lambda;\Z/2\Z_\Lambda).
$$
\end{remark}

\section{Vanishing of the Stiefel-Whitney class}

We have introduced a class $rw_2(\lambda_0,\lambda_\Lambda) \in
H^2(\Lambda;\Z/2\Z_\Lambda)$ in Definition~\ref{def:relStieWhit} and
Corollary~\ref{cor:objet_global_KSstack}, where $\lambda_0$ is the fiber bundle
of vertical directions (with respect to the projection $\pi_{M\times\R}$) and
$\lambda_\Lambda$ the tangent vector bundle of $\Lambda$.

Here we prove that $rw_2(\lambda_0,\lambda_\Lambda)$ vanishes.  For this we will
use Theorem~\ref{thm:quant} in the framework of twisted sheaves.  Let $c\in
H^2(M;\Z/2\Z)$ be given and let $\check c = \{c_{ijk}\}$, $i,j,k \in $, be a \v
Cech cocycle representing $c$ with respect to a finite covering $\{U_i\}_{i\in
  I}$ of $M$. We view $\Z/2\Z$ as the multiplicative group $\{\pm 1\}$ and
$c_{ijk} = \pm 1$, for all $i,j,k$.
\begin{definition}\label{def:twisted-sheaves}
A $\check c$-twisted sheaf $F$ on $M$ is the data of sheaves $F_i \in
\Mod(\cor_{U_i})$ and isomorphisms $\varphi_{ij} \cl F_j|_{U_{ij}} \isoto
F_i|_{U_{ij}}$ satisfying the condition
$$
\varphi_{ij} \circ \varphi_{jk} = c_{ijk} \, \varphi_{ik} .
$$
The $\check c$-twisted sheaves form an abelian category that we denote by
$\Mod(\cor^{\check c}_M)$. We denote by $\Derb(\cor^{\check c}_M)$ its derived
category.
\end{definition}
The prestack $U \mapsto \Mod(\cor^{\check c|_U}_U)$ is a stack which is locally
equivalent to the stack of sheaves.  The usual operations on sheaves extend to
twisted sheaves. In particular if $\check c$, $\check d$ are \v Cech cocycles on
$M$ and $F \in \Derb(\cor^{\check c}_M)$, $F' \in \Derb(\cor^{\check d}_M)$, we
have a tensor product $F \ltens F' \in \Der(\cor^{\check c + \check d}_M)$ and a
homomorphism sheaf $\rhom(F,F')\in \Der(\cor^{\check d - \check c}_M)$. If $f
\cl M \to N$ is a morphism of manifolds and $\check d$ is a \v Cech cocycle on
$N$ with values in $\{\pm 1\}$, we have inverse images $\opb{f}, \epb{f} \cl
\Derb(\cor^{\check d}_N) \to \Derb(\cor^{f^* \check d}_M)$ and direct images
$\roim{f}, \reim{f} \cl \Derb(\cor^{f^* \check d}_M) \to \Derb(\cor^{\check
  d}_N)$ with the usual adjunction properties.  The notion of microsupport also
generalizes to the twisted case (since this is a local notion and twisted
sheaves are locally equivalent to sheaves) with the same behaviour with respect
to the sheaves operations.

We can define a Kashiwara-Schapira stack $\kss(\cor^{\check c}_{\Lambda})$ and
formulate a version of Theorem~\ref{thm:quant} in this framework: for $\shf \in
\kss(\cor^{\check c}_{\Lambda})$ there exists $F \in \Derb(\cor^{\check
  c}_{M\times\R})$ such that $\dot\SSi(F) = \Lambda$, $F|_{M\times \{t\}} \simeq
0$ for $t\ll 0$ and $\kssfunc^{\check c}_{\Lambda}(F) \simeq \shf$.

\begin{proposition}\label{prop:vanishingSW}
The class $rw_2(\lambda_0,\lambda_\Lambda) \in H^2(\Lambda;\Z/2\Z_\Lambda)$ is
zero.
\end{proposition}
\begin{proof}
(i) We choose $\cor=\Z$.
By Corollary~\ref{cor:cohomology_Lambda} and Remark~\ref{rem:cohom_Z2Z} we have
$H^2(M;\Z/2\Z_M) \isoto H^2(\Lambda;\Z/2\Z_\Lambda)$. We let $c\in
H^2(M;\Z/2\Z_M)$ be the inverse image of $rw_2(\lambda_0,\lambda_\Lambda)$ by
this isomorphism and we choose a \v Cech cocycle $\check c$ representing $c$.
Then the twisted Kashiwara-Schapira stack $\kss(\cor^{\check c}_{\Lambda})$ has
a simple global object and the twisted version of Theorem~\ref{thm:quant} gives
$F \in \Derb_{\Lambda,+}(\cor^{\check c}_{M\times\R})$ which is simple along
$\Lambda$.  By Proposition~\ref{prop:restr-inf-corps} we have $F_+ \simeq L[d]$
where $L \in \Mod(\cor^{\check c}_M)$ is a twisted locally constant sheaf with
stalks isomorphic to $\Z$ and $d$ is some integer.

\medskip\noindent
(ii) Now we prove that the existence of a locally constant $L \in
\Mod(\cor^{\check c}_M)$ as in~(i) implies that $\check c$ is a boundary, that
is, $rw_2(\lambda_0,\lambda_\Lambda)=0$.

The cocycle $\check c$ is associated with a covering $\{U_i\}_{i\in I}$ of $M$.
The object $L \in \Mod(\cor^{\check c}_M)$ is given by sheaves $L_i \in
\Mod(\cor_{U_i})$ and isomorphisms $\varphi_{ij} \cl L_j|_{U_{ij}} \isoto
L_i|_{U_{ij}}$, for any $i,j\in I$, such that $\varphi_{ij} \circ \varphi_{jk} =
c_{ijk} \, \varphi_{ik}$ for all $i,j,k \in I$.  We can assume that $U_i$ is
contractible and that $U_{ij}$ is connected for any $i,j\in I$.  Since $L$ is
locally constant, we can choose an isomorphism $\varphi_i \cl L|_{U_i} \simeq
\Z_{U_i}$ for each $i\in I$. Then the composition $b_{ij} = \varphi_i
\varphi_{ij} \varphi_j^{-1}$ is an isomorphism $\Z\isoto \Z$, that is, $b_{ij} =
\pm 1$.  We let $\check b$ be the $1$-cochain defined by $\{b_{ij}\}_{i,j\in
  I}$.  Then the equality $\varphi_{ij} \circ \varphi_{jk} = c_{ijk} \,
\varphi_{ik}$ says that $\check c$ is the boundary of $\check b$, as required.
\end{proof}

\section{A canonical quantization}

Summing up the results of this part we obtain a canonical quantization for any
closed conic connected Lagrangian submanifold $\Lambda \subset
T^*_{\tau>0}(M\times\R)$ which satisfies the conditions of
Lemma~\ref{lem:cond_Lambda_exact}.

\begin{theorem}\label{thm:quant_canon}
Let $\cor$ be a ring with finite global dimension.
\begin{itemize}
\item [(i)] There exists $F \in\Derb_{\Lambda,+}(\cor_{M\times\R})$ such that
  $F_- \simeq 0$ and $F_+ \simeq \cor_M$.
\item [(ii)] The object $F$ in~{\rm(i)} is unique up to a unique isomorphism:
  for another such $F' \in \Derb_{\Lambda,+}(\cor_{M\times\R})$ we have a
  canonical isomorphism $\Hom(F,F') \simeq \Hom(F_+,F'_+) \simeq \cor$.
\item [(iii)] The projection $\Lambda \to M$ yields an isomorphism
  $\rsect(M;\cor_M) \isoto \rsect(\Lambda;\cor_\Lambda)$.
\end{itemize}
\end{theorem}
\begin{proof}
(i) We first assume that $\cor=\Z$.  By Theorem~\ref{thm:Maslovzero} and
Proposition~\ref{prop:vanishingSW} we know that $m_\Lambda = 0$ and
$rw_2(\lambda_0,\lambda_\Lambda) = 0$.  By Theorem~\ref{thm:quant} there exists
$F^0 \in \Derb_{\Lambda,+}(\cor_{M\times\R})$ which is simple along $\Lambda$.
By Proposition~\ref{prop:restr-inf-corps} we have $F^0_+ \simeq L[d]$ where $L
\in \Mod(\cor_M)$ is locally constant with stalks isomorphic to $\Z$ and $d$ is
some integer. Let $p\cl M\times \R \to M$ be the projection. Then $F^1 = F^0
\tens \opb{p}L^{\otimes-1}[-d]$ satisfies the required properties.  For a
general ring $\cor$ we set $F = F^1 \ltens_{\Z_{M\times\R}} \cor_{M\times\R}$.

\noindent
(ii) is given by Theorem~\ref{thm:restr-infini-ff}.

\noindent
(iii) is given by Corollary~\ref{cor:cohomology_Lambda}.
\end{proof}

In~\cite{A12} Abouzaid gives a result more precise than
Theorem~\ref{thm:quant_canon}: the projection $\pi_\Lambda \cl \Lambda \to M$
induces an isomorphism of the fundamental groups. Since we already have an
isomorphism between the cohomology groups, it is enough to show that
$\pi_1(\Lambda) \to \pi_1(M)$ is an isomorphism.  It is equivalent to show that
the inverse image by $\pi_\Lambda$ induces an equivalence of categories
$\loc(\cor_M) \isoto \loc(\cor_\Lambda)$, for some field $\cor$.

\begin{proposition}\label{prop:equiv_Poinc}
Let $\cor$ be a field. Let $\pi_\Lambda \cl \Lambda \to M$ be the
projection. Then the inverse image functor $\opb{\pi_\Lambda} \cl \loc(\cor_M)
\to \loc(\cor_\Lambda)$ is an equivalence of categories.
\end{proposition}
\begin{proof}
(i) We first prove that $\opb{\pi_\Lambda}$ is fully faithful. Let $F \in
\Derb_{\Lambda,+}(\cor_{M\times\R})$ be the simple object given by
Theorem~\ref{thm:quant_canon}.  Since $F$ is simple we have $\mu
hom(F,F)|_\Lambda \simeq \cor_\Lambda$ and we deduce, for $L, L' \in
\loc(\cor_M)$,
\begin{equation}\label{prop:equiv_Poinc1}
\mu hom(F\tens \opb{p}L,F\tens \opb{p}L') \simeq 
\hom(\opb{\pi_\Lambda} L, \opb{\pi_\Lambda} L') ,
\end{equation}
where $p\cl M\times\R \to M$ is the projection.  We have $(F\tens \opb{p}L)_+
\simeq L$ and~\eqref{eq:muhom=hom-infini1} together
with~\eqref{prop:equiv_Poinc1} imply
\begin{align*}
\Hom(L,L') & \simeq H^0(\Lambda; \mu hom(F\tens \opb{p}L,F\tens \opb{p}L')) \\
& \simeq  \Hom(\opb{\pi_\Lambda} L, \opb{\pi_\Lambda} L') ,
\end{align*}
which means that $\opb{\pi_\Lambda}$ is fully faithful.

\medskip\noindent
(ii) We prove that $\opb{\pi_\Lambda}$ is essentially surjective.
Let $L_1 \in \loc(\cor_\Lambda)$ be given. We recall that the functor $\mu
hom(F,\cdot)$ induces an equivalence $\kss(\cor_\Lambda) \isoto
\loc(\cor_\Lambda)$ (see Proposition~\ref{prop:KSstack=Dloc}, where the induced
functor is denoted $\ol{\mu hom}(F,\cdot)$).  Hence there exists $\shl_1\in
\kss(\cor_\Lambda)$ such that $\ol{\mu hom}(F,\shl_1) \simeq \shl$.  By
Theorem~\ref{thm:quant} there exists $F_1 \in \Derb(\cor_{M\times\R})$ such that
$\kssfunc_\Lambda(F_1) \simeq L_1$. Then we have $\mu hom(F,F_1)|_\Lambda \simeq
L_1$.

We set $L = (F_1)_+ \in \Derb(\cor_M)$.  Then $\dot\SSi(L) = \emptyset$ and,
since $F_+ \simeq \cor_M$, we also have $L \simeq (F\tens \opb{p}L)_+$.  Hence
$(F_1)_+ \simeq (F\tens \opb{p}L)_+$ and Theorem~\ref{thm:restr-infini-ff} gives
$F_1 \simeq F\tens \opb{p}L$.  We deduce
$$
\mu hom(F,F_1)|_\Lambda \simeq \mu hom(F, F\tens \opb{p}L)|_\Lambda 
 \simeq \opb{\pi_\Lambda} L .
$$
Hence $L_1 \simeq \opb{\pi_\Lambda} L$. This proves that $L$ is concentrated in
degree $0$.  Since $\dot\SSi(L) = \emptyset$ we have $L \in \loc(\cor_M)$ and
$L_1 \in \opb{\pi_\Lambda}(\loc(\cor_M))$.
\end{proof}

As already remarked, Theorem~\ref{thm:quant_canon} and
Proposition~\ref{prop:equiv_Poinc} imply

\begin{corollary}\label{cor:equiv_homot}
The projection $\Lambda \to M$ is a homotopy equivalence.
\end{corollary}

\providecommand{\bysame}{\leavevmode\hbox to3em{\hrulefill}\thinspace}

\vspace*{1cm}
\noindent
\parbox[t]{21em}
{\scriptsize{
\noindent
St{\'e}phane Guillermou\\
Institut Fourier, Universit{\'e} Grenoble I, \\
%BP 74, 38402 Saint-Martin d'H{\`e}res, France\\
email: Stephane.Guillermou@ujf-grenoble.fr\\
%http://www-fourier.ujf-grenoble.fr/\textasciitilde guillerm/
}}


\begin{thebibliography}{00}

\bibitem{A11} M.~Abouzaid, 
{\em A cotangent fibre generates the Fukaya category,}
Adv. Math. {\bf 228}, 894--939  (2011).

\bibitem{A12} M.~Abouzaid, 
{\em Nearby Lagrangians with vanishing Maslov class are homotopy equivalent,}
Invent. Math. {\bf 189} no. 2, 251--313 (2012).

\bibitem{FSS08} K.~Fukaya, P.~Seidel and I.~Smith,
{\em Exact Lagrangian submanifolds in simply-connected cotangent bundles,}
Invent. Math. {\bf 172} no.1, 1--27 (2008).

\bibitem{GKS10} S.~Guillermou, M.~Kashiwara and P.~Schapira,
{\em Sheaf quantization of Hamiltonian isotopies and applications to
non displaceability problems,}
Duke Math. J. {\bf 161} no. 2, 201--245  (2012).

\bibitem{GS11} S.~Guillermou and P.~Schapira,
{\em Microlocal theory of sheaves and Tamarkin's non displaceability theorem}
in Homological Mirror Symmetry and Tropical Geometry, 
Edited by R.~Castano-Bernard et al.,
Lect. Notes of the UMI {\bf 15}, 43--85 (2014)
\/\texttt{arXiv:1106.1576}

\bibitem{KS82} M.~Kashiwara and P.~Schapira,
{\em Micro-support des faisceaux: applications aux modules diff{\'e}rentiels,}
C.~R.~Acad.\ Sci.\ Paris s{\'e}rie I Math {\bf 295} 8, 487--490 (1982).

\bibitem{KS85} \bysame,
{\em Microlocal study of sheaves,}
Ast{\'e}risque {\bf 128} Soc.\ Math.\ France (1985).

\bibitem{KS90} \bysame,
{\em Sheaves on Manifolds,}
\/Grundlehren der Math. Wiss. {\bf 292} Springer-Verlag (1990).

\bibitem{KS06}  \bysame,
{\em Categories and Sheaves,}
\/Grundlehren  der Math. Wiss. {\bf 332} Springer-Verlag (2005).

\bibitem{Ke05} B.~Keller,
{\em On triangulated orbit categories.} Doc. Math. {\bf 10}, 551--581 (2005).

\bibitem{Kr13} T.~Kragh,
{\em Parametrized ring-spectra and the nearby Lagrangian conjecture.
With an appendix by Mohammed Abouzaid.}
Geom. Topol. {\bf 17} no. 2, 639--731 (2013).

\bibitem{N09} D.~Nadler,
{\em Microlocal branes are constructible sheaves,}
Selecta Math. (N.S.) {\bf 15} 563--619 (2009).

\bibitem{NZ09} D.~Nadler and E.~Zaslow, 
{\em Constructible sheaves and the Fukaya category,}
J. Amer. Math. Soc. {\bf 22} 233--286 (2009).

\bibitem{T08} D.~Tamarkin, 
{\em Microlocal conditions for non-displaceability,}\\
\/\texttt{arXiv:0809.1584}

\bibitem{Vic12} N.~Vichery,
{\em Homog{\'e}n{\'e}isation symplectique et applications de la th{\'e}orie
    des faisceaux {\`a} la topologie symplectique,}
PhD thesis (2012).

\bibitem{V11} C.~Viterbo,
{\em Notes on symplectic geometry and the proof of Arnold conjecture using
    sheaves,}
Lectures at Princeton Fall 2010 and New York Spring 2011,
\texttt{www.dma.ens.fr/\~{}viterbo}

\end{thebibliography}
\end{document}